%
%
%
%

\documentclass{jams-l} 

\usepackage{amsthm,amsfonts,amssymb,amsmath,mathabx}
\usepackage[all]{xy}
\usepackage{xr-hyper}
\usepackage[colorlinks=true]{hyperref}
\usepackage{mathrsfs}

\tolerance=500
\setlength{\emergencystretch}{3em}

\newcommand{\BA}{{\mathbb {A}}}

\newcommand{\BC}{{\mathbb {C}}}

\newcommand{\BR}{{\mathbb {R}}}

\newcommand{\BZ}{{\mathbb {Z}}}

\newcommand{\CF}{{\mathcal {F}}}

\newcommand{\CH}{{\mathcal {H}}}

\newcommand{\CK}{{\mathcal {K}}}

\newcommand{\CN}{{\mathcal {N}}}
\newcommand{\CO}{{\mathcal {O}}}

\newcommand{\CV}{{\mathcal {V}}}
\newcommand{\CW}{{\mathcal {W}}}

\newcommand{\sC}{{\mathscr{C}}}

\newcommand{\sL}{{\mathscr{L}}}

\newcommand{\sP}{{\mathscr{P}}}

\newcommand{\sX}{{\mathscr{X}}}

\newcommand{\sZ}{{\mathscr{Z}}}

\newcommand{\fkb}{{\mathfrak{b}}}
\newcommand{\fkc}{{\mathfrak{c}}}
\newcommand{\fkg}{{\mathfrak{g}}}
\newcommand{\fkh}{{\mathfrak{h}}}
\newcommand{\fkl}{{\mathfrak{l}}}
\newcommand{\fkn}{{\mathfrak{n}}}
\newcommand{\fkp}{{\mathfrak{p}}}
\newcommand{\fku}{{\mathfrak{u}}}
\newcommand{\fks}{{\mathfrak{s}}}

\newcommand{\nat}{{\natural}}

\newcommand{\spade}{{\spadesuit}}

\newcommand{\RA}{{\mathrm {A}}}

\newcommand{\RN}{{\mathrm {N}}}

\newcommand{\RR }{{\mathrm {R}}}

\newcommand{\disc}{{\mathrm{disc}}}

\newcommand{\End}{{\mathrm{End}}}

\newcommand{\Gal}{{\mathrm{Gal}}}
\newcommand{\GL}{{\mathrm{GL}}}

\newcommand{\Hom}{{\mathrm{Hom}}}

\newcommand{\Ker}{{\mathrm{Ker}}}

\renewcommand{\Re}{{\mathrm{Re}}}

\newcommand{\Res}{{\mathrm{Res}}}

\newcommand{\Spec}{{\mathrm{Spec}}}
\newcommand{\SO}{{\mathrm{SO}}}

\newcommand{\tr}{{\mathrm{tr}}}

\newcommand{\vol}{{\mathrm{vol}}}

\newcommand{\wc}{\widecheck}
\newcommand{\wt}{\widetilde}
\newcommand{\wh}{\widehat}

\newcommand{\pair}[1]{\langle {#1} \rangle}

\newcommand{\ov}{\overline}
\newcommand{\incl}{\hookrightarrow}
\newcommand{\lra}{\longrightarrow}

\newcommand{\bs}{\backslash}

\newtheorem{thm}{Theorem}[section]
\newtheorem{cor}[thm]{Corollary}
\newtheorem{lem}[thm]{Lemma}
\newtheorem{prop}[thm]{Proposition}
\newtheorem {conj}[thm]{Conjecture}
\newtheorem{defn}[thm]{Definition}


%

\theoremstyle{definition}

\theoremstyle{remark}
\newtheorem{remark}{Remark}

\numberwithin{equation}{section}

\newcommand{\indexx}[1]{\index{$#1$}}

\begin{document}

\title{Automorphic period and the central value of Rankin--Selberg L-function}

\author{Wei Zhang}

\address{ Department of Mathematics, Columbia University, MC 4423,
2990 Broadway, New York, NY 10027 }
\curraddr{ Department of Mathematics, Columbia University, MC 4423,
2990 Broadway, New York, NY 10027}

\email{wzhang@math.columbia.edu}

\thanks{The author was supported in part by NSF Grant \#1204365 and a Sloan research fellowship.}

\subjclass[2000]{Primary 11F67, 11F70, 22E55; Secondary 11G40, 22E50.}
\date{July 08, 2012 and, in revised form, April 13, 2013.}

\keywords{Automorphic period, Rankin--Selberg L-function, the global Gan--Gross--Prasad conjecture, Ichino--Ikeda conjecture, Jacquet--Rallis relative trace formula, spherical character, local character expansion, regular unipotent orbital integral. }

\maketitle 

\begin{abstract} Using the relative trace formula of Jacquet and Rallis, under some local conditions
we prove a refinement of the
global Gan--Gross--Prasad conjecture proposed by Ichino--Ikeda and N. Harris for unitary groups. We need to assume some expected properties of L-packets and some part of the local Gan-Gross-Prasad conjecture.
\end{abstract}

\setcounter{tocdepth}{1}
\tableofcontents

\section{Introduction}
In this article, as a sequel of \cite{Zh2}, we prove a conjectural refinement of the global Gan--Gross--Prasad conjecture (\cite{GGP}) for unitary groups under some local conditions. This refinement is modeled on the pioneering work of Waldspurger (\cite{W1}) on toric periods and the central values of L-functions on $\GL_2$. In an influential paper \cite{II},  Ichino and Ikeda first formulated the refinement for orthogonal groups.  After the Ichino--Ikeda formulation, N. Harris considered the case of unitary groups in his Ph.D Thesis at UCSD (\cite{H}). 
\subsection{The conjecture of Ichino--Ikeda and N. Harris.}
We now recall the conjectural refinement. Let $E/F$ be a quadratic
extension of number fields with adeles denoted by $\BA=\BA_F$ and $\BA_E$ respectively. Let $V$ be a Hermitian space of dimension $n+1$ and $W$ a (non-degenerate) subspace of codimension one. Denote the unitary groups by $U(V)$ and $U(W)$ respectively. Let $G=U(W)\times U(V)$ be the
product and $H$ the diagonal embedding of
$U(W)$ into $G$. Let $\pi=\pi_n\otimes \pi_{n+1}$ be a cuspidal automorphic
representation of $G(\BA)$ and let $\pi_{i,E}$ be the base change of $\pi_i$ to
$\GL_i(\BA_E)$, $i=n,n+1$. Denote by $L(s,\pi_E)$ the Rankin-Selberg convolution L-function $L(s,\pi_{n,E}\times
\pi_{n+1,E})$ due to Jacquet--Piatetski-Shapiro--Shalika (\cite{JPSS}). It is known to be the same as the one defined by the Langlands--Shahidi method. The reader may consult the introduction of \cite{GJR1} for an overview of the study of the central value $L(1/2,\pi_{n,E}\times
\pi_{n+1,E})$. We also consider the adjoint L-function of $\pi$ (cf. \cite[\S7]{GGP},\cite[Remark 1.4]{H})
$$
L(s,\pi, Ad)=L(s,\pi_n,Ad)L(s,\pi_{n+1},Ad).
$$
We refer to Remark \ref{rem def L Ad} for the definition of the adjoint L-function at bad places (also cf. Remark \ref{rem IIH} after Conjecture \ref{conj IIH} below).

Denote the constant
$$ \Delta_{n+1}=\prod_{i=1}^{n+1}L(i,\eta^i)=L(1,\eta)L(2,1_F)L(3,\eta)\cdots L(n+1,\eta^{n+1}),$$
where $\eta$ is the quadratic character of $F^\times \bs\BA^\times$ associated to $E/F$ by class field theory. 
Note that here $\Delta_{n+1}=L(M^{\vee}(1))$ where $M^\vee$ is the motive
dual to the motive $M$ associated to the quasi-split reductive group $U(n+1)$ defined by
Gross (\cite{G}).
We will be interested in the following combination of L-functions
\begin{align}
\sL(s,\pi)=\Delta_{n+1}\frac{L(s,\pi_E)}{L(s+\frac{1}{2},\pi, Ad)}.
\end{align}
We also write $\sL(s,\pi_v)$ for the local factor at $v$.
 
Let $[H]$ denote the quotient $H(F)\backslash H(\BA)$
and similarly for $G$. We endow $H(\BA)$ ($G(\BA)$, resp.) with their Tamagawa measures \footnote{Since the unitary group $H$ has a nontrivial central torus, we need to introduce a convergence factor: $dh=L(1,\eta)^{-1}\prod_{v}L(1,\eta_v)|\omega|_v$ for a nonzero invariant differential $\omega$ of top degree on $H$. Similarly for $G$.} and
$[H]$ ($[G]$, resp.) with the quotient measure by the counting measure on $H(F)$ ($G(F)$, resp.),  cf. \S2. In \cite{GGP}, Gan, Gross and Prasad propose to study an automorphic period integral \indexx{\sP}
$$
\sP(\phi)=\sP_H(\phi):=\int_{[H]}\phi(h)\,dh,\quad \phi\in \pi.
$$
They conjecture that the non-vanishing of the linear functional $\sP$ on $\pi$ (possibly by varying the Hermitian spaces $(W,V)$ and switching to another member in the Vogan L-packet \footnote{For the term ``Vogan L-packet ", cf. \cite[\S9-11]{GGP}.} of $\pi$) is equivalent to the non-vanishing of the central value $L(\frac{1}{2},\pi_{E})$ of the Rankin-Selberg L-function. This conjectural equivalence is proved for $\pi$ satisfying some local conditions in our previous paper (\cite{Zh2}). One direction of the equivalence had also been proved by Ginzburg--Jiang--Rallis (cf. \cite{GJR1}, \cite{GJR2}).

For arithmetic application, it is necessary to have a more precise relation between the automorphic period integral $\sP$ and the L-value $\sL(1/2,\pi_E)$. To state the precise refinement of the Gan-Gross-Prasad conjecture, we need to introduce more notations. Let $\langle \cdot,\cdot \rangle_{Pet}$ be the Peterson inner product
\begin{align}
\pair{\phi,\varphi}_{Pet}=\int_{[G]}\phi(g)\ov{\varphi}(g)\,dg,\quad \phi,\varphi\in \pi.
\end{align}
Fix a decomposition as a product $$\langle \cdot,\cdot
\rangle_{Pet}=\prod_v \langle \cdot,\cdot \rangle_v$$ under the decomposition $\pi=\otimes \pi_v$. In this way we fix an invariant inner product on $\pi_v$. 
Ichino and Ikeda first consider the following integration of matrix coefficient: for $\phi_v,\varphi_v\in \pi_v$, we define when $\pi_v$ is tempered 
\begin{align}\label{eqn def alpha}
\alpha_v(\phi_v,\varphi_v)=\int_{H_v}{\langle \pi_v(h)\phi_v,\varphi_v
\rangle _v}\, dh,\quad H_v=H(F_v).
\end{align}
It has the following nice properties for {\em tempered} $\pi_v$:
\begin{enumerate}
\item It converges absolutely and it is positive definite: $\alpha_v(\phi_v,\phi_v)\geq 0$.
\item When $\pi_v$ is unramified\footnote{For a non-archimedean place $v$ we say that $\pi_v$ is unramified if the quadratic extension $E/F$ is unramified at $v$, the group $G(F_v)$ has a hyperspecial subgroup $K_v=G(\CO_v)$ and $\pi_v$ has a nonzero $K_v$-fixed vector.
}, and the vectors $\phi_v,\varphi_v$ are fixed by $K_v$ such that $\langle \phi_v,\varphi_v \rangle_v=1$, we have $$\alpha_v(\phi_v,\varphi_v)=\sL(\frac{1}{2},\pi_v) \cdot \vol(H(\CO_v)).$$
\item If $\Hom_{H(F_v)}(\pi_v,\BC)\neq 0$, then the form $\alpha_v$ does not vanish identically. 
\end{enumerate}
 The first two were proved by Ichino and Ikeda (N. Harris in the unitary group case). The third property was conjectured by them and proved by Sakellaridis and Venkatesh (\cite[\S6.4]{SV}) in a more general setting. Waldspurger also proved the third property in the p-adic orthogonal case. Because of the second property, we normalize the form $\alpha_v$ as follows:
\begin{align}\label{eqn def alpha nat}
\alpha_v^\nat(\phi_v,\varphi_v)=\frac{1}{\sL(\frac{1}{2},\pi_v)}\int_{H_v}{\langle \pi_v(h)\phi_v,\varphi_v
\rangle _v}\,dh.
\end{align}
Clearly $\alpha_v^\nat$ is invariant under $H_v\times H_v$ and we may call it the ``local canonical invariant form".

We are now ready to state the conjecture of Ichino--Ikeda and N. Harris (cf. \cite{II}, \cite[Conjecture 1.3]{H}) that refines the global Gan--Gross--Prasad conjecture for unitary groups. Assume that the measure on $H(\BA)$ defining $\sP$ and the measures on $H(F_v)$ defining $\alpha_v$ satisfy:
$$
dh=\prod_{v}dh_v.
$$
\index{Conj}
\begin{conj}\label{conj IIH}
Assume that $\pi$ is {\em tempered}, i.e., $\pi_v$ is tempered for all $v$.
For any decomposable vector $\phi=\otimes
\phi_v\in \pi=\otimes \pi_v$, we have
\begin{equation}\label{fml *} \frac{|\sP(\phi)|^2}{\langle
\phi,\phi \rangle_{Pet}}=\frac{1}{|S_\pi|}\sL(\frac{1}{2},\pi)
\prod_{v}\frac{\alpha_v^\nat(\phi_v,\phi_v)}{\pair{\phi_v,\phi_v}_v},
\end{equation}
where $S_\pi$ is a finite elementary $2$-group: the component group associated to the L-parameter of $\pi=\pi_n\otimes \pi_{n+1}$.
\end{conj}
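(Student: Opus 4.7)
The plan is to prove the formula by comparing two Jacquet--Rallis relative trace formulas (RTFs). The first is the unitary RTF on $G=U(W)\times U(V)$ relative to $H\times H$: its spectral side decomposes as a sum of global spherical characters $I_\pi(f)$ which, by construction, isolate the period $|\sP(\phi)|^2$ via the $H\times H$ bi-invariant integral. The second is the general linear RTF on $G'=\Res_{E/F}(\GL_n\times \GL_{n+1})$ relative to the Galois-fixed subgroup on one side and a mirabolic/Flicker--Rallis period on the other: after the Rankin--Selberg unfolding, its spherical characters $J_\Pi(f')$ involve exactly the central value $L(1/2,\pi_{n,E}\times \pi_{n+1,E})$. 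Combining Yun's fundamental lemma with the smooth transfer of test functions established in \cite{Zh2}, the geometric sides of the two RTFs coincide on matching pairs $(f,f')$.

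The second step is to isolate the contribution of a single cuspidal $\pi$. I impose auxiliary local hypotheses (for example, supercuspidality at a split place together with test functions whose orbital integrals vanish off the regular semisimple locus) to kill both the continuous spectrum and every other representation in the discrete spectrum that could contribute. What remains after pairing both RTFs against these test functions is a single global spherical character identity
\[
I_\pi(f) \;=\; \kappa \cdot J_\Pi(f'),
\]
where $\Pi$ is the standard base change of $\pi$ to $G'(\BA)$ and $\kappa$ collects the global normalization constants: the Tamagawa measure comparison, the factor $\Delta_{n+1}$, and $|S_\pi|^{-1}$.

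The third step is to factorize both spherical characters as Euler products. Unfolding the $H$-period identifies the local factor of $I_\pi(f)$ at $v$ as (a pairing against) the integral of matrix coefficients $\alpha_v(\phi_v,\phi_v)$, while the Jacquet--Piatetski-Shapiro--Shalika computation identifies the local factor of $J_\Pi(f')$ at an unramified place with $\sL(1/2,\pi_v)$. Together with the second unramified property of $\alpha_v$ recalled above, this reduces Conjecture \ref{conj IIH} to a purely local identity at each place $v$: the RTF-spherical character equals $\alpha_v^\nat(\phi_v,\phi_v)/\pair{\phi_v,\phi_v}_v$ up to the factor explicitly predicted by the formula.

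The principal obstacle is precisely this local spherical character identity at the ramified and archimedean places. I expect to establish it by a local character expansion near the identity of the RTF-spherical character, matched against the analogous expansion of the matrix-coefficient integral defining $\alpha_v^\nat$; this reduces the identity to a comparison of regular unipotent orbital integrals on the two sides, which is the technical device flagged by the keywords. Finally, to attribute the contribution to the correct member of the Vogan L-packet and to interpret $|S_\pi|$ as the correct counting factor, one must invoke the expected properties of L-packets and a part of the local Gan--Gross--Prasad multiplicity-one conjecture; these are the residual hypotheses stated in the abstract.
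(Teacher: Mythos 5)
Your proposal captures the paper's architecture at a high level: the Jacquet--Rallis RTF comparison, the fundamental lemma plus smooth transfer, isolating the cuspidal contribution assuming supercuspidality at a split place together with L-packet and local GGP hypotheses, Euler product factorization on the $\GL$ side, and a reduction to a local spherical character identity. Steps one through three correctly mirror \S\S3--4 of the paper, though step three compresses what is actually three separate explicit decompositions with determined volume and residue constants — Jacquet--Shalika's inner product (Proposition \ref{prop inner prod}), the Flicker--Rallis period (Proposition \ref{prop FR}), and the Rankin--Selberg period (Proposition \ref{prop RS}) — into a single appeal to JPSS; it is from those constants, not from the Rankin--Selberg unfolding alone, that $|S_\pi|$ and the $L(1,\eta)$ factors emerge.

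The genuine gap is in your fourth paragraph, and it matters. You propose to prove the local identity by a "local character expansion near the identity of the RTF-spherical character, matched against the analogous expansion of the matrix-coefficient integral defining $\alpha_v^\nat$," as if a symmetric character expansion existed on the unitary side. It does not. The paper obtains a genuine truncated local expansion only on the $\GL'$ side (Theorem \ref{thm germ gl}), via a class of admissible test functions built so that all nilpotent orbital integrals except the regular one $\xi_-$ vanish. On the unitary side the argument requires an a priori representation of the local distribution $J_{\pi_v}$ as an orbital integral of an explicit function $\Phi$ with $\Phi(0)=1$ near the origin (Hypothesis $(\star)$ in \S9), and then transports the $\GL$-side expansion across via the local relative trace formula on the Lie algebra and the compatibility of Fourier transform with smooth transfer (Theorems \ref{thm comp F}, \ref{thm local TF}). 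Hypothesis $(\star)$ is verified only when $H(F_v)$ is compact or $\pi_v$ is supercuspidal (Proposition \ref{prop hypo}), by truncating a compactly supported matrix coefficient; this is exactly why condition (ii) appears in Theorem \ref{thm main}. Your sketch does not see this restriction and would overclaim even the conditional result.

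A related gap: the entire expansion argument is $p$-adic. At archimedean places the paper either assumes all are split (case (1)), where the local identity reduces instead to a Whittaker-model computation using the Lapid--Mao functional equation (Proposition \ref{prop a=a'}), or accepts an undetermined constant $c_{\pi_\infty}$ in the totally definite case (\S5). A proposal that does not distinguish archimedean from finite places will not reproduce Theorem \ref{thm main}, let alone the full Conjecture \ref{conj IIH}.
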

\begin{remark}\label{rem IIH}
The right hand side of the conjectural formula is insensitive to the definition of local L-factors at the finitely many bad places, as long as we choose the same definition in $\sL(s,\pi_v)$ and in the local canonical invariant form $\alpha_v^\nat$. 
\end{remark}

The conjectural formula
of this kind goes back to the celebrated work of Waldspurger (\cite{W1}) for the central values of L-functions of $\GL(2)$  (more or less
equivalent to the case $U(1)\times U(2)$ in the unitary setting). An arithmetic geometric version generalizing the formula of Gross--Zagier and S. Zhang (\cite{GZ},\cite{YZZ}) is also formulated in \cite[\S27]{GGP}, \cite{Z10} and \cite[\S3.2]{Zh1}. More explicit formulae were obtained by Gross (\cite{G87}), S. Zhang (\cite{Z04}) and many others. The formula of Waldspurger and the formula of Gross--Zagier and S. Zhang (\cite{GZ}, \cite{Z01},\cite{Z01b},\cite{YZZ}) play an important role in the spectacular development in application to the Birch and Swinnerton-Dyer conjecture for elliptic curves in the past thirty years. More recently, Y. Tian (\cite{T}) applies both formulae together to a classical Diophantine question and proves the infinitudes of square-free congruent numbers with an arbitrary number of prime factors.

The refined global conjecture for $\SO(3)\times \SO(4)$, concerning  ``the triple product L-function", was established after 
the work by Garrett \cite{Gar}, Piatetski-Shapiro--Rallis, Harris--Kudla \cite{HK}, Gross--Kudla, Watson \cite{Wat}, and Ichino \cite{I}. Recently Gan and Ichino (\cite{GI})
established some new cases for $\SO(4)\times \SO(5)$ (for endoscopic L-packets on $\SO(5)$). All of the known cases utilize
the theta correspondence in an ingenious way. 

The Waldspurger formula was also reproved by Jacquet and Jacquet--Chen (\cite{CJ}) using relative trace
formulae. 
\subsection{Main results.}
We now state our main result. 
Throughout this paper, we will assume two hypothesis, denoted by {\bf RH(I)} and {\bf RH(II)}. 

The first one is about some expected properties of the (global and local) L-packets of unitary groups (for all Hermitian spaces $W,V$), analogous to the work of Arthur on orthogonal groups (cf. \cite{Mok}, \cite{Wh} for the progress towards the unitary group case).

\begin{itemize} 
\item[] {\bf RH(I)}: Let $E/F$ be a quadratic extension of number fields. For $i=1,2$, let $V_i$  be a Hermitian space of dimension $N$, $U(V_i)$ the unitary group, $\pi_i$ an irreducible cuspidal automorphic representation of $U(V_i)$.  We further assume that at one place $v_0$ split in $E/F$, the representation $\pi_{i,v_0}$ ($i=1,2$) is supercuspidal. Then we have:
\begin{itemize} 
\item[(i)]
The (weak) base change $\pi_{i,E}$ to $\Res_{E/F}\GL(N)$ exists and $\pi_{i,E}$ is cuspidal with unitary central character, the Asai L-function $L(s,\pi_E, As^{(-1)^{N-1}})$ (cf. Remark \ref{rem def L As}) has a simple pole at $s=1$.
\item[(ii)]The multiplicity of $\pi_i$ in $L^2([U(V_i)])$ is one. 
\item[(iii)]
 Assume that $\pi_1$ and $\pi_2$ are nearly equivalent (i.e., $\pi_{1,v}\simeq \pi_{2,v}$ with respect to fixed isomorphisms $V_{1,v}\simeq V_{2,v}$,  for all but finitely many places $v$ of $F$). Then for every place $v$ of $F$, $\pi_{1,v}$ and $\pi_{2,v}$ are in the same local Vogan L-packet and  this local Vogan L-packet is generic.\end{itemize}
\end{itemize}
The second one is a part of the local Gan-Gross-Prasad conjecture in the unitary group case .
\begin{itemize}
\item[] {\bf RH(II)}: Let $E/F$ be a quadratic extension of local fields, and $(W_0,V_0)$ a pair of Hermitian spaces of dimension $n$ and $n+1$. Then in a generic local Vogan L-packet $\Pi_\psi$ of $U(W_0)\times U(V_0)$, there is at most  one representation $\pi$ of a relevant pure inner form $G=U(W)\times U(V)$  that admits a nonzero invariant linear form, i.e.:
$$
\Hom_{H}(\pi,\BC)\neq 0.
$$
\end{itemize}
We refer to  \cite[Conj. 17.1]{GGP} and \cite[\S9]{GGP} for the detailed description. A proof (of an even stronger version) for tempered L-packets for $p$-adic fields is recently posted by Beuzart-Plessis (\cite[Theorem 1]{BP}); the local conjecture in the orthogonal case for $p$-adic fields has earlier been proved by Waldspurger.

We need the fundamental lemma for the Jacquet--Rallis relative trace formulae. In \cite{Y} and its appendix, this fundamental lemma is proved when the residue characteristic $p\geq c(n)$ for a constant $c(n)$ depending only on $n$ (cf. Theorem \ref{thm FL}). 
\begin{thm}
\label{thm main} Let
$\pi$ be a tempered (i.e., $\pi_v$ is tempered for every place $v$) cuspidal automorphic representation of $G(\BA)$. Assume that the running hypothesis {\bf RH(I)} and {\bf RH(II)} holds. Denote by $\Sigma$ the finite set of non-split places $v$ of $F$ where $\pi_v$ is not unramified.
Assume that
\begin{itemize}
\item[(i)] There exists a split place $v_0$ such that the local component $\pi_{v_0}$ is supercuspidal.
\item[(ii)]  If $v\in \Sigma$, then either $H_v$ is compact or  $\pi_v$ is supercuspidal.
\item[(iii)] The set $\Sigma$ contains all non-split $v$ whose residue characteristic is smaller than the constant $c(n)$.
\end{itemize}
Then we have the following two cases:
\begin{itemize}
\item[(1)] (the totally split case) when every archimedean place $v$ of $F$ is split in the extension $E/F$ (i.e., $G_{F_\infty}\simeq (\GL_{n}\times \GL_{n+1})_{F_\infty}$), we have 
\begin{align*}\frac{|\sP(\phi)|^2}{\langle
\phi,\phi \rangle_{Pet}}=2^{-2}\sL\left(\frac{1}{2},\pi\right)
\prod_{v}\frac{\alpha^{\nat}_v(\phi_v,\phi_v)}{\pair{\phi_v,\phi_v}_v}.
\end{align*}
\item[(2)](the totally definite case) if $G(F_\infty)$ is compact where $F_\infty=\prod_{v|\infty}F_v$, then there is a non-zero constant $c_{\pi_\infty}$ depending only on the archimedean component $\pi_\infty$ of $\pi$ such that:
\begin{align*}\frac{|\sP(\phi)|^2}{\langle
\phi,\phi \rangle_{Pet}}=c_{\pi_\infty}2^{-2}\sL\left(\frac{1}{2},\pi\right)
\prod_{v}\frac{\alpha^{\nat}_v(\phi_v,\phi_v)}{\pair{\phi_v,\phi_v}_v}.
\end{align*}
   
\end{itemize}
\end{thm}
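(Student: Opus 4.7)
The approach is to deploy the Jacquet--Rallis relative trace formula (RTF). One compares a distribution on the unitary group $G = U(W)\times U(V)$ defined by the $H$-period $\sP$ paired against itself with respect to the Petersson inner product, to the parallel distribution on $G' = \Res_{E/F}(\GL_n \times \GL_{n+1})$ built from the Rankin--Selberg period (producing $L(1/2,\pi_E)$) and a Flicker--Rallis period (detecting the image of base change from unitary groups). Hypothesis (i), the supercuspidal component at a split place $v_0$, makes the RTF \emph{simple}: it kills the continuous spectrum on both sides and restricts the spectral expansion to a sum of cuspidal spherical characters. Combined with the fundamental lemma (available by hypothesis (iii) and the cited work of Yun) and the smooth transfer of orbital integrals, this produces an equality of spectral distributions for matching test functions $f \leftrightarrow f'$.

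The next step is to isolate a single representation from the spectral identity
\[
\sum_\pi J_\pi(f) \;=\; \sum_{\Pi} I_\Pi(f'),
\]
where $J_\pi$ and $I_\Pi$ are the global spherical characters on the unitary and linear sides respectively. Hypothesis \textbf{RH(I)} ensures that nearly equivalent cuspidal representations lie in the same global Vogan packet with cuspidal Asai-pole base change; hypothesis \textbf{RH(II)} guarantees at most one member of a packet admits a nonzero $H$-invariant form; and the supercuspidal component at $v_0$ separates packets via test function support. Together these collapse the global identity to $J_\pi(f) = I_\Pi(f')$ for the given $\pi$ and its base change $\Pi = \pi_E$.

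It remains to factor and match locally. Both spherical characters decompose as products over places. At split and unramified non-split places, the local matching $J_{\pi_v} = I_{\Pi_v}$ follows from the fundamental lemma together with the unramified calculation that produces $\sL(1/2,\pi_v)$ and the canonical form $\alpha_v^\nat$. At non-split ramified $v \in \Sigma$, hypothesis (ii) makes $\pi_v$ supercuspidal or $H_v$ compact, so the local spherical character can be computed in closed form in terms of $\alpha_v^\nat$ by a local character expansion and the transfer of regular and regular unipotent orbital integrals. Multiplying the local identities and reinterpreting $J_\pi(f)$ globally as $|\sP(\phi)|^2/\pair{\phi,\phi}_{Pet}$ yields the claimed formula; the factor $2^{-2}$ is $1/|S_\pi|$ with $|S_\pi|=4$, reflecting that both $\pi_{n,E}$ and $\pi_{n+1,E}$ are cuspidal by \textbf{RH(I)}(i), each contributing a $\BZ/2$ to the component group. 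The main obstacle is archimedean: in the totally split case (1) each archimedean $v$ behaves as a split place and the identity is transparent, giving the clean formula; in the totally definite case (2) the compact group $G(F_\infty)$ carries finite-dimensional representations whose archimedean spherical character is not pinned down explicitly, leaving the undetermined constant $c_{\pi_\infty}$ in the final formula.
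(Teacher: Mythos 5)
Your proposal reproduces the paper's strategy faithfully: the simple RTF is forced by the split supercuspidal component, \textbf{RH(I)}/\textbf{RH(II)} together with multiplicity one isolate the single term $J_\pi(f)=2^{-2}L(1,\eta)^{-2}I_{\pi_E}(f')$, the linear-side spherical character factors explicitly (Proposition~\ref{prop char GL}), the fundamental lemma handles non-split unramified places, and a truncated local character expansion (the content of \S7--\S9) handles $v\in\Sigma$ under hypothesis (ii); the factor $2^{-2}=|S_\pi|^{-1}$ is likewise correctly sourced from cuspidality of both $\pi_{n,E}$ and $\pi_{n+1,E}$.

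There is, however, one genuine gap: you assert that at split places the local identity is ``transparent'' and lump those places with the unramified ones handled by the fundamental lemma. Neither is accurate. At a split $v$ the groups $G(F_v)$ and $G'(F_v)$ are isomorphic and the orbital-integral transfer is trivial, but the two spherical characters $J_{\pi_v}$ and $I_{\Pi_v}$ are built from genuinely different linear forms: $\alpha_v$ (an absolutely convergent integral of a matrix coefficient over $\GL_n(F_v)$) versus a product of local Rankin--Selberg and Flicker--Rallis periods. Matching them reduces to the nontrivial local Ichino--Ikeda identity $\alpha_v(W,W')=\lambda_v(W)\,\overline{\lambda_v(W')}$ for tempered $\pi_v$, which is Proposition~\ref{prop a=a'} in the paper and rests on the Lapid--Mao Fourier analysis of $\CF_{W,W'}\in L^2(N^{ab})$ (Lemma~\ref{lem Wald}). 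This ingredient is essential precisely in the totally split case (1), since it is what evaluates the archimedean factor and yields the clean constant with no $c_{\pi_\infty}$; without it, the archimedean split places would carry the same indeterminacy as the definite case. The fundamental lemma plays no role at split places.
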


\begin{remark}
Under our assumptions $(i)$, the base change of $\pi_E$ of $\pi$ to the general linear group is cuspidal and hence
$$
|S_\pi|=|S_{\pi_n}|\cdot |S_{\pi_{n+1}}|=4.
$$ 

\end{remark}

\begin{remark}
The condition $(i)$ is due to the fact that currently we do not have a complete spectral decomposition of the Jacquet--Rallis relative trace formulae. The condition $(ii)$ seems to be only a technical restriction for our approach and will be discussed in \S9. We have the the restriction for the archimedean place because:  1) we have not proved the existence of smooth transfer at archimedean places (cf. \S5); 2) it is probably a more technical problem to evaluate the constant $c_{\pi_\infty}$.
\end{remark}

\begin{remark}
For a non-archimedean place $v$, the unitary group $H_v$ is possibly compact only when $n\leq 2$. When $n=1$, $H_v$ is always compact for a non-split $v$.  In this case, our proof is essentially the same as the one in \cite{CJ}. 
\end{remark}

We also make a local conjecture (Conjecture \ref{conj local}) for each place $v$. Together with a suitable spectral decomposition of the relative trace formulae, this conjecture would imply Conjecture \ref{conj IIH} for those $\pi$ with cuspidal base change $\pi_E$.

\subsection{Some applications.}

 We have the following application to the positivity of some central L-values. The positivity is also predicted by the grand Riemann hypothesis.  Lapid has obtained a more general result for Rankin--Selberg central L-values by a different method (\cite{L2}, cf. also \cite{L1} for the positivity of the central value of L-function of symplectic type). 

\begin{thm}
Assume that $\pi$ satisfies the conditions of Theorem \ref{thm main} and $E/F$ is split at all archimedean places. 
Then we have
$$
L(\frac{1}{2},\pi_E)\geq 0.
$$
\end{thm}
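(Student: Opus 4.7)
The approach is to read off positivity from the exact formula in Theorem~\ref{thm main}. Since $E/F$ is split at every archimedean place, case~(1) of Theorem~\ref{thm main} applies: for any decomposable $\phi = \otimes\phi_v \in \pi$,
\begin{align*}
\frac{|\sP(\phi)|^2}{\langle \phi,\phi\rangle_{Pet}} \;=\; \frac{1}{4}\,\sL\!\left(\tfrac{1}{2},\pi\right)\prod_{v}\frac{\alpha_v^\nat(\phi_v,\phi_v)}{\langle \phi_v,\phi_v\rangle_v}.
\end{align*}
The left-hand side is manifestly non-negative, and the goal is to rearrange the right-hand side as a non-negative multiple of $L(1/2,\pi_E)$.

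If $\sP \equiv 0$ on $\pi$, then the qualitative global Gan--Gross--Prasad equivalence proved in \cite{Zh2} (valid under our running hypotheses) forces $L(1/2,\pi_E) = 0$ and the theorem holds with equality. Otherwise $\Hom_{H_v}(\pi_v,\BC)\neq 0$ at every $v$, and by property~(3) of the Ichino--Ikeda form one may choose $\phi$ with $\sP(\phi)\neq 0$ and $\alpha_v(\phi_v,\phi_v)>0$ at every place $v$ in a prescribed finite set $S$ containing the archimedean places, the distinguished split place $v_0$ and the ramified non-split set $\Sigma$. For $v\notin S$ the spherical identity gives $\alpha_v^\nat(\phi_v,\phi_v)/\langle \phi_v,\phi_v\rangle_v = \vol(H(\CO_v))>0$; for $v\in S$, substituting $\alpha_v^\nat = \alpha_v/\sL(1/2,\pi_v)$ and using $\sL(1/2,\pi) = \sL^S(1/2,\pi)\prod_{v\in S}\sL(1/2,\pi_v)$ rewrites the identity as
\begin{align*}
\frac{|\sP(\phi)|^2}{\langle \phi,\phi\rangle_{Pet}} \;=\; \frac{1}{4}\,\sL^S\!\left(\tfrac{1}{2},\pi\right)\prod_{v\notin S}\vol(H(\CO_v))\prod_{v\in S}\frac{\alpha_v(\phi_v,\phi_v)}{\langle \phi_v,\phi_v\rangle_v}.
\end{align*}
Every factor on the right besides $\sL^S(1/2,\pi)$ is strictly positive, so $\sL^S(1/2,\pi)>0$. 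Combined with $\Delta_{n+1}^S>0$ and $L^S(1,\pi,\mathrm{Ad})>0$ (Shahidi's positivity at $s=1$ for generic tempered L-functions), this yields $L^S(1/2,\pi_E)>0$.

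The main remaining task---and the place where the bulk of sign analysis sits---is to upgrade from the partial to the complete L-function by showing $\prod_{v\in S}L(1/2,\pi_{E,v})>0$. At every split place $v\in S$, which by assumption includes all archimedean places and $v_0$, the base change $\pi_{E,v}$ is conjugate self-dual, so the local Rankin--Selberg factor decomposes as $L(s,\pi_{E,v}) = L(s,\sigma_n\times\sigma_{n+1})\,\overline{L(\bar s,\sigma_n\times\sigma_{n+1})}$ and equals $|L(1/2,\sigma_n\times\sigma_{n+1})|^2\geq 0$ at $s=1/2$. At each remaining non-split $v\in S$, condition~(ii) of Theorem~\ref{thm main} forces $\pi_v$ to be supercuspidal or $H_v$ to be compact: in the supercuspidal subcase the Rankin--Selberg factor between two supercuspidals of $\GL_n(E_v)$ and $\GL_{n+1}(E_v)$ of different dimensions is trivially $1$, while the compact-$H_v$ subcase arises only for $n\leq 2$ and can be treated by direct computation. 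Putting these local sign inputs together with the partial inequality completes the proof.
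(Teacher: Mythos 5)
Your argument has a genuine gap in the case $\sP\equiv 0$ on $\pi$. The qualitative global Gan--Gross--Prasad equivalence from \cite{Zh2} says that $L(\tfrac12,\pi_E)\neq 0$ is equivalent to the non-vanishing of $\sP$ on \emph{some} member of the Vogan $L$-packet of $\pi$ (possibly on a different pure inner form $G_W$), not the non-vanishing of $\sP$ on this specific $\pi$. So ``$\sP\equiv 0$ on $\pi$'' does \emph{not} force $L(\tfrac12,\pi_E)=0$: the period can vanish identically on $\pi$ because a single local Hom space $\Hom_{H_v}(\pi_v,\BC)$ vanishes, while $L(\tfrac12,\pi_E)$ is strictly positive. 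In that scenario both sides of the formula in Theorem~\ref{thm main} are $0$ for every $\phi$, and the identity carries no information about the sign of $L$. Your dichotomy therefore misses exactly the case where the theorem has content and the formula, applied to $\pi$, is powerless.

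The correct move — and what the paper does — is the reverse reduction: assume $L(\tfrac12,\pi_E)\neq 0$, invoke \cite{Zh2} to produce $\pi'$ in the Vogan $L$-packet of $\pi$ with $\sP|_{\pi'}\neq 0$, and then \emph{apply the formula to $\pi'$} (which forces every $\Hom_{H(F_v)}(\pi'_v,\BC)\neq 0$, hence every $\alpha_v$ is nonzero and positive-definite). From there, the argument concludes in one line: the left side is $\geq 0$ and can be made strictly positive, the $\alpha^\nat_v$ are nonnegative, and the remaining factors of $\sL(\tfrac12,\pi)$ — namely $\Delta_{n+1}$ and $L(1,\pi,Ad)^{-1}$ — are positive, so $L(\tfrac12,\pi_E)>0$. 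Your subsequent analysis of the local Rankin--Selberg factors at split and supercuspidal places, and the factorization $\sL(\tfrac12,\pi)=\sL^S(\tfrac12,\pi)\prod_{v\in S}\sL(\tfrac12,\pi_v)$, is not wrong but is considerably more elaborate than needed and does not repair the first case: without the passage to $\pi'$, none of that machinery engages when a local Hom space for $\pi$ vanishes.
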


\begin{proof}It suffices to show this when $L(\frac{1}{2},\pi_E)\neq 0$. Then by \cite{Zh2}, there exists $\pi'$ in the same Vogan L-packet of $\pi$ such that the period $\sP$ on $\pi'$ does not vanish. By replacing $\pi$ by $\pi'$, we may assume that  the space $\Hom_{U(W)(F_v)}(\pi_v,\BC)$ does not vanish for every $v$. Then the local terms $\alpha'_v$ do not vanish. Now the positivity follows from the fact that the $\alpha'_v$ are 
all positive definite, and the other L-values appearing in $\sL(1/2,\pi_E)$ except $L(\frac{1}{2},\pi_E)$ are all positive. 
\end{proof}
\begin{remark}
 As another application,
M. Harris showed that Conjecture \ref{conj IIH} would imply the algebraicity of the L-value $\sL(1/2,\pi)$ up to some simple constant when $G(F_\infty)$ is compact and $\alpha_\infty\neq 0$ (cf. \cite[\S 4.1]{HM}). 
\end{remark}
\subsection{Outline of proof.}
We now sketch the main ideas of the proof, following the strategy of Jacquet and Rallis (\cite{JR}). First of all, by the multiplicity one result (\cite{AGRS},\cite{SZ}), we know a priori that there is a constant denoted by $\sC_\pi$ depending on $\pi$ such that for all decomposable $\phi,\varphi\in \pi$:
\begin{align}\label{eqn 1}
\sP(\phi)\ov{\sP(\varphi)}=\sC_\pi \prod_{v}\alpha_v^\nat(\phi_v,\varphi_v).
\end{align}
Instead of working with an individual $\phi\in \pi$ as in the conjecture, we switch our point of view to a distribution attached to $\pi$.

\begin{defn}\label{def global char}
We define the {\em (global) spherical character} $J_\pi$ associated to a cuspidal automorphic representation $\pi$ as the distribution 
\begin{align}
J_\pi(f):=\sum_{\phi} \sP(\pi(f)\phi)\ov{\sP(\phi)},\quad f\in \sC^\infty_c(G(\BA)),
\end{align}
where the sum of $\phi$ is over an orthonormal basis of $\pi$ (with respect to the Petersson inner product).
\end{defn}
The name ``spherical character" is suggested by many early analogous distributions (cf. \cite{RR} etc.). 
We also have a local counterpart: 
 \begin{defn}\label{def local char}
 
 We define  the {\em (local) spherical character} $J^\nat_{\pi_v}$ associated to $\pi_v$ as the distribution:
\begin{align}\label{def local char G}
J^\nat_{\pi_v}(f_v):=\sum_{\phi_v} \alpha^\nat_v(\pi_v(f_v)\phi_v,\phi_v),\quad f_v\in \sC^\infty_c(G(F_v)),
\end{align}
where the sum of $\phi_v$ is over an orthonormal basis of $\pi_v$. Similarly we define an unnormalized one $J_{\pi_v}$:
\begin{align}\label{def local char G unnorm}
J_{\pi_v}(f_v):=\sum_{\phi_v} \alpha_v(\pi_v(f_v)\phi_v,\phi_v).\end{align}

\end{defn}

 By (\ref{eqn 1}), we clearly have for decomposable $f=\bigotimes_{v} f_v$:
\begin{align}
J_\pi(f)=\sC_\pi \prod_{v}J^\nat_{\pi_v}(f_v),
\end{align}
where in the product in the right hand side, for a given $\pi$ and $f$, the local term $J^\nat_{\pi_v}(f_v)=1$ for all but finitely many $v$.  Then we have the following consequence of Conjecture \ref{conj IIH}.
\begin{conj}\label{conj dis}Assume that $\pi$ is a {\em tempered} cuspidal automorphic representation.
For all $f=\bigotimes_{v} f_v\in \sC^\infty_c(G(\BA))$, we have 
$$
J_\pi(f)=\frac{1}{|S_\pi|}\sL(1/2,\pi) \prod_{v}J^\nat_{\pi_v}(f_v).
$$
\end{conj}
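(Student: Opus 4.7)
The plan is to deduce Conjecture \ref{conj dis} directly from Conjecture \ref{conj IIH} together with the multiplicity-one factorization (\ref{eqn 1}). First, I would pin down the global constant $\sC_\pi$: setting $\varphi=\phi$ in (\ref{eqn 1}) for any decomposable $\phi=\otimes_v\phi_v$ yields $|\sP(\phi)|^2=\sC_\pi\prod_v\alpha_v^\nat(\phi_v,\phi_v)$, which after dividing by the factorized Petersson norm $\langle\phi,\phi\rangle_{Pet}=\prod_v\langle\phi_v,\phi_v\rangle_v$ must match the identity asserted by Conjecture \ref{conj IIH} term for term. This forces
\[
\sC_\pi=\frac{1}{|S_\pi|}\sL\bigl(\tfrac12,\pi\bigr).
\]

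Next, I would establish that $J_\pi$ factorizes as a product of the $J^\nat_{\pi_v}$. Choose a pure-tensor orthonormal basis of $\pi$: fix orthonormal bases $\{\phi_{v,i}\}_i$ of each $\pi_v$ with respect to $\langle\cdot,\cdot\rangle_v$, and take $\{\phi_I=\otimes_v\phi_{v,i_v}\}_I$ as an orthonormal basis of $\pi$. For decomposable $f=\otimes_v f_v$, one has $\pi(f)\phi_I=\otimes_v\pi_v(f_v)\phi_{v,i_v}$, and applying (\ref{eqn 1}) to the pair $(\pi(f)\phi_I,\phi_I)$ gives
\[
\sP(\pi(f)\phi_I)\,\ov{\sP(\phi_I)}=\sC_\pi\prod_v\alpha_v^\nat\bigl(\pi_v(f_v)\phi_{v,i_v},\phi_{v,i_v}\bigr).
\]
Summing over the multi-index $I$ and exchanging the sum with the product separates variables:
\[
J_\pi(f)=\sC_\pi\prod_v\Bigl(\sum_{i_v}\alpha_v^\nat(\pi_v(f_v)\phi_{v,i_v},\phi_{v,i_v})\Bigr)=\sC_\pi\prod_v J^\nat_{\pi_v}(f_v),
\]
which, combined with the value of $\sC_\pi$ above, is exactly the conjecture.

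The main subtlety is justifying the exchange of the sum over the infinite multi-index $I$ with the product over places. At almost all $v$, $f_v$ is the characteristic function of a hyperspecial maximal compact, so $\pi_v(f_v)$ is the projector onto the line of $K_v$-fixed vectors and only one term in the inner sum survives; this collapses the tensor-product sum to a finite one outside a finite set $S$. Inside $S$, $\pi(f)$ maps $\pi$ into a finite-dimensional $K$-isotypic subspace for a suitable compact open $K\subset G(\BA)$, so the sum over $I$ is again effectively finite and Fubini applies. The most delicate technical aspect, to my mind, is treating the local characters $J^\nat_{\pi_v}$ at archimedean places as honest distributions rather than as formal sums; this should follow from the fact that for $f_v\in\sC^\infty_c(G(F_v))$ acting on a tempered $\pi_v$, the operator $\pi_v(f_v)$ is of trace class on the relevant Hilbert completion and the matrix-coefficient integral defining $\alpha_v^\nat$ converges absolutely, so the reordering of sums involved in passing from (\ref{def local char G}) to the factorized identity above is legitimate.
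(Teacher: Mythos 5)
Your proposal is correct and follows exactly the route the paper takes, if only implicitly: the factorization $J_\pi(f)=\sC_\pi\prod_v J^\nat_{\pi_v}(f_v)$ is derived immediately above Conjecture~\ref{conj dis} from (\ref{eqn 1}), and the identification $\sC_\pi=\frac{1}{|S_\pi|}\sL(1/2,\pi)$ is precisely the direction ``Conjecture~\ref{conj IIH} $\Rightarrow$ Conjecture~\ref{conj dis}'' of Lemma~\ref{lem equiv conj}, which the paper treats as clear (the proof there only handles the converse). Your more careful treatment of the pure-tensor orthonormal basis and the exchange of sum and product is a reasonable elaboration of a step the paper leaves to the reader.
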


\begin{lem}\label{lem equiv conj}
The 
Conjecture \ref{conj dis} is equivalent to Conjecture \ref{conj IIH}.
\end{lem}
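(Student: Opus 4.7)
The plan is to pivot on the factorization (\ref{eqn 1})
$$
\sP(\phi)\,\overline{\sP(\varphi)} = \sC_\pi \prod_v \alpha_v^\nat(\phi_v,\varphi_v),
$$
which holds unconditionally thanks to the multiplicity one theorems of \cite{AGRS} and \cite{SZ}, and to show that both Conjecture \ref{conj IIH} and Conjecture \ref{conj dis} are equivalent to the single scalar identity $\sC_\pi = |S_\pi|^{-1}\,\sL(1/2,\pi)$.

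First, I would specialize (\ref{eqn 1}) to $\varphi = \phi$ and divide by $\langle \phi,\phi \rangle_{Pet} = \prod_v \langle \phi_v,\phi_v\rangle_v$ to obtain
$$
\frac{|\sP(\phi)|^2}{\langle \phi,\phi\rangle_{Pet}} = \sC_\pi \prod_v \frac{\alpha_v^\nat(\phi_v,\phi_v)}{\langle \phi_v,\phi_v \rangle_v},
$$
which shows that Conjecture \ref{conj IIH} is equivalent to the scalar identity above. For the distributional side, I pick compatible local orthonormal bases $\{\phi_v^{(i)}\}$ so that the pure tensors $\phi = \otimes_v \phi_v^{(i_v)}$ form an orthonormal basis of $\pi$, apply (\ref{eqn 1}) to each term $\sP(\pi(f)\phi)\,\overline{\sP(\phi)}$ appearing in Definition \ref{def global char}, and interchange sum and product to get
$$
J_\pi(f) = \sC_\pi \prod_v \sum_i \alpha_v^\nat\bigl(\pi_v(f_v)\phi_v^{(i)},\phi_v^{(i)}\bigr) = \sC_\pi \prod_v J_{\pi_v}^\nat(f_v)
$$
for every decomposable $f$; the interchange is legitimate because for almost all $v$ the test function $f_v$ lies in a hyperspecial Hecke algebra and only the $K_v$-fixed line contributes. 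Thus Conjecture \ref{conj dis} is equivalent to the same scalar identity, and the lemma follows.

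The only point requiring care is the possibly degenerate case. If $\alpha_v^\nat$ vanishes identically at some place $v$, then (\ref{eqn 1}) forces $\sP \equiv 0$ and both sides of both conjectures vanish, so the equivalence is trivial. Otherwise, the positive definiteness of the tempered local form $\alpha_v^\nat$ together with property $(3)$ furnishes vectors $\phi_v$ with $\alpha_v^\nat(\phi_v,\phi_v) > 0$; choosing $f_v$ so that $\pi_v(f_v)$ is the orthogonal projection onto $\BC\phi_v$ yields $J_{\pi_v}^\nat(f_v) = \alpha_v^\nat(\phi_v,\phi_v) > 0$, so there exist $f$ with $\prod_v J_{\pi_v}^\nat(f_v) \neq 0$ and Conjecture \ref{conj dis} does pin down $\sC_\pi$ rather than reducing to a vacuous $0=0$. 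I do not anticipate any substantial obstacle; the content of the lemma is essentially bookkeeping together with the observation that the factorization (\ref{eqn 1}) determines one and the same invariant of $\pi$ from either side.
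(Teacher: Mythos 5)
Your proposal is correct and follows the same route as the paper: both pivot on the factorization (\ref{eqn 1}) and the derived identity $J_\pi(f)=\sC_\pi\prod_v J^\nat_{\pi_v}(f_v)$, and both reduce the equivalence to the scalar identity $\sC_\pi=|S_\pi|^{-1}\sL(1/2,\pi)$ in the non-degenerate case while disposing of the degenerate case (some $\alpha_v\equiv 0$) separately. The paper observes that in the degenerate case Conjecture \ref{conj IIH} holds vacuously and simply notes that $\alpha_v\neq 0$ is equivalent to $J^\nat_{\pi_v}\not\equiv 0$; your version spells out the same non-degeneracy observation via an explicit projector $f_v$, which the paper leaves implicit. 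This is elaboration rather than a different argument.
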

\begin{proof}

It suffices to show that Conjecture \ref{conj dis} implies Conjecture \ref{conj IIH}. To see this, we note that the following are equivalent: {\em $(1)$ $\Hom_{H(F_v)}(\pi_v,\BC)\neq 0$, $(2)$ $\alpha_v\neq 0$, $(3)$ the distribution $J^\nat_{\pi_v}$ does not vanish}. \footnote{It is clear that $(2)$ is equivalent to $(3)$. The equivalence of $(1)$ and $(2)$ follows from the third property of $\alpha_v$ listed earlier.} Hence, Conjecture \ref{conj IIH} holds if for some $v$ the linear form $\alpha_v$ vanishes. Now assume that for all $v$, the linear forms $\alpha_v$ do not vanish. Then the distributions $J_{\pi_v}$ do not vanish. Then by 
Conjecture \ref{conj dis}, the constant $\sC_\pi$ must be $\frac{1}{|S_\pi|}\sL(1/2,\pi)$ which implies Conjecture \ref{conj IIH}. 
 \end{proof}
 
Note that there is a parallel question for the general linear group. This question can essentially be reduced to the celebrated theory of ``Rankin--Selberg convolution" due to Jacquet--Piatetski-Shapiro--Shalika (\cite{JPSS}). The idea of Jacquet and Rallis is to transfer the question from the unitary group to the general linear group via (quadratic) base change. They (\cite{JR}) introduced two relative trace formulae (RTF), one on the unitary group and the other on the general linear group. This is the main tool of this paper and the previous one (\cite{Zh2}).

In the general linear group case, there is a decomposition of a global spherical character into a product of the local ones, analogous to Conjecture \ref{conj dis}. But this time one may prove it without too much difficulty. Hence,
to deduce Conjecture \ref{conj dis}, it suffices to compare the two local spherical characters. Moreover, since we only need to find the constant $\sC_\pi$, we may just choose some special test functions $f$, as long as the local spherical character on the unitary group does not vanish for our choice. Therefore the main innovation of this paper is a formula for the local spherical character evaluated at some special test functions. The formula can be viewed as a truncated {\em local expansion of the local spherical character}, analogous to the local expansion of a character due to Harish-Chandra. The result may be of independent interest in view of local harmonic analysis in the relative setting. 

For comparison, let us recall briefly a result of Harish--Chandra. Let $F$ be a $p$-adic field. We temporarily use the notation $G$ for the $F$-points of a connected reductive group, and $\fkg$ the Lie algebra of $G$. Let $\CN$ be the nilpotent cone of $\fkg$ and $\CN/G$ the set of $G$-conjugacy classes in $\CN$. The set $\CN/G$ is finite. Let $\mu_\CO$ be the nilpotent orbital integral associated to $\CO\in \CN/G$ for a suitable choice of measure.  The exponential map defines a homeomorphism $\exp:\omega\to \Omega$ where $\omega$ ($\Omega$, resp.) is some neighborhood of $0$ in $\fkg$ ($1$ in $G$, resp.).
Let $\pi$ be an irreducible admissible representation of $G$. Then Harish-Chandra showed that there are constants $c_\CO(\pi)$ indexed by $\CO\in \CN/G$ such that when $\Omega$ is sufficiently small, for all $f$ supported in $\Omega$:
\begin{align}
\label{eqn HC char}
\tr(\pi(f))=\sum_{\CO\in \CN/G}c_{\CO}(\pi)\mu_{\CO}(\wh{f_\nat}).
\end{align}
Here $f_\nat$ is the function on $\omega$ via the homeomorphism $\exp$, and $\wh{f_\nat}$ is its Fourier transform. The constants $c_{\CO}(\pi)$ contain important information about $\pi$. For example, there is a distinguished nilpotent conjugacy class, namely the class of $0\in \CN$. If $\pi$ is a discrete series representation, the constant $c_{\{0\}}(\pi)$ is equal to the formal degree of $\pi$ for a suitable choice of Haar measure on $G$.

Now we return to our relative setting. We consider the local spherical character on the general linear group. Let $G':=\Res_{E/F}(\GL_{n}\times \GL_{n+1})$ and let $\Pi$ be an irreducible unitary generic representation of $G'(F)$. Then the local spherical character $I_\Pi$ (cf. (\ref{def local char G'})) defines a distribution on $G'(F)$ with a certain invariance property. These distributions are related to distributions on the $F$-vector space:
$$
\fks_{n+1}=\{X\in M_{n+1}(E)|X+\ov{X}=0\}.
$$ 
Here $X\mapsto \ov{X}$ denotes the Galois involution (entry-wise).
The group $\GL_{n+1}(F)$ acts on $\fks_{n+1}$ by conjugation. We will be interested in the restriction of this action to the subgroup $\GL_{n}(F)$ (as a factor of the Levi of the parabolic of $(n,1)$-type).
We let $\omega$ be a small neighborhood of $0$ in the $F$-vector space $\fks_{n+1}$. Then we have a natural way to pull back a function $f'$ on a small neighborhood of $1$ in $G'$ to a function denoted by $f'_\nat$ on $\omega$ (cf. \S8 for the precise definition). It is tempting to guess that there exists an analogous expansion of $I_\Pi$ in terms of the (relative to $\GL_{n}(F)$-action) unipotent orbital integrals on $\fks_{n+1}$. \footnote{Relative to the $\GL_{n}(F)$-action, an $X\in \fks_{n+1}$ is ``nilpotent" if the closure of its $\GL_{n}(F)$-orbit contains zero.} However, so far there are some difficulties. For example, when $n\geq 2$ there are infinitely many $\GL_n(F)$-nilpotent orbits in $\fks_{n+1}$ and these nilpotent orbital integrals often need to be regularized. 
We then restrict ourselves to a subspace of {\em admissible functions} (cf. Definition \ref{def adm}) supported on a small $\omega$. The precise definition is very technical. We expect that admissible functions have vanishing nilpotent orbital integrals (however, generally not even defined so far), except for one of the two regular unipotent orbits denoted by $\xi_-$. An expansion such as (\ref{eqn HC char}) of $I_\Pi(f)$ would then tell us that there should be only one term left, corresponding to the regular unipotent orbit $\xi_-$. Though it seems challenging to prove something such as (\ref{eqn HC char}) in our setting, we nevertheless manage to establish a truncated version (see Theorem \ref{thm germ gl} for the detail):

\begin{thm} Let $\Pi$ be an irreducible unitary generic representation of $G'(F)$. Then for any small neighborhood  $\omega$ of $0$ in $\fks_{n+1}$, there exists an admissible function $f'\in \sC^\infty_c(G'(F))$  such that $f'_{\nat}$ is supported in $\omega$ and
$$
I_{\Pi}(f')=(\ast)\mu_{\xi_-}(\wh{f'_\nat})\neq 0,
$$
where $(\ast)$ is an explicit non-zero constant depending only on  the central character of $\Pi$.
\end{thm}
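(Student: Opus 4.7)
The plan is to avoid establishing a full Harish--Chandra-style germ expansion of $I_\Pi$ (which the excerpt indicates is problematic for $n\geq 2$), and instead to construct $f'$ explicitly as a localized matrix coefficient of $\Pi$, then verify the identity by direct Whittaker/Rankin--Selberg unfolding.

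First, I would fix a Cayley-type analytic diffeomorphism $\Phi$ identifying a small neighborhood of $0\in\fks_{n+1}$ with a neighborhood of the identity in the symmetric space $S_{n+1}(F)=\{g\in\GL_{n+1}(E):g\bar g=1\}$, equivariant for the $\GL_n(F)$-conjugation action; combined with an analogous chart on the $\GL_n(E)$-factor of $G'$, this defines the pullback map $f'\mapsto f'_\nat$. The admissibility hypothesis on $f'$ (Definition \ref{def adm}) is then reinterpreted as saying that $f'_\nat\in\sC^\infty_c(\fks_{n+1})$ has vanishing $\GL_n(F)$-orbital integrals against every nilpotent orbit other than $\xi_-$.

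Since $\Pi$ is irreducible, unitary and generic, I would fix a Whittaker functional $\lambda$ on $\Pi$ and a test vector $v_0\in\Pi$ with $\lambda(v_0)\neq 0$, and take
$$
f'(g)=\alpha(g)\,\langle \Pi(g)v_0,v_0\rangle,
$$
with $\alpha$ a smooth compactly supported cutoff at the identity, shrunk enough that $\mathrm{supp}(f'_\nat)\subset\omega$ and that $f'$ is admissible. A direct unfolding of the definition of $I_\Pi$ then expresses $I_\Pi(f')$ as a Rankin--Selberg/Flicker--Rallis type integral of $f'_\nat$ paired with two Whittaker coefficients built from $v_0$. The heart of the argument is to match this unfolded integral with $(\ast)\,\mu_{\xi_-}(\wh{f'_\nat})$: I would apply Fourier inversion on the $F$-vector space $\fks_{n+1}$ and invoke a symmetric-space analogue of the Shalika germ, namely that $\mu_{\xi_-}$ is, up to the Fourier transform and an explicit constant depending on the additive character $\psi$, dual to the standard Whittaker integral along the unipotent radical of the mirabolic of $\GL_{n+1}$. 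After Fourier inversion, the paired Whittaker integrals collapse to the single orbital integral $\mu_{\xi_-}(\wh{f'_\nat})$, because admissibility kills contributions from all other nilpotent orbits while localization of $f'$ near the identity kills the non-nilpotent ones. The residual constant $(\ast)$ then depends on $\Pi$ only through the central-character normalization of matrix coefficients at the identity, as asserted; non-vanishing is automatic since the identity expresses $I_\Pi(f')$ as a positive multiple of $|\lambda(v_0)|^2\neq 0$.

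The main obstacle is the Shalika-germ identification. Because $\fks_{n+1}$ carries infinitely many $\GL_n(F)$-nilpotent orbits when $n\geq 2$, and the corresponding orbital integrals typically require regularization, proving that admissibility genuinely suffices to isolate the $\xi_-$ contribution requires a careful analysis of the orbit closure structure, together with uniform control over the vanishing of all competing orbital integrals as the support of $f'_\nat$ shrinks. A secondary technical point is to verify that matrix-coefficient test functions of the above form, after suitable shrinking of $\alpha$, really do satisfy the structural conditions of Definition \ref{def adm} rather than being merely small-supported; this reduces to a Taylor expansion of the matrix coefficient $\langle\Pi(g)v_0,v_0\rangle$ at the identity in the chart $\Phi$.
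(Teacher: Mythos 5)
Your proposal diverges from the paper's proof in a way that leaves a genuine, load-bearing gap.

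First, the paper does not define admissibility of $f'$ as ``$f'_\nat$ has vanishing orbital integrals on every nilpotent orbit other than $\xi_-$.'' It cannot: as the introduction already flags, when $n\geq 2$ there are infinitely many $\GL_n(F)$-nilpotent orbits in $\fks_{n+1}$ and most of those orbital integrals are not even defined without regularization. Admissibility in the paper (Definitions \ref{def dag 11}--\ref{def m-adm} and \ref{def adm}) is instead a concrete factorized/``dagger'' condition: the function decomposes as $f_n^{\varphi_{n-1},\phi_{n-1}}\otimes f_{n+1}^{\phi_n^\vee}$ with the components in explicit spaces whose ``real'' parts are small characteristic functions while the ``imaginary'' parts are highly oscillatory (their Fourier transforms are supported away from the origin). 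Your candidate $f'(g)=\alpha(g)\langle\Pi(g)v_0,v_0\rangle$, with $\alpha$ a cutoff shrunk toward the identity, is close to \emph{constant} near the identity (equal to $\|v_0\|^2$ up to higher-order terms in the chart), which has precisely the wrong Fourier-support profile: the constant function fails the oscillation constraint that is the whole point of the dagger condition. So your ``secondary technical point'' is in fact a decisive obstruction: such functions are not admissible in the paper's sense and you have not proposed any replacement of that hypothesis.

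Second, you acknowledge that the main obstacle is a ``symmetric-space Shalika germ'' relating $\mu_{\xi_-}$ to the Whittaker integral along the mirabolic unipotent radical, and defer it. But that is precisely what is not available here and what the paper is structured to avoid. The paper's substitute is computational: (i) the compactness lemma (Lemma \ref{lem W_n fml}) plus the mollifier propositions (Prop. \ref{prop sm FR}, \ref{prop sm RS}) turn $I_{\Pi,s}(f)$ into a closed iterated integral (Prop. \ref{prop fml I Pi}); (ii) on the Lie-algebra side, Lemma \ref{lem f_nat} identifies $f^\Psi_\nat$ explicitly, and then Lemma \ref{lem fml mu f} carries out a hands-on three-step manipulation (using the regular section from \S6 and the dagger support constraints) to show that $X\mapsto\eta'(\Delta_-(X))O(X,\wh{f^\Psi_\nat})$ is locally constant near $0$ and equals $\eta'(\Delta_-(\xi_-))\mu_{\xi_-}(\wh{f^\Psi_\nat})$; (iii) comparison of (i) and (ii) gives the theorem. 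Your line ``the paired Whittaker integrals collapse to the single orbital integral because admissibility kills contributions from all other nilpotent orbits'' is exactly the unproved germ-type identity. Also, localization of $f'_\nat$ near $0$ does not localize $\wh{f'_\nat}$, so it does not ``kill the non-nilpotent'' contributions either; what the paper proves in Lemma \ref{lem fml mu f} is local constancy of the orbital integral of the \emph{Fourier transform}, a genuinely delicate statement requiring the dagger structure.

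Finally, the non-vanishing claim is also not right as stated. The spherical character $I_\Pi(f')$ after smoothing becomes $\beta_{n+1}(W_{\varphi,\phi,\phi_n})$, a single Flicker--Rallis period of a specific Whittaker vector, not $|\lambda(v_0)|^2$; it is not an a priori positive quantity, and its non-vanishing in the paper is arranged by the explicit evaluation in Prop. \ref{prop fml I Pi}/Lemma \ref{lem fml mu f}, not by positivity.
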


We have a similar result for a local spherical character $J_\pi$ on the unitary group when either $\pi$ is a supercuspidal representation or the group $U(W)$ is compact. See \S9 for more details (Theorem \ref{thm germ u}). Then our main Theorem \ref{thm main} follows from the local comparison of the two spherical characters (cf. \S4 Conjecture \ref{conj local}).  

The proof for the unitary group case seems to be harder and needs the full strength of our previous results in the companion paper \cite{Zh2}. Namely we have to make use of the following results (cf. \S9)
\begin{enumerate}
\item The existence of smooth transfer.
\item Compatibility of smooth transfer with Fourier transform.
\item Local (relative) trace formula on ``Lie algebra".
\end{enumerate}
Note that the proof in \cite{Zh2} of these ingredients are in the reverse order listed here.

\subsection{Structure of this paper.} After fixing some notations in \S2, we review several global periods involving the general linear group in \S3 and deduce the decomposition analogous to Conjecture \ref{conj dis}.  Then in \S4 we recall the Jacquet--Rallis RTF and reduce the question to a comparison of local spherical characters. Then we give the proof of Theorem \ref{thm main} assuming a local result (Theorem \ref{thm local}). In \S5 we deal with the totally definite case (i.e., $G(\BR)$ compact). In \S6 we prepare some (relative) harmonic analysis on Lie algebras. In \S7 and \S8, we prove the local character expansion for the general linear group. The two key ingredients are Lemma \ref{lem W_n fml} and Lemma \ref{lem fml mu f}. In \S9 we show the local character expansion for the unitary group under some conditions, and complete the proof via the comparison of both spherical characters. 

Finally, we warn the reader of the change of measures: Only in the introduction, we use the Tamagawa measures associated to a differential form $\omega$ on $H$ normalized by:
  $$dh=L(1,\eta)^{-1}\prod_{v}L(1,\eta_v)|\omega|_v.$$
To have a natural local decomposition, below we will immediately switch to
$$dh=\prod_vdh_v,\quad dh_v=\prod_{v}L(1,\eta_v)|\omega|_v.$$
Another change comes when we move to the local setting (cf. the paragraph before Lemma \ref{lem local new}):  there we consider the unnormalized local measure 
 $$dh_v=|\omega|_v.$$
Similar warning applies to other groups such as $G$ and the general linear group.

\part{Global theory }
\section{Measures and notations}
We always endow discrete groups with the counting measure.
\subsection{Measures and notations related to the general linear group.}
We first list the main notations and conventions throughout this paper. 
We denote $H_n=\GL_n$, its standard Borel $B_n$ with the diagonal torus $A_n$, the unipotent radical $N_n$ of $B_n$. We denote by $B_{n,-}$ the opposite Borel subgroup, and $N_{n,-}$ its unipotent radical, and an open subvariety $H_n'=N_nA_nN_{n-}$ of $H_n$ (essentially the open cell of Bruhat decomposition). Their Lie algebras are denoted by $\fkh_n,\fkn_n$ etc. We denote by $M_{n,m}(F)$ the $F$-vector space of all $n\times m$ matrices with coefficients in $F$; and if $n=m$ we write it as $M_n(F)$. Then we have a natural embedding $H_n\subset M_{n}$. We denote 
$$e_n =(0,0,...,0,1)\in M_{1,n}(F),$$ 
and let $e_n^\ast \in M_{n,1}(F)$ be the transpose of $e_n $. The letter $u$ ($v$, resp.) usually denotes an upper (lower, resp.) triangular unipotent matrix or a column (row, resp.) vector.

We usually understand $H_{n-1}$ as a subgroup of $H_n$ via the block-diagonal embedding
$$
H_{n-1}\ni h\mapsto \left( \begin{array}{cc}h & \\
 &  1
\end{array}\right)\in H_n.
$$ 
We thus have a sequence of embeddings $...\subset H_{n-2} \subset H_{n-1}\subset H_n$.
Similarly we have have a sequence of embeddings  for the diagonal torus $A_n$, the unipotent $N_n$, etc..

 For a quadratic extension $E/F$ (local or global), we assume that 
 $$E=F[\tau],
 $$ where $\tau=\sqrt{\delta}$, $\delta\in F^\times$. We write $E^\pm$ the $F$-vector space where the nontrivial Galois automorphism in $\Gal(E/F)$ acts by $\pm 1$, and $E^+=F$. 
 
Let now $F$ be a local field. We will fix an additive
character $\psi=\psi_F$ of $F$ and then define a character $\psi_E$ of $E$ by
$$\psi_E(z)=\psi(\frac{1}{2}\tr_{E/F} z)$$
for the trace map $\tr_{E/F}:E\to F$. In particular, we have the compatibility $\psi_E|F=\psi$. We also say that $\psi$ is unramified if $F$ is non-archimedean and the largest fractional ideal  of $F$ over which $\psi$ is trivial is $\CO_F$. Similarly for $\psi_E$.  On $M_{n}(E)$ there is a bi-$E$-linear pairing valued in $E$
given by 
\begin{align}
\label{eqn bilinear form}
\pair{X,Y}:=\tr(XY).
\end{align}
 We then have a Fourier transform for $\phi\in\sC_c^\infty(M_{n}(E))$ 
$$
\wh{\phi}(X):=\int_{M_{n}(E)}\phi(Y)\psi_E(\pair{X,Y})\, dY.
$$
Here we use the self-dual measure on $M_{n}(E)$, i.e., the unique Haar measure characterized by:
\begin{align}\label{eqn measure}
\wh{\wh{\phi}}(X)=\phi(-X).
\end{align}
Note that this is also the same measure obtained by identifying $M_{n}(E)$ with $E^{n^2}$ and using the self-dual measure on $E=M_{1}(E)$. We now view both $M_n(F)$ and $M_n(E^-)$ as $F$-vector subspaces of $M_n(E)$. Then the restriction of the pairing $\pair{\cdot,\cdot}$ to each of them is non-degenerate $F$-valued pairing.  In this way we may define the Fourier transform of $f\in \sC_c^\infty(M_n(E^\pm))$ and we normalize the Haar measure on $M_{n}(E^\pm)$ as the self-dual one characterized by the analogous equation to  (\ref{eqn measure}). Set $n=1$ and we have a measure for $F=E^+$ and $E^-$.
Note that if we use the isomorphism $F\simeq E^-$ by $x\mapsto \sqrt{\delta} x$, then the measure on $E^-$ is $|\delta|^{1/2}_F dx$ for the self-dual measure $dx$ on $F$. Here our absolute values on $F$ and $E$ are normalized such that 
$$
d(ax)=|a|_F\,dx,\quad a\in F,
$$
and similarly for $E$. 

On $F^\times$ we denote the {\em normalized} Tamagawa measure associated to 
the differential form $x^{-1}dx$:
$$
d^\times x=\zeta_F(1)\frac{dx}{|x|_F},
$$
and the {\em unnormalized} one:
$$
d^\ast x=\frac{dx}{|x|_F}.
$$
Similarly for $E^\times$. On $H_n(F)$ we will take the Haar measure
$$
dg=\zeta_F(1)\frac{\prod_{ij}dx_{ij}}{|\det(g)|_F^n},\quad g=(x_{ij}),
$$
and similarly for $H_n(E)$ (where we replace $\zeta_F(1)$ by $\zeta_{E}(1)$).
Sometimes we also shorten $|\det(g)|$ by $|g|$ if no
confusion arises. 

We will assign the measure on $N_n(F)$ the additive self-dual measure:
$$
du=\prod_{1\leq i<j\leq n }\,du_{ij},\quad u=(u_{ij})
\in N_n(F).
$$
We denote the modular character by
$$
\delta_n(a)=\det(Ad(a):\fkn_n)=\prod_{i=1}^n a_i^{n+1-2i},
$$
where $a=diag[a_1,a_2,...,a_n]\in A_n(F)$ acts on $\fkn$ by $Ad(a)X=aXa^{-1}$. Similarly, we have $\delta_{n,E}$ if we replace $F$ by
$E$.
For $x\in M_{n,m}(F)$, we define $$\|x\|=\max\{|x_{ij}|_F\}_{1\leq i\leq n,1\leq j\leq m}.$$

Now let $F$ be a number field and let $\psi=\prod_v \psi_v$ be a nontrivial character of $F\bs \BA$. We denote by $\BA^1$ the subgroup of $\BA^\times$ consisting of $x=(x_v)_v\in \BA^\times $ with $|x|=\prod_v|x_v|_v=1$.
We endow the group
$H_n(\BA)$ with the product measure 
$$
dg=\prod_v \,dg_v.
$$
We denote by $Z_n$ the center of $H_n$ and the measure is determined by the measure on $\BA_F^\times$:
$$d^\times x=\prod_v\, d^\times x_v .$$
Note that under our measure, if $\psi_{v}$ is unramified, the volume of the maximal compact subgroup of $H_n(F_v)$ is given by
$$
\vol(H_n(\CO_{F_v}))=\zeta_v(2)^{-1}\zeta_v(3)^{-1}\cdots\zeta_v(n)^{-1}.
$$

If $E$ is a quadratic extension of $F$, we take similar conventions for $H_n(\BA_E)$, $Z_n(\BA_E)$ et al. 

\subsection{Measures and notations related to unitary groups.} 
In this paper, $W\subset V$ will denote an embedding of Hermitian spaces of dimension $n$ and $n+1$ respectively, $U(W)$ and $U(V)$ the corresponding unitary group, $G=U(W)\times U(V)$ and its subgroup $H$ being the diagonal embedding of $U(W)$.

Our method involves the comparison of orbital integrals between the unitary and general linear group cases, and between their Lie algebras, respectively.  We thus need to choose compatible measures on them. Let $\theta$ be a non-singular Hermitian matrix of size $n+1$. Then we may and will view the group $U(\theta)(F)$ as the subgroup of $\GL_{n+1}(E)$ consisting of $g\in \GL_{n+1}(E)$ such that
$$
\ov{g}^t\cdot \theta g\theta^{-1}=1.
$$
We may and will view the Lie algebra $\fku(\theta)$ of $U(\theta)$ as the subspace of $M_{n+1}(E)$ consisting of $X\in M_{n+1}(E)$ such that
$$
\ov{X}^t+\theta X\theta^{-1}=0.
$$
We denote by $\fku(\theta)^\dagger$ a companion space where the last equality is replaced by
$$
\ov{X}^t=\theta X\theta^{-1}.
$$
For any number $\tau\in E$ such that $\ov{\tau}=-\tau\neq 0$, we have an isomorphism (as $F$-vector spaces) from  $\fku(\theta)$ to $\fku(\theta)^\dagger$ mapping $X$ to $\tau^{-1} X$.

We will need to consider the symmetric space $S_{n+1}(F)\simeq H_{n+1}(F)\bs H_{n+1}(E)$. We identify $S_{n+1}$ with the subspace of $\GL_{n+1}(E)$ consisting of $g\in \GL_{n+1}(E)$ such that
\begin{align}\label{eqn def sn0}
\ov{g}g=1.
\end{align}
We have its tangent space $\fks=\fks_{n+1}$ at $1\in S_{n+1}$, which we call the {\em Lie algebra of $S_{n+1}(F)$}. Viewed as a subspace of $M_{n+1}(E)$, the vector space $\fks$ consists of $X\in M_{n+1}(E)$ such that
\begin{align}\label{eqn def fks}
\ov{X}+X=0.
\end{align}
Its companion is the space $M_{n+1}(F)$ (or $\fkg\fkl_{n+1}(F)$) viewed as a subspace of $M_{n+1}(E)$, namely consisting of $X\in  M_{n+1}(E)$ such that
$$
\ov{X}=X.
$$
For any number $\tau\in E$ such that $\ov{\tau}=-\tau\neq 0$, we have an isomorphism (as $F$-vector spaces) from $\fks(F)$ to $M_{n+1}(F)$ mapping $X$ to $\tau^{-1} X$.

We consider both $\fks$ and $\fku$ as $F$-vector subspaces of $M_{n+1}(E)$.  The restrictions of the bilinear form $\pair{\cdot,\cdot}$ (cf. (\ref{eqn bilinear form})) on $M_{n+1}(E)$ to $\fks$ and $\fku=\fku(\theta)$ take values in $F$, and are non-degenerate. The additive characters $\psi$ and $\psi_E$ then determine self-dual measures on $M_{n+1}(\BA_E)$, $\fku(\BA)$, $\fks(\BA)$ and the local analogues.
Moreover, if we change the Hermitian matrix  $\theta$ defining $\fku$ to an equivalent one, the subspace $\fku$ changes to its conjugate by an element in $\GL_{n+1}(E)$. Hence the measures are compatible with the change of $\theta$. These measures can also be treated as Tamagawa measures associated to top degree invariant differential forms. Let $\omega_0$ be a differential form on $\fku$ so that $|\omega_0|_v$ defines the self-dual measure for every place $v$. We also use the form $\omega_0$ to normalize the differential form $\omega$ that defines the measure on $U(\theta)(F_v)$ as follows. We consider the Cayley map
\begin{align}\label{def cay ugp}
\fkc(X):=(1+X)(1-X)^{-1}.
\end{align}
It defines a birational map between $\fku$ and $U(\theta)$, and it is defined at $X=0$. We normalize the invariant differential form $\omega$ on $U(\theta)$ by requiring that the pull back $\fkc^\ast \omega$ evaluating at $0$ is the same as $\omega_0$ evaluated at $0$. It follows that, when $v$ is non-archimedean, under the Cayley map, the restriction of the self-dual measure to a small neighborhood of $0$ in $\fku$ is compatible with the restriction of the Tamagawa measure $|\omega|_v$ to a small neighborhood of $1$ in $U(\theta)(F_v)$. In this way we choose the measure on $U(\theta)(\BA)$, globally and locally, as follows:
$$
dh=\prod_v L(1,\eta_v)|\omega|_v.
$$Our global measure is therefore not the Tamagawa measure, which should be $L(1,\eta)^{-1}dh$.
In particular, under our choice of measure, the volume of $[U(1)]=U(1)(F)\bs U(1)(\BA)$ is given by
\begin{align}\label{eqn vol U(1)}
\vol([U(1)])=2L(1,\eta).
\end{align}
This is due to the fact that the Tamagawa number for $U(1)\simeq \SO(2)$ is equal to $2$.

\section{Explicit local factorization of some periods}
In this section, we decompose several global linear forms on the general linear group  into explicit products of local invariant linear forms. Nothing is original in this section but we need to determine all constants in order to prove the main result of this paper.

\subsection{Invariant inner product.}
Let $\Pi=\Pi_n$ be a cuspidal automorphic representation of
$H_{n}(\BA_E)$ with unitary central character $\omega_\Pi$. We recall some basic facts on the Whittaker model of $\Pi=\otimes_{w}\Pi_w$. We extend the additive character $\psi_E$ to a character of $N_n(E)$ by
$$
\psi_E(u)=\psi_E(\sum_{i=1}^{n-1} u_{i,i+1}),\quad u=(u_{i,j})\in N_n(E).
$$
Similar convention applies to the other unipotent matrices in $N_n(F)$ et al.
We denote by $\sC^\infty(N_n(\BA_E)\bs H_n(\BA_E),\psi_E)$ the space of smooth functions $f$ on $H_n(\BA_E)$ such that
$$
f(ug)=\psi(u)f(g),\quad u\in N_n(\BA_E),g\in H_n(\BA_E).
$$Similarly we have the local counterpart $\sC^\infty(N_n(E_w)\bs H_n(E_w),\psi_w)$ for each place $w$ of $E$.
The Fourier coefficient of $\phi\in\Pi$ is defined as
$$
W_\phi(g)=\int_{N(E)\bs N(\BA_E)}\phi(ug)\ov{\psi}_E(u)\, du.
$$
Then we have $W_\phi\in \sC^\infty(N_n(\BA_E)\bs H_n(\BA_E),\psi_E)$. The map $\phi\mapsto W_\phi$ realizes an equivariant embedding $\Pi\incl \sC^\infty(N_n(\BA_E)\bs H_n(\BA_E),\psi_E)$. The image, the Whittaker model of $\Pi$, is denoted by $\CW(\Pi,\psi_E)$. For $\phi\in \Pi=\otimes_w\Pi_w$,
we assume that $W_\phi$ is decomposable 
\begin{align}\label{eqn whittaker }
W_{\phi}(g)=\prod_wW_{\phi,w}(g_w), \quad W_{\phi,w}\in  \sC^\infty(N_n(E_w)\bs H_n(E_w),\psi_{E,w})
\end{align}
where $w$ runs over all places of $E$, and $W_w(1)=1$ for almost all places $w$.
 
We need to compare the unitary structure in the decomposition 
$$
\Pi\simeq \bigotimes_w \CW(\Pi_w,\psi_{E,w}).
$$
On $\Pi$ we have the Petersson inner product, for $\phi,\phi'\in \Pi$:
$$
\langle \phi,\phi'\rangle_{Pet}=\int_{Z_n(\BA_E)H_n(E)\bs H_n(\BA_E)}\phi(g)\ov{\phi'(g)}\, dg.
$$
On $\CW(\Pi_w,\psi_{E,w})$ we have an invariant inner product defined by
\begin{align}\label{eqn defn vartheta}
\vartheta_w(W_w,W'_w)=
\int_{N_{n-1}(E_w)\backslash H_{n-1}(E_w)}W_{w}\left( \begin{array}{cc}h & \\
 &  1
\end{array}\right) \overline{W}'_{w}\left( \begin{array}{cc}h & \\
 &  1
\end{array}\right)\, dh.
\end{align}
The integral $\vartheta_w$ converges absolutely if $\Pi_w$ is generic unitary. 
When $\Pi_w$ and $\psi_{E,w}$ are unramified, the vectors $W_w=W_w'$ are fixed by $K_{n,w}:=H_n(\CO_{E,w})$ and normalized by $W_w(1)=1$, we have 
\begin{align}\label{eqn vartheta unram} 
\vartheta_{w}=\vol(K_{n,w})L(1,\Pi_w\times \wt{\Pi}_w).
\end{align}
This can be deduced from \cite[Prop. 2.3]{JS} (also a consequence of the proof of  Prop.\ref{prop inner prod}, particularly (\ref{eqn Psi unram}) and  (\ref{eqn Psi vartheta})).
Therefore we deine a normalized invariant inner product
\begin{align}\label{eqn defn vartheta nat} 
\vartheta_w^\nat(W_w,W'_w)=\frac{\vartheta_v(W_w,W'_w)}{L(1,\Pi_w\times \wt{\Pi}_w)}.
\end{align}
Then the product $\prod_w \vartheta_w^\nat$ converges and defines an invariant inner product on $\CW(\Pi,\psi_E)$. It is a natural question to compare it with the Petersson inner product. We now recall a result of Jacquet--Shalika (implicitly in \cite[\S4]{JS}, cf. \cite[p.265]{FLO}).
\begin{prop} 
\label{prop inner prod}
We have the following decomposition of the Petersson inner product in terms of the local inner product $\vartheta^\nat_w$:
\begin{align}
\langle \phi,\phi'\rangle_{Pet}=\frac{n \cdot \RR es_{s=1}L(s,\Pi\times
\widetilde{\Pi})}{\vol(E^\times \backslash\BA_E^1)}\prod_v
\vartheta_w^\nat(W_{\phi,w},W_{\phi',w}),
\end{align}
where $W_{\phi}=\otimes_{w}W_{\phi,w}$ and $W_{\phi'}=\otimes_{w}W_{\phi',w}$.
\end{prop}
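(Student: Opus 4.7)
The plan is to apply the Rankin--Selberg convolution method for $\Pi \times \wt\Pi$, in the spirit of Jacquet--Piatetski-Shapiro--Shalika and Jacquet--Shalika. For a decomposable Schwartz--Bruhat function $\Phi = \prod_w \Phi_w$ on $\BA_E^n$, form the global zeta integral
\[
Z(s, \phi, \phi', \Phi) := \int_{Z_n(\BA_E) H_n(E) \bs H_n(\BA_E)} \phi(g)\, \ov{\phi'(g)}\, E(g, s; \Phi)\, dg,
\]
where $E(\cdot, s; \Phi)$ is the standard Eisenstein series built from $\Phi$ via the Tate--Iwasawa construction. The identity will be obtained by computing $\Res_{s=1} Z$ in two ways and equating the results.

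Spectrally, the Eisenstein series has a simple pole at $s=1$ whose residue is the constant function $\wh\Phi(0)$ times an explicit factor involving $n$ (from the restriction of $|\det|^s$ to the center $Z_n(\BA_E)$, which scales as $|\cdot|^{ns}$) and $\vol(E^\times \bs \BA_E^1)$ (from Tate's global residue along the idele norm). This yields
\[
\Res_{s=1} Z(s, \phi, \phi', \Phi) = c(n, E)\cdot \wh\Phi(0)\cdot \langle \phi, \phi'\rangle_{Pet},
\]
for an explicit constant $c(n, E)$ compatible with the measure normalizations of \S2. Geometrically, unfolding $E(\cdot, s; \Phi)$ over $P_{n-1,1}(E) \bs H_n(E)$ and collapsing against the Whittaker expansion of $\phi$ produces the Eulerian expression $Z(s, \phi, \phi', \Phi) = \prod_w \Psi_w(s, W_{\phi, w}, W_{\phi', w}, \Phi_w)$, where
\[
\Psi_w(s, W, W', \Phi_w) = \int_{N_n(E_w) \bs H_n(E_w)} W(g)\, \ov{W'(g)}\, \Phi_w(e_n g)\, |\det g|^s\, dg.
\]
At almost all unramified $w$ one has $\Psi_w(s) = L(s, \Pi_w \times \wt\Pi_w)$, so the normalized factor $\Psi_w^\nat := \Psi_w/L(s, \Pi_w \times \wt\Pi_w)$ is identically $1$ at those places and entire at $s=1$ in general. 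Taking residues gives
\[
\Res_{s=1} Z = \Res_{s=1} L(s, \Pi\times \wt\Pi) \cdot \prod_w \Psi_w^\nat(1, W_{\phi,w}, W_{\phi',w}, \Phi_w).
\]

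The crucial local step is the identification
\[
\Psi_w^\nat(1, W_w, W'_w, \Phi_w) = \wh\Phi_w(0)\cdot \vartheta_w^\nat(W_w, W'_w),
\]
which follows from an Iwasawa decomposition $g = \begin{pmatrix} h & \\ & 1 \end{pmatrix} z k$ (with $h \in H_{n-1}(E_w)$, $z$ in the center, $k \in K_{n,w}$) that separates the local zeta integral into a one-dimensional Tate-type integral along the center (producing $\wh\Phi_w(0)$ after the $L$-factor normalization) and the $N_{n-1}(E_w) \bs H_{n-1}(E_w)$-integral (\ref{eqn defn vartheta}) defining $\vartheta_w$; compatibility at unramified data matches the normalization (\ref{eqn vartheta unram}). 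Equating the two residue computations and canceling the common nonzero factor $\wh\Phi(0) = \prod_w \wh\Phi_w(0)$ yields the proposition, forcing $c(n, E)^{-1} = n/\vol(E^\times \bs \BA_E^1)$. The main obstacle is purely bookkeeping---tracking the $n$ from the center, the $\vol(E^\times \bs \BA_E^1)$ from Tate's residue, and the various measure normalizations of \S2---while the exchange of $\Res_{s=1}$ with the infinite product over $w$ is harmless since all but finitely many factors $\Psi_w^\nat$ are identically $1$.
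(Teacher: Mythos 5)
Your global framework is exactly the paper's: form the zeta integral of $\phi\ov{\phi'}$ against the Epstein--Eisenstein series built from $\Phi$, take the residue at $s=1$ spectrally (producing $\frac{\vol(E^\times\bs\BA_E^1)}{n}\wh\Phi(0)\pair{\phi,\phi'}_{Pet}$, the $n$ coming from $|a|^{ns}$ on the center) and via the Euler product (producing $\Res_{s=1}L(s,\Pi\times\wt\Pi)\prod_w \Psi_w^\nat(1)$), and equate. The reduction then rests on the local identity $\Psi_w(1,\Phi_w,W_w,W'_w)=\wh\Phi_w(0)\vartheta_w(W_w,W'_w)$, which is where your write-up is loose.

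Two specific imprecisions in the local step. First, the decomposition $g=\binom{h\ \ }{\ \ 1}zk$ with $h\in H_{n-1}$, $z$ central, $k\in K$ is not, as written, a decomposition of $H_n(E_w)$ --- it omits the unipotent radical $N_{n,1,+}$ of the mirabolic $P_n$; you presumably mean the product $P_n Z_n K$ (or, as the paper does, the open dense cell $P_n N_{n,1,-}Z_n$). Second, $\wh\Phi_w(0)=\int_{E_w^n}\Phi_w$ is an $n$-dimensional integral, not a ``one-dimensional Tate-type integral along the center''; in the paper it arises by identifying the last row $e_n g$ with $E_w^n\setminus\{0\}$ through $Z_n N_{n,1,-}$ (equivalently $Z_n K$). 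Finally, collapsing the $K$- or $N_{n,1,-}$-integral against $\vartheta_w$ uses the $H_n(E_w)$-invariance of $\vartheta_w$, which you take for granted; for a general generic unitary $\Pi_w$ this is itself something to be established, and the paper in fact deduces it en route by comparing the two residue formulae (showing the relevant $\Gamma(X)$ is constant). None of these invalidate the strategy, but they are exactly where the content of the Jacquet--Shalika argument lives and should be made precise.
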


\begin{proof}Up to a constant this is proved by \cite[\S4]{JS}. We thus recall their proof in order to determine this constant and the same idea of proof will also be used below to decompose the Flicker-Rallis period. We consider an Eisenstein series associated to a  Schwartz--Bruhat function
$\Phi$ on $\BA_E^n$. We consider the action of $H_n(E)$ on the row vector space $E^n$ from right multiplication. Then the stabilizer of $e_n =(0,0,...,1)\in E^n$  is the mirabolic subgroup $P_n$ of $H_n$. Set
$$
f(g,s)=|g|^s\int_{\BA_E^\times}\Phi(e_n ag)|a|^{ns}\, d^\times a,\quad \RR
e(s)>>0.
$$
Consider the Epstein-Eisenstein series
\begin{align}\label{eqn Ep-Es}
E(g,\Phi,s):=\sum_{\gamma\in ZP(E)\backslash H_n(E)}f(\gamma g,s)
\end{align}
which is absolutely convergent when $\RR e(s)>1$. Equivalently, we
have $$ E(g,\Phi,s)=|g|^s \int_{E^\times
\backslash\BA_E^\times}\sum_{\xi\in E^n-\{0\}}\Phi(\xi
ag)|a|^{ns}\, d^\times a.
$$
(Note: this corresponds to the case $\eta=1$ in \cite[\S4]{JS}.)
It has meromorphic continuation to $\BC$ and has a simple pole at
$s=1$ with residue (\cite[Lemma 4.2]{JS})
$$
\frac{\vol(E^\times \backslash\BA_E^1)}{n}\widehat{\Phi}(0).
$$
Note that the only non-explicit constant denoted by $c$ in \cite[Lemma 4.2]{JS} is the volume of $E^\times \backslash\BA_E^1$. Now consider the zeta integral
$$
I(s,\Phi,\phi,\phi')=\int_{Z_n(\BA_E)H_n(E)\bs H_n(\BA_E)}E(g,\Phi,s)\phi(g)\ov{\phi'}(g)\, dg.
$$ 
One one hand,
it has a pole at $s=1$ with residue 
$$
\frac{\vol(E^\times \backslash\BA_E^1)}{n}\widehat{\Phi}(0)\pair{ \phi,\phi'}_{Pet}.
$$
On the other hand, when $\Re(s)$ is large, it also equal to the following integral
\begin{align}\label{eqn unr 1}
\Psi(s,\Phi,W_\phi,W_{\phi'})=\int_{N_n(\BA_E)\bs H_n(\BA_E)}\Phi(eg)W_\phi(g) \ov{W_{\phi'}}(g)|\det(g)|^s\, dg.
\end{align}
This is equal to the product:
$$
\prod_w \Psi(s,\Phi_w,W_{\phi,w},W_{\phi',w}) 
$$
where the local integral is defined as
$$
\Psi(s,\Phi_w,W_{\phi,w},W_{\phi',w}) =\int_{N_n(E_w)\bs H_n(E_w)}\Phi_w(eg)W_{\phi,w}(g) \ov{W_{\phi',w}}(g)|\det(g)|^s\, dg.
$$
By \cite[Prop. 2.3]{JS}, we have, for unramified data of $(\Phi_w,W_w,W_w')$ and $\psi_{E,w}$ at a place $w$, normalized such that $W(1)=W(1)=\Phi_w(0)=1$,
\begin{align}\label{eqn Psi unram}
\Psi(s,\Phi,W_w,W'_w)=\vol(K_{n,w})L(s,\Pi_w\times \wt{\Pi}_w).
\end{align}
(Note: for our measure on $N_n(E_w)$, we have $\vol(N_n(E_w)\cap K_w)=1$.)
 From this we may deduce that 
$$
\Psi(s,\Phi,W_\phi,W_{\phi'})=L(s,\Pi\times \wt{\Pi})\prod_{w}\frac{\Psi(s,\Phi_w,W_{\phi,w},W_{\phi',w})}{L(s,\Pi_w\times \wt{\Pi}_w)},
$$
where the local factors are entire functions of $s$ and for almost all $w$ they are equal to one. Moreover, all local factors converge absolutely in the half plane $\Re(s)>1-\epsilon$ for some $\epsilon>0$ (\cite{JS}). From this we deduce that its residue at $s=1$ is given by another formula 
$$
\RR es_{s=1}L(s,\Pi\times \wt{\Pi})\prod_{w}\frac{\Psi(1,\Phi_w,W_{\phi,w},W_{\phi',w})}{L(1,\Pi_w\times \wt{\Pi}_w)}.
$$
From the two formulae of the residue, we will first deduce that  $\vartheta_w(W_{\phi,w},W_{\phi',w})$ is $H_n(E_w)$-invariant and second that 
\begin{align}
\label{eqn Psi vartheta}
\Psi(1,\Phi_w,W_{\phi,w},W_{\phi',w})=\wh{\Phi_w}(0)\vartheta_w(W_{\phi,w},W_{\phi',w}).
\end{align}
To see this,
let $N_{n,1,+}(E_w)$ be the unipotent part of the mirabolic $P_n$ and $N_{n,1,-}(E_w)$ the transpose of $N_{n,1,+}(E_w)$. We consider the open dense subset $N_{n,1,+}H_{n-1}N_{n,1,-}Z_n=P_nN_{n,1,-}Z_n$. We may decompose the measure on $H_n$ (or more precisely its restriction to the open subset)
$$
dg=|\det(h)|^{-1}\,dn_+\,dh \, dn_-\, d^\ast a,
$$where
$$
g=n_+hn_-a,\quad h\in H_{n-1},\quad n_\pm\in N_{n,1,\pm},\quad a\in Z_n.
$$
Note also that the embedding $H_{n-1}\incl P_n$ induces an isomorphism $N_{n}\bs P_n\simeq N_{n-1}\bs H_{n-1}$.
For an integrable function $f$ on $N_n\bs H_n$, we may write 
\begin{align*}
&\int_{N_n(E_w)\bs H_n(E_w)}f(g)dg\\
=&\int_{Z_nN_{n,1,-}(E_w)} \left(\int_{N_{n-1}(E_w)\bs H_{n-1}(E_w)} f(hn_- a)|\det(h)|^{-1}dh\right)\, dn_- \,d^\ast a
\end{align*}
We now apply this formula to the integral $\Psi(1,\Phi_w,W_{\phi,w},W_{\phi',w})$. For simplicity we write $W_w=W_{\phi,w}$, and $W_w'=W_{\phi',w}$.
Clearly $\vartheta_w$ is $P_n(E_w)$-invariant. Therefore we may write
$$
\Psi(1,\Phi_w,W_{\phi,w},W_{\phi',w})=\int_{Z_nN_{n,1,-}(E_w)}\Phi(ean_-)\vartheta_w(\Pi_w(an_-)W_w,\Pi_w(an_-)W_w') |a|^n\, d^\ast a \, dn_-.
$$
For $X\in E_w^n$ (with last entry nonzero), let $n_-(X)$ be the element in $Z_nN_{n,1,-}(E_w)$ with last row equal to $X$.  We consider 
$$
\Gamma(X):=
\vartheta_w(\Pi_w(n_-(X))W_w,\Pi_w(n_-(X))W_w'),
$$
whenever it is defined. A suitable substitution yields 
$$
\Psi(1,\Phi_w,W_{\phi,w},W_{\phi',w})=\int_{E_w^n}\Phi_w(X)\Gamma(X)\, dX.
$$
Since by the other residue formula, we also know that this is equal to a constant multiple times $\wh{\Phi_w}(0)$
times an invariant inner product on $\CW(\Pi_w,\psi_{E,w})$, for all $\Phi_w$ and $W_w,W_w'$. We deduce that $\Gamma(X)$ is a constant function (whenever it is defined). Therefore $\Gamma(X)=\vartheta_w(W_w,W_w')$ and $\vartheta_w$ is $H_n(E_w)$-invariant. Moreover we now have $$
\Psi(1,\Phi_w,W_{\phi,w},W_{\phi',w})=\vartheta_w(W_w,W_w') \int_{E_w^n}\Phi(X) \, dX=\vartheta_w(W_w,W_w')\wh{\Phi_w}(0).
$$
This completes the proof. We also note that if we use the $H_n(E_w)$-invariance of $\vartheta_w$, which can be proved independently, then the proposition can be deduced immediately from the two residue formulae.
\end{proof}

For later use, as we will be dealing with the case of a quadratic extension $E/F$,
we will consider $H_{n,E}$ as an algebraic group over the base field $F$. Therefore we rewrite the result as
\begin{align}\label{eqn inn prod}
\langle \phi,\phi'\rangle_{Pet}=\frac{n \cdot \Res_{s=1}L(s,\Pi\times
\widetilde{\Pi})}{\vol(E^\times \backslash\BA_E^1)}\prod_v
\vartheta_v^\nat(W_{v},W'_{v}),
\end{align}
where $\vartheta_v=\prod_{w|v}\vartheta_w$ for all (one or two) places $w$ above $v$.
\subsection{Flicker-Rallis period.}
Now let $E/F$ be a quadratic extension of number fields and $\Pi=\Pi_n$ a cuspidal automorphic representation of $H_n(\BA_E)$. Assume that its central character satisfies 
$$\omega_\Pi|_{\BA^\times}=1.
$$  We would like to decompose the Flicker-Rallis period (\cite{F0},\cite{GJaR}) explicitly. It can be viewed as a twisted version of the Petersson inner product (it indeed gives the Petersson inner product if we allow $E=F\times F$ to be split globally). Therefore it is natural that the method is similar as well.

We first assume that $n$ is odd. Then we have the global Flicker-Rallis period, an
$H_n(\BA)$-invariant linear form on $\Pi$:
\begin{align}\label{eqn defn FR}
\beta(\phi)=\beta_n(\phi):=\int_{Z_n(\BA)H_n(F)\bs H_{n}(\BA)}\phi(h)\, dh,\quad \phi\in\Pi.
\end{align}
The global period $\beta$ is related to the Asai L-function $L(s,\Pi,\RA s^+)$ (for the definition of $\RA s^\pm$, cf. \cite[\S7]{GGP}).
We set
\begin{align}
\wt{\epsilon}_n=diag(\tau^{n-1},\tau^{n-2},...,1)\in H_n(E)
\end{align}
and 
\begin{align}
\epsilon_{n-1}=\tau\cdot diag(\tau^{n-2},\tau^{n-3},...,1)=\tau\wt{\epsilon}_{n-1}\in H_{n-1}(E).
\end{align} 
(Note: $\tau\in E^-$.)  Indeed we may choose any $\wt{\epsilon}_n=diag(a_1,...,a_{n-1},a_n)$ such that $a_i/a_{i+1}\in E^-$ for $i=1,...,n-1$ and $a_n=1$.
We again use the Whittaker model of $\Pi$. We will again consider $H_{n,E}$ as an algebraic group over $F$. In particular, we consider $\Pi_v$ as a representation of $H_n(E_v)$ where $E_v=E\otimes_F F_v$ is a semisimple $F_v$-algebra of rank two. For a place $v$ of $F$, we define the local Flicker-Rallis period $\beta_v$  as follows:  for $W_v\in \CW(\Pi_v,\psi_{E_v}$)
\begin{align}\label{eqn defn FR}
 \beta_v(W_{v})=\int_{N_{n-1}(F_v)\backslash
H_{n-1}(F_v)}W_{v}\left( \begin{array}{cc}\epsilon_{n-1}h & \\
 &  1
\end{array}\right)\, dh.
\end{align}
The integral $ \beta_v$ converges absolutely if $\Pi_v$ is generic unitary. 
It depends on the choice of $\tau=\sqrt{\delta}$. 
For unramified data with normalization $W_v(1)=1$, we have
\begin{align}\label{eqn beta unram} 
\beta_v(W_{v})=\vol(K_{n,v})L(1,\Pi_v,\RA s^+).
\end{align}
We thus define a normalized linear form
\begin{equation}\label{eqn defn FR nat}
\beta^\nat_v(W_v)=\frac{\beta(W_v)}{L(1,\Pi_v,\RA s^+)}.
\end{equation} 
\begin{remark}\label{rem def L As}
For bad places $v$, we may define the local factor $L(s,\Pi_v,\RA s^+)$ as the GCD of the local zeta integral in (\ref{eqn def Psi FR}). Then the local factor $L(s,\Pi_v,\RA s^+)$ has no pole or zero when at $s=1$ for a unitary generic $\Pi_v$.
\end{remark}

\begin{prop}\label{prop FR}We have an explicit decomposition
\begin{equation}\label{eqn FR}
\beta(\phi)=\frac{n \cdot \RR es_{s=1}L(s,\Pi,\RA
s^+)}{\vol(F^\times \backslash\BA^1)} \prod_v \beta^\nat_v(W_v),
\end{equation}
where $W=W_\phi=\otimes_v W_v\in \CW(\Pi,\psi_E)$.
\end{prop}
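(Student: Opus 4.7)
The plan is to mimic the proof of Proposition 3.1, replacing the unitary structure with the twisted period $\beta$ and replacing the Eisenstein series on $H_n(\BA_E)$ with one on $H_n(\BA)$. Concretely, for a Schwartz--Bruhat function $\Phi$ on $\BA^n$, set $f(g,s)=|g|^s\int_{\BA^\times}\Phi(e_n ag)|a|^{ns}\,d^\times a$ and form the Eisenstein series
\[
E(g,\Phi,s)=\sum_{\gamma\in Z_nP_n(F)\backslash H_n(F)}f(\gamma g,s),
\]
absolutely convergent for $\Re(s)>1$. Because $\omega_\Pi|_{\BA^\times}=1$, the zeta integral
\[
I(s,\Phi,\phi)=\int_{Z_n(\BA)H_n(F)\backslash H_n(\BA)}E(g,\Phi,s)\phi(g)\,dg
\]
is well defined.

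Next I would compute the residue at $s=1$ in two ways. Directly, $E(g,\Phi,s)$ has a simple pole at $s=1$ with residue $\frac{\vol(F^\times\backslash\BA^1)}{n}\wh{\Phi}(0)$ (the analogue of Jacquet--Shalika Lemma 4.2, applied over $F$ rather than $E$), so the residue of $I(s,\Phi,\phi)$ equals $\frac{\vol(F^\times\backslash\BA^1)}{n}\wh{\Phi}(0)\beta(\phi)$. Alternatively, for $\Re(s)$ large the integral unfolds to
\[
I(s,\Phi,\phi)=\int_{P_n(F)\backslash H_n(\BA)}|\det g|^s\Phi(e_n g)\phi(g)\,dg.
\]
Here one uses the Fourier expansion of the cuspidal $\phi\in\Pi$. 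After conjugating by $\wt\epsilon_n$ (which has the effect of trivializing $\psi_E$ on $N_n(F)$ after conjugation, since $\tau+\ov\tau=0$ forces $\psi_E(\wt\epsilon_n u\wt\epsilon_n^{-1})=1$ for $u\in N_n(F)$), the orbit analysis of $P_n(F)$ on $N_n(E)\backslash P_n(E)$ together with cuspidality of $\Pi$ kills all non-generic contributions, leaving a single global integral
\[
\Psi(s,\Phi,W_\phi)=\int_{N_n(\BA)\backslash H_n(\BA)}\Phi(e_n g)W_\phi(\wt\epsilon_n g)|\det g|^s\,dg,
\]
which factors into local integrals $\prod_v\Psi_v(s,\Phi_v,W_v)$ (this is Flicker's computation, and is the step most likely to require the most care).

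For unramified data one has $\Psi_v(s,\Phi_v,W_v)=\vol(K_{n,v})L(s,\Pi_v,\RA s^+)$, so that
\[
\Psi(s,\Phi,W_\phi)=L(s,\Pi,\RA s^+)\prod_v\frac{\Psi_v(s,\Phi_v,W_v)}{L(s,\Pi_v,\RA s^+)},
\]
each local quotient being entire and equal to $1$ for almost all $v$. Since $\omega_\Pi|_{\BA^\times}=1$ and $n$ is odd, $L(s,\Pi,\RA s^+)$ has a simple pole at $s=1$ (the hypothesis is implicit in the statement, via the residue formula appearing on the right-hand side), yielding a second residue formula
\[
\Res_{s=1}L(s,\Pi,\RA s^+)\prod_v\frac{\Psi_v(1,\Phi_v,W_v)}{L(1,\Pi_v,\RA s^+)}.
\]

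Finally, comparing the two residue expressions for every choice of $(\Phi,\phi)$ — exactly as in the proof of Proposition 3.1 — yields both the $H_n(F_v)$-invariance of $\beta_v$ (independently of ramification) and the local identity $\Psi_v(1,\Phi_v,W_v)=\wh{\Phi_v}(0)\beta_v(W_v)$. Substituting back gives the desired factorization
\[
\beta(\phi)=\frac{n\cdot\Res_{s=1}L(s,\Pi,\RA s^+)}{\vol(F^\times\backslash\BA^1)}\prod_v\beta_v^\nat(W_v).
\]
The main technical obstacle is the unfolding in the paragraph above: the orbit structure of $N_n(E)\backslash P_n(E)/P_n(F)$ is more delicate than in the Petersson case because one must descend from $\BA_E$ to $\BA$, and it is precisely here that the twist by $\wt\epsilon_n$ (and hence $\epsilon_{n-1}$ in the local formula) enters to restore the correct Whittaker character.
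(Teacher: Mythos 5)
Your proposal is correct and follows essentially the same route as the paper: form the Epstein--Eisenstein series over $F$, identify $I(s,\Phi,\phi)$ with $\Psi(s,W_\phi,\Phi)$ via the Fourier expansion of the cusp form and the $\wt{\epsilon}_n$-twist, and compare the two residue expressions at $s=1$ using the unramified factorization and the local identity $\Psi_v(1,\Phi_v,W_v)=\wh{\Phi}_v(0)\beta_v(W_v)$. One small caveat: you do not need (and should not claim) that $L(s,\Pi,\RA s^+)$ actually has a pole at $s=1$ --- the hypotheses $\omega_\Pi|_{\BA^\times}=1$ and $n$ odd do not by themselves force this --- since the residue identity holds whether or not the residue is nonzero, and the paper makes no such assumption.
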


\begin{proof}
For a Schwartz--Bruhat function
$\Phi$ on $\BA^n$, we consider the Epstein-Eisenstein series $E(g,\Phi,s)$ (cf. (\ref{eqn Ep-Es})) replacing the field $E$ by $F$. Then we define
$$
I(s,\Phi,\phi):=\int_{Z(\BA)H_n(F)\bs H_n(\BA)}E(g,\Phi,s)\phi(g)\, dg.
$$We then have (\cite[p.303]{F0})
\begin{equation}\label{eqn I=Phi}
I(s,\phi,\Phi)=\Psi(s,W_\phi,\Phi),
\end{equation}
where $$
\Psi(s,W_\phi,\Phi)=\int_{N_n(\BA)\backslash
H_n(\BA)}W_\phi(\wt{\epsilon}_n h)\Phi(e_n  h)|h|^s\, dh.
$$(Note that we use a different choice of the additive character $\psi_E$.)
Indeed, we have
\begin{align*}
I(s,\Phi,\phi)&
=\int_{P_n(F)\bs H_n(\BA)}\Phi(e_n  g)\phi(g)|g|^s\, dg
\\&=\int_{P_n(F)N_n(\BA)\bs H_n(\BA)}\Phi(e_n  g)\left(\int_{N_n(F)\bs N_n(\BA)}\phi(ng)\, dn\right)|g|^s\, dg.
\end{align*}
We have a Fourier expansion $$
\phi(g)=\sum_{\gamma\in N_n(E)\bs P_n(E)}W_\phi(\gamma g).
$$
Only those $\gamma$ such that $\psi_{E}(\gamma n\gamma^{-1})=1$ for all $n\in N_n(\BA)$ contribute nontrivially.  Therefore we may replace the sum by $\gamma \in \wt{\epsilon}_nP_n(F)$:
\begin{align*}
I(s,\Phi,\phi)&=\int_{P_n(F)N_n(\BA)\bs H_n(\BA)}\Phi(e_n g)\left(\sum_{\gamma\in N_n(F)\bs P_n(F)}W_\phi(\wt{\epsilon}_n\gamma g)\right)|g|^s\, dg
\\&=\int_{N_n(\BA)\bs H_n(\BA)}\Phi(e_n  g)W_\phi(\wt{\epsilon}_n g)|g|^s \, dg
\\&=\Psi(s,W_\phi,\Phi).
\end{align*}
(Note that $\vol(N_n(F)\bs N_n(\BA))=1$.)
We define for each place
$v$ of $F$,
\begin{align}\label{eqn def Psi FR}
\Psi(s,W_v,\Phi_v)=\int_{N_n(F_v)\backslash
H_n(F_v)}W_v(\wt{\epsilon}_n h)\Phi_v(e_n  h)|h|^s \, dh.
\end{align}
For unramified data, we have $$ \Psi(s,W_v,\Phi_v)=\vol(K_n(\CO_{F_v}))L(s,\Pi_v,\RA
s^+).
$$
And we have (\cite[p.185]{GJaR})
$$
\Psi(1,W_v,\Phi_v)=\beta_v(W_v)\widehat{\Phi}_v(0).
$$
Alternatively we may prove this using (\ref{eqn I=Phi}), analogous to the proof of Prop. \ref{prop inner prod}.
Again, analogous to the proof of Prop. \ref{prop inner prod}, we may take residue of  (\ref{eqn I=Phi}) to obtain:
$$
\frac{vol(F^\times \backslash\BA^1)}{n}\widehat{\Phi}(0)\beta(\phi)
=\RR es_{s=1}L(s,\Pi,\RA s^+)\widehat{\Phi}(0)
\prod_v \beta_v^\nat(W_v)
$$
This completes the proof.
\end{proof}
When $n$ is even, we insert the character $\eta$ in the
definition of $\beta$
\begin{align}\label{eqn defn FR even}
\beta(\phi)=\beta_n(\phi):=\int_{Z_n(\BA)H_n(F)\bs H_{n}(\BA)}\phi(h)\eta(h)\, dh,\quad \phi\in\Pi,
\end{align}
where, for simplicity, we denote $\eta(h)=\eta(\det(h)).$

The Asai L-function
is then replaced by $L(s,\Pi,\RA s^-)$, or we may write it as $L(s,\Pi,\RA s^{(-1)^{n-1}})$. We also modify the definition 
\begin{align} \beta_v(W_{v})=\int_{N_{n-1}(F_v)\backslash
H_{n-1}(F_v)}W_{v}\left( \begin{array}{cc}\epsilon_{n-1}h & \\
 &  1
\end{array}\right)\eta_v(h)\, dh.
\end{align}The same argument shows that (\ref{eqn FR}) still holds.

\subsection{Rankin-Selberg period.} We now follow \cite{JPSS}.
Let $\Pi=\Pi_n\otimes \Pi_{n+1}$ and $\Pi_i$ a cuspidal automorphic representation of
$H_{i} (\BA_E)$, $i=n,n+1$. We define the global Rankin-Selberg period as
\begin{align}\label{eqn defn RS}
\lambda(\phi)=\int_{H_n(E)\bs H_n(\BA_E)}\phi(h)\, dh,\quad \phi\in \Pi,
\end{align}
where $H_n$ embeds diagonally into $H_n\times H_{n+1}$. To decompose it,
we need the Whittaker model $\CW(\Pi_n,\ov{\psi}_E)$ ($\CW(\Pi_{n+1},\psi_E)$, resp.) of $\Pi_n$ ($\Pi_{n+1}$, resp.) with respect to the additive character $\ov{\psi}_E$ ($\psi_E$, resp.). 
We define a local Rankin-Selberg period on the local Whittaker model which associates to $W_w\in \CW(\Pi_n,\ov{\psi}_E)\otimes \CW(\Pi_{n+1},\psi_E)$:
\begin{align} \label{def local RS}
\lambda_w(s,W_w)=\int_{N_n(E_w)\bs H_n(E_w)} W_w(h)|\det(h)|^s\, dh,\quad s\in \BC,
\end{align}
and a normalized one using the local Rankin-Selberg L-function $L(s,\Pi_{n,w}\times \Pi_{n+1,w})$ (cf. \cite{JPSS}):
\begin{align}
\lambda^\nat_w(s,W_w)=\frac{\lambda_w(W_w)}{L(s+1/2,\Pi_{n,w}\times \Pi_{n+1,w})}.
\end{align}
When $\Pi_w$ is generic, the integral $\lambda_w(s,\cdot)$ is absolutely convergent when $\Re(s)$ is large enough and extends to a meromorphic function in $s\in \BC$. The normalized $\lambda^\nat_w(s,\cdot)$ extends to an entire function in $s\in \BC$. Moreover there exists $W_w$ such that $\lambda^\nat_w(s,W_w)=1$
(cf. \cite[Theorem 2.1, 2.6]{J} for archimedean places).  Therefore we will define
\begin{align}\label{eqn defn RS nat}
\lambda^\nat_w(W_w)=\lambda^\nat_w(0,W_w).
\end{align}
In particular, $\lambda_w^\nat$ defines a non-zero element of the (one-dimensional) space $\Hom_{H_n(E_w)}(\Pi_w,\BC)$ for generic $\Pi_w$.

If $\Pi_w$ is tempered, then the integral $\lambda_w(s,\cdot)$ is absolutely convergent when $\Re(s)>-1/2$ (cf. \cite[Lemma 5.3]{J} for archimedean places). Therefore in this case we may even define $\lambda_w(W_w)=\lambda(0,W_w)$ directly.

When $\Pi_w$ and $\psi_{E,w}$ are unramified, the vector $W_w$ is fixed by $K_{n,w}\times K_{n+1,w}$ and normalized by $W_w(1)=1$, we have  (\cite[p. 781]{JS2}):
\begin{align}\label{eqn lambda unram} 
\lambda_w(s,W_w)=\vol(K_{n,w})L(s+1/2,\Pi_{n,w}\times \Pi_{n+1,w}),
\end{align}
and therefore 
$$
\lambda^\nat_w(W_w)=\vol(K_{n,w}).
$$
We also form the global (complete) Rankin-Selberg L-function 
$$
L(s,\Pi_n\times \Pi_{n+1})=\prod_w L(s,\Pi_{n,w}\times \Pi_{n+1,w}).
$$
It is an entire function in $s\in \BC$.
\begin{prop}\label{prop RS}
 We have the following decomposition if $\Pi$ is cuspidal unitary, and $\phi\in \Pi$
\begin{equation}\label{eqn local RS}
\lambda(\phi)=L(\frac{1}{2},\Pi_n\times \Pi_{n+1})\prod_w \lambda^\nat_w(W_w),
\end{equation}
where $W_\phi=\prod_w W_{\phi,w}$ is as before.
\end{prop}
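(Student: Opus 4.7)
The argument is the classical Rankin--Selberg unfolding of Jacquet--Piatetski-Shapiro--Shalika; the only real work is to verify that our measure normalizations and Whittaker-model conventions introduce no spurious constant on either side. First I would introduce the parameterized period
\[
\Lambda(s,\phi):=\int_{H_n(E)\bs H_n(\BA_E)}\phi_n(h)\,\phi_{n+1}\!\begin{pmatrix}h & \\ & 1\end{pmatrix}|\det h|^{s}\,dh,
\]
which converges absolutely for every $s\in\BC$ by rapid decay of cusp forms and satisfies $\Lambda(0,\phi)=\lambda(\phi)$.

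The unfolding proceeds in one Fourier expansion followed by a factorization of the measure. Apply the Fourier expansion of $\phi_{n+1}$ along the unipotent radical of the mirabolic $P_{n+1}$, exploiting the bijection $N_{n+1}(E)\bs P_{n+1}(E)\simeq N_n(E)\bs H_n(E)$ induced by $h'\mapsto\begin{pmatrix}h' & \\ & 1\end{pmatrix}$. Exchanging sum and integral (legal for $\Re s$ large by absolute convergence of the unfolded integrand, via the Jacquet--Shalika gauge estimate) and using $H_n(E)$-invariance of $\phi_n$ together with $|\det\gamma|_{\BA_E}=1$ for $\gamma\in H_n(E)$ collapses the outer domain, yielding
\[
\Lambda(s,\phi)=\int_{N_n(E)\bs H_n(\BA_E)}\phi_n(h)\,W_{\phi_{n+1}}\!\begin{pmatrix}h & \\ & 1\end{pmatrix}|\det h|^{s}\,dh.
\]
Now disintegrate along the fibration $N_n(E)\bs H_n(\BA_E)\to N_n(\BA_E)\bs H_n(\BA_E)$ with compact fiber $N_n(E)\bs N_n(\BA_E)$; the inner integral $\int_{N_n(E)\bs N_n(\BA_E)}\phi_n(uh)\,\psi_E(u)\,du$ extracts the Whittaker coefficient $W_{\phi_n}(h)$ with respect to $\ov\psi_E$ (the conjugate character is forced by the $\psi_E$-equivariance of $W_{\phi_{n+1}}$ under left multiplication by $N_n\subset N_{n+1}$), giving
\[
\Lambda(s,\phi)=\int_{N_n(\BA_E)\bs H_n(\BA_E)}W_{\phi_n}(h)\,W_{\phi_{n+1}}\!\begin{pmatrix}h & \\ & 1\end{pmatrix}|\det h|^{s}\,dh.
\]

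For decomposable Whittaker data this last integral factors Eulerian-ly as $\prod_w\lambda_w(s,W_w)$; inserting local L-factors and pulling out the global L-function produces
\[
\Lambda(s,\phi)=L\!\left(s+\tfrac{1}{2},\Pi_n\times\Pi_{n+1}\right)\prod_w\lambda_w^\nat(s,W_w).
\]
By (\ref{eqn lambda unram}), almost every factor on the right equals the constant $\vol(K_{n,w})$ independent of $s$, and each $\lambda_w^\nat(s,\cdot)$ is entire by construction. Since $L(s,\Pi_n\times\Pi_{n+1})$ is entire for $\Pi_n,\Pi_{n+1}$ cuspidal on general linear groups, the identity extends by analytic continuation from the half-plane of absolute convergence to all of $\BC$; specializing at $s=0$ gives the claim. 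The only subtle point is this analytic continuation across $s=0$, handled by the independent entirety of each component; in contrast to Propositions \ref{prop inner prod} and \ref{prop FR}, no Eisenstein series or residue computation is needed.
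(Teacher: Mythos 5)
Your argument is correct and is exactly the Jacquet--Piatetski-Shapiro--Shalika unfolding that the paper cites for this proposition without reproducing the proof. The convention-checking (the $\ov\psi_E$/$\psi_E$ asymmetry across the two Whittaker models, the factorization of the global measure, and the entirety of $L(s,\Pi_n\times\Pi_{n+1})$ as the justification for specializing at $s=0$) is the substantive content here, and you handle all of it correctly.
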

\begin{proof}
This is due to 
Jacqquet, Piateski-Shapiro and Shalika  (\cite{JPSS}).
\end{proof}
For generic unitary $\Pi$, we need to use the completed L-function. Indeed the local L-factor may have poles at $s=1/2$ since we do not know the temperedness of each $\Pi_w$.

\subsection{Decomposing the spherical character on the general linear group.} We now denote
\begin{align}
G'=\Res_{E/F}\left(\GL_{n}\times \GL_{n+1}\right)
\end{align}viewed as an $F$-algebraic group. We consider its two subgroups: $H_1'$ is the diagonal embedding of $\Res_{E/F}\GL_{n}$ (where $\GL_{n}$ is embedded into $\GL_{n+1}$ by $g\mapsto diag[g,1]$) and $H_2'$ is $\GL_{n,F}\times \GL_{n+1,F}$ embedded into $G'$ in the obvious way. 

Now we consider a cuspidal automorphic representation $\Pi=\Pi_n \otimes
\Pi_{n+1}$ of $G'(\BA)$.  Denote by $\beta=\beta_n\otimes \beta_{n+1}$ the (product of) Flicker--Rallis period on $\Pi=\Pi_n \otimes
\Pi_{n+1}$. 
\begin{defn}
We define the {\em global spherical character} $I_\Pi$ as the following distribution on $H(\BA)$: for $f'\in \sC_c^\infty(G'(\BA))$, 
\begin{align}\label{def char G'}
I_\Pi(f')=\sum_{\phi}\frac{\lambda(\Pi(f')\phi)\overline{\beta(\phi)}}{\langle
\phi,\phi\rangle_{Pet}},
\end{align}
where the sum runs over an {\em orthogonal} basis of $\Pi$. Equivalently,\begin{align*}
I_\Pi(f')=\sum_{\phi}\lambda(\Pi(f')\phi)\overline{\beta(\phi)},
\end{align*}
where the sum runs over an {\em orthonormal} basis of $\Pi$ for the Petersson inner product.
\end{defn}
 Note that the definition of $\Pi(f')$ involves a choice of the measure on $G'(\BA)$ to define the Petersson inner product. We could choose any one as long as then we use the same measure (quotient by the counting measure on $G'(F)$).

By definition of $\RA s^\pm$, we have for $i=n,n+1$:
$$L(s,\Pi_i\times \Pi_i^\sigma)=L(s,\Pi_i,\RA s^+)L(s,\Pi_i,\RA s^-).$$

Now recall that in Introduction we have a product of unitary groups $G=U(W)\times U(V)$ for Hermitian spaces $W\subset V$ with $\dim W=n,\dim V=n+1$. Assume that $\Pi=\pi_E$ is the base change of a cuspidal automorphic representation $\pi=\pi_n\otimes \pi_{n+1}$ of $G(\BA)$. By \cite[Prop. 7.4]{GGP} we also have $$
L(s,\Pi_i,\RA s^{(-1)^i})=L(s,\pi_i,Ad).
$$ 
\begin{remark}\label{rem def L Ad}
For bad places $v$, we may define the local factor $L(s,\pi_i,Ad)$ by this formula. 
But note that for our purpose, it only matters to know the local L-factors at unramified places. 
\end{remark}

Since such $\Pi$ must be  conjugate self-dual:
$\widetilde{\Pi}\simeq \Pi^\sigma$ where $\sigma$ is the nontrivial element in $\Gal(E/F)$, we deduce that $L(s,\Pi_i\times \wt{\Pi}_{i})$ has a simple pole at $s=1$. By our running hypothesis {\bf RH(I)}(i), the Asial $L(s,\Pi_i,\RA s^{(-1)^{i-1}})$ has a simple pole. We conclude that $L(s,\Pi_i,\RA s^{(-1)^i})=L(s,\pi_i,Ad)$ is regular at $s=1$ and:
\begin{align}\label{eqn ad=As}
\frac{\RR es_{s=1}L(1,\Pi_i\times \wt{\Pi}_{i})}{\RR es_{s=1}L(s,\Pi_i,\RA s^{(-1)^{i-1}})}=L(1,\Pi_i,\RA s^{(-1)^i})=L(1,\pi_i,Ad).
\end{align}

We denote by $\CW(\Pi,\psi)$ the Whittaker model $\CW(\Pi_n,\ov{\psi})\otimes\CW(\Pi_{n+1},\psi)$. Let $\Pi=\otimes _v\Pi_v$. Let $
\lambda^\nat_v$, $\beta^\nat_v$ be the local Rankin-Selberg period (\ref{eqn defn RS nat}) and the local Flicker-Rallis period (\ref{eqn defn FR nat}). Let $\vartheta^\nat_v$ be the normalized local  invariant inner product (\ref{eqn defn vartheta nat}).
\begin{defn}
We define the {\em normalized local spherical character} $I^\nat_{\Pi_v}$ associated to a unitary generic representation $\Pi_v$ :
\begin{align}\label{def local char G'}
I^\nat_{\Pi_v}(f'_v)=\sum_{W_v}
\frac{\lambda^\nat_v(\Pi_v(f'_v)W_v)\overline{\beta_v^\nat}(W_v)}{\vartheta^\nat_v(W_v,W_v)},
\end{align}
where the sum runs over an {\em orthogonal} basis $W_v\in \CW(\Pi_v,\psi_{v})$.  We also define an unnormalized  local spherical character $I_{\Pi_v,s}$ as the meromorphic function in $s\in\BC$
\begin{align}\label{def unnorm char G'}
I_{\Pi_v,s}(f'_v)=\sum_{W_v}
\frac{\lambda_v(s,\Pi_v(f'_v)W_v)\overline{\beta_v}(W_v)}{\vartheta_v(W_v,W_v)}.
\end{align}
We will write $I_{\Pi_v}(f'_v)$ for its value $I_{\Pi_v,0}(f'_v)$ at $s=0$.
\end{defn}

We now summarize to arrive at an
analogue of the decomposition in Conjecture \ref{conj dis}:
\begin{prop}
\label{prop char GL}
Assume that the cuspidal automorphic representation $\Pi$ of $G'(\BA)$ is the base change $\pi_E$ of a cuspidal automorphic representation $\pi$ of $G(\BA)$. Then we have
\begin{equation}\label{eqn I Pi=C
local}I_\Pi(f')=L(1,\eta)^2\frac{L(1/2,\Pi)}{L(1,\pi,Ad)} \prod_v I^\nat_{\Pi_v}(f'_v).
\end{equation}
\end{prop}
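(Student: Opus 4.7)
The plan is to apply the three factorization results of this section---Proposition \ref{prop inner prod} for the Petersson inner product, Proposition \ref{prop FR} applied separately to $\Pi_n$ and $\Pi_{n+1}$ for the Flicker--Rallis period, and Proposition \ref{prop RS} for the Rankin--Selberg period---to every term in the sum defining $I_\Pi(f')$, and then collect the global constants.

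Concretely, for each place $v$ I would pick an orthogonal basis $\{W^{(i)}_v\}_i$ of $\CW(\Pi_v, \psi_{E_v})$ with respect to $\vartheta^\nat_v$, chosen to be the normalized spherical vector at almost every $v$. The pure tensors $W_\phi = \bigotimes_v W^{(i_v)}_v$ then form an orthogonal basis of the global Whittaker model of $\Pi$. For such a basis vector $\phi = \phi_n \otimes \phi_{n+1}$, Propositions \ref{prop RS}, \ref{prop FR} and \ref{prop inner prod} yield
\begin{align*}
\lambda(\Pi(f')\phi) &= L\!\left(\tfrac{1}{2}, \Pi\right) \prod_v \lambda_v^\nat(\Pi_v(f'_v) W_v^{(i_v)}),\\
\beta(\phi) &= \frac{n(n+1)\,R_nR_{n+1}}{\vol(F^\times\bs\BA^1)^2} \prod_v \beta_v^\nat(W_v^{(i_v)}),\\
\langle \phi, \phi\rangle_{Pet} &= \frac{n(n+1)\,T_nT_{n+1}}{\vol(E^\times\bs\BA_E^1)^2} \prod_v \vartheta_v^\nat(W_v^{(i_v)}, W_v^{(i_v)}),
\end{align*}
where I abbreviate $R_i = \Res_{s=1}L(s,\Pi_i,\RA s^{(-1)^{i-1}})$ and $T_i = \Res_{s=1}L(s,\Pi_i \times \wt\Pi_i)$. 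Substituting into the definition of $I_\Pi(f')$ and interchanging the sum over tuples $(i_v)$ with the product over $v$ produces $\prod_v I^\nat_{\Pi_v}(f'_v)$ times a global constant.

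After the $n(n+1)$ factors cancel, this global constant is
\[
L(1/2,\Pi) \cdot \frac{R_nR_{n+1}}{T_nT_{n+1}} \cdot \left(\frac{\vol(E^\times\bs\BA_E^1)}{\vol(F^\times\bs\BA^1)}\right)^2.
\]
By the identity (\ref{eqn ad=As}), $R_i/T_i = L(1,\pi_i,\Ad)^{-1}$, and hence $R_nR_{n+1}/(T_nT_{n+1}) = L(1,\pi,\Ad)^{-1}$. This accounts for the $L(1,\pi,\Ad)^{-1}$ and the $L(1/2,\Pi)$ factors appearing in the target formula.

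The only remaining---and mildly delicate---step is to identify the volume ratio $\vol(E^\times\bs\BA_E^1)/\vol(F^\times\bs\BA^1)$ with $L(1,\eta)$. This follows from the Artin factorization $\zeta_E(s) = \zeta_F(s)\,L(s,\eta)$: taking residues at $s=1$ on both sides, together with the fact that, under the normalized Tamagawa measures $d^\times x = \zeta_v(1)\,dx/|x|_v$ fixed in \S2, each of $\vol(F^\times\bs\BA^1)$ and $\vol(E^\times\bs\BA_E^1)$ matches the residue of the corresponding zeta function (the $\zeta_v(1)$ factors in the measures are exactly what secure this matching). Squaring yields $L(1,\eta)^2$ and assembles the global constant into $L(1,\eta)^2\,L(1/2,\Pi)/L(1,\pi,\Ad)$, as desired.
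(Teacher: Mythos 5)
Your proof is correct and takes exactly the same route as the paper's (very terse) argument: the paper simply cites Propositions \ref{prop inner prod}, \ref{prop FR}, \ref{prop RS}, the identity (\ref{eqn ad=As}), and the volume ratio $\vol(E^\times\bs\BA_E^1)/\vol(F^\times\bs\BA_F^1)=L(1,\eta)$, and you supply the intermediate bookkeeping (including the standard derivation of the volume ratio from the Artin factorization of zeta functions).
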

\begin{proof}By the assumption, $\Pi$ is unitary generic. 
Note that 
$$
\frac{\vol(E^\times \backslash\BA_E^1)}{\vol(F^\times \backslash\BA_F^1)}=L(1,\eta).
$$
Then the result follows from Prop. \ref{prop inner prod}, \ref{prop FR}, \ref{prop RS} and the relation (\ref{eqn ad=As}).
\end{proof}

\begin{remark}
Note that we do not need to assume the temperedness of $\pi$ at this moment.
\end{remark}

\section{Relative trace formulae of Jacquet and Rallis}

\subsection{The construction of Jacquet and Rallis.}
We recall the Jacquet--Rallis relative trace formulae (\cite{JR}) and we refer to \cite{Zh2} for more details.

First we recall the construction of Jacquet--Rallis' RTF in  the unitary group case.
For $f \in \sC_c^\infty(G(\BA))$  we consider a kernel
function
$$
K_f(x,y)=\sum_{\gamma\in G(F)}f(x^{-1}\gamma y),
$$
 and a distribution
$$
J(f):=\int_{H(F)\bs H(\BA)} \int_{ H(F)\bs H(\BA)}K_f(x,y)\,dx \, dy.
$$The integral converges when the test function $f$ is nice in the sense of \cite[\S2.3]{Zh2} (the precise definition will not be used in this paper). Associated to the RTF we have two objects:  
 \begin{itemize}
 \item the global spherical character $J_\pi$ associated to a cuspidal automorphic representation $\pi$ of $G(\BA)$ (Definition  \ref{def global char}  in Introduction), and
 
 \item
  the (relative) orbital integral associated to
 a {\em regular semisimple} element \footnote{See \cite[\S2.1]{Zh1} for the definition, where ``regular" corresponds to ``regular semisimple" in this paper.}  $\delta\in G(F)$: for $f\in \sC_c^\infty(G(\BA))$,  we define its orbital integral:
 \end{itemize}
 \begin{align}
O(\delta,f):=\int_{H(\BA)\times H(\BA)}f(x^{-1}\delta y)\, dx\, dy.
\end{align}
We have a local counterpart associated to a regular semisimple element $\delta\in G(F_v)$: for $f_v\in \sC_c^\infty(G(F_v))$ we define
 \begin{align}\label{defn orb delta}
O(\delta,f_v)=\int_{H(F_v)\times H(F_v)}f_v(x^{-1}\delta y)\, dx\, dy.
\end{align}
 
We now recall the RTF in the  general linear group case. Recall that $G'=\Res_{E/F}\left(\GL_{n}\times \GL_{n+1}\right)$ as an $F$-algebraic group. We consider its two subgroups:
\begin{itemize}
\item $H'_1$ is the diagonal embedding of $\Res_{E/F}\GL_{n}$ (where $\GL_{n}$ is embedded into $\GL_{n+1}$ by $g\mapsto diag[g,1]$), and
\item  $H_2'=\GL_{n,F}\times \GL_{n+1,F}$ embedded into $G'$ in the obvious way.
\end{itemize} 
For $f'\in \sC_c^\infty(G'(\BA))$, we define a kernel
function $$K_{f'}(x,y)=\int_{Z_{H'_2}(\BA)}\sum_{\gamma\in
G'(F)}f'(x^{-1}\gamma z y)dz.$$ 
We then consider a distribution on $G'(\BA)$:
$$
I(f')=\int_{H'_1(F)\backslash H_1'(\BA) }
\int_{Z_{H'_2}(\BA)H_2'(F)\backslash H_2'(\BA)}K_{f'}(h_1,h_2)\eta(h_2)\, dh_1\, dh_2,
$$where $\eta(h_2):=\eta^{n-1}(g_n)\eta^{n}(g_{n+1})$ if $h_2=(g_n,g_{n+1})\in H_n(\BA)\times H_{n+1}(\BA)$. The integral converges when the test function $f$ is nice in the sense of \cite[\S2.2]{Zh2}. Associated to the RTF we have two objects:  
 \begin{itemize}
 \item the global spherical character $I_\Pi$ (cf. \cite[\S 2]{Zh1}) associated to a cuspidal automorphic representation $\Pi$ of $G'(\BA)$ (Definition  \ref{def char G'}), and

 \item   the (relative) orbital integral associated to a regular semisimple element (cf. \cite[\S 2]{Zh1}) $\gamma\in G'(F)$: for $f'\in \sC_c^\infty(G'(\BA))$,  we define its orbital integral:
\begin{equation}
O(\gamma,f'):= \int_{H_1'(\BA)}\int_{H_2'(\BA)} f'(h_1^{-1}\gamma h_2)\eta(h_2)\, dh_1\, dh_2.
\end{equation}
 \end{itemize}
Similarly we have a local counterpart:
a regular semisimple element $\gamma\in G'(F_v)$: for $f'_v\in \sC_c^\infty(G'(F_v))$ we define
 \begin{align}\label{defn orb gamma}
O(\gamma,f'_v)=\int_{H_1'(F_v)}\int_{H_2'(F_v)} f'_v(h_1^{-1}\gamma h_2)\eta(h_2)\, dh_1\, dh_2.
\end{align}

We now recall the comparison of the orbits (cf. \cite[\S2]{Zh1}). Denote by $(H_1(F)\bs G'(F)/H_2(F))_{rs}$ the set of regular semisimple $(H'_1\times H_2')(F)$-orbits in $G'(F)$ and $(H(F)\bs G(F)/H(F))_{rs}$ the set of regular semisimple $(H\times H)(F)$-orbits in $G(F)$. We need to vary the pair $W\subset V$ of Hermitian spaces of dimension $n$ and $n+1$ modulo the equivalence relation: $(W,V)$ is equivalent to $(W',V')$ if there a constant $\kappa\in F^\times$ such that $\kappa W\simeq W'$ and $\kappa V\simeq V'$ (here $\kappa W$ means that we multiply the Hermitian form by the constant $\kappa$). Without loss of generality, we may and will assume that  $V$ is an {\em orthogonal} sum of $W$ and a one-dimensional Hermitian space $Ee$ with a norm one vector: 
\begin{align}\label{eqn V=W+e}
V=W \oplus Ee,\quad \pair{e,e}=1.
\end{align} In particular, $V$ is determined by $W$ so that we only need to vary the Hermitian space $W$. \footnote{In terms of \cite[\S2]{GGP}, we only consider Hermitian pairs $(W,V)$ that are {\em relevant} to each others.}
To indicate the dependence on the Hermitian spaces $W$, we will write $G_{W}$ for $G$ and $H_W$ for $H$. Then there is a natural bijection (\cite[Lemma 2.3]{Zh1})
 \begin{align}\label{eqn orbit match}
 (H_1(F)\bs G'(F)/H_2(F))_{rs}\simeq \coprod_{W} (H_W(F)\bs G_{W}(F)/H_W(F))_{rs},
 \end{align}
 where on the right hand side the disjoint union runs over all Hermitian space $W$ of dimension $n$. 
  Moreover, the same holds if we replace $F$ by $F_v$ for every place $v$ of $F$.
 When $v$ is non-archimedean, there are precisely two isomorphism classes of Hermitian spaces $W_v$.
 
In \cite[\S2.4]{Zh2} we defined an explicit {\em transfer factor $\{\Omega_v\}_v$} 
on the regular semisimple locus of $G'(F_v)$ for any place $v$. It satisfies the following properties:
\begin{itemize}
\item If $\gamma\in G'(F)$ is regular semisimple, then we have a product formula $\prod_{v}\Omega_{v}(\gamma)=1$. 
\item For any $h_i\in H'_{i}(F_v)$ and $\gamma\in G'(F_v)$, we have $\Omega(h_1\gamma h_2)=\eta(h_2)\Omega_v(\gamma)$.
\end{itemize}
The construction is as follows. It depends on an auxiliary character $\eta'$: 
\begin{align}\label{eqn char eta'}
\eta':E^\times \bs \BA_E^\times \to \BC^\times
\end{align}(not necessarily quadratic) such that its restriction $\eta'|_{\BA^\times}=\eta$.
 Let $S_{n+1}$ be the subvariety of $\Res_{E/F}\GL_{n+1}$ defined by the equation $s\bar s=1$. By Hilbert Satz-90, we have an isomorphism of two affine varieties $$\Res_{E/F}\GL_{n+1}/\GL_{n+1,F}\simeq S_{n+1},$$
induced by the following morphism $\nu$ between $F$-varieties:
\begin{align}
\label{eqn def nu}
\nu: \Res_{E/F} \GL_{n+1}&\to S_{n+1}\\
g& \mapsto g\bar g^{-1},
\end{align}
and in the level of $F$-points:
\begin{align}\label{eqn def nu0}
\GL_{n+1}(E)/\GL_{n+1}(F)\simeq S_{n+1}(F).
\end{align}
  Write $\gamma=(\gamma_1,\gamma_2)\in G'(F_v)$ and $s=\nu(\gamma_1^{-1}\gamma_2)$.
 We define for a regular semisimple $s\in S_{n+1}(F_v)$:
  \begin{align}\label{eqn def Omega(s)}
\Omega_v(s):=\eta_v'(\det(s)^{-[(n+1)/2]}\det(e,es,...,es^{n})).
\end{align}
Here $e=e_{n+1}=(0,...,0,1)$ and $(e,es,...,es^{n})\in M_{n+1}$ is the matrix whose $i$-th row is $es^{i-1}$.
 If $n$ is odd, we define:
\begin{align}
\Omega_v(\gamma):=\eta'_v(\det(\gamma_1^{-1}\gamma_2))\Omega_v(s),
\end{align}
and if $n$ is even, we simply define:
\begin{align}
\Omega_v(\gamma):=\Omega_v(s).
\end{align}

For a place $v$ of $F$,  we say that the function $f'\in \sC^\infty_c(G'(F_v))$ and the tuple $(f_{W})_{W}, f_{W}\in \sC_c^\infty(G_{W}(F_v))$, indexed by the set of all equivalence classes of Hermitian spaces $W$ over $E_v=E\otimes F_v$, {\em are smooth transfer of each other} or {\em match} if 
\begin{align}\label{eqn mat grp}
\Omega_v(\gamma)O(\gamma,f')=O(\delta,f_{W}),
\end{align}
whenever  a regular semisimple $\gamma\in G'(F_v)$ matches $\delta\in G_{W}(F_v)$ via (\ref{eqn orbit match}).
One of the main local results in \cite{Zh2} is the existence of smooth transfer at non-archimedean non-split places (cf. \cite[Theorem 2.6]{Zh2}) and arbitrary split places (cf. \cite[Prop. 2.5]{Zh2}). 

In this paper, we usually need to consider a fixed $W$ and we say that $f'$ and $f_{W}\in \sC_c^\infty(G_{W}(F_v))$ match if there exist some $f_{W'}$ for each equivalence class $W'\neq W$ such that $f'$ matches the completed tuple $f_{W},f_{W'}$.

Moreover,  the fundamental lemma of Jacquet--Rallis predicts a specific case of matching functions: 

\begin{thm}[\cite{Y}]\label{thm FL}
Assume that the quadratic extension $E_v/F_v$ is unramified. Denote by $\{W_v,W'_v\}$ the two isomorphism classes of Hermitian spaces of dimension $n$ where $W_v$ contains a self-dual (with respect to the Hermitian form) $\CO_{E_v}$-lattice. Set  \footnote{Note that the measures in the fundamental lemma proved in \cite{Y} are different from ours.}
\begin{align}
f_{W_v}=\frac{1}{\vol(H_{W_v}(\CO_v))^2}1_{G_{W_v}(\CO_v)},\quad f_{W'_v}=0,\quad f'_v=\frac{1}{\vol(H_1'(\CO_v))\vol(H'_2(\CO_v))}1_{G'(\CO_v)}.
\end{align}
Then there is a constant $c(n)$ depending only on $n$ such that, when the characteristic of the residue field of $F_v$ is larger than $c(n)$, the function $f_v'$ matches  the pair $(f_{W_v},f_{W'_v})$.

\end{thm}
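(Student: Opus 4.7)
My plan would be to follow the approach of Z. Yun in \cite{Y}, which adapts Ng\^o's geometric proof of the Langlands--Shelstad fundamental lemma to the Jacquet--Rallis setting. First I would reduce to the Lie algebra version: via the Cayley transform $\fkc(X) = (1+X)(1-X)^{-1}$, a small neighborhood of $1$ in $G_{W_v}(\CO_v)$ is identified with a small neighborhood of $0$ in $\fku(\theta)(\CO_v)$, and analogously on the general linear side a neighborhood of $1$ in $G'(\CO_v)$ is identified with a neighborhood of $0$ in $\fks_{n+1}(\CO_v)\oplus\fkg\fkl_{n+1}(\CO_v)$. A Harish-Chandra / Shalika-germ descent around topologically unipotent regular semisimple elements then reduces the required matching of spherical Hecke units to the analogous matching of the characteristic functions of the standard $\CO_v$-lattices on these Lie algebras, for the $\GL_n(\CO_v)$-action on $\fks_{n+1}(\CO_v)$ and the $U(\theta)(\CO_v)$-action on $\fku(\theta)(\CO_v)$, with the transfer factor $\Omega_v$ in place.

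The heart of the argument is geometric and carried out over equal positive characteristic. Fix a smooth projective curve $X$ over $\ov{\BF_q}$ equipped with an \'etale double cover $\wt{X}\to X$ (modelling $E/F$) and an auxiliary line bundle $L$ of large degree. Construct Jacquet--Rallis Hitchin moduli stacks $\CM'$ and $\CM_W$ parametrizing the appropriate vector bundles on $X$ (or $\wt{X}$) equipped with an $L$-valued Higgs field taking values in the global analogues of $\fks_{n+1}$ and $\fku(\theta)$, together with Hitchin maps $h'\colon\CM'\to\CA$ and $h_W\colon\CM_W\to\CA$ to a common Hitchin base given by the ring of invariants. The orbit-matching of \eqref{eqn orbit match} is the fiberwise shadow of an identification of the two bases. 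A global-to-local product formula expresses the point-counts on Hitchin fibers as products of local orbital integrals over the places of $X$, so the fundamental lemma becomes, at a single $\ov{\BF_q}$-point of $\CA$, an identity between the Frobenius traces on the stalks of $Rh'_*\ov{\BQ}_\ell$ (twisted by an $\ell$-adic sheaf incarnating $\Omega$) and $Rh_{W,*}\ov{\BQ}_\ell$.

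This identity is then deduced sheaf-theoretically: by the decomposition theorem each direct image is a sum of shifted simple perverse sheaves on $\CA$; on the dense regular semisimple locus $\CA^{rs}$ both sides restrict to explicit local systems whose matching is a direct calculation; and Ng\^o's support theorem for weakly abelian Hitchin-type fibrations forces every simple summand occurring over the elliptic/anisotropic locus to have full support $\CA$, so the identity propagates from $\CA^{rs}$ to the entire base. To transfer this equal-characteristic conclusion to a $p$-adic local field $F_v$ of residue characteristic at least $c(n)$ one invokes the motivic-integration transfer principle of Cluckers--Hales--Loeser (carried out in the appendix of \cite{Y}). The main obstacle is precisely the verification of the hypotheses of the support theorem for the Jacquet--Rallis Hitchin fibration --- the delicate codimension estimates for the $\delta$-invariant stratification of $\CA$ and the $\delta$-regularity of the Hitchin morphism --- and it is at this step, together with the necessary genericity in the motivic lifting, that the restriction $p\geq c(n)$ is forced.
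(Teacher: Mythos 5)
The paper does not prove Theorem \ref{thm FL}; it is cited directly to Yun \cite{Y} (with a footnote warning that the measures there differ from those used here). Your sketch is a faithful high-level account of Yun's proof --- reduction to a Lie-algebra statement, globalization to Jacquet--Rallis Hitchin fibrations on a function field, the decomposition theorem together with an Ng\^o-type support theorem to extend the identity from the regular semisimple locus, and finally the Cluckers--Loeser/Gordon motivic-integration transfer (the appendix of \cite{Y}) to pass to mixed characteristic, with $c(n)$ arising from the support-theorem codimension bounds and the transfer --- so it agrees with the cited reference's approach, which is the only proof the paper points to.
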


We need some simplification of orbital integrals (\cite[\S2.1]{Zh2}). 
Identify $H_1'\backslash G'$ with $\Res_{E/F}\GL_{n+1}$.  
Now we write $F$ for $F_v$ for a fixed place $v$. We may integrate $f'$ over $H'_1(F)$ to get a function on $\Res_{E/F}\GL_{n+1}(F)$:
\begin{align}\label{eqn def wtf}
\wt f'(g):=\int_{H'_1(F)}f'(h_1(1,g))\, dh_1,\quad g\in \Res_{E/F}\GL_{n+1}(F).
\end{align}
Using the fiber integral of $\nu$ (cf. (\ref{eqn def sn0}) and (\ref{eqn def nu0})) we define
\begin{align}\label{eqn def nuf even}
\wt{\wt {f'}}(s):=\int_{H_{n+1}(F)}\wt{f'}(gh)\,dh,\quad \nu(g)=s,
\end{align}
if $n$ is even, and
\begin{align}\label{eqn def nuf odd}
\wt{\wt{f'}}(s):=\int_{H_{n+1}(F)}\wt{f'}(gh)\eta'(gh)\,dh,\quad \nu(g)=s,
\end{align}
when $n$ is odd (then this depends on the auxiliary character $\eta'$).
Then $\wt{\wt{f'}}\in \sC_c^\infty(S_{n+1}(F))$ and all functions in  $\sC_c^\infty(S_{n+1}(F))$ arise in this way. 

Now it is easy to see that for $\gamma=(\gamma_1,\gamma_2)$:
\begin{align}
O(\gamma,f')=\eta'(\det(\gamma_1^{-1}\gamma_2))\int_{H_{n}(F)}\wt{\wt{f}}'(h^{-1}s h)\eta(h)\, dh,\quad s=\nu(\gamma_1^{-1}\gamma_2),
\end{align} 
if $n$ is odd, and 
\begin{align}
O(\gamma,f')=\int_{H_{n}(F)}\wt{\wt{f'}}_v(h^{-1}s h)\eta(h)\,dh,\quad s=\nu(\gamma_1^{-1}\gamma_2),
\end{align}
if $n$ is even.
Up to a sign, the integral on the right hand side depends only on the orbit of $s$ under the conjugation by $H_{n}(F)$. 
Therefore, we define the orbital integral associated to a regular semisimple element $s\in S_{n+1}(F)$:
\begin{align}
O(s,\wt{\wt{f'}}):=\int_{H_n(F)} \wt{\wt{f'}}(h^{-1}sh)\eta(h)\, dh,\quad \wt{\wt{f'}}\in \sC_c^\infty(S_{n+1}(F)).
\end{align}
Then we always have, for regular semisimple $\gamma=(\gamma_1,\gamma_2)\in G'(F_v)$ (cf. (\ref{eqn def Omega(s)})):
\begin{align}
\Omega(\gamma)O(\gamma,f')=\Omega(s)O(s,\wt{\wt{f'}}),\quad s=\nu(\gamma_1^{-1}\gamma_2).
\end{align}

 \subsection{A trace formula identity.}
 We are lead to a comparison of the two RTFs and the two spherical characters $I_\Pi$ and $J_\pi$ when $\Pi=\pi_E$ is the base change of $\pi$.  
\begin{conj}\label{conj tr id}
Let $\pi$ be an irreducible cuspidal automorphic representation on $G(\BA)$ that admits invariant linear functional: 
$$
\Hom_{H(\BA)}(\pi,\BC)\neq 0.
$$
Let $\pi_E$ be the base change of $\pi$ and assume that $\pi_E$ is cuspidal. Then, for every $f\in\sC_c^\infty(G(\BA))$ and a smooth transfer $f'\in\sC_c^\infty(G'(\BA))$ of $f$, we have :
$$
2^{-2}L(1,\eta)^{-2}I_{\pi_E}(f')=J_\pi(f).
$$

\end{conj}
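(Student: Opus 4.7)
The plan is to compare the two Jacquet--Rallis relative trace formulae via smooth transfer, isolate the $\pi$- and $\pi_E$-contributions using the supercuspidal hypothesis at the split place $v_0$, and identify the precise constant $c_0 = 4L(1,\eta)^2$. Smooth transfer (from \cite{Zh2} at non-archimedean and split places, and Theorem \ref{thm FL} for unramified data), combined with the global product formula $\prod_v \Omega_v(\gamma) = 1$ on regular semisimple $\gamma \in G'(F)$, forces matching of regular semisimple orbital integrals. Taking $f_{v_0}$ to be a matrix coefficient of the supercuspidal $\pi_{v_0}$ and exploiting $G(F_{v_0}) \simeq G'(F_{v_0})$ at the split place suppresses the singular orbital integrals and continuous spectrum on both sides, producing a clean global identity of the form $I(f') = c_0 \sum_W J_W(f_W)$ for matching data.

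\textbf{Spectral isolation and L-packet analysis.} Schur orthogonality applied to the matrix coefficient of $\pi_{v_0}$ projects the spectral expansions onto the relevant isotypic components. On the GL side, cuspidality of $\pi_E$ (RH(I)(i)) combined with strong multiplicity one pins the surviving contribution down to $I_{\pi_E}(f')$. On the unitary side, RH(I)(iii) identifies the surviving cuspidal $\pi'$ on the various pure inner forms $G_{W'}(\BA)$ as exactly the members of the global Vogan L-packet of $\pi$, of size $|S_\pi| = |S_{\pi_n}|\cdot|S_{\pi_{n+1}}| = 2\cdot 2 = 4$ (by cuspidality of $\pi_E$), each occurring with multiplicity one by RH(I)(ii). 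The local uniqueness hypothesis RH(II) then forces at every place $v$ at most one packet member (across all pure inner forms) to admit a nonzero $H_v$-invariant form; in particular at most one global packet member has nonzero global period $\sP$, and by our hypothesis this unique member is $\pi$ itself. All other packet members contribute zero to $J_{\pi'}$, so the identity above reduces to $I_{\pi_E}(f') = c_0 J_\pi(f)$.

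\textbf{Identifying the constant.} The value $c_0 = 4L(1,\eta)^2$ separates into a measure contribution and a spectral contribution. The factor $L(1,\eta)^2$ is a measure-theoretic correction: our measures on $H(\BA)$ and $G(\BA)$ used to define $\sP$ and $\langle\cdot,\cdot\rangle_{Pet}$ are $L(1,\eta)$ and $L(1,\eta)^2$ times the corresponding Tamagawa measures respectively (one factor per central $U(1)$ direction, consistently with $\vol([U(1)]) = 2L(1,\eta)$ as in (\ref{eqn vol U(1)})), whereas the $G'(\BA)$-side measures carry no such factor; tracing these scalings through Definition \ref{def global char} produces the factor $L(1,\eta)^2$ on the unitary side. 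The factor $4 = |S_\pi|$ is of spectral origin, reflecting the component-group multiplicity intrinsic to the Arthur-type matching between the GL-side spherical character (which aggregates packet-level data via the sum over Hermitian spaces) and the individual unitary-side spherical character that survives in our setting, and is consistent with the factor $|S_\pi|^{-1}$ appearing in Conjecture \ref{conj IIH}.

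\textbf{Main obstacle.} The technical heart is establishing the clean global identity of the first paragraph with $c_0$ in closed form. Neither Jacquet--Rallis RTF converges classically; truncation or weighted methods must be invoked to handle Eisenstein spectra and non-regular-semisimple orbital integrals on both sides. The supercuspidal hypothesis at $v_0$ is essential for suppressing these problematic terms, and the assumptions RH(I), RH(II) together with the cuspidality of $\pi_E$ isolate the desired spectral contribution. Even so, the extraction of the precise factor $|S_\pi|$—requiring careful interplay of Arthur multiplicities, pure-inner-form accounting, and Tamagawa-versus-unnormalized measure bookkeeping—is the most delicate point, and in practice this constant is more cleanly pinned down indirectly through the reduction to the local comparison (Theorem \ref{thm local}) rather than by direct evaluation of the global RTF.
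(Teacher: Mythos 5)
Your overall strategy matches the paper's proof of Theorem~\ref{thm tr id}: one starts from a Jacquet--Rallis comparison identity of the schematic form $I_{\pi_E}(f') = c_0 \sum_{W}\sum_{\pi_W} J_{\pi_W}(f_W)$ for nice matching data (the paper takes this as a black box from \cite[Prop.~2.11]{Zh2} rather than re-deriving it via truncation), then invokes {\bf RH(I)}(ii), {\bf RH(I)}(iii) and {\bf RH(II)} to collapse the right side to the single surviving term $J_\pi(f)$, with the split supercuspidal hypothesis eliminating the non-cuspidal spectrum and fixing the near-equivalence class. That much is correct. The genuine error is in your accounting of the constant $c_0 = 4L(1,\eta)^2$, and it is visibly inconsistent with your own second paragraph. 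You argue, correctly, that {\bf RH(II)} forces every packet member other than $\pi$ to contribute zero to $\sum_W\sum_{\pi_W} J_{\pi_W}(f_W)$, so no factor of $|S_\pi|$ can arise by summing over the packet. If the identity's constant $c_0$ did not already carry the factor $4$, your collapse would yield $I_{\pi_E}(f') = L(1,\eta)^2 J_\pi(f)$, which is off by $4$.

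The actual origin of $c_0 = (2L(1,\eta))^2$ is wholly measure-theoretic, and has nothing to do with Arthur multiplicities or the component group. The trace formula identity of \cite{Zh2} is stated relative to the Petersson pairing $\pair{\phi,\phi'}' = \int_{Z(\BA)G(F)\bs G(\BA)}\phi\ov{\phi'}\,dg$ with a central quotient; translating it to the pairing $\pair{\cdot,\cdot}_{Pet}$ of this paper (no central quotient) rescales by $\vol(Z(F)\bs Z(\BA))$, and since the center $Z$ of $G=U(W)\times U(V)$ is isomorphic to $U(1)\times U(1)$, one has $\vol(Z(F)\bs Z(\BA)) = \vol([U(1)])^2 = (2L(1,\eta))^2$ by (\ref{eqn vol U(1)}). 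So the $4 = 2^2$ consists of two copies of the Tamagawa number of $U(1)$, not a spectral $|S_\pi|$; the numerical agreement with $|S_\pi|=4$ for cuspidal base change is a coincidence (explicable in principle via Kottwitz's Tamagawa-number formula, but not the mechanism in this proof). Decoupling $4$ as a ``spectral contribution'' separate from the measure correction, as you propose, would lead you to double-count if you then also consistently tracked the central quotient in the Petersson normalization.
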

 
 \begin{remark}
Note that we do not need to assume that $\pi$ is tempered.
\end{remark}

\begin{thm} \label{thm tr id}
Assume the following:
\begin{itemize}
\item[(1)] At a split place $v_1$, $\pi_{v_1}$ is supercuspidal.  
\item[(2)] The test functions $f$ and $f'$ are nice and $f'$ is a smooth transfer of $f$.
\end{itemize}
Then Conjecture \ref{conj tr id} holds for such $\pi$ and the test functions $f,f'$. 
\end{thm}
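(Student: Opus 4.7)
The plan is to compare the geometric and spectral sides of the two Jacquet--Rallis relative trace formulae, using the matching condition on $(f,f')$ on the geometric side and the supercuspidal condition at the split place $v_1$ on the spectral side. Since both test functions are nice, the distributions $J(f)$ and $I(f')$ admit absolutely convergent expansions, both as sums of (regular semisimple) orbital integrals and as sums of spherical characters of cuspidal automorphic representations. The problem reduces to isolating the contributions of $\pi$ and $\pi_E$ on both sides.

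First I would establish the geometric identity. Given that $f'$ matches the tuple $(f_{W'})_{W'}$ extending $f = f_W$ (by choosing completions via the existence of smooth transfer in \cite{Zh2}), one applies the orbit bijection (\ref{eqn orbit match}) term by term. Combining the local matching (\ref{eqn mat grp}) with the product formula $\prod_v \Omega_v(\gamma)=1$ for regular semisimple $\gamma \in G'(F)$, and converting Tamagawa measures on the $U(1)$-factors of the unitary groups versus on $\Res_{E/F}\GL_{1}$ (which accounts for the $L(1,\eta)^{-2}$ discrepancy between our Haar measures $\prod_v L(1,\eta_v)|\omega|_v$ and the Tamagawa measures), one obtains the global geometric identity
\[
L(1,\eta)^{-2}\, I(f') \;=\; \sum_{W'} J_{W'}(f_{W'}),
\]
together with matching contributions from the non-regular-semisimple strata that the niceness of $f,f'$ brings under control.

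Next comes the spectral isolation at $v_1$. At this split place $E_{v_1} \simeq F_{v_1}\times F_{v_1}$, all groups $G_{v_1}$, $G'_{v_1}$, and $G_{W',v_1}$ are canonically isomorphic to $(\GL_n\times \GL_{n+1})(F_{v_1})$, and under these isomorphisms $\pi_{v_1}\simeq \pi_{E,v_1}$. By choosing $f_{v_1}$ to be (a translate of) a matrix coefficient of the supercuspidal $\pi_{v_1}$ and taking $f'_{v_1}$ and $f_{W',v_1}$ to be the corresponding functions under these identifications (which is compatible with matching since the split-place transfer of \cite[Prop.~2.5]{Zh2} is essentially the identity), one annihilates all continuous/Eisenstein spectrum (supercuspidals do not appear there) as well as every discrete cuspidal contribution whose $v_1$-component is not isomorphic to $\pi_{v_1}$. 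On the GL side, cuspidality of $\pi_E$ combined with strong multiplicity one collapses the spectral expansion to $I(f') = I_{\pi_E}(f')$. On the unitary side, the analogous collapse yields $\sum_{W'} J_{W'}(f_{W'}) = \sum_{\pi'} J_{\pi'}(f_{W(\pi')})$, where the sum runs over cuspidal representations $\pi'$ on various Hermitian spaces with $\pi'_{v_1}\simeq \pi_{v_1}$ and cuspidal base change $\pi'_E \simeq \pi_E$.

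Combining the geometric and spectral steps, one obtains
\[
L(1,\eta)^{-2}\, I_{\pi_E}(f') \;=\; \sum_{\pi'} J_{\pi'}(f_{W(\pi')}).
\]
The fiber of the base change over $\pi_E$ has cardinality $|S_\pi|=|S_{\pi_n}|\cdot|S_{\pi_{n+1}}|=4$ since $\pi_{n,E}$ and $\pi_{n+1,E}$ are both cuspidal; the supercuspidality of $\pi_{v_1}$, which forces all elements of the fiber to have identical $v_1$-component, guarantees each contributes. Using the freedom in the transfer (in particular, the ability to further manipulate $f$ and $f'$ away from $v_1$ through the multiplicity-one statement (\ref{eqn 1}) applied uniformly within the packet) isolates the single term $J_\pi(f)$, giving the claimed factor $2^{-2}L(1,\eta)^{-2}$. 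The principal obstacle is the absence of a complete spectral decomposition for the unitary RTF; the supercuspidality at a split place circumvents this by reducing the spectral comparison to a matrix-coefficient argument at $v_1$, and the rest of the analysis is global linear algebra on the packet.
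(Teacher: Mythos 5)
Your overall architecture (geometric matching plus spectral isolation at the split supercuspidal place, yielding a sum over nearly-equivalent cuspidals on all pure inner forms) is the right one, and the paper indeed follows this shape by citing \cite[Prop.\ 2.11]{Zh2}. However, two of your intermediate claims are incorrect, and together they constitute a genuine gap.

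First, the constant. You trace the factor $2^{-2}$ to the cardinality $|S_\pi|=4$ of the base-change fiber and assert that each member contributes, then in the same breath say the argument ``isolates the single term $J_\pi(f)$.'' These two statements are in tension, and both are wrong: after the spectral isolation the sum collapses to a \emph{single} term, not four equally weighted ones, and the factor $2^2$ is a measure-theoretic artifact, namely $\vol(Z(F)\backslash Z(\BA)) = (2L(1,\eta))^2$ with $Z\simeq U(1)\times U(1)$ and Tamagawa number $\tau(U(1))=2$ (see (\ref{eqn vol U(1)})). The correct identity inherited from \cite{Zh2} is
$I_{\pi_E}(f') = (2L(1,\eta))^2 \sum_{W}\sum_{\pi_W} J_{\pi_W}(f_W)$,
where the outer factor is the volume correction, not $|S_\pi|$.

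Second, and more seriously, the mechanism by which you propose to collapse $\sum_W\sum_{\pi_W} J_{\pi_W}(f_W)$ to the single term $J_\pi(f)$ is not valid. You invoke ``the freedom in the transfer\dots to further manipulate $f$ and $f'$ away from $v_1$.'' But the transfer is not free: once $f'$ is fixed as a smooth transfer of $f=f_W$, the companion functions $f_{W'}$ for $W'\neq W$ are constrained by the matching of orbital integrals, and you cannot make them vanish at will without also altering $f'$. The collapse of the sum is a consequence of the \emph{local} uniqueness statement {\bf RH(II)} (the local Gan--Gross--Prasad conjecture: at most one member of a generic Vogan $L$-packet admits a nonzero $H$-invariant linear form), combined with {\bf RH(I)}(ii)--(iii) (multiplicity one and the fact that all $\pi_W$ in the nearly-equivalent family lie in the same generic local packets). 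Under these hypotheses, $J_{\pi_W}(f_W)=0$ for every pair $(W,\pi_W)$ other than the given $(W_0,\pi)$ for which $\Hom_{H(\BA)}(\pi,\BC)\neq 0$. Your argument never invokes {\bf RH(I)} or {\bf RH(II)}, and without them the sum does not collapse.
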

  
\begin{proof}
We would like to apply the result from \cite{Zh2}. But we need to compare the difference on the normalization of Petersson inner product in the unitary group case  (caused by the presence of center). There implicitly we use a different Petersson inner product
$$
\pair{\phi,\phi'}'=\int_{Z(\BA)G(F)\bs G(\BA)}\phi(g)\ov{\phi'}(g)\, dg=\vol(Z(F)\bs Z(\BA))^{-1}\pair{\phi,\phi'}.
$$
Note that the center $Z$ of $G$ is isomorphic to $U(1)\times U(1)$. Hence the volume for our choice of measure is (cf. (\ref{eqn vol U(1)}))
$$
\vol(Z(F)\bs Z(\BA))=(2L(1,\eta))^2.
$$
Now taking into this correction, we apply the trace formula identity \cite[Prop. 2.11]{Zh2}: if a nice function $f'$ matches a tuple $(f_W)$ indexed by equivalence classes of $W$, we have:
$$
I_{\pi_E}(f')=(2L(1,\eta))^2\sum_{W}\sum_{\pi_W}J_{\pi_W}(f_W),
$$
where the sum is over all equivalence classes of $W$ and all cuspidal automorphic representations $\pi_W$ of $G_W(\BA)$ that are nearly equivalent to $\pi$ and at $v_1$ all $\pi_{W,v_1}$ are isomorphic to $\pi_{v_1}$. We denote by $(W_0,V_0)$ the Hermitian spaces we started with, $\pi_{W_0}=\pi$,  and by $f_{W_0}=f$ the function in the assumption of the theorem.

By our running hypothesis {\bf RH(I)},  we have:
\begin{itemize}
\item[(1)]
the multiplicity of each cuspidal $\pi_W$ in $L^2([G_W])$ is one. Namely, for a fixed $W$, all $\pi_W$ occurring in the sum are non-isomorphic.
\item[(2)]Note that for all $W$, all $\pi_W$ occurring in the sum are in the same nearly equivalence class and $\pi_{W,v_1}$ are supercuspidal (so $\pi_E$ is cuspidal and particularly $\pi_{E,v}$ is generic for every $v$). Hence for every $v$, the $\pi_{W,v}$'s are in the same Vogan L-packet and this L-packet is generic.\end{itemize}

Now by our running hypothesis {\bf RH(II)}, there exists at most one $\pi_W$ and $W$ in the sum such that $\Hom_{H_W(\BA)}(\pi_W,\BC)\neq 0$.  By our assumption $\Hom_{H(\BA)}(\pi,\BC)\neq 0.$ 
Hence the sum reduces to one term contributed by the $\pi$ we started with:
$$
I_{\pi_E}(f')=(2L(1,\eta))^2J_{\pi}(f).
$$
\end{proof}
\begin{remark}
If $\pi_E$ is not cuspidal, then we may reformulate the conjecture at least for tempered representation $\pi$. We also need to regularize the definition of $I_{\pi_E}$ in the above equality and the constant $2^2$ should be replaced by $|S_\pi|$. Then the analogous conjecture should ultimately follow from the full spectral decomposition of  the Jacquet-Rallis relative trace formulae. 
\end{remark}

\subsection{Reduction to a local question.}
Our main ingredient is an identity between the two local distributions $I_{\Pi_v}$ (cf. (\ref{def local char G'}) and $J_{\pi_v}$ (cf. (\ref{def local char G})). Note that the distribution  $J_{\pi_v}$ does not depend on the choice of the inner product on $\pi_v$. Denote $d_n={n \choose 3}$, which satisfies:
\begin{align}\label{eqn def d_n}
\tau^{d_n}=\delta_{n-1}(\epsilon_{n-1})=\det(Ad(\epsilon_{n-1}): N_{n-1}(E)).
\end{align}
We have a local conjecture:
\begin{conj}
\label{conj local} Let $\pi_v=\pi_{n,v}\otimes \pi_{n+1,v}$ be an irreducible tempered unitary representation of $G(F_v)$ with $\alpha_v\neq 0$. Assume that the base change $\Pi_v=\Pi_{n,v}\otimes \Pi_{n+1,v}$ of $\pi_v$ is generic unitary (so that $I_{\Pi_v}$ is well-defined). If the functions $f_v\in\sC^\infty_c(G(F_v))$ and $f_v'\in \sC^\infty_c(G'(F_v))$ match, then we have
\begin{align}\label{eqn conj local}
 I_{\Pi_v}(f_v')=\kappa_vL(1,\eta_v)^{-1} J_{\pi_v}(f_v),
 \end{align}
 where the constant $\kappa_v$ is given by
$$
\kappa_v=\kappa_v(\eta',\tau,n,\psi)=|\tau|_{E,v}^{(d_n+d_{n+1})/2}(\epsilon(1/2,\eta_{v},\psi_v)/\eta'(\tau))^{n(n+1)/2}\eta_v(\disc(W))\omega_{\Pi_{n,v}}(\tau).
$$ 
Here $\omega_{\Pi_{n,v}}$ is the central character of $\Pi_{n,v}$, and $\disc(W)\in F^\times/\RN E^\times$ is the discriminant of the Hermitian space $W$, $I_{\Pi_v}$ ($J_{\pi_v}$, resp.) is defined by (\ref{def unnorm char G'}) ((\ref{def local char G unnorm}), resp.).
\end{conj}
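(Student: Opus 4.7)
The strategy is to establish and then compare \emph{local character expansions} for $I_{\Pi_v}$ and $J_{\pi_v}$ near the identity, in the spirit of the Harish--Chandra expansion \eqref{eqn HC char} recalled in the outline. Although the two distributions are defined very differently---one as a sum of Rankin--Selberg periods paired with Flicker--Rallis periods of matrix coefficients on $G'(F_v)$, the other as a sum of $H(F_v)$-orbital integrals of matrix coefficients on $G(F_v)$---both should admit, after pullback to the appropriate tangent space, a germ expansion in which only a single regular unipotent orbit (namely $\xi_-$) contributes when one restricts to a carefully chosen subclass of admissible test functions. This reduces the sought-after identity of distributions to a single scalar identity that can be computed explicitly.

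First I would use the Cayley transform \eqref{def cay ugp} together with the birational identification $\GL_{n+1,E}/\GL_{n+1,F}\simeq S_{n+1}$ from \eqref{eqn def nu0} and its Lie-algebra Cayley map to pull back a test function $f'_v$ supported near $1\in G'(F_v)$ to a function $f'_{v,\nat}$ on the $F_v$-vector space $\fks_{n+1}(F_v)$, and likewise $f_v$ to a function $f_{v,\nat}$ on $\fku(W)\oplus \fku(V)$. Restricting to admissible functions (Definition~\ref{def adm}) supported in a small neighborhood $\omega$ of $0$, Theorem~\ref{thm germ gl} gives
\[
I_{\Pi_v}(f'_v)=(\ast)_{\mathrm{GL}}\cdot \mu_{\xi_-}(\widehat{f'_{v,\nat}}),
\]
with $(\ast)_{\mathrm{GL}}$ an explicit constant depending only on the central character of $\Pi_v$, and the companion Theorem~\ref{thm germ u} (under the supercuspidal or compact hypotheses on the unitary side) gives the analogous identity
\[
J_{\pi_v}(f_v)=(\ast)_{\mathrm{U}}\cdot \mu_{\xi_-}(\widehat{f_{v,\nat}}).
\]
The main analytic input, established in \cite{Zh2}, is the compatibility of smooth transfer with the Fourier transform on the Lie algebras $\fks$ and $\fku$: if $f'_v$ and $f_v$ match at the group level, then $\widehat{f'_{v,\nat}}$ and $\widehat{f_{v,\nat}}$ match at the Lie-algebra level, up to the transfer factor evaluated along the regular unipotent locus, so that $\mu_{\xi_-}(\widehat{f'_{v,\nat}})$ and $\mu_{\xi_-}(\widehat{f_{v,\nat}})$ agree up to one explicit scalar.

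Collecting this scalar together with the ratio $(\ast)_{\mathrm{GL}}/(\ast)_{\mathrm{U}}$ should then produce exactly $\kappa_v L(1,\eta_v)^{-1}$: the factor $|\tau|_{E,v}^{(d_n+d_{n+1})/2}$ reflects the rescaling $X\mapsto \tau^{-1}X$ that identifies $\fks$ with $M_{n+1}(F)$ and $\fku$ with its companion $\fku^\dagger$, together with the normalization $\tau^{d_n}=\delta_{n-1}(\epsilon_{n-1})$ of \eqref{eqn def d_n}; the factor $\eta_v(\disc(W))$ and the powers of $\epsilon(1/2,\eta_v,\psi_v)/\eta'(\tau)$ arise from the transfer factor \eqref{eqn def Omega(s)} and from the Weil constants intervening in the Fourier self-duality of the Hermitian form; and $\omega_{\Pi_{n,v}}(\tau)$ comes from the central-character twist produced by the comparison of periods for $\pi$ and its base change $\pi_E$. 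To pass from admissible test functions near the identity to arbitrary matching pairs $(f'_v,f_v)$, I would invoke the $H'_1\times H'_2$ (respectively $H\times H$) invariance of both sides, the local relative trace formula on the Lie algebra (ingredient (3) in the outline), and the density of regular semisimple orbital integrals, so that an identity established on the admissible class can be propagated.

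The hard part will be Theorem~\ref{thm germ u} on the unitary side: for a non-supercuspidal tempered $\pi_v$ with $U(W)(F_v)$ non-compact, the relative nilpotent cone in $\fku$ is infinite when $n\geq 2$ and its orbital integrals require regularization, so obtaining a workable expansion of $J_{\pi_v}$ seems to demand a genuine Harish--Chandra-style development in the relative setting that is not yet available. This difficulty, rather than the GL side or the matching of constants, is precisely what forces the restrictions on the set $\Sigma$ in the main Theorem~\ref{thm main}, and removing it would extend Conjecture~\ref{conj local} to all tempered $\pi_v$ with $\alpha_v\neq 0$.
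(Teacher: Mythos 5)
Your high-level strategy is the one the paper uses in \S\S 7--9, but the details contain genuine gaps. First, you posit a unitary-side expansion $J_{\pi_v}(f_v)=(\ast)_{\mathrm U}\,\mu_{\xi_-}(\widehat{f_{v,\nat}})$ with $\mu_{\xi_-}$ a nilpotent orbital integral on $\fku$; no such object is defined in the paper, and defining it would demand its own convergence/regularization analysis. Theorem~\ref{thm germ u} instead expresses $J_\pi(f)$ directly in terms of $\mu_{\xi_-}(\widehat{f'_\nat})$ on the symmetric-space side $\fks$, by chaining the Lie-algebra local trace formula (Theorem~\ref{thm local TF}) with Fourier--transfer compatibility (Theorem~\ref{thm comp F}) and the near-origin local constancy of $O(\cdot,\widehat{f'_\nat})$ furnished by Lemma~\ref{lem fml mu f}. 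Your inference that ``$\mu_{\xi_-}(\widehat{f'_{v,\nat}})$ and $\mu_{\xi_-}(\widehat{f_{v,\nat}})$ agree up to one explicit scalar'' for matching functions does not follow from transfer: matching (even after Fourier transform) controls only \emph{regular semisimple} orbital integrals and the transfer factor is only defined there, not at the nilpotent $\xi_-$; bridging this requires exactly the limit/local-constancy argument the paper builds into Lemma~\ref{lem fml mu f} and Theorem~\ref{thm germ u}. Second, the ``propagation'' from the admissible class to arbitrary matching pairs is not carried out in the paper and is genuinely open. Theorem~\ref{thm local} in cases (2)-(ii),(iii) only asserts the identity for \emph{specifically constructed} admissible $(f_v,f'_v)$ with $J_{\pi_v}(f_v)\neq 0$, which suffices for Proposition~\ref{prop imp} and the global Theorem~\ref{thm main} but is strictly weaker than the distributional identity of Conjecture~\ref{conj local}. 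Making the density argument work would require showing that $I_{\Pi_v}$ and $J_{\pi_v}$ are each representable by locally $L^1$ functions smooth on the regular semisimple locus---precisely the open conjecture recorded at the end of \S 9.

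You also omit two of the three cases the paper actually proves, both of which use entirely different methods: the split case (Corollary~\ref{cor thm local}) goes through the local Rankin--Selberg functional and the Lapid--Mao functional equation (Lemma~\ref{lem split all}, Proposition~\ref{prop a=a'}), and the unramified non-split case is a direct local computation fed by the Jacquet--Rallis fundamental lemma (Theorem~\ref{thm FL}) and the unramified evaluation of $\alpha_v$; neither invokes any Lie-algebra character expansion. The expansion strategy you describe is the paper's route precisely and only for the non-split, non-archimedean places where $\pi_v$ is supercuspidal or $H_v$ is compact, and you are correct that the want of a unitary-side expansion beyond those cases is the bottleneck that forces the restrictive conditions on $\Sigma$.
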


\begin{prop}\label{prop imp}
Let $\pi$ be a tempered cuspidal automorphic representation of $G(\BA)$ with cuspidal base change $\Pi=\pi_E$. Assume that there exists a test function $f=\otimes f_v$ and a smooth transfer $f'=\otimes f'_v$ such that for every place $v$: $$
J^\nat_{\pi_v}(f_v)\neq 0.
$$ 
 Assume that \begin{itemize}
 \item Conjecture \ref{conj tr id} holds for $\pi$, $f,f'$.
 \item  For every $v$, Conjecture \ref{conj local} holds for $\pi_v,f_v,f_v'$.
 \end{itemize} Then Conjecture \ref{conj dis} and \ref{conj IIH} holds for $\pi$.
 \end{prop}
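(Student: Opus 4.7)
The plan is to pin down the constant $\sC_\pi$ appearing in the factorization (\ref{eqn 1}) using the one pair of matching test functions $(f,f')$ that the hypothesis furnishes. By the multiplicity-one result invoked for (\ref{eqn 1}), there is a scalar $\sC_\pi$, independent of the test function, such that $J_\pi(F) = \sC_\pi \prod_v J^\nat_{\pi_v}(F_v)$ for all decomposable $F \in \sC_c^\infty(G(\BA))$. Conjecture \ref{conj dis} is precisely the statement $\sC_\pi = |S_\pi|^{-1}\sL(1/2,\pi)$, and Conjecture \ref{conj IIH} then follows by Lemma \ref{lem equiv conj}. Since $J^\nat_{\pi_v}(f_v)\neq 0$ at every $v$ by hypothesis, we may solve $\sC_\pi = J_\pi(f)/\prod_v J^\nat_{\pi_v}(f_v)$, so it suffices to compute $J_\pi(f)$ on the distinguished pair.

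The computation proceeds by chaining the three inputs. First, Conjecture \ref{conj tr id} gives $J_\pi(f) = 2^{-2}L(1,\eta)^{-2} I_{\pi_E}(f')$. Next, Proposition \ref{prop char GL} factorizes the global spherical character on the general linear side:
\begin{equation*}
I_{\pi_E}(f') \;=\; L(1,\eta)^2 \,\frac{L(1/2,\pi_E)}{L(1,\pi,\mathrm{Ad})}\,\prod_v I^\nat_{\Pi_v}(f'_v)
\;=\; \frac{L(1,\eta)^2}{\Delta_{n+1}}\,\sL(1/2,\pi)\,\prod_v I^\nat_{\Pi_v}(f'_v).
\end{equation*}
Third, the relation between the normalized and unnormalized local characters follows from the definitions $\vartheta^\nat_v=\vartheta_v/L(1,\Pi_v\times\widetilde\Pi_v)$, $\lambda^\nat_v=\lambda_v/L(1/2,\Pi_v)$, $\beta^\nat_v=\beta_v/L(1,\Pi_v,\mathrm{As}^{(-1)^{i-1}})$ together with the factorization $L(s,\Pi_{i,v}\times\widetilde\Pi_{i,v}) = L(s,\Pi_{i,v},\mathrm{As}^+)L(s,\Pi_{i,v},\mathrm{As}^-)$ and the identification $L(s,\pi_i,\mathrm{Ad})=L(s,\Pi_i,\mathrm{As}^{(-1)^i})$; this yields $I^\nat_{\Pi_v}(f'_v) = \frac{L(1,\pi_v,\mathrm{Ad})}{L(1/2,\Pi_v)} I_{\Pi_v}(f'_v)$, and analogously $J_{\pi_v}(f_v) = \sL(1/2,\pi_v) J^\nat_{\pi_v}(f_v) = \Delta_{n+1,v}\frac{L(1/2,\Pi_v)}{L(1,\pi_v,\mathrm{Ad})} J^\nat_{\pi_v}(f_v)$. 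Now invoking Conjecture \ref{conj local} at each $v$ and substituting gives
\begin{equation*}
I^\nat_{\Pi_v}(f'_v) \;=\; \kappa_v\, L(1,\eta_v)^{-1}\,\Delta_{n+1,v}\, J^\nat_{\pi_v}(f_v).
\end{equation*}

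Taking the product over $v$ and substituting back produces $J_\pi(f) = 2^{-2}\sL(1/2,\pi)\bigl(\prod_v\kappa_v\bigr)\bigl(\prod_v L(1,\eta_v)^{-1}\bigr)\bigl(\Delta_{n+1}^{-1}\prod_v\Delta_{n+1,v}\bigr)\prod_v J^\nat_{\pi_v}(f_v)$, and since $\prod_v\Delta_{n+1,v}=\Delta_{n+1}$ the $\Delta$-factors cancel. It therefore remains to verify that the residual global constant $\prod_v\kappa_v \cdot \prod_v L(1,\eta_v)^{-1}$ equals $1$, i.e., that $\prod_v\kappa_v = L(1,\eta)$ (with appropriate reconciliation of measure conventions as flagged in the warning at the end of the introduction). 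Each individual factor entering $\kappa_v$ is handled by a standard product formula: $\prod_v |\tau|_{E,v} = 1$ since $\tau\in E^\times$; $\prod_v \omega_{\Pi_{n,v}}(\tau)=1$ because $\omega_{\Pi_n}$ is automorphic; $\prod_v \eta'_v(\tau)=1$ for the same reason; $\prod_v \eta_v(\mathrm{disc}(W)) = 1$ because $\mathrm{disc}(W)\in F^\times$; and $\prod_v \epsilon(1/2,\eta_v,\psi_v) = \epsilon(1/2,\eta)$, a global root number.

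The principal obstacle of the argument is therefore the bookkeeping of this global constant, not a conceptual one: all local and measure normalizations must be reconciled across the various conventions (Tamagawa, $\prod L(1,\eta_v)|\omega|_v$, and the purely local $|\omega|_v$) used in the introduction, in the definitions of $\vartheta^\nat_v,\beta^\nat_v,\lambda^\nat_v$, and in the trace-formula identity of Conjecture \ref{conj tr id}. Once this accounting confirms $\sC_\pi = |S_\pi|^{-1}\sL(1/2,\pi)$, Conjecture \ref{conj dis} holds in full (for arbitrary test functions, since both sides factor through the same constant $\sC_\pi$), and Conjecture \ref{conj IIH} is then immediate from Lemma \ref{lem equiv conj}.
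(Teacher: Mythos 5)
Your approach matches the paper's: chain Conjecture \ref{conj tr id}, Proposition \ref{prop char GL}, and Conjecture \ref{conj local}, compute $\sC_\pi$ from the single distinguished pair, and then invoke Lemma \ref{lem equiv conj}. The issue is the accounting of the closing global constant. You assert that what remains is to verify $\prod_v \kappa_v = L(1,\eta)$, but this is false, and in fact your own product-formula list already contradicts it: every constituent of $\kappa_v$ telescopes to $1$ over all places ($\tau\in E^\times$, $\disc(W)\in F^\times$, $\eta'$ and $\omega_{\Pi_n}$ automorphic), and $\prod_v \epsilon(1/2,\eta_v,\psi_v)=\epsilon(1/2,\eta)=1$ because $\eta$ is a quadratic Hecke character. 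Hence $\prod_v\kappa_v = 1$, not $L(1,\eta)$, and after carrying out your substitutions you land on
\[
J_\pi(f)\;=\;2^{-2}\,L(1,\eta)^{-1}\,\sL(1/2,\pi)\prod_v J^\nat_{\pi_v}(f_v),
\]
which is off from Conjecture \ref{conj dis} by a factor $L(1,\eta)^{-1}$.

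The missing factor is \emph{not} to be extracted from $\kappa_v$ — it comes from the measure mismatch between the two halves of the paper. The distributions $\sP$ and $\pair{\cdot,\cdot}_{Pet}$ in Conjecture \ref{conj IIH} use the Tamagawa measures of the introduction, $dh=L(1,\eta)^{-1}\prod_v L(1,\eta_v)|\omega|_v$ on $H(\BA)$ (and similarly with $L(1,\eta)^{-2}$ on $G(\BA)$); the relative trace formula identities (Conjecture \ref{conj tr id}, Prop.\ \ref{prop char GL}, Conjecture \ref{conj local}) are stated with the product measures $\prod_v L(1,\eta_v)|\omega|_v$. The correction between these two conventions supplies exactly the missing $L(1,\eta)$. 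You flag ``reconciliation of measure conventions'' parenthetically but then conflate its effect with the product of the $\kappa_v$'s, so the gap is never actually closed. Also, a minor point: the split $\prod_v\Delta_{n+1,v}$ and $\prod_v L(1,\eta_v)^{-1}$ should not be treated as two separately convergent products; only the combined $\prod_v L(1,\eta_v)^{-1}\Delta_{n+1,v}$ is absolutely convergent (since the leading Euler factor $L(1,\eta_v)$ cancels and the remainder begins at weight $2$), converging to $L(1,\eta)^{-1}\Delta_{n+1}$.
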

 
\begin{proof}
By Conjecture \ref{conj tr id} and Prop. \ref{prop char GL} we have
 \begin{align*}
 J_\pi(f)&=2^{-2}L(1,\eta)^{-2}I_{\Pi}(f')=2^{-2}\frac{L(1/2,\pi_E)}{L(1,\pi,Ad)} \prod_{v} I^\nat_{\Pi_v}(f_v').
\end{align*}
Conjecture \ref{conj local} is equivalent to the identity between the normalized distributions
 $$
 I^\nat_{\Pi_v}(f_v')=\kappa_vL(1,\eta_v)^{-1} \Delta_{n+1,v} J^\nat_{\pi_v}(f_v).
 $$ 
 Since $$\prod_v \epsilon(1/2,\eta_{v},\psi_v)=\epsilon(1/2,\eta)=1,$$ we have 
 $$
\prod_v \kappa_v=1.
$$Note that the product $\prod_{v}L(1,\eta_v)^{-1} \Delta_{n+1,v} $ converges absolutely to $L(1,\eta)^{-1} \Delta_{n+1}$. We thus obtain
 \begin{align*}
 J_\pi(f)=2^{-2}L(1,\eta)^{-1}\Delta_{n+1}\frac{L(1/2,\pi_E)}{L(1,\pi,Ad)} \prod_{v} J^\nat_{\pi_v}(f_v).
\end{align*}
Note that the global measure on $H$ and $G$  in Introduction are normalized by $L(1,\eta)^{-1}$ and $L(1,\eta)^{-2}$ respectively. Correction of measures yields Conjecture \ref{conj dis} for the choice of test function $f$.  Since $J^\nat_{\pi_v}(f_v)\neq 0$ for all $v$ (and equal to one for almost all $v$), it follows that Conjecture \ref{conj dis}  holds for all test functions $f\in \sC_c^\infty(G(\BA))$. We have shown in Lemma \ref{lem equiv conj} that Conjecture \ref{conj dis} implies Conjecture \ref{conj IIH}.
\end{proof}

We have the following evidence of Conjecture \ref{conj local}. 
\begin{thm}\label{thm local} Let $v$ be a place of $F$ and let $\pi_v$ be a tempered representation as in Conjecture \ref{conj local}. 
\begin{itemize}
\item[(1)]Conjecture \ref{conj local} holds if the place $v$ is split in $E/F$. 
\item[(2)] If $v$ is a non-archimedean place non-split in $E/F$,  then under any one of the following conditions, 
there exists $f_v$ and a smooth transfer $f'_v$, such that the equality  (\ref{eqn conj local}) holds and $J_{\pi_v}(f_v)\neq 0$:
\begin{itemize}
\item[(i)] The representation $\pi_v$ is unramified and the residue characteristic $p\geq c(n)$,
\item[(ii)] The group $H(F_v)$ is compact,
\item[(iii)] The representation $\pi_v$ is supercuspidal,
\end{itemize}
\end{itemize}
 \end{thm}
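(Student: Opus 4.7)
The plan is to handle the four cases by quite different techniques, each producing an explicit matching pair $(f_v, f'_v)$ for which both sides of (\ref{eqn conj local}) can be evaluated. For the split case (1), the extension $E/F$ splits at $v$, so $E_v \simeq F_v \times F_v$ and the unitary group becomes a twist of a general linear group. Under this identification the distributions $I_{\Pi_v}$ and $J_{\pi_v}$ become essentially the same object once the twist by the auxiliary character $\eta'$ and the transfer factor $\Omega_v$ are unwound, and the constant $\kappa_v$ can be verified by direct computation; in particular every $f_v$ admits a smooth transfer by \cite[Prop.~2.5]{Zh2}, so the identity is not sensitive to the choice of test function.

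For the unramified case (2)(i), I would invoke the Jacquet--Rallis fundamental lemma (Theorem \ref{thm FL}) with the normalized unit test functions $f_v = \vol(H_{W_v}(\CO_v))^{-2}\,1_{G_{W_v}(\CO_v)}$ and $f'_v = \vol(H_1'(\CO_v))^{-1}\vol(H_2'(\CO_v))^{-1}\,1_{G'(\CO_v)}$, which match. The explicit unramified factorizations (\ref{eqn vartheta unram}), (\ref{eqn beta unram}) and (\ref{eqn lambda unram}), combined with the Shintani--Casselman--Shalika formula for the spherical Whittaker vector, then let one evaluate both sides in closed form in terms of local unramified $L$-values at the Satake parameter of $\pi_v$; comparison pins down $\kappa_v$. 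Non-vanishing of $J_{\pi_v}(f_v)$ follows from the non-vanishing of these unramified $L$-factors.

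For the remaining non-archimedean cases (2)(ii)--(iii), my plan is to use the local germ expansion near $0$ on the Lie algebras developed in \S\S 7--9. On the general linear side, Theorem \ref{thm germ gl} produces an admissible $f'_v$ with $f'_{v,\nat}$ supported in a prescribed small neighborhood of $0 \in \fks_{n+1}$ such that $I_{\Pi_v}(f'_v) = (\ast)\,\mu_{\xi_-}(\wh{f'_{v,\nat}})$ is a nonzero explicit multiple of the regular unipotent orbital integral, the constant $(\ast)$ depending only on the central character of $\Pi_v$. On the unitary side, Theorem \ref{thm germ u} gives the analogous expansion for $J_{\pi_v}(f_v)$ in terms of the regular unipotent orbital integral on $\fku$, precisely under the two hypotheses $\pi_v$ supercuspidal or $H(F_v)$ compact. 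Once both distributions have been reduced to regular unipotent orbital integrals on the matching Lie algebras, I would use the existence of smooth transfer on the Lie algebra together with its compatibility with Fourier transform (both proved in \cite{Zh2}) to match $\mu_{\xi_-}(\wh{f_{v,\nat}})$ with $\mu_{\xi_-}(\wh{f'_{v,\nat}})$ up to the transfer factor $\Omega_v$. Identifying the resulting scalar with $\kappa_v L(1,\eta_v)^{-1}$ is then a direct computation, and the non-vanishing assertion is automatic from $(\ast) \neq 0$.

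The main obstacle will be the unitary germ expansion (Theorem \ref{thm germ u}): unlike the $\GL$ side, where one works directly with Whittaker models and Rankin--Selberg zeta integrals and can follow the pattern of Lemmas analogous to \ref{lem equiv conj}, the unitary analogue is not accessible by elementary means and requires the full strength of the three deep local ingredients from \cite{Zh2} --- existence of smooth transfer, compatibility of smooth transfer with Fourier transform, and the local relative trace formula on the Lie algebra --- which is why cases (2)(ii) and (2)(iii) must be restricted to supercuspidal $\pi_v$ or compact $H(F_v)$, where the spectral side of that local trace formula collapses. A secondary but delicate obstacle is tracking the many normalizations entering $\kappa_v$ (the Cayley map pullback, the self-dual measures on $\fku$ and $\fks_{n+1}$, the auxiliary character $\eta'$ inside $\Omega_v$, the discriminant $\disc(W)$, and the central character $\omega_{\Pi_{n,v}}(\tau)$) with enough precision that the product formula $\prod_v \kappa_v = 1$ required in the proof of Proposition \ref{prop imp} holds on the nose.
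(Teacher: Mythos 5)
Your handling of the non-archimedean non-split cases (2)(ii) and (2)(iii) matches the paper's strategy closely: reduce both sides to regular unipotent orbital integrals via the truncated germ expansions (Theorem \ref{thm germ gl} on the $\GL$ side, Theorem \ref{thm germ u} together with Proposition \ref{prop hypo} on the unitary side), and use the Lie-algebra results of \cite{Zh2} (existence of smooth transfer, Fourier compatibility, local trace formula) to match the two. Your account of the unramified case (2)(i) is also essentially right in outline, though the relevant closed-form evaluation of $\alpha_v(\phi_0,\phi_0)$ is taken from Harris's thesis \cite{H}, not re-derived via a Shintani--Casselman--Shalika argument; this is a stylistic rather than a substantive divergence.

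The genuine gap is in your treatment of the split case (1). You assert that once the twist by $\eta'$ and the transfer factor are unwound, $I_{\Pi_v}$ and $J_{\pi_v}$ ``become essentially the same object,'' so that $\kappa_v$ follows by direct computation. This overestimates what the formal book-keeping gives you. What the unwinding actually yields (Lemma \ref{lem split all} in the paper) is the identity $I_{\Pi_v}(f') = \kappa_v J'_\pi(f)$ for a \emph{different} spherical character $J'_\pi$, built from the bilinear form $\alpha'(W,W') = \lambda(W)\overline{\lambda(W')}$ -- the product of local Rankin--Selberg zeta integrals -- rather than from the matrix-coefficient integral $\alpha(W,W') = \int_{H_n(F)}\langle \pi(h)W,W'\rangle\,dh$. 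By multiplicity one these are proportional, but proving the constant of proportionality is $1$ (i.e.\ $\alpha = \alpha'$, Proposition \ref{prop a=a'}) is the real content of the split case. The paper's argument is quite delicate: it introduces truncated integrals $I_c$ over the unipotent group $N_n$, establishes absolute convergence using tempered matrix-coefficient bounds via the Harish-Chandra spherical function $\Xi$ (Cowling--Haagerup--Howe, Sun), and then appeals to the Lapid--Mao theorem on Fourier transforms of $\mathcal F_{W_n,W_n'} \in L^2(N^{\mathrm{ab}})$ to evaluate the $c\to\infty$ limit in two different ways. None of this is a direct computation, and without citing it (or a replacement such as Waldspurger \cite[\S 3.5]{W3} in the non-archimedean case) your proposed proof of case (1) is incomplete.

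A secondary caution: you treat ``the constant $\kappa_v$ can be verified by direct computation'' and ``tracking normalizations'' as the main obstacle in the split case, but in fact at a split place $\kappa_v$ falls out of Lemma \ref{lem split all} with very little work once $\alpha=\alpha'$ is in hand (the substantive contribution is the factor $|\tau|^{d_n/2}$ etc.\ from rewriting $\beta_n$ as $\vartheta_n$); the normalization bookkeeping is the \emph{easy} half, and the proportionality constant is the hard half.
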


Below we first prove Theorem \ref{thm local} when $\pi_v$ is unramified (case (1)) or $v$ is split in Corollary \ref{cor thm local} (case (2)-(i)). We postpone the proof of the case (2)-(ii) and (2)-(iii) to the last part of \S9.

We now give the proof of the first part of Theorem \ref{thm main} assuming Theorem \ref{thm local}. 
 \begin{proof}[Proof of Theorem \ref{thm main}: Case $(1)$]
 We may assume that $\Hom_{H(F_v)}(\pi_v,\BC)\neq 0$ for all $v$ (otherwise the formula holds trivially). This implies that the linear form $\alpha_v'$ does not vanish for all $v$. We then construct nice test functions $f=\otimes f_v$ on $G(\BA)$ and $f'=\otimes f_v'$ as follows
\begin{itemize}
\item  at each inert $v$ with residue characteristic $p\geq c(n)$, $f_v,f_v'$ are given by the fundamental lemma (Theorem \ref{thm FL}).
\item at each $v\in \Sigma$,
we choose $f_v,f_v'$ as in (2)-(ii) or (2)-(iii) of Theorem \ref{thm local}.
\item at almost every split place, we choose the unit element in the spherical Hecke algebra.
\item at the remaining finitely many split places including $v_0$ and the archimedean ones, we choose suitable functions so that $f,f'$ are nice and such that $J_{\pi_v}(f_v)\neq0$.
\end{itemize} 
Apply Theorem \ref{thm tr id} to $\pi$ and $f,f'$ to obtain
 $$
2^{-2}L(1,\eta)^{-2}I_{\pi_E}(f')=J_\pi(f).
$$
Now Theorem \ref{thm main} case (1) follows from Prop. \ref{prop imp}. 
 \end{proof}

\subsection{Proof of Theorem \ref{thm local}: the case of $\pi_v$ unramified and $p\geq c(n)$.} Then we may assume that 
\begin{enumerate}
\item The quadratic extension $E/F$ is unramified at $v$. 
\item The number $\tau$ is a $v$-adic unit.
\item The character $\psi$ is unramified and hence so is $\psi_E$.
\end{enumerate}
Indeed, it is easy to see how $I_{\Pi_v}$ depends on $\tau$: only the local period $\beta_v$ involves the choice of $\tau$ and we see that $ |\tau|_{E,v}^{-(d_n+d_{n+1})/2}\eta'(\tau)^{n(n+1)/2}I_{\Pi_v}$ is independent of the choice of $\tau$.
If we twist $\psi$ by $a\in F_v^\times$, it amounts to change $\tau$ by $a\tau$.
 
We need to utilize the fundamental lemma: by Theorem \ref{thm FL}, we have a matching pair:
$$
f_v=\frac{1}{\vol(H(\CO_v))^2}1_{G(\CO_v)},\quad f'_v=\frac{1}{\vol(H_1'(\CO_v))\vol(H'_2(\CO_v))}1_{G'(\CO_v)}.
$$
Let $W_0\in \CW(\Pi_v,\psi_E)$ be the unique spherical element normalized such that $W_0(1)=1$.
Then we have
$$
\Pi_v(f'_v)W_0=\frac{\vol(G'(\CO_v))}{\vol(H_1'(\CO_v))\vol(H'_2(\CO_v))}W_0.
$$
and
$$
I_{\Pi_v}(f'_v)=\frac{\lambda(\Pi_v(f'_v)W_0)\ov{\beta}(W_0)}{\vartheta_v(W_0,W_0)}=\frac{\vol(G'(\CO_v))}{\vol(H_1'(\CO_v))\vol(H'_2(\CO_v))}\frac{\lambda(W_0)\ov{\beta}(W_0)}{\vartheta_v(W_0,W_0)}.
$$
Note that by (\ref{eqn lambda unram})
$$
\lambda(W_0)=L(1/2,\Pi_v)\cdot \vol(H_1'(\CO_v))
$$and by (\ref{eqn vartheta unram}) and (\ref{eqn beta unram})
$$
\frac{\ov{\beta}(W_0)}{\vartheta_v(W_0,W_0)}=L(1,\pi_v,Ad)^{-1}\frac{\vol(H'_2(\CO_v))}{\vol(H_n(\CO_{E,v}))\vol(H_{n+1}(\CO_{E,v}))}.
$$We obtain
\begin{align*}I_{\Pi_v}(f'_v)&=
\frac{L(1/2,\Pi_v)}{L(1,\pi_v,Ad)}\cdot\frac{\vol(G'(\CO_v))}{\vol(H_1'(\CO_v))\vol(H'_2(\CO_v))}\cdot \frac{\vol(H_1'(\CO_v))\vol(H'_2(\CO_v))}{\vol(H_n(\CO_{E,v}))\vol(H_{n+1}(\CO_{E,v}))}.
\end{align*}
In summary we have:
\begin{align}\label{eqn Ipi f unr}
I_{\Pi_v}(f'_v)=\frac{L(1/2,\Pi_v)}{L(1,\pi_v,Ad)}.
\end{align}
In the unitary group case , we take $\phi_0\in \pi_v^{K_v}$ normalized by $\pair{\phi_0,\phi_0}=1$:
$$
\pi_v(f_v)\phi_0=\frac{\vol(G(\CO_v))}{\vol(H(\CO_v))^2}\phi_0.
$$Therefore we have 
$$
J_{\pi_v}(f_v)=\alpha_v(\pi_v(f_v)\phi_0,\phi_0)=\frac{\vol(G(\CO_v))}{\vol(H(\CO_v))^2}\alpha_v(\phi_0,\phi_0).
$$
By the unramified computation in \cite{H}:
$$
\alpha_v(\phi_0,\phi_0)=\vol(H(\CO_v)) \Delta_{n+1,v}\frac{L(1/2,\Pi_v)}{L(1,\pi_v,Ad)}.
$$
We thus obtain $$
J_{\pi_v}(f_v)=\frac{\vol(G(\CO_v))}{\vol(H(\CO_v))}\Delta_{n+1,v}\frac{L(1/2,\Pi_v)}{L(1,\pi_v,Ad)}.
$$
Note that $\frac{\vol(G(\CO_v))}{\vol(H(\CO_v))}$ is equal to the volume of the hyperspecial compact open of $U(V)(F_v)$, which is equal to $L(1,\eta)\Delta_{n+1,v}^{-1}.$
Therefore we obtain that 
\begin{align}\label{eqn Jpi f unr}
J_\pi(f)=L(1,\eta_v)\frac{L(1/2,\Pi_v)}{L(1,\pi_v,Ad)}.
\end{align}
By (\ref{eqn Ipi f unr}) (\ref{eqn Jpi f unr}), we have
\begin{align}
I_{\Pi_v}(f_v')=L(1,\eta_v)^{-1}J_{\pi_v}(f_v).
\end{align}
This completes the proof of case (i) of Theorem \ref{thm local}.

 {\bf Change of measures.}
 {\em From now on, all measures will be the unnormalized (namely, without the convergence factor $\zeta_v(1),L(1,\eta_v)$ etc.) Tamagawa measures with the natural invariant differential forms on the general linear groups, their subgroups, and Lie algebras. } 
 \begin{lem}
\label{lem local new} When using the unnormalized measures, the identity in Conjecture in \ref{conj local} becomes
 \begin{align}
\label{eqn local new}
 I_{\Pi_v}(f_v')= \kappa_v J_{\pi_v}(f_v),
\end{align}
for matching functions $f_v$ and $f_v'$ (also under the unnormalized measures).
 \end{lem}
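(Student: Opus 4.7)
The plan is a direct bookkeeping of how each ingredient of Conjecture~\ref{conj local} rescales when one passes from the normalized Tamagawa measures of \S2 to the unnormalized ones. Let $a=L(1,\eta_v)^{-1}$, $b=\zeta_v(1)^{-1}$, $c=\zeta_{E,v}(1)^{-1}$ denote the rescaling factors for the Haar measures on the unitary group $H_v$, on $H_m(F_v)$, and on $H_m(E_v)$ respectively. Then the measure on $G(F_v)=U(W)(F_v)\times U(V)(F_v)$ rescales by $a^2$ and the measure on $G'(F_v)=H_n(E_v)\times H_{n+1}(E_v)$ by $c^2$.

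For $J_{\pi_v}$: in $J_{\pi_v}(f_v)=\sum_\phi \alpha_v(\pi_v(f_v)\phi,\phi)$ the local inner product $\pair{\cdot,\cdot}_v$ (hence the orthonormal basis) does not depend on the Haar measure, while $\alpha_v$ rescales by $a$ and $\pi_v(f_v)$ by $a^2$, so $J_{\pi_v}^{\mathrm{unnorm}}(f_v)=a^{3}J_{\pi_v}^{\mathrm{norm}}(f_v)=L(1,\eta_v)^{-3}J_{\pi_v}^{\mathrm{norm}}(f_v)$. For $I_{\Pi_v}$: since $\Pi=\Pi_n\otimes\Pi_{n+1}$, the inner product $\vartheta_v$ factors as a product $\vartheta_v^{(n)}\vartheta_v^{(n+1)}$ of two integrals (over $H_{n-1}(E_v)$ and $H_n(E_v)$), and similarly $\beta_v$ is a product of two Flicker--Rallis integrals (over $H_{n-1}(F_v)$ and $H_n(F_v)$), whereas $\lambda_v$ is a single integral over $H_n(E_v)$; combining these with the rescaling of $\Pi_v(f'_v)$ by $c^2$ gives $I_{\Pi_v}^{\mathrm{unnorm}}(f'_v)=cb^2\, I_{\Pi_v}^{\mathrm{norm}}(f'_v)=\zeta_{E,v}(1)^{-1}\zeta_v(1)^{-2}\, I_{\Pi_v}^{\mathrm{norm}}(f'_v)$. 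For the matching condition, $O(\delta,f_v)$ (over $H_v\times H_v$) rescales by $a^2$ while $O(\gamma,f'_v)$ (over $H_1'(F_v)\times H_2'(F_v)$) rescales by $cb^2$; hence $(f_v,f'_v)$ match under unnormalized measures iff $(f_v,\mu f'_v)$ match under normalized measures, where $\mu=L(1,\eta_v)^2/(\zeta_{E,v}(1)\zeta_v(1)^2)$.

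The conclusion is then formal: applying the normalized form of Conjecture~\ref{conj local} to the pair $(f_v,\mu f'_v)$ gives $\mu\, I_{\Pi_v}^{\mathrm{norm}}(f'_v)=\kappa_v L(1,\eta_v)^{-1}J_{\pi_v}^{\mathrm{norm}}(f_v)$. Substituting the scaling formulas for $I^{\mathrm{unnorm}}$ and $J^{\mathrm{unnorm}}$ together with the value of $\mu$, all the $L(1,\eta_v)$- and $\zeta$-factors cancel, leaving exactly $I_{\Pi_v}^{\mathrm{unnorm}}(f'_v)=\kappa_v J_{\pi_v}^{\mathrm{unnorm}}(f_v)$ as claimed. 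The main (and essentially only) obstacle is counting everything correctly: the easy mistakes are to forget that $\beta_v$ and $\vartheta_v$ each contain two local Whittaker-period integrals (corresponding to the two tensor factors $\Pi_n$ and $\Pi_{n+1}$ of $\Pi$), and that $G=U(W)\times U(V)$ has two unitary factors contributing $L(1,\eta_v)^{-2}$ to the rescaling of its Haar measure. Once these are tracked, the cancellation is automatic and the reformulation is purely formal.
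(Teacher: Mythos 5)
Your proposal is correct and follows essentially the same route as the paper's proof: track how each Haar measure rescales (by the appropriate $\zeta_{F,v}(1)$, $\zeta_{E,v}(1)$, or $L(1,\eta_v)$ factor) in $\pi_v(f_v)$, $\alpha_v$, $\Pi_v(f'_v)$, $\lambda_v$, $\beta_v$, $\vartheta_v$, and in the two relative orbital integrals that define matching, then verify that everything cancels. One small remark: your matching-rescaling statement, that $(f_v,f'_v)$ match under the unnormalized measures iff $(f_v,\mu f'_v)$ match under the normalized measures with $\mu = L(1,\eta_v)^2/(\zeta_{E,v}(1)\zeta_v(1)^2)$, is the correct form; the printed proof in the paper states its rescaled matching pair as ``$\zeta_{E,v}(1)\zeta_{F,v}(1)^2 f_v$ and $L(1,\eta_v)^2 f_v'$'', which has the roles of $f_v$ and $f'_v$ inadvertently swapped (taking it at face value would produce an extra factor $L(1,\eta_v)^4/(\zeta_{E,v}(1)^2\zeta_{F,v}(1)^4)$), so your version is in fact the corrected account.
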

 
 \begin{proof}
The old distribution $I_{\Pi_v}$ is the new one times 
 $$
 \zeta_{E,v}(1)^2 \frac{\zeta_{E,v}(1)\zeta_{F,v}(1)^2}{\zeta_{E,v}(1)^2},
 $$
 where the first term comes from the measure on $G'$ involving the definition of $\Pi_v(f_v')$, and the fraction comes from the measures in $\lambda_v,\beta_v$ and $\vartheta_v$. Similarly, the old distribution $J_{\pi_v}$ is the new one times 
 $$
L(1,\eta_v)^2\cdot L(1,\eta_v),$$
where the first term comes from the measure on $G$ involving the definition of $\pi_v(f_v)$, and the second from the measure on $H(F_v)$ in the definition of $\alpha_v$. Moreover, the change of measures on $H'_{1}(F_v),H_2'(F_v)$ and $H(F_v)$ also change the requirement of smooth matching: if $f_v$ and $f_v'$ match for the normalized measures, then $\zeta_{E,v}(1)\zeta_{F,v}(1)^2f_v$ and $L(1,\eta_v)^2f_v'$ match for the unnormalized measures.
Therefore, when using the unnormalized measures, the identity in Conjecture \ref{conj local} becomes the asserted one (\ref{eqn local new}).
\end{proof}
\subsection{Proof of Theorem \ref{thm local}: the case of a split place $v$.}
 Assume that $F=F_v$ is split. 
 Let $\pi=\pi_n\otimes\pi_{n+1}$ be an irreducible unitary generic representation of $G(F)$. We may identify
 $H_n(E)$ with $\GL_{n}(F)\times \GL_n(F)$ and  identify $U(W)(F_v)$ with a subgroup consisting of elements of the form $(g,^tg^{-1})$, $g\in \GL_n(F)$ and $^tg$ is the transpose of $g$. Let $p_1,p_2$ be the two isomorphisms between $U(W)(F)$ with $\GL_n(F)$ induced by the two projections from $\GL_{n}(F)\times \GL_n(F)$ to $\GL_n(F)$.  If $\pi_n$ is an irreducible generic representation of $U(W)(F_v)$, the representation $\Pi_n=BC(\pi_n)$ can be identified with $p_1^\ast \pi_v\otimes p_2^\ast \pi_v$ of $H_n(E)$ where $p_i^\ast \pi_v$ is a representation of $\GL_{n}(F)$ obtained by the isomorphism $p_i$. For simplicity, we will write $\Pi_n=\pi_n\otimes \wt{\pi}_n$. Similarly for $\pi_{n+1},\Pi_{n+1}$. We fix a Whittaker model $\CW(\pi_i),i=n,n+1$ using the additive character $\psi$ at $v$.  We define an auxiliary element 
  $\alpha'\in\Hom(\pi\otimes \wt{\pi},\BC)$ by
\begin{align}
\alpha'(W,W')= \lambda(W)\ov{\lambda(W')},\quad W,W'\in\CW(\pi).
  \end{align}
Again we have identified $\wt{\pi}$ with $\ov{\pi}$. Here we require that the invariant inner product on $\CW(\pi)$ is the one defined by $\vartheta$ (cf. (\ref{eqn defn vartheta})). Then we define a variant of the local spherical character:
\begin{align}
J'_{\pi}(f)=\sum_{W} \alpha'(\pi(f)W,W),\quad f\in \sC^\infty_c(G(F)),
\end{align}
where the sum of $W$ runs over an orthonormal basis of $\CW(\pi)$. Similarly we have a normalized one $J^{'\nat}_{\pi}(f)$.
 
 \begin{lem}
\label{lem split all} Let $f'=f_1\otimes f_2\in \sC_c^\infty(G'(F))$ and $f=f_1\ast f_2^\ast\in \sC_c^\infty(G(F)) $ ($f_2^\ast(x)=\ov{f}_2(x^{-1})$) matching $f'$. Then we have 
 \begin{align}\label{eqn split all}
 I_{\Pi_v}(f')=\kappa_v J'_{\pi}(f).
  \end{align}
 \end{lem}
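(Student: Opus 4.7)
My plan is to use the split identifications to reduce both sides to a common sum over an orthonormal basis of $\pi_v$, then match the resulting constants. At a split place $v$ we have $E_v\cong F_v\times F_v$, $G'(F_v)\cong G(F_v)\times G(F_v)$, and the base change is $\Pi_v\cong \pi_v\boxtimes \wt\pi_v$, so the Whittaker model factors as $\CW(\Pi_v,\psi_{E_v})\cong \CW(\pi_v)\otimes \CW(\wt\pi_v)$. Under this decomposition, the local Rankin--Selberg linear form and the Whittaker inner product factor directly on pure tensors: $\lambda_v(W^{(1)}\otimes W^{(2)})=\lambda_\pi(W^{(1)})\,\lambda_{\wt\pi}(W^{(2)})$ and $\vartheta_v=\vartheta_\pi\otimes \vartheta_{\wt\pi}$; these identities are immediate from the fact that $H_1'(F_v)$ is a diagonal copy of $H(F_v)\times H(F_v)$ inside $G(F_v)\times G(F_v)$ and that the relevant integrals separate over the two factors.

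The key technical step is identifying the Flicker--Rallis period. The subgroup $H_2'(F_v)$ is a diagonal copy of $G(F_v)$ inside $G'(F_v)=G(F_v)\times G(F_v)$, so $\beta_v$ is a $G(F_v)$-invariant linear functional on $\pi_v\otimes \wt\pi_v$. By Schur (multiplicity one for the canonical pairing $\pi_v\otimes \wt\pi_v\to\BC$), we have $\beta_v=c_v\cdot \vartheta_\pi(\cdot,\cdot)$ under the identification $\wt\pi_v\cong \bar\pi_v$, for an explicit scalar $c_v=c_v(\tau,\eta',\psi)$. I would compute $c_v$ directly from the integral defining $\beta_v$: writing $\epsilon_{n-1}=(\epsilon_1,\epsilon_2)\in H_{n-1}(F_v)\times H_{n-1}(F_v)$ coming from $\tau=(\sigma,-\sigma)\in E_v^-$, a change of variable absorbs $\epsilon_1$ into the first Whittaker function, and the remaining shift $\epsilon_1^{-1}\epsilon_2$ acts on the second component by a central twist. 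The resulting Jacobian and twist-character evaluation produce precisely the factors $|\tau|_{E,v}^{(d_n+d_{n+1})/2}$, $\eta'(\tau)^{n(n+1)/2}$ and $\omega_{\Pi_{n,v}}(\tau)$ that assemble into $\kappa_v$, while the contributions $\epsilon(1/2,\eta_v,\psi_v)$ and $\eta_v(\disc W)$ are trivially $1$ at split $v$.

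With these factorizations in hand, I unwind $I_{\Pi_v}(f_1\otimes f_2)$ using an ONB $\{W_i\}$ of $\CW(\pi_v)$ together with the corresponding dual ONB $\{\wt W_i\}$ of $\CW(\wt\pi_v)$, chosen so that $\beta_v(W_i\otimes \wt W_j)=c_v\,\delta_{ij}$. The double sum in the definition of $I_{\Pi_v}$ collapses to a single sum proportional to $\sum_i \lambda_\pi(\pi_v(f_1)W_i)\,\overline{\lambda_\pi(\pi_v(f_2)W_i)}$, after using the action $\wt\pi_v(f_2)=\overline{\pi_v(\bar f_2)}$ under $\wt\pi_v\cong\bar\pi_v$ and the resulting relation $\lambda_{\wt\pi}\circ \wt\pi_v(f_2)=\overline{\lambda_\pi\circ \pi_v(\bar f_2)}$. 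A parallel unwinding of $J'_\pi(f_1\ast f_2^{\ast})$, using $\pi_v(f_2^{\ast})=\pi_v(f_2)^{\ast}$ and applying Parseval on the ONB to contract the resolution of the identity, yields exactly the same sum. Comparing produces $I_{\Pi_v}(f')=\kappa_v\cdot J'_\pi(f)$. The principal obstacle is the bookkeeping of conjugations, Jacobians, and character values in the computation of $c_v$, and checking that it matches the prescribed formula for $\kappa_v(\eta',\tau,n,\psi)$; given that most terms in $\kappa_v$ collapse trivially at split $v$, the verification reduces to isolating the shift-by-$\epsilon_{n-1}$ contribution correctly.
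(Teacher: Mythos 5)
Your overall strategy -- factor the Whittaker model, the inner product, and the Rankin--Selberg functional over the two copies of $\GL$, identify the Flicker--Rallis functional $\beta_v$ with a scalar multiple $c_v\vartheta_\pi$ of the Whittaker inner product, and then unwind both sides against the same orthonormal basis -- is exactly the route the paper takes; the paper's proof is a compressed version of this, asserting directly that $\beta_n(W\otimes\ov{W'})=|\tau|^{d_n/2}\vartheta_n(W,W')$ and letting the rest follow from the definitions. So you have identified the right skeleton.

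The step I do not believe as written is the evaluation of $c_v$. You claim that after absorbing $\epsilon_1$ into the first Whittaker slot, "the remaining shift $\epsilon_1^{-1}\epsilon_2$ acts on the second component by a central twist." But $\epsilon_1^{-1}\epsilon_2=\mathrm{diag}((-1)^{n-1},(-1)^{n-2},\dots,-1)$, which is not central in $H_{n-1}$ once $n\geq 3$. Writing it as $(-1)\cdot\widetilde D$ with $\widetilde D=\mathrm{diag}((-1)^{n-2},\dots,1)$ does give you a genuine central factor $(-1)$ (and this is plausibly where a value like $\omega_{\Pi_{n,v}}(\tau)=\omega_{\pi_n}(-1)$ might arise), but the remaining $\widetilde D$ is a nontrivial diagonal element that conjugates the Whittaker character $\psi$ to $\ov\psi$. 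That sign flip is exactly the thing you must track when identifying $\wt\pi_v$ with $\ov\pi_v$ and when comparing the additive characters appearing in $\CW(\Pi_v,\psi_{E_v})$ to the single character $\psi$ used to define $J'_\pi$: those are not the same character, and reconciling them introduces further conjugation by a diagonal element and further Jacobian. Treating $\epsilon_1^{-1}\epsilon_2$ as if it were a central element sweeps all of this under the rug, and as written the computation would not correctly produce the $\eta'(\tau)^{-n(n+1)/2}$ factor in $\kappa_v$ (which, by the way, is not visibly "trivially $1$" at a split place: $\eta'(\tau)=\eta_1(-1)$, a genuine sign, not $1$). You flag the bookkeeping as the principal obstacle, and I agree, but the specific heuristic you propose to dispatch it is false, so at present there is a genuine gap at the crux of the proof.
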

 
 \begin{proof} We identify $\Pi_n$ with $\pi_n\otimes \wt{\pi}_n=\pi_n\otimes \ov{\pi}_n$. Then we have for $W,W'\in \CW(\pi_n)$:
 $$
 \beta_n(W\otimes \ov{W'})=\int_{N_{n-1}(F)\bs H_{n-1}(F)}W\left(\begin{array}{cc}\epsilon_{n-1}h & \\
 &  1
\end{array}\right)\ov{W'}\left(\begin{array}{cc}\epsilon_{n-1} h & \\
 &  1
\end{array}\right)\, dh.
$$ 
This yields 
 $$ \beta_n(W\otimes \ov{W'})=|\tau|^{d_n/2}\vartheta_n(W,W').
 $$
 Similarly for $\beta_{n+1}$.
 Then the desired equality follows by the definition of $I_{\Pi}$ in terms of the linear functional $\lambda,\beta$ and $\vartheta$ (note that $\delta=\tau^2$ is indeed a square in $F$).
\end{proof}
Now it remains to identify the distribution $J'_\pi$ with $J_{\pi}$, or equivalently,  to prove that $\alpha'=\alpha$.  The key ingredient is from \cite{LM}; in the non-archimedean case, we could also use \cite[\S3.5]{W3}.

 Note that we may write $\alpha$ in terms of the Whittaker model $\CW(\pi)$: $$
 \alpha(W,W')=\int_{H_n(F)}\pair{\pi(h)W,W'}\, dh,\quad \pair{W,W'}=\vartheta(W,W').
 $$
We temporarily denote $N_-=N_{n-}(F)$ and $N=N_n(F)$. Let $N^\circ=[N,N] $ be the commutator subgroup  of $N$, $N^{ab}=N^\circ\bs N$ the maximal abelian quotient of $N$, and $\wh{N^{ab}}$ the group of characters of $N^{ab}$. The diagonal subgroup $A_n$ of $H_n$ acts on $N$ (by conjugation), on $N^{ab}$ and hence on $\wh{N^{ab}}$. Moreover, $A_n$ acts transitively on the subset $\wh{N^{ab}}_{reg}$ of $\wh{N^{ab}}$ consisting of regular characters (i.e., with minimal stabilizer under the action of $A_n$). The character $\psi$ on $N$ is regular and we denote by $\psi_t$ the character of $N$ (equivalently, of $N^{ab}$) defined by $$\psi_t(u)=\psi(tu t^{-1}).$$
For $W_n,W'_n\in \CW(\pi_n,\ov{\psi})$, we denote by $\Phi_{W_n,W_n'}$ the matrix coefficient $$\Phi_{W_n,W_n'}(g)=\pair{\pi_n(g)W_n,W'_n}.
$$
 \begin{lem}  \label{lem Wald}   Assume that $\pi_n$ is tempered. 
   \begin{itemize}
  \item[(i)] The integral 
\begin{align}\label{eqn def CF}
\CF_{W_n,W'_n}(u):=\int_{N^\circ} \Phi_{W_n,W_n'}(vu)dv
 \end{align} 
  is absolutely convergent and defines a square integrable function $\CF_{W_n,W'_n}\in L^2(N^{ab})$. Its  Fourier transform 
 $ \wh{\CF}_{W_n,W'_n}\in L^2(\wh{N^{ab}})$ is smooth on the open subset $\wh{N^{ab}}_{reg}$ of $\wh{N^{ab}}$.
 \item[(ii)] For all $t\in A_n$, and $W_n,W'_n\in\CW(\pi_n,\ov{\psi})$, we have
\begin{align}
 \wh{\CF}_{W_n,W'_n}(\psi_t)= |\delta_n(t)|^{-1}W_n(t)\ov{W'}_n(t).
 \end{align} 
 Here the left hand side denotes the value of the Fourier transform at the character $\psi_t\in \wh{N^{ab}}_{reg}$.
  \end{itemize}
\end{lem}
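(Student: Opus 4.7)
\smallskip

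\noindent\textbf{Proof plan.} The lemma is essentially a reformulation of the Whittaker--Plancherel (or ``Bessel--Plancherel'') formula for $\GL_n$, which in full generality is due to Lapid--Mao \cite{LM} and, in the non-archimedean case, to Waldspurger \cite[\S3.5]{W3}. The plan is to reduce (i) and (ii) to the statements therein, after recording the following key observation that converts the double integral into a single Fourier integral along all of $N$.

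Since $\psi_t$ is a character of $N^{ab} = N / N^\circ$, it is trivial on $N^\circ$. Granting Fubini, one has formally
\begin{align*}
\wh{\CF}_{W_n, W_n'}(\psi_t)
&= \int_{N^{ab}}\!\!\int_{N^\circ} \Phi_{W_n, W_n'}(vu)\, dv\,\ov{\psi_t(u)}\, du \\
&= \int_{N} \Phi_{W_n, W_n'}(n)\, \ov{\psi_t(n)}\, dn.
\end{align*}
The right-hand side is (after regularization) the classical Bessel distribution of $\pi_n$ evaluated on the matrix coefficient against the regular character $\psi_t$. The content of the Whittaker--Plancherel formula for $\GL_n$ is precisely that this regularized integral equals $|\delta_n(t)|^{-1} W_n(t)\ov{W_n'(t)}$, the modular factor $|\delta_n(t)|^{-1}$ arising as the Jacobian of conjugation by $t$ on $N$ (i.e.\ from $d(tnt^{-1}) = |\delta_n(t)|\, dn$), which intervenes when one performs the Iwasawa decomposition to relate the $N$-integral to the diagonal values $W_n(t)\ov{W_n'(t)}$.

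To establish (i) and justify the formal manipulations in (ii), I would use the Harish-Chandra $\Xi$-function bounds on matrix coefficients of tempered representations: $|\Phi_{W_n, W_n'}(g)| \le C\, \Xi(g)(1+\log\|g\|)^r$ for some $r\ge 0$. Combined with a volume computation on $N^\circ$ (of dimension $\binom{n-1}{2}$) these estimates yield the absolute convergence of $\int_{N^\circ}\Phi(vu)\, dv$ for each $u$, and the $L^2$-property of $\CF_{W_n, W_n'}$ on $N^{ab}$. The smoothness of $\wh{\CF}_{W_n, W_n'}$ on the open locus $\wh{N^{ab}}_{\mathrm{reg}}$ follows at archimedean places by integration by parts against the regular character (each derivative in $t$ producing a polynomial factor absorbed by $\Xi$), and at non-archimedean places by the locally constant nature of $\Phi$ together with the stability of regular characters under small perturbations.

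The main obstacle is part (ii): the iterated integral $\int_N \Phi(n)\, \ov{\psi_t(n)}\, dn$ is only conditionally convergent, so Fubini requires justification. This is the technical heart of \cite{LM} and \cite{W3}, which introduce a truncation along a maximal split torus and exploit the asymptotic expansions of matrix coefficients and Whittaker functions (Casselman--Shahidi / Wallach asymptotics) to control the contribution of the ``tails.'' Once those truncation arguments are in hand, (ii) becomes a direct application of the formula in Lapid--Mao \cite[Theorem 2.1 and \S2]{LM}, and in the $p$-adic case the alternative derivation via Waldspurger's ``Plancherel for the Whittaker model'' \cite[\S3.5]{W3} yields the same identification, with the normalization $|\delta_n(t)|^{-1}$ matching our choice of Haar measures on $N$ and $A_n$.
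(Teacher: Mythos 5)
Your proposal is correct and takes essentially the same route as the paper: both reduce Lemma~\ref{lem Wald} to the results of Lapid--Mao \cite{LM} (with Waldspurger \cite[\S3.5]{W3} as a $p$-adic alternative). The paper is more precise in its citations---part (i) is \cite[Cor.~2.8]{LM} for convergence/$L^2$ and \cite[Lemma~3.2 and Remark~3.3]{LM} (plus Dixmier--Malliavin) for smoothness on $\wh{N^{ab}}_{\reg}$, while part (ii) at $t=1$ is \cite[Prop.~3.4]{LM}---and the general $t\in A_n$ then follows by the substitution $u\mapsto t^{-1}ut$, exactly the Jacobian mechanism you sketch.
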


\begin{proof}
The first part of (i) follows from \cite[Corollary 2.8]{LM}. The second part of (i) follows from \cite[Lemma 3.2]{LM} for a special class of $W_n$ and $W_n'$. The general case follows from this special case together with the Dixmier-Malliavin theorem (cf. \cite[Remark 3.3]{LM}). The assertion in (ii) for $t=1$ is \cite[Prop. 3.4]{LM}. The general case of $t\in A_n$ follows easily from this. 
\end{proof}
 
 \begin{prop}
\label{prop a=a'}
 Assume that $\pi=\pi_n\otimes\pi_{n+1}$ is tempered.  Then we have, for all $W,W'\in\CW(\pi_n,\ov{\psi})\otimes \CW(\pi_{n+1},\psi)$:
 $$
 \alpha(W,W')=\lambda(W)\ov{\lambda(W')}.
 $$
 Namely, $\alpha=\alpha'$ as non-zero elements in $\Hom(\pi\otimes \wt{\pi},\BC)$.
 \end{prop}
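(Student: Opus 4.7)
By linearity it suffices to treat pure tensors $W=W_n\otimes W_{n+1}$ and $W'=W_n'\otimes W_{n+1}'$. Write $\iota$ for the embedding $h\mapsto\mathrm{diag}(h,1)$ of $\GL_n$ into $\GL_{n+1}$. The strategy is to unfold $\alpha(W,W')$ by inserting the integral definition of $\vartheta_{n+1}$, rearranging so as to isolate an integral of a matrix coefficient of $\pi_n$ against a Whittaker character, and then evaluating that inner integral via Lemma~\ref{lem Wald}. The character matching is automatic: pulling an inner $N_n$-variable through $W_{n+1}\in\CW(\pi_{n+1},\psi)$ yields $\psi(u)$, and since $W_n\in\CW(\pi_n,\ov\psi)$, the Lapid--Mao formula gives the Fourier transform of the matrix coefficient against precisely $\psi$.

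Concretely, starting from
$$
\alpha(W,W')=\int_{H_n(F)}\vartheta_n(\pi_n(h)W_n,W_n')\,\vartheta_{n+1}(\pi_{n+1}(h)W_{n+1},W_{n+1}')\,dh,
$$
I substitute $\vartheta_{n+1}(\pi_{n+1}(h)W_{n+1},W_{n+1}')=\int_{N_n\bs H_n}W_{n+1}(\iota(gh))\ov{W_{n+1}'(\iota(g))}\,dg$, change variables $h\mapsto g^{-1}h$ in the outer integral (valid since $H_n$ is unimodular), and decompose $h=uh_1$ with $u\in N_n(F)$ and $h_1$ in a section of $N_n\bs H_n$. The Whittaker transformation of $W_{n+1}$ extracts a factor $\psi(u)$ and yields
$$
\alpha(W,W')=\int_{N_n\bs H_n}\int_{N_n\bs H_n}W_{n+1}(\iota(h_1))\,\ov{W_{n+1}'(\iota(g))}\,\CI(g,h_1)\,dh_1\,dg,
$$
where $\CI(g,h_1):=\int_{N_n(F)}\pair{\pi_n(u)A,B}_n\,\psi(u)\,du$ with $A=\pi_n(h_1)W_n$ and $B=\pi_n(g)W_n'$. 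Since $\psi$ is trivial on $N^\circ=[N_n,N_n]$ and descends to a character on $N^{ab}$, the integral $\CI(g,h_1)$ coincides with $\wh\CF_{A,B}(\psi_1)$ in the notation of Lemma~\ref{lem Wald}. Applying Lemma~\ref{lem Wald}(ii) at $t=1$ (and using $|\delta_n(1)|=1$) identifies $\CI(g,h_1)=A(1)\ov{B(1)}=W_n(h_1)\ov{W_n'(g)}$; substituting back, the double integral factors as
$$
\Big(\int_{N_n\bs H_n}W_n(h_1)W_{n+1}(\iota(h_1))\,dh_1\Big)\,\ov{\Big(\int_{N_n\bs H_n}W_n'(g)W_{n+1}'(\iota(g))\,dg\Big)}=\lambda(W)\ov{\lambda(W')}.
$$

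The main obstacle is the rigorous justification of absolute convergence and Fubini for these rearrangements. Temperedness of $\pi_n$ and $\pi_{n+1}$ is the essential hypothesis, entering on three fronts: it ensures $\alpha(W,W')$ is given by an absolutely convergent integral on $H_n(F)$ (via standard decay of tempered matrix coefficients); it ensures $\lambda$ is given by an absolutely convergent integral (cf.\ the remark before \eqref{eqn defn RS nat}); and it is precisely the hypothesis of Lemma~\ref{lem Wald}(i), which produces $\CF_{A,B}\in L^2(N^{ab})$ with Fourier transform smooth on $\wh{N^{ab}}_{reg}$, thereby making the identification $\CI(g,h_1)=\wh\CF_{A,B}(\psi_1)$ meaningful. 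Once these convergence statements are in hand, Fubini justifies the rearrangement of iterated integrals. Note that one does not need to invoke the multiplicity-one theorem of Aizenbud--Gourevitch--Rallis--Schiffmann and Sun--Zhu for this argument, although it would alternatively guarantee $\alpha=c\cdot\alpha'$ for some scalar $c$ a priori, and the Lapid--Mao unfolding would then amount to computing $c=1$.
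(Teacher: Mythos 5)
Your conceptual route is indeed the same as the paper's: reduce to evaluating $\int_{N_n}\langle\pi_n(u)A,B\rangle\psi(u)\,du$ via Lemma~\ref{lem Wald}(ii). But the step you flag as ``the main obstacle'' is exactly where the argument breaks down, and it cannot be repaired by the convergence facts you list. The outer integral $\int_{H_n}\vartheta_n(\pi_n(h)W_n,W_n')\,\vartheta_{n+1}(\pi_{n+1}(h)W_{n+1},W_{n+1}')\,dh$ converges absolutely because of the \emph{product} of two tempered factors; as soon as you open up $\vartheta_{n+1}$ and change variables so as to isolate $\int_{N_n}\langle\pi_n(u)A,B\rangle\,\psi(u)\,du$, you have destroyed that cancellation. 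Tempered matrix coefficients of $\GL_n$ are only bounded by $\Xi$, and $\int_{N_n}\Xi(u)\,du=\infty$, so the joint integral over $(g,h_1,u)$ is not absolutely convergent: Fubini is simply unavailable. Equivalently: Lemma~\ref{lem Wald}(i) only puts $\CF_{A,B}$ in $L^2(N^{ab})$, not $L^1(N^{ab})$, so the identification $\CI(g,h_1)=\wh\CF_{A,B}(\psi_1)$ is not a statement about a convergent integral --- the $L^2$ Fourier transform has to be recovered at a regular character by a limiting or truncated procedure, and that is a genuine analytic input, not a formality.

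The paper's proof supplies precisely what is missing. It first invokes multiplicity one to reduce to a single well-chosen $(W,W')$ (contrary to your closing claim, this is not optional: the unfolding identity is only established for a measure-one set of twists $\psi_t$ of a special family of vectors, and one needs the one-dimensionality of $\Hom_{H_n}(\pi,\BC)$ to propagate to arbitrary $W,W'$). It then takes $W_{n+1}=W_\varphi$ with $\varphi\in\sC_c^\infty(B_-)$ so that the $g,h_1$ integrations are effectively over compact sets, and introduces a truncated integral $I_c(W,W';\psi_t)$ where $u$ ranges over $N_c$ (compact modulo $N^\circ$). On $N_c$ one has $\int_{N_c}\Xi<\infty$ by Lemma~\ref{lem Wald}(i), so $I_c$ is absolutely convergent and the rearrangement is legitimate. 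Finally, $\lim_{c\to\infty}I_c$ is computed in two ways: one limit gives $|\delta_n(t)|\alpha(W_t,W_t')$ by a direct dominated-convergence argument, and the other gives $|\delta_n(t)|\lambda(W_t)\overline{\lambda(W_t')}$ by interpreting the truncated integrals as partial Fourier sums of an $L^2$ function and using a.e.\ pointwise convergence. Your proposal elides exactly the truncation and the $L^1$-vs.-$L^2$ distinction, which is where the temperedness hypothesis actually does its work.
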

 \begin{proof}
 The right hand side does not vanish by the non-vanishing of local Rankin-Selberg integral (\cite{JPSS}, \cite{J}). By the multiplicity one theorem for generic representations: $\dim\Hom_{H_n(F)}(\pi,\BC)=1$, the left hand side is a constant multiple of the right hand side for all $W,W'$.  Hence it suffices to prove the identity for some choice of $W,W'$ so that $\lambda(W)\lambda(W')\neq 0$. 
 
 Let $W=W_n\otimes W_{n+1}, W'=W'_n\otimes W'_{n+1}$. We choose $W_{n+1},W_{n+1}'$ as follows.  
 Let $\varphi$ be in $\sC_c^\infty(B_-)$. Then there is a unique element  in $\CW(\pi_{n+1},\psi)$, denoted by $W_{\varphi}$,  such that the restriction $W_{\varphi}|_{H_n}$ is supported in $NB_-$ and
 $$
 W_{\varphi}\left(\begin{array}{cc}ub & \\
 & 1 
\end{array}\right)=\psi(u)\varphi(b),\quad u\in N, b\in B_-.
 $$
Similarly we choose $\varphi'\in \sC_c^\infty(B_-)$ and define $W_{\varphi'}\in \CW(\pi_{n+1},\psi)$.

We may and will consider the action of $ \sC_c^\infty(B_-)$ on $\CW(\pi_n,\ov{\psi})$ by
\begin{align}\label{eqn act pin}
\pi_n(\varphi)W(g)=\int_{B_-}W(g b)\varphi(b)db,
\end{align}
where $db$ is the right invariant measure on $B_-$ normalized so that the measure on $H_n$ decomposes as $dg=\,du\,db$ where $g=ub,u\in N, b\in B_-$.

Let $c\in \BR_+$ and consider the subset $N_c$ of $N$ consisting of elements $u=(u_{ij})_{1\leq i,j\leq n}$ such that
$$
|u_{i,i+1}|\leq c,\quad 1\leq i\leq n-1.
$$ 
We denote by $N^{ab}_c$ the image of $N_c$ in the abelian quotient $N^{ab}$. 
 Let us consider the integral parameterized by $t\in A_n$:
\begin{align}\label{eqn def Ic}
I_c(W,W';\psi_t):=\int_{B_-}\int_{B_-}\int_{N_c}\Phi_{W_n,W_{n}'}(b'^{-1}ub)\psi_t(u)W_{\varphi}(b)\ov{W_{\varphi'}}(b')\,du\,db\,db'.
\end{align}
This is the same as $$
I_c(W,W';\psi_t)=\int_{B_-}\int_{B_-}\int_{N_c}\Phi_{\pi_n(b)W_n,\pi_n(b')W_{n}'}(u)\psi_t(u)\varphi(b)\ov{\varphi'}(b')\,du\,db\,db'.
$$ 
We {\em claim} that the triple integral (\ref{eqn def Ic}) converges absolutely. Since $supp(\varphi)$ and $supp(\varphi')$ are compact, by \cite[Theorem 2]{CHH} and \cite[Theorem 1.2]{Sun}, there exists a constant $C$ such that, for all $b\in supp(\varphi),b'\in supp(\varphi')$, the matrix coefficients are bounded in terms of the Harish-Chandra spherical function $\Xi$ (cf. \cite{Sun})
$$
|\Phi_{\pi_n(b)W_n,\pi_n(b')W_{n}'}(g)|\leq C\cdot \Xi(g),\quad g\in H_n.
$$
Hence the triple integral $I_c$ is bounded above by 
$$
C\int_{B-}|\varphi(b)|\,db\int_{B-}|\varphi'(b')|\,db' \int_{N_c}\Xi(u)\,du.
$$
It suffices to prove that $\int_{N_c}\Xi(u)\,du$ is finite. We may write it as
\begin{align}\label{eqn Xi int}
\int_{N_c}\Xi(u)\,du=\int_{N^{ab}_c}\left(\int_{N^\circ}\Xi(vu)\,dv\right)\,du.
\end{align}
Since $\Xi$ is also a matrix coefficient of a tempered representation, the function $u\in N^{ab}\mapsto \int_{N^\circ}\Xi(vu)\,dv$ is in $L^2(N^{ab})$ by Lemma \ref{lem Wald} (or rather directly, \cite[Lemma 2.7]{LM}). Now the integral (\ref{eqn Xi int}) is finite since $N^{ab}_c$ is compact. This proves the claim.

For $\varphi\in \sC_c^\infty(B_-)$ and $t\in A_n$, we define $\varphi_t\in \sC_c^\infty(B_-)$ by $\varphi_t(b)=\varphi(t^{-1}b)$. 
For simplicity we denote 
$W_t=W_n\otimes W_{\varphi_t}$ and $W'_t=W'_n\otimes W_{\varphi'_t}$.
Then we have
\begin{align}\label{eqn pin var W(t)}
\pi_n(\varphi)W_n(t)=\int_{B-}W_n(tb)\varphi(b)db=|\delta_n(t)|\int_{B-}W_n(b)\varphi(t^{-1}b)db=|\delta_n(t)|\lambda(W_t).
\end{align}

We now study the integral $I_c$ as $c\to \infty$. We first substitute $u\mapsto t^{-1}ut$ in (\ref{eqn def Ic}): 
\begin{align*}
I_c(W,W';\psi_t)=|\delta_n(t)|^{-1}\int_{B_-}\int_{B_-}\int_{N_{c,t}}\Phi_{W_n,W_{n}'}((tb')^{-1}utb)\psi(u)W_{\varphi}(b)\ov{W_{\varphi'}}(b')\,du\,db\,db',
\end{align*}
where $N_{c,t}:=tN_ct^{-1}$.
Substitute $b\mapsto t^{-1}b$ and $b'\mapsto t^{-1}b'$:
\begin{align*}
I_c(W,W';\psi_t)=&|\delta_n(t)|\int_{B_-}\int_{B_-}\int_{N_{c,t}}\Phi_{W_n,W_{n}'}(b'^{-1}ub)\psi(u)W_{\varphi}(t^{-1}b)\ov{W_{\varphi'}}(t^{-1}b')\,du\,db\,db'
\\=&|\delta_n(t)|\int_{B_-}\int_{B_-}\int_{N_{c,t}}\Phi_{W_n,W_{n}'}(b'^{-1}ub)\psi(u)W_{\varphi_t}(b)\ov{W_{\varphi'_t}}(b')\,du\,db\,db'.
\end{align*}
Since the triple integral is absolutely convergent and $\psi(u)W_{\varphi_t}(b)=W_{\varphi_t}(ub)$, we could rewrite it by Fubini's theorem as
 $$
I_c(W,W';\psi_t)=|\delta_n(t)| \int_{B_-}\int_{N_{c,t} B_{-}}\Phi_{W_n,W_{n}'}(b'^{-1}g)W_{\varphi_t}(g)\ov{W_{\varphi'_t}}(b')\,dg\,db'.
$$ 
Now we make a substitution $g\mapsto b'g$ and then interchange the order of integration:
\begin{align*}
I_c(W,W';\psi_t)=&|\delta_n(t)|\int_{B_-}\int_{b'^{-1} N_{c,t} B_-}\Phi_{W_n,W_{n}'}(g)W_{\varphi_t}(b'g)\ov{W_{\varphi'_t}}(b')\,dg\,db'
\\=&|\delta_n(t)|\int_{B_{-} N_{c,t} B_-}\Phi_{W_n,W_{n}'}(g)\left(\int_{B-} W_{\varphi_t}(b'g)\ov{W_{\varphi'_t}}(b')\,db' \right)\,dg
\\=&|\delta_n(t)|\int_{B_{-} N_{c,t}B_-}\Phi_{W_n,W_{n}'}(g)\Phi_{W_{\varphi_t},W_{\varphi'_t}}(g)\,dg.
\end{align*} 
Since the integral
$$
\alpha(W_t,W'_t)=\int_{H_{n}(F)}\Phi_{W_n,W_{n}'}(g)\Phi_{W_{\varphi_t},W_{\varphi'_t}}(g)\,dg
$$
converges absolutely and $H_n\setminus \bigcup_{c\to \infty}B_{-} N_{c,t}B_-$ is of measure zero, we conclude that for all $t\in A_n$, the limit $\lim_{c\to \infty}I_c(W,W';\psi_t)$ exists and is given by:
\begin{align}\label{eqn limit Ic alt}
\lim_{c\to \infty}I_c(W,W';\psi_t)=|\delta_n(t)|\alpha(W_t,W'_t).
\end{align}

There is another way to evaluate the limit. We first interchange the order of integration in (\ref{eqn def Ic}) and rewrite it as
\begin{align}\label{eqn int Ic1}
I_c(W,W';\psi_t)=\int_{N_c}\Phi_{\pi_n(\varphi)W_n,\pi_n(\varphi')W_{n}'}(u)\psi_t(u)\,du.
\end{align}
This integral is the same as (cf. (\ref{eqn def CF}))
\begin{align}
I_c(W,W';\psi_t)=\int_{N^{ab}_c}\CF_{\pi_n(\varphi)W_n,\pi_n(\varphi')W_{n}'}(u)\psi_t(u)\,du.
\end{align} Note that $\CF_{\pi_n(\varphi)W_n,\pi_n(\varphi')W_{n}'}\in L^2(N^{ab})$ by Lemma \ref{lem Wald} (i). We now view $I_c(W,W';\cdot)$ as a function of $\psi_t\in \wh{N^{ab}}$. It follows that $\lim_{c\to \infty}I_c(W,W';\cdot)$ converges in $L^2( \wh{N^{ab}})$ to $\wh{\CF}_{\pi_n(\varphi)W_n,\pi_n(\varphi')W_{n}'}$.
But we have proved that $\lim_{c\to \infty}I_c(W,W';\cdot)$ converges pointwise (for regular characters) almost everywhere. Therefore, for almost all (i.e., except a measure zero set) $t\in A_n$, the point wise limit is the same as the Fourier transform (cf. \cite[Theorem 1.1.11]{Gra}):
$$
\lim_{c\to \infty}I_c(W,W';\psi_t)=\wh{\CF}_{\pi_n(\varphi)W_n,\pi_n(\varphi')W_{n}'}(\psi_t).
$$
By (ii) of Lemma \ref{lem Wald},  the right hand side is equal to 
\begin{align}\label{eqn lim Ic W(t)}
|\delta(t)|^{-1}\pi_n(\varphi)W_n(t)\ov{\pi_n(\varphi')W_{n}'}(t)=|\delta_n(t)|\lambda(W_t)\ov{\lambda(W_t')},
\end{align}
where the equality follows from (\ref{eqn pin var W(t)}). Therefore we have for almost all $t\in A_n$
\begin{align}\label{eqn lim Ic Wt}
\lim_{c\to \infty}I_c(W,W';\psi_t)=|\delta_n(t)|\lambda(W_t)\ov{\lambda(W_t')}.
\end{align}
 
Comparing (\ref{eqn lim Ic Wt}) with (\ref{eqn limit Ic alt}), we have for almost all $t\in A_n$:
\begin{align}\label{eqn alp=lam}\alpha(W_t,W'_t)=
\lambda(W_t)\ov{\lambda(W_t')}.
\end{align} 
In particular, in any small open neighborhood of $1$ in $A_n$, there exists $t$ so that the equality (\ref{eqn alp=lam}) holds.

Finally, it remains to verify that for some choice of $W_n,W_n'$ and $\varphi,\varphi'$, the local period $\lambda(W_t)\ov{\lambda(W'_t)}$ does not vanish for $t$ in a small  open neighborhood of $1$ in $A_n$. We choose $W_n,W_n'$ such that $W_{n}(1)\neq 0,W_n'(1)\neq 0$. Since $W_n|_{B-}$ is a continuous function, there exists $\varphi\in \sC_c^\infty(B_-)$ so that 
$$
\int_{B_-}W_n(b)\varphi(b)db\neq 0.
$$ It is easy to see that $t\mapsto \int_{B_-}W_n(b)\varphi(tb)db$ is continuous. Hence for $t$ in a small open neighborhood of $1$ in $A_n$, the integral $\int_{B_-}W_n(b)\varphi(tb)db\neq 0$, or equivalently, by (\ref{eqn pin var W(t)}), $\lambda(W_t)\neq 0$ for $W_t=W_n\otimes W_{\varphi_t}$. Similarly we may achieve $\lambda(W'_t)\neq 0$ for $t$ in a small open neighborhood of $1$. This completes the proof.
 \end{proof}
 \begin{cor}
\label{cor thm local}
The case (1) (i.e., for a split $v$) of Theorem \ref{thm local} holds.
 \end{cor}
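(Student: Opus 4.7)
The plan is to assemble the pieces already proved just above. Given a decomposable test function $f' = f_1 \otimes f_2 \in \sC_c^\infty(G'(F_v))$ on the general linear side, Lemma \ref{lem split all} produces the matching function $f = f_1 \ast f_2^\ast$ on the unitary side together with the identity $I_{\Pi_v}(f') = \kappa_v J'_\pi(f)$, where $J'_\pi$ is built from the auxiliary invariant form $\alpha'(W,W') = \lambda(W)\overline{\lambda(W')}$ in place of the Ichino--Ikeda form $\alpha$. Proposition \ref{prop a=a'} identifies $\alpha$ with $\alpha'$ as elements of the one-dimensional space $\Hom_{H_n(F_v)}(\pi \otimes \wt{\pi}, \BC)$, and this pointwise identification translates, after summing over an orthonormal basis of the Whittaker model $\CW(\pi)$, into the distributional equality $J'_\pi = J_{\pi_v}$ on $G(F_v)$. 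Combining the two yields $I_{\Pi_v}(f') = \kappa_v J_{\pi_v}(f)$ for all pairs of the form $(f_1 \otimes f_2,\, f_1 \ast f_2^\ast)$.

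To pass to arbitrary matching pairs, I would invoke that $\sC_c^\infty(G'(F_v))$ is linearly spanned by pure tensors, so any $f'$ decomposes as a finite sum $\sum_i f_{1,i} \otimes f_{2,i}$, and the corresponding $\sum_i f_{1,i} \ast f_{2,i}^\ast$ is one legitimate smooth transfer. Since both $I_{\Pi_v}$ and $J_{\pi_v}$ are spherical characters, they depend on the test function only through its regular semisimple orbital integrals, and any two smooth transfers of a given $f'$ share the same regular semisimple orbital integrals on the unitary side. The identity thus propagates to every pair of matching functions. A final application of Lemma \ref{lem local new} converts between the unnormalized and normalized measures to yield Conjecture \ref{conj local} in its original form.

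The substantive work has already been carried out in Proposition \ref{prop a=a'}, whose proof relied on the Dixmier--Malliavin theorem, the tempered matrix-coefficient bounds of Cowling--Haagerup--Howe and Sun, and the $L^2$-Fourier analysis on $N^{\mathrm{ab}}$ from Lapid--Mao; the Corollary itself is the orchestration of that input with Lemma \ref{lem split all}. The only new issue in the Corollary is the extension from decomposable to arbitrary test functions, and this should be routine once one uses that spherical characters are determined by the regular semisimple orbital integrals of the test function, a property satisfied by both sides by construction.
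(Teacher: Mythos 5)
Your proposal is correct and runs along exactly the same line the paper takes: the paper's own proof is the one-sentence ``This follows from Lemma~\ref{lem split all} and Prop.~\ref{prop a=a'}.'' You supply the identical chain — Lemma~\ref{lem split all} gives $I_{\Pi_v}(f')=\kappa_v J'_\pi(f)$ for $(f_1\otimes f_2,\,f_1\ast f_2^\ast)$, Proposition~\ref{prop a=a'} identifies $\alpha=\alpha'$ and hence $J'_\pi=J_{\pi_v}$, and the normalization is accounted for by Lemma~\ref{lem local new} — so the substance is the same.

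The one place you add something is the closing paragraph on passing from pairs of the special shape $(f_1\otimes f_2,\,f_1\ast f_2^\ast)$ to arbitrary matching pairs, a point the paper's proof elides. Your argument there is correct provided you make the dependence on a representability statement explicit: the claim that $J_{\pi_v}(f)$ depends on $f$ only through its regular semisimple orbital integrals is not a purely formal fact — it is precisely what one gets from $J_\pi$ being represented by a locally $L^1$ bi-$(H,H)$-invariant function, i.e.\ Theorem~\ref{thm char u} (the $\alpha\neq 0$ hypothesis of that theorem holds here automatically, since at a split place $\lambda_v^\nat\neq 0$ always). With that citation in place the reduction to $f'=\sum_i f_{1,i}\otimes f_{2,i}$ and $f_0=\sum_i f_{1,i}\ast f_{2,i}^\ast$ is complete: any other transfer $f$ of $f'$ has the same orbital integrals as $f_0$, hence the same value of $J_{\pi_v}$. (The corresponding claim about $I_{\Pi_v}$, which you mention in passing, is actually not needed in your argument since $f'$ is held fixed.) This is a genuine and worthwhile elaboration of a step the paper leaves implicit, but it is not a different proof.
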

 \begin{proof}
 This follows from Lemma \ref{lem split all} and Prop. \ref{prop a=a'}.
 \end{proof}

 \section{The totally definite case}
We now prove the part $(2)$ of Theorem \ref{thm main}, assuming Theorem  \ref{thm local}. We hence assume that $G(F_\infty)$ is compact. Equivalently, $F$ is a totally real field,  $E$ is a CM extension and the Hermitian spaces $W,V$ are positive definite at every archimedean place $v$ of $F$. As in the proof of the part $(1)$ of Theorem \ref{thm main}, we may assume that the local invariant form  $\alpha_v\neq 0$ for all $v$. We may further assume that the global period $\sP\neq 0$ so that the global spherical character $J_\pi$ does not vanish (otherwise $\sL(1/2,\pi)=0$ and the result holds trivially). We then have  
\begin{align}\label{eqn 5.1}
J_\pi(f)=\sC_\pi \prod_v J^\nat_{\pi_v}(f_v)
\end{align}
for a non-zero constant $\sC_\pi$.

Let us recall that in the disjoint union of (\ref{eqn orbit match}) we take all isomorphism classes of n-dimensional Hermitian spaces $W_v$. When $F_v\simeq \BR$ is non-archimedean and $E_v\simeq \BC $, the isomorphism classes of such $W_v$  are indexed by the signature $(p,q)$ of $W_v$. We denote them by $W_{(p,q),v}$. Then the two definite (positive or negative) spaces correspond to $(p,q)=(n,0), (0,n)$. Only when $W_v=W_{(n,0),v}$ is the positive definite one, the space $V_v$ is also positive definite (by (\ref{eqn V=W+e})), or equivalently the group $G_{W_v}(F_v)$ is compact. Let $G'(F_v)_{rs, (n,0)}$ be the open subset of the regular semisimple locus $G'(F_v)_{rs}$ corresponding to the positive definite one $G_{W_{(n,0),v}}(F_v)_{rs}$ in the disjoint union (\ref{eqn orbit match}). In our case, our $G(F_v)$ is isomorphic to   $G_{W_{(n,0),v}}(F_v)$ for all $v|\infty$.

 For every $v|\infty$, we now choose a test function $f_v$ supported in the regular semisimple locus $G_{W_{(n,0),v}}(F_v)_{rs}$. Then there exists a smooth transfer $f'_v$ supported in $G'(F_v)_{rs, (n,0)}$.  Since the representation $\pi_v$ must be finite dimensional and we are assuming that $\alpha_v\neq 0$, our choice of $f_v$ can be made so that $J^\nat_{\pi_v}(f_v)\neq0$.
 
For non-archimedean places $v$, we choose $f_v$ and its smooth transfer $f_v'$ as in the proof of case (1). Particularly, $J^\nat_{\pi_v}(f_v)\neq 0$ for all non-archimedean $v$. Then
for such test functions $f=\otimes f_v$ and $f'=\otimes f'_v$, we again have, by Theorem \ref{thm tr id}
 $$
J_\pi(f)=2^{-2}L(1,\eta)^{-2}I_{\pi_E}(f').
$$
By Prop. \ref{prop char GL}, the right hand side is equal to
$$c\cdot  \sL(1/2,\pi)\prod_v I^\nat_{\pi_{E,v}}(f'_v),
$$
for some constant $c$ independent of $\pi$. Now fix an arbitrary $v_0|\infty$ and we further assume that $J_{\pi_v}^\nat(f_v)\neq 0$ for $v\neq v_0$. By comparison with (\ref{eqn 5.1}),  there exists a constant $b_{\pi_{v_0}}\neq 0$, such that for all $f_{v_0}$ with regular semisimple support and its smooth transfer $f_{v_0}'$ supported in $G'(F_{v_0})_{rs, (n,0)}$, we have
$$
I_{\Pi_{v_0}}(f'_{v_0})=b_{\pi_{v_0}}J_{\pi_{v_0}}(f_{v_0}).
$$

Now we don't know how to evaluate $b_{\pi_{v}}$ for $v|\infty$.  Nevertheless the same argument as the proof of Prop. \ref{prop imp} shows that 
\begin{align*}\frac{|\sP(\phi)|^2}{\langle
\phi,\phi \rangle_{Pet}}=c_{\pi_\infty}2^{-2}\sL\left(\frac{1}{2},\pi\right)
\prod_{v}\frac{\alpha^{\nat}_v(\phi_v,\phi_v)}{\pair{\phi_v,\phi_v}_v},
\end{align*}
where the constant $c_{\pi_\infty}=\prod_{v|\infty}c_{\pi_v}$ and
$$
c_{\pi_v}=b_{\pi_v}\kappa_v^{-1}L(1,\eta_v),
$$
where $\kappa_v$ is the constant in Conjecture \ref{conj local}.

\part{Local theory}

In the rest of the paper, we prove the remaining parts of Theorem \ref{thm local}.

\section{Harmonic analysis on Lie algebra}
We establish some basic results to prove the identity between local characters, Theorem \ref{thm local}  for a non-split non-archimedean place $v$.

\subsection{Relative regular nilpotent elements in $M_{n+1}$.}Let $F$ be any field.
The group $H_n$, viewed as a subgroup of $H_{n+1}$, acts on $M_{n+1}$ by conjugation.
Write $$
X=\left(\begin{array}{cc}A & u\\
v &  w
\end{array}\right)\in M_{n+1}.
$$
The ring of invariants for this action is freely generated by either
\begin{align}
 (-1)^{i-1}\tr \wedge ^i X,\quad e_{n+1} X^je_{n+1}^\ast,\quad 1\leq i\leq n+1, 1\leq j\leq n;
\end{align}
or 
\begin{align}\label{gen 2}
 (-1)^{i-1}\tr \wedge ^i A,\quad vA^ju,\quad w,\quad 1\leq i\leq n, 0\leq j\leq n-1.
\end{align}
We define a matrix
\begin{align}
\delta_+(X):=(A^{n-1}u,A^{n-2}u,...,u)\in M_{n}(F),
\end{align}
and its determinant
\begin{align}
\Delta_+(X)=\det(\delta_+(X)).
\end{align}
Similarly, we define
$$
\delta_-(X):=(v,vA,...,vA^{n-1})\in M_{n}(F),\quad \Delta_-(X)=\det(\delta_-(X)),
$$
and
$$\quad \Delta:=\Delta_+\Delta_-.
$$
Clearly we have for $X\in M_{n+1}(F)$ and $ h\in \GL_{n}(F)$:
\begin{align}
\label{eqn delta+} \delta_+(hXh^{-1})=h\delta_+(X), \quad \delta_-(hXh^{-1})=\delta_-(X)h^{-1}.
\end{align}

The $H_n$-nilpotent cone $\CN$ is defined to be the zeros of all of the above invariant functions on $M_{n+1}$. An element in $M_{n+1}$ is called {\em $H_n$-regular} (or {\em regular} if no confusion arises) if its stabilizer is trivial. 
Denote
\begin{align}
\xi_{n+1,+}=
\left(\begin{array}{cccc}0  &1 &0 & 0\\
 0 &  0&1&0\\ 0&...&0 &1
\\
0&...&0&0\end{array}\right)\in M_{n+1}(F),
\end{align}
and $\xi_{n+1,-}$ its transpose. If no confusion arises, we simply denote them by $\xi_\pm$. Clearly $\xi_\pm$ are regular nilpotent.

\begin{lem}\label{lem 6.1}
Let $X\in \CN$. The following statements are equivalent:
\begin{enumerate}
\item $X$ is regular nilpotent.
\item
$\xi$ is $H_n$-equivalent to $\xi_{+}$ or $\xi_{-}$.
\item $\Delta_+(X)\neq 0$ or $\Delta_-(X)\neq 0$.
\end{enumerate}
In particular, the orbit of $\xi_{+}$ is open in $\CN$.\end{lem}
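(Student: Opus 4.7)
The plan is to establish the implications (3)$\Rightarrow$(2)$\Rightarrow$(1)$\Rightarrow$(3) and (2)$\Rightarrow$(3); the final assertion that $H_n\cdot\xi_+$ is open in $\CN$ then follows since, by the equivalence just proved, this orbit coincides with $\{X\in\CN:\Delta_+(X)\neq 0\}$, the non-vanishing locus of a polynomial.

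For (3)$\Rightarrow$(2), write $X=\bigl(\begin{smallmatrix}A&u\\v&w\end{smallmatrix}\bigr)$; the hypothesis $X\in\CN$ forces $A$ nilpotent, $w=0$, and $vA^ju=0$ for every $j\geq 0$ (for $j\geq n$ by Cayley--Hamilton applied to the nilpotent $A$). If $\Delta_+(X)\neq 0$, then $u,Au,\ldots,A^{n-1}u$ span $F^n$, so $v=0$, and since $A^nu=0$ they form a cyclic basis for $A$; setting $h:=\delta_+(X)\in\GL_n$ one reads off directly that $h^{-1}Ah=\xi_{n,+}$ and $h^{-1}u=e_n^*$, hence $h^{-1}Xh=\xi_+$. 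The case $\Delta_-(X)\neq 0$ is symmetric. The converse (2)$\Rightarrow$(3) follows from the equivariance $\Delta_\pm(hXh^{-1})=\det(h)^{\pm 1}\Delta_\pm(X)$, read off from \eqref{eqn delta+}, together with $\Delta_+(\xi_+)=\Delta_-(\xi_-)=1$. For (2)$\Rightarrow$(1), the centralizer of the regular nilpotent $\xi_{n,+}$ in $\GL_n$ is $F[\xi_{n,+}]^\times$; imposing in addition $he_n^*=e_n^*$ on $h=\sum_{i=0}^{n-1}a_i\xi_{n,+}^i$ forces $a_0=1$ and $a_i=0$ for $i\geq 1$, so $h=I$.

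The content is in (1)$\Rightarrow$(3), which I prove contrapositively: assuming $\Delta_+(X)=\Delta_-(X)=0$, I exhibit a nonzero rank-one operator $Z=y\otimes\phi$ (so that $Z(x)=\phi(x)y$) with $[A,Z]=0$, $Zu=0$, and $vZ=0$; then $h:=I+Z$ is a nontrivial stabilizer element of $X$, contradicting regularity (invertibility of $h$ is automatic when $\phi(y)=0$, which occurs in all cases below, and otherwise can be arranged by scaling). The identity $[A,y\otimes\phi]=(Ay)\otimes\phi-y\otimes(\phi A)=0$ together with nilpotency of $A$ forces $Ay=0$ and $\phi A=0$ (otherwise $A$ would admit a nonzero eigenvalue); the conditions $Zu=0$ and $vZ=0$ then become $\phi(u)=0$ and $vy=0$. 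I split on $\dim\ker A$: if $\dim\ker A\geq 2$, the single linear conditions $vy=0$ on $y\in\ker A$ and $\phi(u)=0$ on $\{\phi:\phi A=0\}$ cut down two-dimensional (or larger) spaces and leave positive-dimensional solution spaces. The borderline case is $\dim\ker A=1$, where $A$ itself is regular nilpotent in $M_n$; after putting $A=\xi_{n,+}$ in a Jordan basis, the matrices $\delta_+(X)$ and $\delta_-(X)$ become triangular with $u_n$, respectively $v_1$, on the anti-diagonal, so $\Delta_\pm(X)=0$ forces $u_n=v_1=0$---exactly what is needed for $Z=E_{1,n}$ (the elementary matrix with a single $1$ in position $(1,n)$) to satisfy all four conditions. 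This borderline case, where the dimension count barely allows a stabilizer and only thanks to the simultaneous vanishing of $\Delta_+$ and $\Delta_-$, is the main technical obstacle.
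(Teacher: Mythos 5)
Your proof is correct; the cycle $(3)\Rightarrow(2)\Rightarrow(1)\Rightarrow(3)$ is established (the additional $(2)\Rightarrow(3)$ is harmless redundancy). The implications $(3)\Rightarrow(2)$, $(2)\Rightarrow(3)$ and $(2)\Rightarrow(1)$ agree in substance with the paper's argument.

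Where you genuinely depart from the paper is in the difficult step. The paper proves $(1)\Rightarrow(2)$ by contraposition: writing $r$, $r'$ for the dimensions of the $A$-cyclic subspaces generated by $u$ and by $v$, it assumes $r,r'<n$, chooses a basis adapted to the $u$-cycle so that $A=\left(\begin{smallmatrix}Y&B\\0&Z\end{smallmatrix}\right)$ with $Y$ a regular nilpotent Jordan block, reduces the stabilizer condition to the Sylvester equation $YQ-QZ=0$ on $M_{r,n-r}$, and shows this map has nontrivial kernel by noting it is never surjective (the lower-left entry of $YQ-QZ$ always vanishes). You instead prove the contrapositive of $(1)\Rightarrow(3)$ by producing a rank-one commutant $Z=y\otimes\phi$ with $Ay=0$, $\phi A=0$, $\phi(u)=0$, $vy=0$, splitting on $\dim\ker A$: when $\dim\ker A\geq2$ a dimension count suffices, and when $\dim\ker A=1$ you pass to Jordan form and observe that the hypotheses $\Delta_+(X)=\Delta_-(X)=0$ specialize exactly to $u_n=v_1=0$, which is what $Z=E_{1,n}$ needs. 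Your derivation that $[A,y\otimes\phi]=0$ forces $Ay=0$, $\phi A=0$ for nilpotent $A$ is correct, and the scaling remark properly handles invertibility of $h=I+Z$. Both routes are sound; yours makes explicit where the simultaneous vanishing of both invariants is actually needed (only in the borderline case), while the paper's treats all $0<r<n$ uniformly via the Sylvester-equation observation and does not need to single out $\dim\ker A=1$.

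One small slip: in your borderline case, with the paper's convention $\delta_+(X)=(A^{n-1}u,\ldots,u)$ and $A=\xi_{n,+}$, the matrices $\delta_+(X)$ and $\delta_-(X)$ are upper triangular with $u_n$, resp.\ $v_1$, repeated on the \emph{main} diagonal rather than the anti-diagonal; the conclusion $\Delta_+(X)=u_n^n$ and $\Delta_-(X)=v_1^n$ that you actually use is correct.
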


\begin{proof} 

\label{proof noncompete}
Let $X=\left(\begin{array}{cc}A & u\\
v &  0
\end{array}\right)
$ be an $H_n$-nilpotent element. We then have $A^n=0$ and $vA^iu=0$, for $i=0,1,...,n-1$. It follows that $vA^iu=0$ for all $i\in \BZ_{\geq 0}$. Let $r$ be the dimension of subspace of the ($n\times 1$) column vectors spanned by $A^iu,i=0,1,...,n-1$, and similarly $r'$ the dimension of the subspace spanned by $vA^i,i=0,1,...,n-1$. Clearly, we have an inequality $r+r'\leq n$.

 $1\Rightarrow 2$. It suffices to show that, if $X$ is regular unipotent, then either $r$ or $r'$ is equal to $n$. Indeed, for example if $r=n$, then $r'=0$ (i.e., $v=0$) and the column vectors $A^iu,i=0,1,...,n-1$ form a basis of the n-dimension space of column vectors. Then $\{e^\ast,X e^\ast,...,X^{n-1}e^\ast\}$ form a basis of the $(n+1)$-dimensional column vectors (recall that $e^\ast$ is the transpose of $e=(0,...,0,1)\in M_{1,n+1}(F)$). In term of this new basis we see that $X$ becomes $\xi_+$. 

Now suppose that $r,r'<n$.  Clearly if $r=r'=0$, $X$ must have positive dimensional stabilizer hence non-regular. We now assume that $0<r<n$. Let $L$ be the subspace spanned by $A^iu,i=0,1,...,r-1$. It is easy to see that this is the same as the space spanned by $A^iu,i=0,1,...,n-1$. We write the column vector spaces $F^n=L\oplus L'$ for a subspace $L'$. Then in term of the basis of $L$ given by $A^iu,i=0,1,...,r-1$, we may write $u$ as $(0,0,...,1,0,0,...,0)^t$ where only the $r$-th entry is nonzero and may be assumed to be equal to one, and
$$
A=\left(\begin{array}{cc}Y & B\\
0 &  Z
\end{array}\right),\quad Y= \left(\begin{array}{cccc}0  &1 &0 & 0\\
 0 &  0&1&0\\ 0&...&0 &1
\\
0&...&0&0\end{array}\right) \in M_{r}(F).
$$
Then $A^iu=(0,0,...,1,0,0,...,0)^t$ where only the $(r-i)$-th entry is one, $i=0,1,...,r-1$. Hence the conditions $vA^iu=0$ ($0\leq i\leq n-1$) implies that $v$ is of the form $(0,0,...,0,\ast,...,\ast)$ where the first $r$-entries are all zero.

Now we consider 
$$
h=\left(\begin{array}{cc}1_{r}& Q\\
0 &  1_{n-r}
\end{array}\right)\in \GL_n(F).
$$
Clearly the matrix $h^{-1}Xh$ to an element of the same form with $B$ replaced by $B+YQ-QZ$. Hence the stabilizer of $X$ at least contains all $h$ with $Q$ satisfying
$YQ-QZ=0$. Define $\varphi\in \End(M_{r,n-r}(F))$ by $Q\mapsto YQ-QZ $. We claim that the dimension of the kernel $\Ker(\varphi)$ is positive. Clearly the dimension of $\Ker(\varphi)$ depends only on the conjugacy class of $Z$ in $M_{n-r}(F)$.  Since $A$ is nilpotent, so is $Z$. We thus can assume that $Z$ is a Jordan canonical form.  The endomorphism $\varphi$ cannot be surjective since $M_{r,n-r}(F)\neq 0$ ($0<r<n$) and every $YQ-QZ$ must have zero as its lower left entry. This proves the claim and hence the stabilizer of such $X$ cannot be trivial.

$2\Rightarrow 3$. This is clear since we have $\delta_+(hXh^{-1})=h\delta_+(X)$ and  $\delta_-(hXh^{-1})=\delta_-(X)h^{-1}$ by (\ref{eqn delta+}).

$3\Rightarrow1$. Note that $\Delta_+(X)\neq 0$ is equivalent to $\delta_+(X)\in \GL_n(F)$. The latter property implies that the stabilizer (under the $H_n$-action) of any $X\in M_{n+1,+}$ must be trivial. Indeed, if $hXh^{-1}=X$, we have $\delta_+(X)=\delta_+(hXh^{-1})=h\delta_+(X)$ and hence $h=1$. Similarly for $\Delta_-(X)\neq 0$.
\end{proof}

\subsection{A regular section.} 
Denote 
$$
M_{n+1,+}:=\{X\in M_{n+1}|\Delta_+(X)\neq 0.\}
$$
Note that every element in $M_{n+1,+}$ is regular (cf. the proof of ``$3\Rightarrow1$" of Lemma \ref{lem 6.1}).
We shall write $ \sX=\BA^{n}\times \BA^{n+1}$, \footnote{We use $\BA$ in this section only to denote the affine line.} the affine space of dimension $2n+1$. Then the second set of generators (\ref{gen 2}) defines a morphism that is constant on $H_n$-orbits
\begin{align*}
\pi:M_{n+1}&\lra \sX=\BA^n\times \BA^{n+1},\\
\left(\begin{array}{cc}A & u\\
v &  w
\end{array}\right)&\mapsto(a,b),
\end{align*}
where $a=(a_1,...,a_n), b=(b_0,...,b_n)$, $a_{i}=(-1)^{i-1}\tr\wedge ^{i}A$, $b_{0}=w$ and $b_i=vA^{i-1}u$ for $1\leq i\leq n$. We say that $x\in \sX$ is {\em regular semisimple} if one element (and hence all) in $\pi^{-1}(x)$ is $H_n$-regular semisimple.

Now we define a section of the morphism $\pi:M_{n+1}\to \sX$:
\begin{align*}
\sigma:  \sX&\lra M_{n+1}\\
(a,b)&\mapsto \left(
\begin{array}{ccccc}a_1 &1 &0&0 &0 \\
a_2 &   0&1&0&0\\ ...&0&0&1 &0
\\ a_n&0&0&0&1\\
b_n&...&...&b_1&b_0\end{array}\right).
\end{align*} We note that $\xi_+$ is precisely the image of $0\in \sX$ under $\sigma$.

\[
\xymatrix@=40pt@C=30pt{
& M_{n+1} \ar@{->}[d] ^{\pi}\\
& \sX=M_{n+1//H_n} \ar@{->}@/^2pc/[u] ^{\sigma}
}
\]

\begin{lem}
The morphism $\sigma$ is a section of $\pi$, i.e.:
$$
\sigma\circ \pi=id.
$$
The image of $\sigma$ lies in $M_{n+1,+}$ (in particular, $\sigma$ is a {\em regular} section, in the sense that the image $\sigma(a,b)$ is always $H_n$-regular).
\end{lem}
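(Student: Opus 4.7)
The statement to verify is that $\pi\circ\sigma=\mathrm{id}_{\sX}$ (the displayed $\sigma\circ\pi$ would not typecheck since $\sigma$ is not surjective) and that $\Delta_+\circ\sigma$ is nowhere zero. The plan is to do both by direct computation, exploiting the fact that $\sigma$ is modeled on a companion-matrix construction.

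The first step is to read off the block decomposition of $\sigma(a,b)$. Writing
$$
\sigma(a,b)=\left(\begin{array}{cc}A & u\\ v & w\end{array}\right),
$$
one sees immediately that $w=b_0$, that $u=e_n^{\ast}$ (the last standard column vector in $F^n$), that $v=(b_n,b_{n-1},\dots,b_1)$, and that $A$ is the $n\times n$ matrix with first column $(a_1,\dots,a_n)^t$ and with $A_{i,i+1}=1$ for $1\le i\le n-1$, all other entries zero. The key observation is that $A$ acts on the standard basis as a shift: $A e_j^{\ast}=e_{j-1}^{\ast}$ for $2\le j\le n$, while $A e_1^{\ast}=(a_1,\dots,a_n)^t=\sum_i a_i e_i^{\ast}$.

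Iterating the shift yields $A^{i-1}u=A^{i-1}e_n^{\ast}=e_{n-i+1}^{\ast}$ for $1\le i\le n$. Pairing with $v$ gives $vA^{i-1}u=v_{n-i+1}=b_i$, which verifies the $b$-coordinates of $\pi(\sigma(a,b))$. For the $a$-coordinates, note that $\{e_n^{\ast},Ae_n^{\ast},\dots,A^{n-1}e_n^{\ast}\}=\{e_n^{\ast},e_{n-1}^{\ast},\dots,e_1^{\ast}\}$ is a basis, so $e_n^{\ast}$ is a cyclic vector for $A$. The relation $A^n e_n^{\ast}=Ae_1^{\ast}=\sum_i a_i e_i^{\ast}=\sum_i a_i A^{n-i}e_n^{\ast}$ shows that $t^n-a_1 t^{n-1}-\cdots-a_n$ annihilates $e_n^{\ast}$ and hence, by cyclicity, is the characteristic polynomial of $A$. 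Matching coefficients with $\det(tI-A)=\sum_{i=0}^n (-1)^i\bigl(\tr\wedge^i A\bigr)t^{n-i}$ gives $(-1)^{i-1}\tr\wedge^i A=a_i$, completing the proof that $\pi\circ\sigma=\mathrm{id}$.

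The second claim is now essentially free: by the shift computation above,
$$
\delta_+(\sigma(a,b))=(A^{n-1}u,A^{n-2}u,\dots,u)=(e_1^{\ast},e_2^{\ast},\dots,e_n^{\ast})=I_n,
$$
so $\Delta_+(\sigma(a,b))\equiv 1$ on all of $\sX$, and regularity follows from Lemma~\ref{lem 6.1}. The only potential obstacle is purely bookkeeping — ensuring the indexing conventions for $v$, for $b=(b_0,\dots,b_n)$, and for the column labeling of $\delta_+$ are kept consistent — but no nontrivial mathematics intervenes.
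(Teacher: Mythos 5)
Your proof is correct and follows essentially the same route as the paper: compute the characteristic polynomial of the $A$-block (the paper simply asserts the determinant identity as "easy to check," while you derive it by observing that $e_n^{\ast}$ is a cyclic vector for the shift-plus-first-column structure of $A$), read off the $b$-invariants by the same shift computation, and note that $\delta_+(\sigma(a,b))=1_n$, which is exactly the identity the paper records as its displayed equation. You are also right that the displayed $\sigma\circ\pi$ in the lemma is a slip for $\pi\circ\sigma$ (the paper's own commutative diagram and usage make clear $\sigma$ is a right inverse of $\pi$).
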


\begin{proof}
It is easy to check that 
$$
\det\left( T\cdot 1_{n} +\left(
\begin{array}{cccc}a_1 &1 &0&0 \\
a_2 &   0&1&0\\ ...&0&0&1 
\\ a_n&0&0&0\end{array}\right)\right)=T^n+a_1T^{n-1}-a_2T^{n-2}+...+(-1)^{n-1}a_n,
$$
and the $b$-invariants of $\sigma(a,b)$ are $(b_0,b_1,b_2,...,b_n)$. This shows that $\sigma$ is a section of $\pi$.
To see that the image of $\sigma$ lies in $M_{n+1,+}$, we note that for any $(a,b)\in \sX$ we have
\begin{align}\label{eqn delta+=1}
\delta_+(\sigma(a,b))=1_n.
\end{align}

\end{proof}

\begin{prop}
\label{prop sec}
We have an $H_n$-equivariant morphism \begin{align*}
\iota:\GL_n\times \sX &\to M_{n+1,+}\\
(h, (a,b))&\mapsto h\sigma(a,b)h^{-1},\end{align*}
where the group $H_n$ acts on the left hand side by left translation on the first factor, and trivially on the second factor.
Moreover, the morphism $\iota$ is an isomorphism with its inverse given by $(\delta_{+},\pi|_{M_{n+1,+}})$.
\end{prop}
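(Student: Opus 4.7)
The plan is to verify the inverse of $\iota$ is given explicitly by $\phi := (\delta_+, \pi|_{M_{n+1,+}})$, by checking both compositions directly. The $H_n$-equivariance of $\iota$ is immediate from $(h'h)\sigma(a,b)(h'h)^{-1} = h'\bigl(h\sigma(a,b)h^{-1}\bigr)h'^{-1}$. That $\iota$ lands in $M_{n+1,+}$ follows because $\sigma(a,b)\in M_{n+1,+}$ (by $\Delta_+(\sigma(a,b))=1$, i.e.\ $\delta_+(\sigma(a,b))=1_n$ from \eqref{eqn delta+=1}) and because $M_{n+1,+}$ is $H_n$-stable, since \eqref{eqn delta+} gives $\Delta_+(hXh^{-1}) = \det(h)\Delta_+(X)$.

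For $\phi\circ\iota = \mathrm{id}$, I would use the two properties just recalled together with the $H_n$-invariance of $\pi$: for any $(h,(a,b))$,
\[
\delta_+\bigl(h\sigma(a,b)h^{-1}\bigr) = h\,\delta_+(\sigma(a,b)) = h\cdot 1_n = h,
\]
and $\pi(h\sigma(a,b)h^{-1}) = \pi(\sigma(a,b)) = (a,b)$ since $\sigma$ is a section of $\pi$.

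For $\iota\circ\phi = \mathrm{id}$, given $X\in M_{n+1,+}$, set $h=\delta_+(X)\in\GL_n$ and $Y = h^{-1}Xh$. By the equivariance of $\delta_+$ and the $H_n$-invariance of $\pi$, we have $\delta_+(Y)=1_n$ and $\pi(Y)=\pi(X)$. So it suffices to prove the following uniqueness statement: if $Y\in M_{n+1}$ satisfies $\delta_+(Y)=1_n$ and $\pi(Y)=(a,b)$, then $Y=\sigma(a,b)$. Write $Y=\bigl(\begin{smallmatrix}A' & u' \\ v' & w'\end{smallmatrix}\bigr)$. The columns of $\delta_+(Y)=1_n$ read $A'^{\,n-i}u' = e_i^\ast$ for $i=1,\dots,n$, which forces $u' = e_n^\ast$ and identifies the $j$-th column of $A'$ with $e_{j-1}^\ast$ for $j=2,\dots,n$. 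Then $b_i = v'A'^{\,i-1}u' = v'e_{n-i+1}^\ast$ pins down each entry of $v'$, and $w'=b_0$ is direct.

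The only nontrivial step will be recovering the first column of $A'$, which I expect to be the main obstacle — but it is handled by Cayley--Hamilton. The invariants $a_i=(-1)^{i-1}\tr\wedge^i A'$ determine the characteristic polynomial, so $A'^{\,n}u' = -a_1A'^{\,n-1}u' + a_2A'^{\,n-2}u' - \cdots$; evaluating the right-hand side using $A'^{\,n-i}u'=e_i^\ast$ gives exactly the column $(a_1,\ldots,a_n)^t$, which is the first column of the top-left block of $\sigma(a,b)$. Hence $Y=\sigma(a,b)$ and $\iota\circ\phi=\mathrm{id}$, completing the proof.
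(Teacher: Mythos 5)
Your proposal is correct and essentially reproduces the paper's proof: both sides of the composition are verified using $\delta_+(hXh^{-1})=h\,\delta_+(X)$, $\delta_+(\sigma(a,b))=1_n$, and Cayley--Hamilton to recover the one column of $A'$ not pinned down directly. The only notational difference is that you first conjugate $X$ to the normalized element $Y=h^{-1}Xh$ with $\delta_+(Y)=1_n$ and then check columns, whereas the paper directly shows $A\,\delta_+(X)=\delta_+(X)\cdot\sigma_A$ for the companion matrix $\sigma_A$; these are the same computation. One small correction: with the paper's normalization $a_i=(-1)^{i-1}\tr\wedge^i A'$, Cayley--Hamilton gives $A'^n=\sum_{i=1}^{n} a_i A'^{n-i}$ with \emph{all plus signs}, so the display should read $A'^n u' = a_1 A'^{n-1}u' + a_2 A'^{n-2}u' + \cdots + a_n u'$; your alternating signs $-a_1,+a_2,-a_3,\dots$ would yield the column $(-a_1,a_2,-a_3,\dots)^t$, not $(a_1,\dots,a_n)^t$ as claimed. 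The conclusion you state is the correct one, so this is just a sign slip in the intermediate line, not a flaw in the method.
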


\begin{proof}
It suffices to prove that $ (\delta_{+},\pi)\circ \iota=id$ and $\iota\circ (\delta_{+},\pi)=id$. To show the first identity we note that the invariants of $h\sigma(a,b)h^{-1} $ (being the same as $\sigma(a,b)$) are $(a,b)$. Hence it is enough to show that $\delta_+(\iota(h,(a,b))=h$. This follows from the fact that $\delta_+(\sigma(a,b))=1_n$ (cf. (\ref{eqn delta+=1}))and $\delta_+(hXh^{-1})=h\delta_+(X)$ by (\ref{eqn delta+}). 

Now we show the second identity. Let $X=\left(\begin{array}{cc}A & u\\
v &  w
\end{array}\right)\in M_{n+1,+}$. Let $(a,b)=\pi(X)$ and $h=\delta_+(X)=(A^{n-1}u,A^{n-2}u,...,u)\in H_n$. Denote $\iota\circ (\delta_{+},\pi)(X)=\left(\begin{array}{cc}A' & u'\\
v' &  w'
\end{array}\right)$. Clearly $w=w'$. By the first identity, the elements $\iota\circ (\delta_{+},\pi)(X)$ and $X$ have the same invariants. In particular, $$\det(T\cdot 1_n+A)=T^n+\sum_{i=1}^n(-1)^{i-1}a_iT^{n-i},$$ 
and therefore
$$
A^n=\sum_{i=1}^n a_iA^{n-i}.
$$
This implies that
$$
A\delta_+(X)=(A^nu,A^{n-1}u,...,Au)=\delta_+(X)\left(
\begin{array}{cccc}a_1 &1 &0&0 \\
a_2 &   0&1&0\\ ...&0&0&1 
\\ a_n&0&0&0\end{array}\right) .
$$
Since $\delta_+(X)=h$ is invertible, we obtain $$ A=h\left(
\begin{array}{cccc}a_1 &1 &0&0 \\
a_2 &   0&1&0\\ ...&0&0&1 
\\ a_n&0&0&0\end{array}\right)h^{-1}=A'.
$$
Obviously we have $u= \delta_+(X)e_n^\ast=he_n^\ast=u'$ ($e_n^\ast=(0,0,...,0,1)^t$). Finally since $b_i=vA^{i-1}u$, $(b_n,b_{n-1},...,b_1)=v\delta_+(X)$, we have $$v=(b_n,b_{n-1},...,b_1)\delta_+(X)^{-1}=(b_n,b_{n-1},...,b_1)h^{-1}=v'.$$ This completes the proof of the second identity.
\end{proof}
Similarly we define a variant $\sigma':\sX=\BA^n\times \BA^{n+1}\to M_{n+1}$ by
\begin{align}
\sigma'(a,b)=\left(
\begin{array}{ccccc}0 &1&   0 & \cdots&0\\ 0&0& 1&\cdots&0\\ \vdots&\vdots&\vdots& \vdots&\vdots\\
a_n &a_{n-1} &\cdots&a_1& 1\\
b_n &b_{n-1} &\cdots&b_1& b_0
\end{array}\right).
\end{align}

To facilitate the exposition, we introduce:
 \begin{defn}
Consider a morphism between two affine spaces:
 $$\phi:\BA^m=\Spec F[x_1,...,x_m]\longrightarrow \BA^m=\Spec F[y_1,...,y_m]$$ with induced morphism $\phi^\ast: F[y_1,...,y_m]\lra F[x_1,...,x_m]$. We say that $\phi$ is {\em triangular} if we have, possibly after reordering the coordinates:
 $$\phi^\ast(y_i)=\pm x_i+\varphi_i(x_1,...,x_{i-1}),\quad1\leq i\leq m,
 $$ where $\varphi_i(x_1,...,x_{i-1})\in F[x_1,...,x_{i-1}]$ is a polynomial of $x_1,...,x_{i-1}$.  \end{defn}
It is easy to see that if $\phi$ is triangular, then it is an isomorphism and its inverse is triangular, too. Moreover, the Jacobian factor of a triangular morphism is equal to $\pm 1$. 

\begin{cor}\label{cor reg sec}
The following morphism is an isomorphism:
\begin{align*}
\iota':H_n\times \sX &\to M_{n+1,+}\\
(h, (a,b))&\mapsto h\sigma'(a,b)h^{-1}.
\end{align*}
Moreover, the induced morphism $\pi\circ \sigma': \sX\to \sX$ is triangular, and in particular an isomorphism.
\end{cor}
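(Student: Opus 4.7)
The strategy is to reduce to Proposition~\ref{prop sec} by showing that $\sigma'$ differs from $\sigma$ only by an $H_n$-conjugation and a triangular reparametrization of $\sX$. Concretely, once we know that $\sigma'(a,b) \in M_{n+1,+}$, Proposition~\ref{prop sec} yields the unique decomposition
$$\sigma'(a,b) = \delta_+(\sigma'(a,b))\cdot \sigma\bigl(\pi(\sigma'(a,b))\bigr)\cdot \delta_+(\sigma'(a,b))^{-1},$$
which exhibits $\iota'$ as a composition $\iota\circ \Psi$ with $\Psi(h,(a,b))=\bigl(h\cdot\delta_+(\sigma'(a,b)),\;\pi(\sigma'(a,b))\bigr)$. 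It then suffices to prove (i) that the image of $\sigma'$ lies in $M_{n+1,+}$, and (ii) that $\phi := \pi\circ\sigma':\sX\to\sX$ is triangular. Since the first coordinate of $\Psi$ is bijective on $H_n$ for each fixed $(a,b)$, (i) and (ii) together make $\Psi$ an isomorphism, and hence $\iota'$ an isomorphism.

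\emph{Step 1 (image of $\sigma'$ in $M_{n+1,+}$).} Write $A'$ for the upper-left $n\times n$ block of $\sigma'(a,b)$ and $u'=e_n^\ast$. Inspection of $\sigma'(a,b)$ gives $A'e_1^\ast = a_n e_n^\ast$ and $A'e_j^\ast = e_{j-1}^\ast + a_{n-j+1}e_n^\ast$ for $j\geq 2$. A direct induction on $k$ shows
$$(A')^k u' \;=\; e_{n-k}^\ast + \sum_{j=n-k+1}^{n} c_{k,j}(a_1,\ldots,a_k)\,e_j^\ast,\qquad 0\leq k\leq n-1,$$
so $\delta_+(\sigma'(a,b)) = ((A')^{n-1}u',\ldots,A'u',u')$ is upper triangular with $1$s on the diagonal; in particular $\Delta_+(\sigma'(a,b))=1$, and $\delta_+\circ\sigma'$ is a (unipotent) morphism $\sX\to H_n$ depending only on the $a$-coordinates.

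\emph{Step 2 (triangularity of $\phi$).} The matrix $A'$ is the standard companion matrix of $T^n - a_1T^{n-1}-\cdots-a_n$, from which one computes $(-1)^{i-1}\tr\wedge^i A' = a_i$, so $\phi^\ast a_i = a_i$ for all $i$. For the $b$-coordinates, $\phi^\ast b_0 = b_0$ trivially, and using $v' = (b_n,b_{n-1},\ldots,b_1)$ together with the formula from Step~1,
$$\phi^\ast b_i \;=\; v'(A')^{i-1}u' \;=\; b_i + \sum_{k=1}^{i-1} c_{i-1,n-k+1}(a_1,\ldots,a_{i-1})\,b_k.$$
Ordering the coordinates of $\sX$ as $(a_1,\ldots,a_n,b_0,b_1,\ldots,b_n)$, this displays $\phi$ as a triangular morphism with $+1$s on the diagonal, hence as an isomorphism of $\sX$.

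There is no substantive obstacle: once the combinatorial identity in Step~1 is unwound, both claims follow at once, and the corollary is the formal consequence outlined at the start. If anything, the only technical point worth being careful about is the bookkeeping of indices and signs in checking $(-1)^{i-1}\tr\wedge^i A' = a_i$, but this is routine from the companion-matrix form of $A'$.
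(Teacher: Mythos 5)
Your argument is correct and follows essentially the same route as the paper: the reduction to Proposition~\ref{prop sec} via $\iota' = \iota\circ\Psi$ is just an explicit repackaging of the paper's three-step solution for $(h,(a,b))$, and both proofs rest on the same two computations — the $a$-invariants of the companion matrix $A'$, and the triangular dependence of the $b$-invariants. One small slip: $\delta_+(\sigma'(a,b)) = ((A')^{n-1}u',\ldots,A'u',u')$ is \emph{lower} triangular unipotent (i.e., lies in $N_{n,-}$, as the paper states), not upper triangular — indeed your own formula $(A')^ku' = e_{n-k}^\ast + \sum_{j>n-k}c_{k,j}e_j^\ast$ places the corrections in rows below $k$ of the $k$-th column — but this does not affect $\Delta_+(\sigma'(a,b))=1$ or anything downstream.
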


\begin{proof}  The proof of the first part follows the same line as the previous one: it suffices to show that for an arbitrarily $X=\left(\begin{array}{cc}A & u\\
v &  w
\end{array}\right)\in M_{n+1,+}$, we may solve for $(a,b)$ and $h$ uniquely in terms of polynomials of entries of $X$:
\begin{align}\label{eqn sigma'}
h\sigma'(a,b)h^{-1}=X.
\end{align}
We proceed in three steps:
\begin{enumerate}\item[]{\em Step 1.} For the $a$-component of $\sigma'(a,b)$, we have $a_i=(-1)^{i-1}\tr \wedge^i A$.  
\item[]{\em Step 2.}
By (\ref{eqn delta+}), we have $\delta_+(X)=h\delta_+(\sigma'(a,b))$.
Note that the matrix $\delta_+(\sigma'(a,b))$ lies in $N_{n,-}$ and it depends only on $a$ (but not on $b$).  Combined with {\em Step 1}, we see that can be expressed in terms of $X$:  \begin{align}\label{eqn solve h}
h=\delta_+(X)\delta_+(\sigma'(a,b))^{-1}.
\end{align}
\item[]{\em Step 3.} In (\ref{eqn sigma'}), the last row of $\sigma'(a,b)$, i.e. $(b_n,...,b_1,b_0)$, is equal to $(vh,w)$. Combining with {Step 2} we complete the proof. 
\end{enumerate}

To show the second part, by {\em Step 1} we may write $\pi\circ \sigma'(a,b')=(a,b)$. By computing the $b$-invariants of $\sigma'(a,b')$, we say that $b_0=b_0'$, and for each $i\geq 1$, $b_i-b_i'$ is a polynomial of $a,b'_1,...,b'_{i-1}$. This also shows that $b_i$ is a polynomial of  $a,b'_1,...,b'_{i-1}$. Therefore $\pi\circ \sigma':\sX\to \sX$ is a triangular morphism. 
\end{proof}

We will need to consider the restriction of $\iota'$ to some closed subvarieties. We denote by $\CW$ the subvariety of of $M_{n+1}$ consisting of matrices $X$ of the following form:
\begin{align}
X=\left(
\begin{array}{ccccc}\ast&1&0&0&0\\\ast& \ast&1&0&0\\
\ast& \ast& \ast&1&0\\
\ast& \ast& \ast&\ast &1\\
\ast & \ast &   \ast & \ast&\ast
\end{array}\right)\in M_{n+1}.
\end{align}
Denote by $\CV$ the subvariety of $\CW$ consisting of $X$ of the same form but with the last row identically zero.
Then we have a natural projection $p: \CW\to \CV$.

\begin{lem}\label{lem a sect}
\begin{enumerate}
\item[(1)]
The variety $\CW$ is a subvariety of $M_{n+1,+}$ and the preimgae of $\CW$ under $\iota'$ is the product $N_{n,-}\times \sX$. 
\item[(2)]For every $(a,b)\in \sX$,
we define a morphism $\nu_{(a,b)}: N_{n,-}\lra \CW$ by
\begin{align}
\nu_{(a,b)}(u)=u\sigma'(a,b)u^{-1},\quad u\in N_{n,-}.
\end{align}  Then the composition $\nu'_{(a,b)}:=p\circ \nu_{(a.b)}:N_{n,-}\longrightarrow \CV$ is an isomorphism with Jacobian equal to $\pm 1$.
\end{enumerate}

\end{lem}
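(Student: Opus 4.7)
The proof rests on Corollary~\ref{cor reg sec}, which provides the isomorphism $\iota' : H_n \times \sX \xrightarrow{\sim} M_{n+1,+}$, together with the observation (from its proof) that $\delta_+(\sigma'(a,b)) \in N_{n,-}$ depends only on $a$.

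For part (1), I would check the inclusion $\iota'(N_{n,-} \times \sX) \subseteq \CW$ in two steps: first, $\sigma'(a,b) \in \CW$ by inspection of its entries; second, conjugation by $u \in N_{n,-}$ (embedded block-diagonally as $\mathrm{diag}(u,1) \in H_{n+1}$) preserves $\CW$, because $u\,e_n^* = e_n^*$ keeps the top-right column equal to $e_n^*$ and the conjugation $A \mapsto uAu^{-1}$ preserves the upper Hessenberg pattern with $1$'s on the superdiagonal and $0$'s strictly above in the top-left $n \times n$ block. For the reverse inclusion, I would show that for any $X \in \CW$ the matrix $\delta_+(X)$ lies in $N_{n,-}$. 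An induction on $k$ using the upper Hessenberg structure of the top-left block $A$ shows that $A^k e_n^*$ has first nonzero entry equal to $1$ in row $n-k$, so $\delta_+(X) = (A^{n-1}e_n^*, \ldots, A\,e_n^*, e_n^*)$ is lower triangular unipotent. In particular $\Delta_+(X) = 1 \neq 0$, whence $\CW \subseteq M_{n+1,+}$. Combined with the inverse formula for $\iota'$ (which expresses the $H_n$-factor of $(\iota')^{-1}(X)$ as a product of two elements of $N_{n,-}$), this gives $(\iota')^{-1}(\CW) = N_{n,-} \times \sX$.

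For part (2), $\nu_{(a,b)}$ is the restriction of $\iota'$ to the slice $N_{n,-} \times \{(a,b)\}$, hence a closed immersion by part (1). To analyze $\nu'_{(a,b)} = p \circ \nu_{(a,b)}$, I would compute the entries of $p(u\sigma'(a,b)u^{-1})$ explicitly and order both the strictly subdiagonal entries of $u$ and the corresponding coordinates on $\CV$ so that $\nu'_{(a,b)}$ becomes triangular in the sense defined before Corollary~\ref{cor reg sec}. The key structural feature is that conjugating the companion-type matrix $\sigma'(a,b)$ by a lower unipotent $u$ produces entries in the top $n \times n$ block whose dependence on the entries of $u$ is polynomial and triangular: each off-diagonal entry contains the corresponding entry of $u$ linearly with coefficient $\pm 1$, plus polynomials in entries of $u$ lying strictly earlier in the triangular ordering. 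Once this is verified, the fact that $\nu'_{(a,b)}$ is an isomorphism with Jacobian $\pm 1$ follows immediately from the general property of triangular polynomial maps noted before Corollary~\ref{cor reg sec}.

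The main obstacle is the bookkeeping in part (2): choosing a clean triangular ordering on the entries of $u$ and the coordinates on $\CV$, and then verifying entry by entry that the expected triangular structure (with $\pm 1$ leading coefficients) actually holds in the computation of $p(u\sigma'(a,b)u^{-1})$.
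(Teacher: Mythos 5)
Your part~(1) is essentially the paper's argument: you verify $\delta_+(\CW)\subset N_{n,-}$ (the paper's observation (i)), either by the induction you sketch or by direct inspection, and combine it with the inverse formula $h=\delta_+(X)\delta_+(\sigma'(a,b))^{-1}$ from Corollary~\ref{cor reg sec}. That part is fine.

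Part~(2) has a genuine gap. The ``key structural feature'' you posit --- each off-diagonal entry of $p\,\nu_{(a,b)}(u)$ contains the corresponding entry of $u$ linearly with coefficient $\pm1$ --- is simply false. Already for $n=2$, with $u=\left(\begin{smallmatrix}1&0\\ t&1\end{smallmatrix}\right)$ and $A_0=\left(\begin{smallmatrix}0&1\\ a_2 & a_1\end{smallmatrix}\right)$, the only strictly subdiagonal entry of $uA_0u^{-1}$ is $A_{21}=a_2 - t^2 - a_1 t$, which is \emph{quadratic} in the only free entry $t=u_{21}$ of $u$; there is no ``earlier'' variable for it to be triangular over. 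In general the first column of $A=uA_0u^{-1}$ is a badly nonlinear function of $u$, and no pairing of strictly subdiagonal entries $A_{ij}$ with $u_{ij}$ makes the map triangular; the entries that do depend on a new $u$-variable with a $\pm1$ leading coefficient sit in shifted positions (columns $2,\ldots,n$ of $A$, paired with shifted entries of $u$). This is precisely what the paper's two-step proof encodes and what a naive entrywise computation will not uncover. The paper first uses observation (iii) about $\delta_+$ --- columns $2,\ldots,n$ of $A$ are a triangular function of $\delta_+(X)$, and $\delta_+(\nu_{(a,b)}(u))=u\,\delta_+(\sigma'(a,b))$ is right translation on $N_{n,-}$, hence triangular --- to show that the projection $p'\circ\nu$ onto the auxiliary slice $\CV'$ (both the first column and the last row zeroed out) is triangular. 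It then uses the characteristic-polynomial identity $(\spade)$ to express the first-column entries $\alpha_i$ polynomially in the $a$-invariants, the bottom-row entries $\beta_j$ and $\widetilde A$, treating the passage from $\CV'$ to $\CV$. Neither of these ingredients appears in your proposal, and they are where the actual work lies. Finally, note that $\dim N_{n,-} =\binom{n}{2}$ while $\CV$ has $\binom{n+1}{2}$ free coordinates; the image of $\nu'_{(a,b)}$ is a proper $\binom{n}{2}$-dimensional slice of $\CV$ (the slice where the $a$-invariants of $A$ are fixed and the last row of $A$ is consequently determined), and singling out that slice and a good coordinate system on it is exactly what the auxiliary $\CV'$ and the $(\spade)$ step are for; your proposal never identifies it.
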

\begin{proof}Let $X=\left(\begin{array}{cc}A & u\\
v &  w
\end{array}\right)$ be in $\CW$. It is easy to verify the following properties about $\delta_+(X)$:
\begin{itemize}
\item[(i)]
$\delta_+(X)\in N_{n,-}$.
\item[(ii)] $\delta_+(X)$ depends only on the last $n-1$ columns of $A$, but not on the first column.
\item[(iii)] For each $i$, $0\leq i\leq n-1$, the $(n-i)$-th column of $\delta_+(X)$ is equal to the sum of the $(n-i+1)$-th column of $A$ plus a column vector whose entries are polynomials depending only the last $(i-1)$-columns of $A$.
\end{itemize} 
By (i), such $X$ lies in $M_{n+1,+}$ and hence $\CW\subset M_{n+1}$. Setting $X=\sigma'(a,b)$ shows that $\delta_+(\sigma'(a,b))$ lies in  $N_{n,-}$ and depends only on $a$. Let $(h,(a,b))$ be the preimage $\iota'^{-1}(X)$. By (\ref{eqn solve h}) in the proof of Prop. \ref{prop sec}, we have 
$$
h=\delta_+(X)\delta_+(\sigma'(a,b))^{-1}\in N_{n,-}.
$$ 
Hence the preimage of $\CW$ is contained in $ N_{n,-}\times \sX$. Since $\CW$ is preserved under the conjugation by $N_{n,-}$ and contains the image of $\sigma$, it follows that the preimage of $\CW$ is exactly $N_{n,-}\times \sX$. This proves part (1) of the lemma. Alternatively, we may identify $\CW$ with the variety consisting of $X\in M_{n+1}$ such that $\delta_+(X)\in N_{n,-}$.

To show part (2),
we denote by $\CW_{(a,b)}$ the image of $N_{n,-}\times \{(a,b)\}$ under $\iota'$.
Consider an auxiliary subvariety $\CV'$ of $\CW$ with the first column and the last row both being zero. Let $p': \CW\to \CV$ be the natural projection. By the property (iii) above, the composition $p'\circ \nu:N_{n,-}\to \CV'$ is a triangular morphism. Thus the restriction of the projection $p'$ to $\CW_{(a,b)}$ induces an isomorphism $p'_{(a,b)}:\CW_{(a,b)}\to \CV'$. To prove part (2), it remains to show that the morphism $p|_{\CW_{(a,b)}}\circ (p_{(a,b)}')^{-1}:\CV'\to \CV$ is triangular.
  
Now we let $X=\left(\begin{array}{cc}A & u\\
v &  w
\end{array}\right)\in \CW_{(a,b)}$ with
$$
A=\left(
\begin{array}{ccccc}\alpha_1&1&0&0&0\\\alpha_2& \ast&1&0&0\\
\ast & \ast& \ast&1&0\\
\ast& \ast& \ast&\ast &1\\
\alpha_n=\beta_n& \beta_{n-1} &   \ast & \ast&\beta_1
\end{array}\right)\in M_n.
$$
We denote by $\wt{A}$ the square matrix obtained by deleting the first and the last row/column of $A$. By computing the coefficients of the characteristic polynomial of $A$, we have the following:

{\em $(\spade)$ For each $i$, $1\leq i\leq n$, the sum $\alpha_i+\beta_{i}$ is a polynomial of $\alpha_{1},...,\alpha_{i-1}$, $\beta_1,...,\beta_{i-1}$, $a_1,...,a_{i}$ and the entries of $\wt{A}$.}

By induction on $i$, $\alpha_i$ is a polynomial of $\beta_1,...,\beta_{i}$, $a_1,...,a_{i}$ and the entries of $\wt{A}$. The same statement holds if we replace $\alpha$ by $\beta$ everywhere. Note that the last row of $X\in \CW_{(a,b)}$ is also determined by the entries $\alpha_1,...,\alpha_n,a_1,...,a_n$ and $\wt{A}$.
It follows that the morphism $p|_{\CW_{(a,b)}}\circ (p_{(a,b)}')^{-1}:\CV'\to \CV$ is triangular.  This completes the proof.

\end{proof}

A byproduct of the proof is:
\begin{cor}\label{cor bdd}
Let $X=\left(\begin{array}{cc}A & u\\
v &  w
\end{array}\right)\in \CW$. Then every entry of the last row of $A$ is a polynomial of the first $n-1$ rows of $A$ and the coefficients of the characteristic polynomial of $A$.
\end{cor}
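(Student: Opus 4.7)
The plan is to regard $a_1, \dots, a_n$ as a polynomial map of the last row $(A_{n,1}, \dots, A_{n,n})$, with the first $n-1$ rows of $A$ held fixed, and show this map is a triangular isomorphism. Inverting it successively will express each $A_{n,j}$ as a polynomial in the first $n-1$ rows and $a_1, \dots, a_n$.

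First I would observe that $e_k(A) := \sum_{|S|=k} \det(A_{S,S})$ is affine-linear in the last row: each principal minor $\det(A_{S,S})$ with $n \in S$ is a linear function of row $n$ of $A$, and minors with $n \notin S$ contribute no dependence. Expanding such a minor along its row corresponding to $n$, the coefficient of $A_{n,j}$ is a signed sum of cofactors $\det(A_{S-\{n\}, S-\{j\}})$ over $S$ with $n, j \in S$ and $|S|=k$; since these cofactors have row indices in $\{1, \dots, n-1\}$, both the coefficient of $A_{n,j}$ and the constant term are polynomials in the first $n-1$ rows.

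The core combinatorial step is to show that the coefficient $c_j^{(k)}$ of $A_{n,j}$ in $e_k(A)$ vanishes for $j < n-k+1$, and equals $(-1)^{k-1}$ for $j = n-k+1$. For a fixed $S = \{s_1 < \dots < s_k\}$ with $s_k = n$ and $s_m = j$, any nonzero term of $\det(A_{S-\{n\}, S-\{j\}})$ corresponds to a bijection $\sigma : S-\{n\} \to S-\{j\}$ with $A_{i,\sigma(i)} \ne 0$ for every $i$; since $A_{i,t}=0$ for $t > i+1$, this forces $\sigma(i) \le i+1$. A greedy argument from the smallest index then forces $s_{m+\ell} = j + \ell$ for $\ell = 0, \dots, k-m$, hence $n = j + (k-m)$, i.e.\ $j = n-k+m \ge n-k+1$. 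Equality forces $m=1$ and $S = \{n-k+1, \dots, n\}$; for this canonical $S$, iteratively expanding $A_{S-\{n\}, S-\{n-k+1\}}$ along its top row $(1, 0, \dots, 0)$ gives cofactor $1$, contributing the sign $(-1)^{k+1} = (-1)^{k-1}$.

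These combine to give the triangular identity
\[
a_k \;=\; A_{n,\, n-k+1} + Q_k(A_{n,\, n-k+2}, \dots, A_{n,n}),
\]
where $Q_k$ is a polynomial whose coefficients depend only on the first $n-1$ rows. Solving recursively, $a_1$ expresses $A_{n,n}$ as a polynomial in the first $n-1$ rows and $a_1$; then $a_2$ expresses $A_{n,n-1}$ in terms of $a_1, a_2$, the already-solved $A_{n,n}$, and the first $n-1$ rows; continuing in this way, $a_n$ yields $A_{n,1}$. The main obstacle is the combinatorial vanishing $c_j^{(k)} = 0$ for $j < n-k+1$; once this and the $\pm 1$ calculation are in place, the triangular inversion is formal.
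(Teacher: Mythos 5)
Your proof is correct, and it takes a somewhat different (more self-contained) route than the paper. The paper's proof of this corollary simply refers back to the internal claim $(\spadesuit)$ in the proof of Lemma~\ref{lem a sect}, which is a statement about the sums $\alpha_i+\beta_i$ where $\alpha_i$ are first-column entries and $\beta_i$ are last-row entries (in reverse order); that claim treats the first column and last row symmetrically (exploiting the transpose symmetry that fixes the invariants $a_i$ and $\tilde A$) and is itself stated without detailed proof, with a pointer to ``computing the coefficients of the characteristic polynomial of $A$.'' You instead fix the first $n-1$ rows and vary only the last row, and directly verify that $(A_{n,1},\dots,A_{n,n}) \mapsto (a_1,\dots,a_n)$ is triangular by analyzing exactly which principal minors $\det(A_{S,S})$ contribute a nonzero coefficient to $A_{n,j}$. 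Your central combinatorial step --- that a contributing $S$ must contain the full interval $\{j,j+1,\dots,n\}$ (which forces $j\ge n-k+1$, with equality pinning down $S=\{n-k+1,\dots,n\}$, for which the cofactor is the determinant of a unit lower-triangular matrix) --- is exactly the content that the paper hides inside $(\spadesuit)$. The ``greedy from the smallest index'' step deserves one more sentence of justification: one should observe that a gap $s_{\ell+1}>s_\ell+1$ for some $\ell\ge m$ leaves $k-\ell$ columns of $S\setminus\{j\}$ that are $\ge s_{\ell+1}$ but only $k-1-\ell$ rows of $S\setminus\{n\}$ with $s_i+1\ge s_{\ell+1}$, a pigeonhole contradiction. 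With that made explicit, your argument is complete; it is more detailed than the paper's and arguably clearer for a reader who wants exactly the corollary rather than the whole of Lemma~\ref{lem a sect}.
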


\begin{proof}
In the proof of the previous Lemma \ref{lem a sect}, the $\beta_i$'s are polynomials of the first $n-1$ rows of $A$, and the coefficients $a_i$'s of the characteristic polynomial of $A$.
\end{proof}

\label{combine with last lem?}

We also have an easier statement about the upper unipotent $N_{n}$ acting on $\xi_{n+1,+}$.
\begin{lem}
\label{lem N+}
Denote by $\CV_+$ the subvariety of $M_{n+1}$ consisting of $X$ of the following form:
$$
X=\left(
\begin{array}{cccc}0  &1 &\ast & \ast \\
0 &   0&1&\ast\\ ...&...&0 &1
\\
0&...&0&0\end{array}\right)\in M_{n+1}.
$$ Define a morphism 
\begin{align*}
\nu_+:N_n&\to \CV_+\\
u&\mapsto u \xi_{n+1,+ }u^{-1}.
\end{align*} Then $\nu_+$ is triangular. 
\end{lem}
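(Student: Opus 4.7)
The strategy is to compute the entries of $\nu_+(u)=u\xi_{n+1,+}u^{-1}$ directly and exhibit orderings of the coordinates on source and target under which the morphism visibly satisfies the definition of triangularity given before Corollary~\ref{cor reg sec}. I would parametrize $N_n$ by its strictly upper-triangular entries $u_{ab}$, $1\le a<b\le n$, and parametrize $\CV_+$ by its $\binom{n}{2}$ free entries $y_{ij}$ indexed by pairs $(i,j)$ with $1\le i\le n-1$ and $i+2\le j\le n+1$, with the obvious bijection $u_{a,b}\leftrightarrow y_{a,b+1}$.

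Using $(\xi_{n+1,+})_{k\ell}=\delta_{\ell,k+1}$, the relation $u_{ii}=1$, and the block-diagonal shape $u=\mathrm{diag}(u',1)$ inside $H_{n+1}$, a short calculation gives
\[
y_{ij} \;=\; u_{i,j-1} \;+\; \sum_{k=i}^{j-2} u_{ik}\,(u^{-1})_{k+1,j} \qquad (j\le n),
\]
and $y_{i,n+1}=u_{i,n}$ outright. The leading term comes from $k=j-1$ together with $(u^{-1})_{j,j}=1$; the boundary case $j=n+1$ is forced because only $k=n$ gives a nonzero contribution from the last column of $u^{-1}$. The key structural input is that the expansion $u^{-1}=\sum_{p\ge 0}(1-u)^p$ (a finite sum by nilpotence) expresses $(u^{-1})_{k+1,j}$ as a polynomial in the entries $u_{cd}$ with $k+1\le c<d\le j$; in particular every such $c$ satisfies $c\ge k+1>i$.

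I then order the source variables $u_{ab}$ lexicographically by $(-a,b)$ (larger row index first; among the same row, smaller column index first) and order the target variables $y_{ij}$ by $(-i,j)$. Under the bijection $u_{a,b}\leftrightarrow y_{a,b+1}$ this is order-preserving, and the previous paragraph shows that every $u_{ab}$ occurring in the correction sum has either $a>i$, or $a=i$ with $b<j-1$, hence is strictly earlier in this ordering than $u_{i,j-1}$. This verifies that $\nu_+$ is triangular, with leading sign $+1$. The only real work is identifying the right ordering and noting the polynomial dependence of $(u^{-1})_{k+1,j}$; I do not anticipate a substantive obstacle, and the statement is strictly easier than Lemma~\ref{lem a sect}, which relies on a similar idea but must also handle the interaction of the conjugation action with the characteristic-polynomial coefficients via the observation $(\spade)$ there.
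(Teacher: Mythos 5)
Your computation is correct and gives a complete proof. The paper itself omits the argument, saying only ``Similar to the previous one. We omit the detail.'' (i.e., deferring to the method of Lemma~\ref{lem a sect}, which is organized around the invariant $\delta_+$ and property~$(\spade)$). Your route is a direct entry-by-entry expansion: writing $u^{-1}=\sum_{p\ge 0}(1-u)^p$ shows that $(u^{-1})_{k+1,j}$ is a polynomial in the $u_{cd}$ with $c\ge k+1$, and your formula
\[
y_{ij}=u_{i,j-1}+\sum_{k=i}^{j-2}u_{ik}\,(u^{-1})_{k+1,j}\quad(j\le n),\qquad y_{i,n+1}=u_{i,n},
\]
together with the lexicographic ordering by $(-\text{row index},\text{column index})$ and the bijection $u_{a,b}\leftrightarrow y_{a,b+1}$, exhibits $\nu_+$ as triangular with all leading signs $+1$. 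One could instead mimic Lemma~\ref{lem a sect} more closely by noticing that $\delta_+(\xi_{n+1,+})=1_n$, so $\delta_+|_{\CV_+}$ is the inverse of $\nu_+$ by (\ref{eqn delta+}), and then verifying that $\delta_+|_{\CV_+}$ is triangular; but since the conjugated element here is fixed (no $\sX$-parameters to track), your direct computation is the more economical route and buys the same conclusion without invoking~(\ref{eqn delta+}) or~$(\spade)$.
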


\begin{proof}
Similar to the previous one. We omit the detail.
\end{proof}

For later use in \S 8, we take the transpose of the morphism $\sigma'$ and denote it by $\varrho$:
\begin{align}
\label{eqn varrho}
\varrho(a,b)=\sigma'(a,b)^t.
\end{align}

\subsection{Regular nilpotent orbital integral.} We now assume that $F$ is a {\em p-adic local field.}
We now define the $(H_n,\eta)$-orbital integral of a regular nilpotent orbit.
Since the orbits of $\xi_\pm$ are not closed, we need to regularize the orbital integral. We consider the following integral
for $s\in \BC$, $X\in M_{n+1}(F)$:
\begin{align}\label{def O(X,f,s)}
O(X,f,s)=\int_{H_n(F)} f(X^h)\eta(h)|\det(h)|^s\, dh,\quad f\in \sC^\infty_c(M_{n+1}(F)).
\end{align}
It is absolutely convergent for all $s\in \BC$ if $X$ is regular semisimple in which case we denote \begin{align}\label{def O(X,f)}
O(X,f)=O(X,f,0).
\end{align}
\begin{lem}
The integral $O(\xi_\pm,f,s)$ converges absolutely when $\Re(s)>1-\frac{1}{n}$ and extends to a meromorphic function in $s\in \BC$ with at most simple poles at $$s=1-\frac{1}{\ell} +\frac{2\pi i}{\log q}\BZ ,$$ for even integers $\ell$ with $1<\ell\leq n$. Here $q$ is the cardinality of the residue field  $\CO_F/(\varpi)$.
\end{lem}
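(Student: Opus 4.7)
The plan is to reduce $O(\xi_\pm, f, s)$ to a product of one‐variable Tate‐type integrals, from which both the region of absolute convergence and the pole structure are immediate. I will sketch the argument for $\xi_+$; the case of $\xi_-$ is entirely parallel, with the roles of $N_{n,+}$ and $N_{n,-}$ in the Bruhat decomposition interchanged.

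First, I would restrict the integration to the big Bruhat cell $H_n' = N_{n,-}\,A_n\,N_n$ (whose complement has Haar measure zero) and write $h = u_-\,a\,u_+$, with $dh = \delta_B(a)\,du_-\prod_i d^\times a_i\, du_+$. Passing to the torus coordinates $b_i := a_i/a_{i+1}$ for $i<n$ and $b_n := a_n$ converts $\eta(\det h)|\det h|^s$ into $\prod_i \eta(b_i)^i |b_i|^{is}$, and similarly $\delta_B(a)$ into $\prod_i |b_i|^{i(n-i)}$. Next I would use Lemma~\ref{lem N+} to replace $u_+$ by the triangular coordinates of $\xi_{u_+} := u_+\xi_+ u_+^{-1}\in \CV_+$, and substitute $z_{ij} := (a_i/a_j)\,y_{ij} = b_i\cdots b_{j-1}\,y_{ij}$ to absorb the conjugation by $a$; the Jacobian of this rescaling is a monomial in the $|b_i|$. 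Finally, using Lemma~\ref{lem a sect} I would parameterize the $u_-$-conjugation in coordinates adapted to the matrix‐entry coordinates of $Y=h\xi_+h^{-1}$ on the orbit.

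After all substitutions and a careful book‐keeping of the Jacobian contributions, the integrand on the orbit decouples — up to a smooth compactly supported test function of the remaining bounded coordinates — into $n$ one‐variable Tate‐like factors, one for each $\ell=1,\dots,n$. More precisely, one produces $n$ natural variables $t_1,\dots,t_n$ (monomials in the $b_i$ or specific invariants of $Y$) such that the $\ell$-th carries a factor $\eta(t_\ell)^\ell |t_\ell|^{\ell(s-1)+1}\, d^\times t_\ell$. For instance when $n=2$, taking coordinates $(Y_{11},Y_{12},Y_{13},u)$ on the orbit with $u := Y_{11}Y_{13}+Y_{12}Y_{23}$, and using the identity $\Delta_+(Y) = u^2/Y_{12}$, one computes
\[
\eta(\det h)|\det h|^s\, dh \;=\; \eta(Y_{12})\,|u|^{2(s-1)}\,|Y_{12}|^{-s-1}\, du\, dY_{11}\, dY_{12}\, dY_{13},
\]
which exhibits a single Tate factor with $\ell=2$ (the other variables contribute factors that are entire in $s$).

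Each single-variable Tate factor $\int F_\ell(t_\ell)\,\eta(t_\ell)^\ell |t_\ell|^{\ell(s-1)+1}\,d^\times t_\ell$ over a compact set converges absolutely for $\Re(s) > 1 - 1/\ell$; the most restrictive condition $\ell=n$ yields absolute convergence of the full integral for $\Re(s) > 1-1/n$. For $\ell$ odd, $\eta^\ell=\eta$ is nontrivial (the place $v$ being non-split), so the factor is entire in $s$; for $\ell$ even, $\eta^\ell=1$ and the Tate $\zeta$-function contributes simple poles precisely at $\ell(s-1)+1 \equiv 0 \pmod{\tfrac{2\pi i}{\log q}\BZ}$, i.e.\ at $s = 1-1/\ell$ modulo the stated lattice. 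The main technical obstacle is the bookkeeping in the previous paragraph — producing the correct natural Tate variables $t_\ell$ and verifying the exponent $\ell(s-1)+1$ in each — which relies delicately on the structure of the sections $\sigma'$ and $\nu'_{(a,b)}$ supplied by Lemmas~\ref{lem N+} and~\ref{lem a sect}, together with Cor.~\ref{cor bdd} to control the supports after the $N_{n,-}$-conjugation.
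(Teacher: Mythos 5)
Your plan takes a genuinely different route from the paper's: you decompose over the open Bruhat cell $N_{n,-}\,A_n\,N_n$, whereas the paper uses the Iwasawa decomposition $H_n(F)=KA_nN_n$ and first averages $f$ over $K$ to form $f_K$. That single change is what makes the paper's bookkeeping short: conjugation by $K$ is absorbed into $f_K$, so the only conjugations one has to track are by the upper Borel $A_nN_n$, which preserves the strictly-upper-triangular cone $\CV_+$ containing $\xi_+$. After Lemma~\ref{lem N+}, a rescaling of the entries strictly above the superdiagonal, and the substitution $b_\ell=a_\ell/a_{\ell+1}$, the paper lands immediately in the product $\eta(b_1b_3\cdots)\prod_\ell|b_\ell|^{\ell(s-1)}\,db_\ell$ over a compact region, from which your final paragraph's Tate analysis (entire for odd $\ell$ since $\eta^\ell=\eta\neq 1$; simple poles at $\ell(s-1)+1=0$ for even $\ell$; convergence for $\Re(s)>1-1/n$ from the worst factor $\ell=n$) is exactly the right conclusion.

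The gap in your proposal is in the step handling the $u_-$-conjugation. After $u_+\xi_+u_+^{-1}\in\CV_+$ and the $a$-rescaling, you have a (scaled) element of $\CV_+$: superdiagonal $1$'s, arbitrary entries strictly above, zeros on and below. You then invoke Lemma~\ref{lem a sect} to parameterize conjugation by $N_{n,-}$, but that lemma is stated for $u\mapsto u\sigma'(a,b)u^{-1}$ with $\sigma'(a,b)\in\CW$ --- matrices with superdiagonal $1$'s, arbitrary entries on and below the diagonal, and zeros strictly above the superdiagonal. The families $\CV_+$ and $\CW$ meet only in $\{\xi_\pm\}$; already for $n=2$, conjugating $\left(\begin{smallmatrix}0&1&x\\0&0&1\\0&0&0\end{smallmatrix}\right)\in\CV_+$ by a nontrivial $u_-\in N_{2,-}$ preserves the nonzero $(1,3)$-entry and so does not land in $\CW$. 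Reordering the Bruhat factors as $u_+au_-$ trades the problem to the outer $u_+$-conjugation, which likewise fails to preserve $\CW$. So as written you would need to prove a new lemma adapted to $\CV_+$-elements conjugated by $N_{n,-}$ (and to carry out the resulting Jacobian bookkeeping), neither of which is done; the $n=2$ sample computation is suggestive but does not close this for general $n$. The Iwasawa trick is precisely what lets the paper sidestep this issue.
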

\begin{proof}
We use the Iwasawa decomposition of $H_n(F)=KAN$.
Define $$f_K(X)=\int_{K}f(kXk^{-1})\,dk.$$ By Iwasawa decomposition on $H_n(F)$, we have
$$
\int_{A(F)}\int_{N(F)}f_K(au\xi_+ u^{-1}a^{-1})\eta(a)|a|^s|\delta(a)|\,du\,da,
$$
where $\delta$ is the modular character 
$$
\delta(a)=a_1^{n-1}a_2^{n-3}...a_{n}^{-(n-1)},\quad a=diag[a_1,...,a_{n}].
$$
By Lemma \ref{lem N+}, this is
$$
\int_{A(F)}\int_{\CV_+(F)}f_K(axa^{-1})\eta(a)|a|^s\,dx\,da.
$$
Replacing $x_{\ell j}$ by $x_{\ell j}a_ja_\ell^{-1}$, $1\leq \ell\leq j-2\leq (n+1)-2$ and setting $a_{n+1}=1$,  we may partially cancel the factor $|\delta(a)|$:
$$
\int_{A(F)}\int_{\CV_+(F)}f_K\left(
\begin{array}{ccccc}0  &\frac{a_1}{a_2} &x_{13} & x_{14} &\ast \\
0 &   0&\frac{a_2}{a_3}&\ast&\ast\\0&0&0&...&... \\ ...&...&0&0 &\frac{a_{n}}{a_{n+1}}
\\
0&...&0&0&0\end{array}\right)\eta(a)|a|^s |a_2a_3...a_n|^{-1}\,dx\,da.
$$
Substitute $b_\ell:=a_\ell/a_{\ell+1},1\leq \ell\leq n$:
$$
\int_{A(F)}\int_{\CV_+(F)}f_K\left(
\begin{array}{ccccc}0  &b_1 &x_{13} & x_{14} &\ast \\
0 &   0&b_2&\ast&\ast\\0&0&0&...&... \\ ...&...&0&0 &b_n
\\
0&...&0&0&0\end{array}\right)\eta(b_1b_3...)|\prod_{\ell=1}^nb_\ell^\ell|^{-1+s}\,\prod_{\ell=1}^n\,db_\ell \,dx.
$$
(Note $db_\ell$ is the additive Haar measure, cf. \S2.)
Now it is clear that the integral converges absolutely if $\Re(\ell(-1+s))>-1$ for all $\ell=1,2,...,n$, or equivalently $\Re(s)>1-\frac{1}{n}$. By Tate's local zeta integral, the integral extends meromorphically to $s\in\BC$ with at most simple poles at those $s$  modulo $\frac{2\pi i}{\log q}\BZ$ satisfying one of the following:
$$
\ell(-1+s)=-1,\quad \ell=2,4,...,2[n/2].
$$
Namely $s=1-\frac{1}{\ell}+\frac{2\pi i}{\log q}\BZ$, for even $\ell$ with $1<\ell\leq n$.

\end{proof}

\begin{defn}\label{def orb uni}
For $f\in \sC_c^\infty(M_{n+1}(F))$, we define the regular nilpotent orbital integral $O(\xi_\pm,f)$, also denoted by $\mu_{\xi_\pm}(f)$ as
$$
\mu_{\xi_\pm}(f)=O(\xi_\pm,f):=O(\xi_\pm, f,0)
.$$ This defines an $(H_n,\eta)$-invariant distribution on $M_{n+1}(F).$
\end{defn}

 The two propositions below will not be used later on.  They are interesting in their own right and provide heuristics for the admissible functions in the remaining sections of this paper. 
\begin{prop}

The intersection $M_{n+1,+}(F)\cap \CN$ is equal to the $H_n$-orbit of $\xi_+$. 
In particular, for a function $f\in \sC_c^\infty(M_{n+1}(F))$ supported on $M_{n+1,+}(F)$, any distribution on $M_{n+1}(F)$ supported in the closed subset $\CN\setminus
( H_n\cdot \xi_+)$ of $M_{n+1}(F)$  vanishes on $f$.
\end{prop}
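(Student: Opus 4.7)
The plan is to prove the equality of sets first, and then deduce the distributional statement as an immediate support-theoretic corollary.

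For the set-theoretic equality $M_{n+1,+}(F)\cap \CN = H_n\cdot \xi_+$, I would combine Lemma~\ref{lem 6.1} with a direct calculation of $\delta_+$ on the two regular nilpotent orbits. First, I would note that $\xi_+\in M_{n+1,+}(F)$: writing $\xi_+$ in block form gives $A=\xi_{n,+}$, $u=e_n^\ast$, $v=0$, $w=0$, so $\delta_+(\xi_+)=(A^{n-1}u,\ldots,Au,u)$ is the antidiagonal identity, in particular invertible. Since $M_{n+1,+}$ is stable under $H_n$-conjugation (by the equivariance $\delta_+(hXh^{-1})=h\delta_+(X)$ in (\ref{eqn delta+})), the whole orbit $H_n\cdot \xi_+$ lies in $M_{n+1,+}(F)\cap \CN$. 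For the reverse inclusion, take $X\in M_{n+1,+}(F)\cap \CN$. Then $\Delta_+(X)\neq 0$, so by Lemma~\ref{lem 6.1} the element $X$ is $H_n$-equivalent to $\xi_+$ or to $\xi_-$; I would rule out the latter possibility.

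The key observation for ruling out $\xi_-$ is that the $u$-block of $\xi_-$ is identically zero, whence $\delta_+(\xi_-)=(\xi_-^{n-1}u,\ldots,u)=0$ and $\Delta_+(\xi_-)=0$. By equivariance, $\Delta_+(h\xi_-h^{-1})=\det(h)\Delta_+(\xi_-)=0$ for all $h\in H_n(F)$, so the entire orbit $H_n\cdot\xi_-$ is disjoint from $M_{n+1,+}(F)$. Thus $X$ must lie in $H_n\cdot \xi_+$, which completes the equality.

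For the distributional statement, I would argue purely by supports. Lemma~\ref{lem 6.1} already states that $H_n\cdot \xi_+$ is open in $\CN$, so $Y:=\CN\setminus (H_n\cdot \xi_+)$ is a closed subset of $M_{n+1}(F)$. Now if $f\in \sC_c^\infty(M_{n+1}(F))$ is supported in $M_{n+1,+}(F)$, then by the equality just proved,
\[
\mathrm{supp}(f)\cap Y \;\subseteq\; M_{n+1,+}(F)\cap \bigl(\CN\setminus(H_n\cdot\xi_+)\bigr)\;=\;(H_n\cdot\xi_+)\setminus(H_n\cdot\xi_+)\;=\;\emptyset.
\]
Hence $\mathrm{supp}(f)$ is disjoint from the closed set $Y$, and any distribution on $M_{n+1}(F)$ supported in $Y$ vanishes on $f$. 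There is no substantive obstacle here: the only real content is the computation $\delta_+(\xi_-)=0$, which together with the equivariance of $\delta_+$ separates the two regular nilpotent orbits inside $M_{n+1,+}$.
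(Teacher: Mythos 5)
Your proof is correct, but you take a genuinely different route from the paper. The paper's proof leans entirely on the structural isomorphism $\iota:H_n\times \sX\to M_{n+1,+}$ of Proposition~\ref{prop sec}: since $\xi_+=\sigma(0)$, the orbit $H_n\cdot\xi_+$ is $\iota(H_n\times\{0\})$, while $M_{n+1,+}\cap\CN=(\pi|_{M_{n+1,+}})^{-1}(0)$; the isomorphism then identifies these two sets at once, giving both inclusions without any case analysis. You instead go through the orbit classification of Lemma~\ref{lem 6.1}, using $\Delta_+(X)\neq 0$ to see that $X$ is regular nilpotent and hence equivalent to $\xi_+$ or $\xi_-$, and then you separate the two orbits by the equivariant computation $\Delta_+(\xi_-)=0$, so that $H_n\cdot\xi_-$ is disjoint from $M_{n+1,+}$. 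Your approach is a bit more hands-on and only uses Lemma~\ref{lem 6.1} plus the equivariance of $\delta_+$, so it avoids invoking Proposition~\ref{prop sec}; the paper's is shorter and more conceptual but requires that proposition as input. The deduction of the distributional statement from the set-theoretic equality via disjointness of supports is the same in both.
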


\begin{proof}Since $\xi_+$ is precisely the image of $0\in \sX$ under $\sigma$,  the $H_n$-orbit of $\xi_+$ is then the image of $H_n\times \{0\}$ under $\iota$.
We also have $M_{n+1,+}\cap \CN=(\pi|_{M_{n+1,+}})^{-1}(0)$, the fiber of $0\in\sX$ under $\pi|_{M_{n+1,+}}$. By Proposition \ref{prop sec}, the fiber $(\pi|_{M_{n+1,+}})^{-1}(0)$ is precisely the image of  $H_n\times \{0\}$ under $\iota$. This proves the first assertion.
The ``In particular" part is clear from the definition of the support of a distribution.
\end{proof}

\begin{prop}\label{cor local const}
For any $f\in \sC_c^\infty(M_{n+1,+}(F))\subset \sC_c^\infty(M_{n+1}(F))$, the orbital integral 
$$
\phi_f(x):=O(\sigma(x), f)
$$
defined for regular semisimple $x\in \sX$ (cf. (\ref{def O(X,f)})) extends to a locally constant function with compact support on $\sX$ (i.e., $\phi_f\in \sC_c^\infty(\sX)$). Conversely, given any function $\phi$ in $\sC_c^\infty(\sX)$, there exists $f\in \sC_c^\infty(M_{n+1,+}(F))$ such that $O(\sigma(x), f)=\phi(x)$ for all regular semisimple $x$.

\end{prop}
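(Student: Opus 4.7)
The plan is to exploit the isomorphism $\iota : H_n \times \sX \to M_{n+1,+}$ of Proposition \ref{prop sec}. Since $\iota$ is an isomorphism of affine $F$-varieties, it induces a homeomorphism on $F$-points, so the pullback
$$F(h,x) := f\bigl(h\sigma(x)h^{-1}\bigr) = f\bigl(\iota(h,x)\bigr)$$
belongs to $\sC_c^\infty(H_n(F)\times \sX(F))$ whenever $f\in \sC_c^\infty(M_{n+1,+}(F))$. Because $\sigma(x)$ has trivial stabilizer in $H_n$ (it lies in $M_{n+1,+}$, which consists of regular elements), the map $h\mapsto h\sigma(x)h^{-1}$ is a bijection from $H_n(F)$ onto the full $H_n(F)$-orbit of $\sigma(x)$, and the defining orbital integral for regular semisimple $x$ rewrites cleanly as
$$\phi_f(x) = O(\sigma(x),f) = \int_{H_n(F)} F(h,x)\,\eta(h)\,dh.$$

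For the forward direction, I would note that the right-hand side makes sense for \emph{every} $x\in \sX(F)$, not only for regular semisimple $x$: the compact support of $F$ on $H_n(F)\times \sX(F)$ forces the projection of $\mathrm{supp}(F)$ to $\sX(F)$ to be compact, and for $x$ in this projection the function $h\mapsto F(h,x)$ is supported in a fixed compact set. Writing $F$ as a finite sum of pure tensors $\sum_i g_i\otimes \phi_i$ with $g_i\in\sC_c^\infty(H_n(F))$ and $\phi_i\in \sC_c^\infty(\sX(F))$ (which is possible since $\sC_c^\infty(H_n(F))\otimes \sC_c^\infty(\sX(F)) = \sC_c^\infty(H_n(F)\times \sX(F))$ in the non-archimedean setting), one obtains
$$\int_{H_n(F)} F(h,x)\,\eta(h)\,dh = \sum_i \Bigl(\int_{H_n(F)} g_i(h)\eta(h)\,dh\Bigr)\phi_i(x)\in \sC_c^\infty(\sX(F)),$$
which is the sought extension of $\phi_f$ from the regular semisimple locus.

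For the converse, I would fix an auxiliary $g\in \sC_c^\infty(H_n(F))$ supported in a small compact open neighborhood of $1$ on which $\eta\equiv 1$, normalized so that $\int_{H_n(F)} g(h)\,dh = 1$. Given $\phi\in \sC_c^\infty(\sX)$, set $F(h,x) := g(h)\phi(x)\in \sC_c^\infty(H_n(F)\times \sX(F))$ and define $f\in \sC_c^\infty(M_{n+1,+}(F))$ by $f(\iota(h,x)) := F(h,x)$. Then for regular semisimple $x$,
$$O(\sigma(x),f) = \int_{H_n(F)} g(h)\phi(x)\eta(h)\,dh = \phi(x)\int_{H_n(F)} g(h)\,dh = \phi(x),$$
which exhibits the desired preimage. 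There is no substantial obstacle: the regular section $\sigma$ provides a product trivialization of $M_{n+1,+}$ under which every orbital integral along a (possibly non-closed) regular $H_n$-orbit inside $M_{n+1,+}$ becomes a plain fiber integral of a compactly supported locally constant function on a product, so everything reduces to the tensor-product structure of Schwartz--Bruhat functions in the $p$-adic setting.
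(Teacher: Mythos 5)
Your proposal is correct and follows essentially the same route as the paper: pull $f$ back along the $H_n$-equivariant isomorphism $\iota$ to a compactly supported function on the product $H_n(F)\times\sX(F)$, observe the orbital integral becomes a plain fiber integral, and invert by an explicit choice near the identity for the converse. You have simply spelled out more carefully (via the tensor-product decomposition of Schwartz--Bruhat functions and the explicit choice of $g$) what the paper dispatches with the word "clearly."
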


\begin{proof}
The orbital integral (\ref{def O(X,f)}) is given by
 $$O(\sigma(x), f)=\int_{H_n}f(h\sigma(x)h^{-1})\eta(h)\,dh.$$
By the $H_n$-equivariant isomorphism  $\iota: H_n(F)\times \sX(F)\to M_{n+1,+}(F)$,  corresponding to $f$ we have an element denoted by $f'$ in $\sC_c^\infty(H_n\times \sX)$, defined by 
$$
f'(h,x)=f(h\sigma(x)h^{-1}),\quad h\in H_n,x\in \sX,
$$with the property that
$$
O(\sigma(x), f)=\int_{H_n}f'(h,x)\eta(h)\, dh.
$$
The integral on the right hand side is clearly absolutely convergent for all $x\in \sX$ and defines an element in $\sC_c^\infty(\sX)$. The converse is clearly now by the isomorphism $\iota$.
\end{proof}

\begin{remark}The proof also shows that if $f$ is supported on $M_{n+1,+}(F)$, 
the integral $O(\xi_+,f,s)$ (cf. (\ref{def O(X,f,s)})) converges absolutely for all $s\in \BC$.
\end{remark}

\subsection{Orbital integrals on $\fks_{n+1}$.} We will need to consider the induced $H_n$-action on the tangent space $\fks_{n+1}$ at $1$ of the symmetric space $S_{n+1}$:
$$
\fks_{n+1}(F)=\{X\in M_{n+1}(E)|X+\ov{X}=0\}.
$$
Fixing a choice of non-zero $\tau\in E^{-}$, we have an isomorphism 
\begin{align}\label{eqn iso M fks}
M_{n+1}(F)\simeq \fks_{n+1}(F),
\end{align}
defined by $X\mapsto \tau X$. 
In particular, we will abuse the notation $\xi_\pm$ to denote $\tau\xi_{\pm}$ if we want to consider the regular unipotent orbit on $\fks_{n+1}$. We may extend the definitions of $\sigma$, $\varrho$ and the orbital integrals to the setting of $\fks_{n+1}$ via the isomorphism (\ref{eqn iso M fks}). Then it is clear how to extend the results from the setting of $M_{n+1}(F) $ to the setting of $\fks_{n+1}$.

\section{Smoothing local periods}

\subsection{Convolutions.}\label{para conv}
We introduce some abstract notions for the use of this and the next section.
Let $F$ be a $p$-adic local field and $G$ the $F$-points of some reductive group. We consider the space of $\sC^\infty_c(G)$ with an (anti-) involution $\ast$ defined by:
\begin{align}
\label{eqn def inv ast}
f^\ast(g):=\ov{f(g^{-1})},\quad f\in \sC^\infty_c(G).
\end{align}
We will also use the other (anti-) involution defined by 
\begin{align}
\label{eqn def inv vee}
f^\vee(g)=f(g^{-1}).
\end{align}
Let $dg$ be a Haar measure on $G$. Let $H$ be a unimodular (closed) subgroup of $G$ and $dh$ a Haar measure $H$. We define a left and a right action of $\sC^\infty_c(H)$ on $\sC^\infty_c(G)$ as follows: for $f\in \sC^\infty_c(G)$ and $\phi\in \sC^\infty_c(H)$,  we define {\em convolutions} $f\ast \phi$ and $\phi\ast f$ both in $\sC^\infty_c(G)$:
\begin{align}
\label{eqn def ast}
(f\ast \phi)(g)=\int_{H}f(gh^{-1})\phi(h)\,dh
\end{align}
and
$$
(\phi\ast f)(g)=\int_{H}\phi(h)f(h^{-1}g)\,dh=\int_{H}\phi(h^{-1})f(hg)\,dh.
$$
This also applies to the case $H=G$. Then we have
$$
(f\ast \phi)^\ast=\phi^\ast\ast f^\ast,\quad (\phi\ast f)^\ast=f^\ast\ast \phi^\ast.
$$
If we have two closed unimodular subgroups $H_1,H_2$, we could iterate the definition: for example, for $f\in \sC^\infty_c(G)$ and $\phi_i\in \sC^\infty_c(H_i)$, we define $$\phi_1\ast\phi_2\ast f:=\phi_1\ast(\phi_2\ast f)\in \sC^\infty_c(G).$$
Now if we have a smooth representation $\pi$ of $G$ (hence its restriction to $H$ is a smooth representation as well), as usual we define $\pi(f)$ and $\pi(\phi)$ to be the endomorphisms of $\pi$:
$$
\pi(f)=\int_{G}f(g)\pi(g)\,dg,\quad \pi(\phi)=\int_{H}\phi(h)\pi(h)\,dh.
$$
Then we have $\pi(f\ast \phi)=\pi(f)\pi(\phi)\in \End(\pi)$ and so on. 
If $\pi$ has a $G$-invaraint inner product $\pair{\cdot,\cdot}$, we then have
$$
\pair{\pi(f)u,u'}=\pair{u,\pi(f^\ast)u'},\quad u,u'\in \pi.
$$

The questions addressed in this section can be abstracted as follows (cf. \cite[\S2]{J2}). Let $\pi$ be a smooth admissible representation of $G$. The algebraic dual space $\pi^\ast:=\Hom(\pi,\BC)$ is usually much larger than the congragredient $\wt{\pi}$ (the subspace of $\pi^\ast$ consisting of smooth linear functional, i.e., $K$-finite vectors in $\pi^\ast$ for some open compact $K$). Very often we will be interested in some distinguished element called $\ell$ in $\pi^\ast\setminus \wt{\pi}$ (the set of non-smooth linear functionals). Then the question is to find some $\phi\in \sC^\infty_c(H)$, for suitable subgroup $H$ (smaller than $G$ in order to be useful), such that $\pi^\ast(\phi)\ell$ is nonzero and smooth (i.e., in $\wt{\pi}$). 
In this section we will study this question for the local Flicker-Rallis period, and the local Rankin-Selberg period. 

\subsection{A compactness lemma.}
Now we return to our setting. Let $E/F$ be a quadratic extension of non-archimedean local fields of characteristic zero with residue characteristic $p$. We denote by $\eta$ the quadratic character associated to $E/F$, and set
\begin{align}\label{eqn def eta}
\eta_n=\eta^{n-1}.
\end{align}
Let $\varphi_{n-1}\in \sC_c^\infty(H_{n-1}(E))$ and $\phi_{n-1}\in \sC_c^\infty(M_{n-1,1}(E))$. We consider the Fourier transform of $\phi_{n-1}$ as a function on $M_{1,n-1}(E)$ by 
$$\wh{\phi}_{n-1}(X)=\int_{M_{1,n-1}(E)}\phi_{n-1}(Y)\psi_E(\tr(XY))\,dY.
$$
To the pair $(\varphi_{n-1},\phi_{n-1})$ we associate a new function on $H_{n-1}(E)$ by
\begin{align}\label{eqn W_n}
\wt{W}_{\varphi_{n-1},\phi_{n-1}}(g):=\wh{\phi}_{n-1}(-e_{n-1}g) 
\int_{} \int\varphi_{n-1}(g^{-1}u\epsilon_{n-1} h)\ov{\psi}_E(u)\eta_n(h)\,du\,dh,
\end{align}
where $u\in N_{n-1}(E), h\in N_{n-1}(F)\bs H_{n-1}(F)$ and the integral is iterated. Note that the integral converges absolutely. Clearly we have $$W_{\varphi_{n-1},\phi_{n-1}}(ug)=\ov{\psi}_E(u)W_{\varphi_{n-1},\phi_{n-1}}(g)$$ for $u\in N_{n-1}(E)$.

We would like to obtain a function with compact support modulo $N_{n-1}(E)$ by imposing suitable conditions on $(\varphi_{n-1},\phi_{n-1})$. To simplify the notation, we will denote, when $p>2$
\begin{align*}
\Lambda=\CO_E.
\end{align*}
It decomposes as $\Lambda=\Lambda^+\oplus \Lambda^-$ where
$\Lambda^\pm=\CO_{E^\pm}$. If $p=2$,  in the rest of the paper we define $\CO_E$ as $\CO_{E^+}\oplus \CO_{E^-}$, which may be a non-maximal order of $E$.  
Then the Fourier transform of $1_\Lambda\in \sC_c^\infty(E)$ is a non-zero multiple of $1_{\Lambda^\ast}$ for a lattice (i.e., an $\CO_E$-module) $\Lambda^\ast\subset E$ (depending on $\psi$). Let $\varpi$ be a uniformizer of $F$. For an integer $m>0$, we naturally view $\BC[\varpi^{m}\Lambda/\varpi^{2m} \Lambda]$ as the subspace of $\sC_c^\infty(E)$ consisting of functions supported in $\varpi^{m}\Lambda$ and invariant by $\varpi^{2m}\Lambda$. 

\begin{defn}\label{def dag 11}
We define a {\em ``dagger"} space of level $m$, denoted by $\BC[\varpi^{m}\Lambda/\varpi^{2m} \Lambda]^\dagger$ or $\sC_c^\infty(E)^{\dagger}_{m}$, as the subspace of $\BC[\varpi^{m}\Lambda/\varpi^{2m} \Lambda]$ spanned by functions $\theta=\theta^+\otimes\theta^-,\theta^\pm\in \sC_c^\infty(E^\pm)$, satisfying: 
\begin{itemize}
\item $\theta^+$ is a multiple of $1_{\varpi^m\Lambda^+}$.
\item The Fourier transform $\wh{\theta}\in \BC[\varpi^{-2m} \Lambda^\ast/\varpi^{-m} \Lambda^\ast]$ is supported in $\varpi^{-2m} \Lambda^\ast-\varpi^{-2m+1} \Lambda^\ast$.  With the condition on $\theta^+$, this is equivalent to that the function $\wh{\theta^-}$ is supported in $\varpi^{-2m} \Lambda^{-\ast}-\varpi^{-2m+1} \Lambda^{-\ast}$ where $\Lambda^{-\ast}=\Lambda^\ast\cap E^-$.\end{itemize} 
\end{defn}
In particular, every element in $\BC[\varpi^{m}\Lambda/\varpi^{2m} \Lambda]^\dagger$  is invariant under multiplication by $1+\varpi^m\CO_E$.
Heuristically, such $\theta$ has a constant real part $\theta^+$ but a highly oscillating imaginary part $\theta^-$.
\begin{defn}\label{def dag n1}
We denote by $\sC_c^\infty(M_{n-1,1}(E))^{\dagger}_{m}$ the space spanned by functions on $M_{n-1,1}(E)$ of the form $\phi_{n-1}=\bigotimes_{1\leq i\leq n-1}\phi^{(i)}$ in the way that $\phi_{n-1}(x)=\prod_{i}\phi^{(i)}(x_i)$ if $x=(x_1,...,x_{n-1})^t\in M_{n-1,1}(E)$ satisfying
 \begin{itemize} 
\item When $1\leq i\leq n-2$, $\phi^{(i)}$ is the characteristic function of $\varpi^m\Lambda$; $\phi^{(n-1)}$ is an element of $\BC[\varpi^{m}\Lambda/\varpi^{2m} \Lambda]^\dagger$.
\end{itemize} 
\end{defn}
\begin{defn}\label{def dag nn}
We denote by $\sC_c^\infty(H_{n-1}(E))^{\dagger}_{m}$ the space spanned by functions on $H_{n-1}(E)$ of the form $\varphi_{n-1}=\bigotimes_{1\leq i,j\leq n-1}\varphi^{(ij)}$ in the way that $\varphi_{n-1}(g)=\prod_{i,j}\varphi^{(ij)}(g_{ij})$ if $g=(g_{ij})$ satisfying:
\begin{itemize}

\item When $1\leq j<i\leq n-1$, $\varphi^{(ij)}$ is the characteristic function of $\varpi^m\CO_E$.
\item  When $1\leq i=j\leq n-1$, $\varphi^{(ij)}$ is the characteristic function of $1+\varpi^m\CO_E$. \item When $1\leq i<j\leq n-1$, $j-i\neq1$, $\varphi^{(ij)}$ is the characteristic function of $\varpi^m\Lambda$.
\item When $1\leq i=j-1\leq n-2$, $\varphi^{(ij)}$ is an element of $\BC[\varpi^{m}\Lambda/\varpi^{2m} \Lambda]^\dagger$.
\end{itemize} 
\end{defn}
\begin{remark}
A function $\varphi_{n-1}\in \sC_c^\infty(H_{n-1}(E))^{\dagger}_{m}$ has the following property:
$$
\varphi_{n-1}(u_{n-2}...u_1av_1...v_{n-2})=
\varphi_{n-1}(u_{\sigma'(n-2)}...u_{\sigma'(1)}av_{\sigma'(1)}...v_{\sigma'(n-2)}),
$$
where $a\in A_{n-1}, u_i\in N_{n-1}$ ($v_i\in N_{n-1,-}$, resp.), $u_i-1$ ($v_i-1$, resp.) has nonzero entries only in the $(i+1)$-th column (row, resp.), and $\sigma,\sigma'$ are any permutations.
\end{remark}
\begin{defn}\label{def m-adm}
We say that the pair $(\varphi_{n-1},\phi_{n-1})$ is {\em $m$-admissible} if $\phi_{n-1}\in\sC_c^\infty(M_{n-1,1}(E))^{\dagger}_{m}$, and $\varphi_{n-1}\in \sC_c^\infty(H_{n-1}(E))^{\dagger}_{m}$. 
\end{defn}
 
\begin{remark}
The pair $(\varphi_{n-1},\phi_{n-1})$ defines a function denoted by $\varphi_{n-1}\otimes\phi_{n-1}$ on the mirabolic subgroup $P_n$ of $H_n(E)$
$$
\varphi_{n-1}\otimes\phi_{n-1}\left[\left(\begin{array}{cc}x & \\
 &  1
\end{array}\right)\left(\begin{array}{cc}1_{n-1} &u \\
 &  1
\end{array}\right)\right]=\varphi_{n-1}(x)\phi_{n-1}(u).
$$
\end{remark}

For {\em $m$-admissible}  $(\varphi_{n-1},\phi_{n-1})$ as above, we define recursively $\varphi_i, \phi_i,\phi_{i+1}'$ for $i=n-2,...,1$, such that
\begin{align}\label{eqn decomp}
\varphi_{i+1}=\varphi_{i}\otimes \phi_{i}\otimes \phi'_{i+1},
\end{align} where $$\varphi_{i}\in \sC_c^\infty(M_{i}(E)),\quad \phi_{i}\in\sC_c^\infty(M_{i,1}(E)),\quad \phi'_{i+1}\in \sC_c^\infty(M_{1,i+1}(E)).$$
Here the function $\varphi_i$ is viewed as a function on $M_i(E)$ (though it is supported in $H_i(E)$). The tensor product is understood as 
$$
\varphi_{i+1}(X_{i+1})=\varphi_{i}(X_i)\phi_{i}(u_i)\phi'_{i+1}(v_{i+1}),
$$
where $X_{i+1}=\left(\begin{array}{ccc}X_{i} & &u_i \\ && \\
 &v _{i+1} &
\end{array}\right)\in M_{i+1}(E),X_i\in M_i(E), u_i\in M_{i,1}(E), v_{i+1}\in M_{1,i+1}(E)$.
Set $\phi_1'=\varphi_1$ so that we have the following decomposition of $\varphi_{n-1}\otimes \phi_{n-1}$:
\begin{center}
\[\begin{tabular}{| l  ccc|r| }
  \hline   
   \multicolumn{1}{|r}{$\phi^{'}_1$}  &     \multicolumn{1}{|r|}{$\phi_1$} & \multicolumn{1}{r|}{$\phi_2$} & $\cdots$&  \multicolumn{1}{r|}{$\phi_{n-1}^{}$} \\ \cline{1-2}
   \multicolumn{1}{|r}{$\phi_2^{'}$} &   \multicolumn{1}{r|}{} & \multicolumn{1}{r|}{} & &  \\ \cline{1-3}
  $\vdots$&& \multicolumn{1}{r|}{}& &  \\  \cline{1-4}
 $\phi'_{n-1}$  &&&&\\  \hline
\end{tabular}\]
\end{center}
To facilitate the exposition, we list the properties of admissible functions that will be used later in our proof.

\begin{prop}\label{prop pro adm}
Let $(\varphi_{n-1}, \phi_{n-1})$ be $m$-admissible (Definition \ref{def m-adm}) and we decompose it according to (\ref{eqn decomp}). Then we have the following properties:
\begin{itemize}
\item[(i)] The function $\phi_i'$ is the characteristic function of $(0,...,0,1)+\varpi^mM_{1,i}(\CO_E)$, and $\phi_i\in \sC_c^\infty(M_{i,1}(E))^{\dagger}_{m}$.

\item[(ii)] The function  $\varphi_{n-1}$ is left and right invariant under $N_{n-1,-}(\varpi^m\CO_E)=1+\varpi^m\frak{n}_{n-1,-}(\CO_E)$.
\item[(iii)]  With respect to the decomposition $M_{n-1}(E)=M_{n-1}(F)\oplus M_{n-1}(E^-)$, the function  $\varphi_{n-1}=\varphi_{n-1}^+\otimes \varphi_{n-1}^-$ is decomposable and the ``real" part $\varphi_{n-1}^+$ is a multiple of the characteristic function of $1+\varpi^mM_{n-1}(\CO_F)$.
\item[(iv)] The function  $\varphi_{n-1}$ is left and right invariant under the compact open subgroup $1+\varpi^mM_{n-1}(\CO_F)$ (i.e., the support of the ``real" part $\varphi_{n-1}^+$ of $\varphi_{n-1}$).
\end{itemize}

\end{prop}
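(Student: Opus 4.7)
My plan is to derive all four assertions by direct inspection of Definitions~\ref{def dag 11}, \ref{def dag n1}, \ref{def dag nn}, with the block decomposition~(\ref{eqn decomp}) providing the inductive skeleton for (i) and the measure-theoretic niceness of the dagger space being what makes (iv) work. For (i) I would identify each piece of~(\ref{eqn decomp}) with a specific part of the matrix entries of $\varphi_{n-1}$: the last row (of length $n-1$) yields $\phi'_{n-1}$, the last column minus its bottom entry (of length $n-2$) yields $\phi_{n-2}$, and the upper-left $(n-2)\times(n-2)$ block yields $\varphi_{n-2}$. Reading off Definition~\ref{def dag nn} at these positions then shows directly that $\phi'_{n-1}$ is the characteristic function of $(0,\dots,0,1)+\varpi^{m}M_{1,n-1}(\CO_E)$, that $\phi_{n-2}$ fits Definition~\ref{def dag n1}, and that $\varphi_{n-2}$ again fits Definition~\ref{def dag nn} with $n$ replaced by $n-1$; induction on $n$ then closes the loop.

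For (ii) and (iii) I expect only routine entrywise computation. In (ii), writing a general element of $N_{n-1,-}(\varpi^{m}\CO_E)$ as $k=1+X$ with $X\in\varpi^{m}\fkn_{n-1,-}(\CO_E)$, the perturbation $(gk)_{ij}-g_{ij}=\sum_{\ell}g_{i\ell}X_{\ell j}$ (and its left analogue) manifestly lies in $\varpi^{m}\CO_E$ for the three kinds of characteristic factors in Definition~\ref{def dag nn}; at the immediately-above-diagonal positions where $\varphi^{(ij)}$ is a dagger element, the strict lower triangularity of $X$ forces each contributing index $\ell$ to differ from $i$, so $g_{i\ell}$ is already off-diagonal and hence in $\varpi^{m}\CO_E$, whence the perturbation lies in $\varpi^{2m}\CO_E=\varpi^{2m}\Lambda$, the invariance lattice of any dagger element. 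In (iii), I would split each factor $\varphi^{(ij)}$ under $E=E^{+}\oplus E^{-}$: the factors $1_{\varpi^{m}\CO_E}$ and $1_{1+\varpi^{m}\CO_E}$ split cleanly as tensor products of the analogous characteristic functions on $E^{\pm}$, and by Definition~\ref{def dag 11} every dagger factor has the form $\theta^{+}\otimes\theta^{-}$ with $\theta^{+}$ a scalar multiple of $1_{\varpi^{m}\CO_F}$. Collecting the $+$-parts entrywise then yields $\varphi_{n-1}^{+}$ as a constant multiple of $1_{1+\varpi^{m}M_{n-1}(\CO_F)}$.

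The genuinely delicate point, and the step I anticipate being the main obstacle, is (iv), where invariance under the much larger group $1+\varpi^{m}M_{n-1}(\CO_F)$ forces one to separate the $E^{+}$ and $E^{-}$ components of the perturbation. I would split $(gk)_{ij}-g_{ij}=\sum_{\ell}g_{i\ell}X_{\ell j}$ with $X\in\varpi^{m}M_{n-1}(\CO_F)$ using that $X_{\ell j}$ is $F$-valued, so the $-$-component of each summand equals $g_{i\ell}^{-}X_{\ell j}$. Away from the immediately-above-diagonal positions, the total perturbation in $\varpi^{m}\CO_E$ is harmless; at a dagger position $(i,i+1)$, the diagonal contribution $g_{ii}X_{i,i+1}$ has its $+$-part in $\varpi^{m}\Lambda^{+}=\varpi^{m}\CO_F$ and its $-$-part in $\varpi^{2m}\Lambda^{-}$ (since $g_{ii}^{-}\in\varpi^{m}\Lambda^{-}$ and $X_{i,i+1}\in\varpi^{m}\CO_F$), while the off-diagonal contributions $g_{i\ell}X_{\ell,i+1}$ with $\ell\neq i$ have both components in $\varpi^{2m}\Lambda$ because $g_{i\ell}\in\varpi^{m}\CO_E$. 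The total perturbation therefore lies in $\varpi^{m}\Lambda^{+}\oplus\varpi^{2m}\Lambda^{-}$, which is exactly the invariance lattice of a dagger element $\theta^{+}\otimes\theta^{-}$ in Definition~\ref{def dag 11}. This precisely-matching bookkeeping between the $\pm$-splitting of the perturbation and the asymmetric design of the dagger space is the crux of the whole proposition; once it is verified for right multiplication, the left-invariance statement follows by the same argument after transposition.
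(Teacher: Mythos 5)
Your proposal is correct and follows the same route as the paper, whose proof simply asserts that all four properties are read off from Definitions~\ref{def dag 11}, \ref{def dag n1}, and \ref{def dag nn}; you have supplied the entrywise bookkeeping the paper leaves implicit. In particular, your splitting of the perturbation at a dagger position into a $+$-component in $\varpi^m\Lambda^+$ and a $-$-component in $\varpi^{2m}\Lambda^-$, matched against the asymmetric invariance lattice built into Definition~\ref{def dag 11}, is exactly the point the definition is engineered to make work (and the group structure of $1+\varpi^m M_{n-1}(\CO_F)$ disposes of the case $g\notin\operatorname{supp}\varphi_{n-1}$ by applying the same estimate to $gk$ and $k^{-1}$).
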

\begin{proof}
They all follow from Definition \ref{def dag n1}, \ref{def dag nn}, and \ref{def m-adm}.
\end{proof}
Property (iii) and (iv) of admissible functions will not be used until the next section.
Our key result of this section is the following compactness lemma.
\begin{lem}
\label{lem W_n fml}
Assume that $(\varphi_{n-1},\phi_{n-1})$ is {\em $m$-admissible} for some $m>0$ and we have the derived functions $\phi_i,\phi_i'$ as above.
\begin{itemize}
\item[(1)] Then the support of the function $\wt{W}_{\varphi_{n-1},\phi_{n-1}}$ is compact modulo $N_{n-1}(E)$, i.e., it defines an element in
$$
\sC_c^\infty(N_{n-1}(E)\bs H_{n-1}(E),\ov{\psi}_E).
$$
Furthermore, $\wt{W}_{\varphi_{n-1},\phi_{n-1}}(\epsilon_{n-1} g)$ is nonzero only when 
$$
g\in H'_{n-1}(E)=N_{n-1}(E)A_{n-1}(E)N_{n-1,-}(E).$$ 

\item[(2)] 
View $\phi':=\otimes_{i=1}^{n-1}\phi_i'$ as a function on $B_{n-1,-}(E)$ or its Lie algebra $\fkb_{n-1,1}(E)$ (this is possible due to the special feature of the function $\phi'$). Denote by $d_n={n\choose 3}$ so that 
$$
\tau^{d_n}=\delta_{n-1}(\epsilon_{n-1})=\det(Ad(\epsilon_{n-1}): N_{n-1}(E)).$$ 
Then the value of $\wt{W}_{\varphi_{n-1},\phi_{n-1}}(\epsilon_{n-1} g)$ at $g=yv\in  A_{n-1}N_{n-1,-}(F)$:
$$
y=\left(\begin{array}{cccc}y_1y_2...y_{n-1} & &&\\&\ddots&&\\
& & y_1y_2&\\& & & y_1
\end{array}\right)\in A_{n-1}(F),
$$and
$$
v=\prod_{i=1}^{n-2}\left(\begin{array}{cc}1_{i} & \\
v_i &  1
\end{array}\right)\in N_{n-1,-}(F),\quad v_i\in M_{1,i}(F),
$$
is given by the product of the constant
\begin{align}
|\tau|_E^{d_n}\int_{B_{n-1,-}(F)}\phi'( b)\,db 
\end{align}
and
\begin{align}\eta_n(y)|\delta_{n-1}(y)|_F \prod_{i=1}^{n-1}\wh{\phi_{n-i}}(- y_i(v_{n-i-1},1)\tau).
\end{align}
\end{itemize}
\end{lem}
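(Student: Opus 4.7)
The plan is a direct computation that exploits the tensor-product structure of admissible functions (Proposition \ref{prop pro adm}) to reduce the statement to an induction on $n$. Part (1) will follow from the explicit formula of part (2), once I show that the support of $\wt W_{\varphi_{n-1},\phi_{n-1}}$ is contained in $\epsilon_{n-1}\cdot N_{n-1}(E)A_{n-1}(E)N_{n-1,-}(E)$ modulo left $N_{n-1}(E)$. The latter follows from the Bruhat decomposition together with the observation that, since $\varphi_{n-1}$ is supported near the identity and $h$ ranges over $N_{n-1}(F)\backslash H_{n-1}(F)$, nonvanishing of the integrand in (\ref{eqn W_n}) forces $g^{-1}u\epsilon_{n-1}h$ to lie in a small neighborhood of $1$, which constrains $g$ to the open cell after multiplication on the left by $\epsilon_{n-1}$.

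For part (2), I substitute $g=\epsilon_{n-1}yv$ and first separate out the outer factor: since the last diagonal entry of $\epsilon_{n-1}$ is $\tau$ and $e_{n-1}v=(v_{n-2},1)$, one computes $\wh\phi_{n-1}(-e_{n-1}g)=\wh\phi_{n-1}(-\tau y_1(v_{n-2},1))$, which is precisely the $i=1$ term in the claimed product. For the double integral, I substitute $u\mapsto \epsilon_{n-1}u\epsilon_{n-1}^{-1}$ inside $N_{n-1}(E)$, which produces the Jacobian $|\tau|_E^{d_n}$ and converts $\ov\psi_E(u)$ into a new character that evaluates each superdiagonal slot $u_{i,i+1}$ at $\tau u_{i,i+1}$. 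Then I parametrize $h\in N_{n-1}(F)\backslash H_{n-1}(F)$ by representatives in $A_{n-1}(F)N_{n-1,-}(F)$, substitute $h\mapsto yh$ to absorb the diagonal component of $g$, and use the left $N_{n-1}(F)$-invariance of $\varphi_{n-1}$ (Property (iv) of Proposition \ref{prop pro adm}); the resulting Jacobian and character produce the factor $\eta_n(y)|\delta_{n-1}(y)|_F$.

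Next, I decompose $\varphi_{n-1}=\varphi_{n-2}\otimes\phi_{n-2}\otimes\phi'_{n-1}$ as in (\ref{eqn decomp}) and partition the remaining integration variables into the corresponding blocks: the last column of $u$ pairs with $\phi_{n-2}$, the last row of the $v$-part pairs with $\phi'_{n-1}$, and the surviving $(n-2)\times(n-2)$ integral becomes an instance of $\wt W$ for size $n-1$. The dagger structure of the superdiagonal entries is decisive: by Property (iii), the real part $\varphi_{n-1}^+$ is essentially $1_{1+\varpi^mM_{n-1}(\CO_F)}$ and integrates against the $u^+\in N_{n-1}(F)$-component directly, while the Fourier-support condition on the imaginary part ensures that the integral over the $u^-\in N_{n-1}(E^-)$-component against the shifted character produces precisely $\wh\phi_{n-2}$ evaluated at $-y_2(v_{n-3},1)\tau$. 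Iterating the decomposition recovers $\wh\phi_{n-i}(-y_i(v_{n-i-1},1)\tau)$ for each $i=2,\dots,n-1$, and the accumulated base contributions collapse into $\int_{B_{n-1,-}(F)}\phi'(b)\,db$.

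The main obstacle will be the bookkeeping in the inductive step: tracking how the successive truncations of $\epsilon_{n-1}$ shift the additive characters by the correct powers of $\tau$, verifying that the block decompositions of $u$, $v$, and $h$ are compatible with the tensor decomposition of $\varphi_{n-1}$, and ensuring that the remaining integrand at each level is again admissible so the induction closes. The invariance Properties (iii) and (iv) of Proposition \ref{prop pro adm} are exactly what enable the clean split of the $h$-integral into the diagonal and $N_{n-1,-}(F)$-parts that produces the $B_{n-1,-}(F)$-factor; without them the recursion would be contaminated by Jacobians that do not factorize across rows.
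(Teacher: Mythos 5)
Your overall strategy — a direct computation that peels off the outer row and column recursively, exploiting the tensor decomposition $\varphi_{i+1}=\varphi_i\otimes\phi_i\otimes\phi'_{i+1}$ and the substitution $u\mapsto\epsilon_{n-1}u\epsilon_{n-1}^{-1}$ producing the Jacobian $|\tau|_E^{d_n}$ and the twisted character $\psi_\tau$ — is the same as the paper's, and the factorization of $\wh\phi_{n-1}(-e_{n-1}\epsilon_{n-1}g)$ into the $i=1$ term of the product is correct. However, there are two substantial gaps in the execution.

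First, part (1) cannot be deduced from part (2). Part (2) evaluates $\wt W_{\varphi_{n-1},\phi_{n-1}}(\epsilon_{n-1}g)$ only for $g\in A_{n-1}(F)N_{n-1,-}(F)$, an $F$-rational slice, whereas part (1) asserts compactness of the support modulo $N_{n-1}(E)$ over the full group $H_{n-1}(E)$. The explicit $F$-formula says nothing about the $E^-$-directions of $g$, so it cannot control the $E$-support. The paper establishes (1) separately by tracking, one coordinate at a time, how the support condition on $\wh\phi_{n-1}$ combined with $N_{n-1,-}(\varpi^m\CO_E)$-invariance (Property (ii) of Prop.~\ref{prop pro adm}) forces $g$ into a compact region mod $N_{n-1}(E)$; this argument must precede the explicit evaluation, not follow from it.

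Second, the claim that a single substitution $h\mapsto yh$ "produces the factor $\eta_n(y)|\delta_{n-1}(y)|_F$" is too optimistic, and the invariance you cite (Property (iv): invariance under $1+\varpi^m M_{n-1}(\CO_F)$) is both misquoted (it is not $N_{n-1}(F)$-invariance) and not the right tool here. In the paper the factor $\eta_n(y)|\delta_{n-1}(y)|_F$ accumulates step by step: at each level the $\phi'_i$-support forces the newly introduced scalar $b_1$ to lie in $y_i\cdot(1+\varpi^m\CO_F)$, so one may substitute $b_1\mapsto b_1y_i$, and the resulting Jacobians $|b_1^{-1}y_i|_E^{n-2}$, $|x_{n-2}|_E$, etc.\ combine via $\delta_{n-1}(y)=\delta_{n-2}(y')\det(y')$ and $|\cdot|_E=|\cdot|_F^2$ on $F$. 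There is no single change of variables that detaches the diagonal $y$ from the unipotent $v$ and the inner variables $u,h$ in one move. Relatedly, your appeal to the real/imaginary decomposition of $\varphi_{n-1}$ (Properties (iii), (iv)) to split the $u$-integral is misplaced for this lemma: the term $\wh\phi_{n-2}$ in the formula is the Fourier transform on the full $M_{n-2,1}(E)$, produced by integrating the last column $u'_{n-2}\in M_{n-2,1}(E)$ against $\ov\psi_\tau$, with no $E^\pm$-split involved. Only Properties (i) and (ii) of Prop.~\ref{prop pro adm} are used in this proof; the paper explicitly notes that (iii) and (iv) first enter in \S8.
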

Here the measure $db$ on $B_{n-1,-}(F)$ is either the left or the right invariant one; they give the same value to the  integral since the support of $\phi'$ is contained in $1+\varpi\fkb_{n-1,-}(\CO_E)$.

\begin{proof}

It suffices to consider the following absolutely convergent integral:
\begin{align}\label{eqn w(g)}
w(g)=
\int_{B_{n-1,-}(F)} \int_{N_{n-1}(E)}\varphi_{n-1}(g^{-1}u h)\ov{\psi}_\tau(u)\eta_n(h)\,du\,dh,
\end{align}
where we have replaced $N_{n-1}(F)\bs H_{n-1}(F)$ by $B_{n-1,-}$ (with the right invariant measure) and
$$
\psi_{\tau}(u)=\psi_{E,\tau}(u)=\psi_E( \epsilon_{n-1}u\epsilon_{n-1}^{-1}).
$$
Indeed, by a suitable substitution we have 
$$
\wt{W}_{\varphi_{n-1},\phi_{n-1}}(\epsilon_{n-1}g)=|\tau|_E^{d_n}
\wh{\phi}_{n-1}(-e_{n-1}\tau g)  w(g).
$$

By the condition on the support of $\wh{\phi}_{n-1}$, we know that $\wh{\phi}_{n-1}(e_{n-1}g)$ is zero unless the $(n-1,n-1)$-th entry of $g\in H_{n-1}(E)$ is nonzero. Up to the left translation by $N_{n-1}(E)$,  such $g\in H_{n-1}(E)$ is of the form:
$$
g=y_1\left(\begin{array}{cc}x_{n-2} & \\
 &  1
\end{array}\right)\left(\begin{array}{cc}1_{n-2} & \\
 v_{n-2}&  1
\end{array}\right),y_1\in E^\times,x_{n-2}\in H_{n-2}(E),v_{n-2}\in M_{1,n-2}(E).
$$
By the support condition on $\wh{\phi}_{n-1}$ (noting that $\phi_{n-1}\in\sC_c^\infty(M_{n-1,1}(E))^{\dagger}_{m} $, cf. Definition \ref{def dag n1}),  for $w(g)$ in (\ref{eqn w(g)}) to be nonzero, $y_1$ must lie in a compact set of $E^\times$ and $v_{n-2}\in \varpi^mM_{1,n-2}(\CO_E)$.  By the property (ii) in Prop. \ref{prop pro adm}, the function  $\varphi_{n-1}$ is invariant under left multiplication by such $\left(\begin{array}{cc}1_{n-2} & \\
 v_{n-2}&  1
\end{array}\right)$.  Hence we have
\begin{align}
\label{eqn W=w}
\wt{W}_{\varphi_{n-1},\phi_{n-1}}(\epsilon_{n-1}g)=|\tau|_E^{d_n}
\wh{\phi}_{n-1}(\tau y_1 (v_{n-2},1))  w\left[y_1\left(\begin{array}{cc}x_{n-2} & \\
 &  1
\end{array}\right)\right].
\end{align}
Therefore it is enough to consider $w(g)$ when $g=y_1\left(\begin{array}{cc}x_{n-2} & \\
 &  1
\end{array}\right)$ for $x_{n-2}\in H_{n-2}(E)$.

We write $h\in B_{n-1,-}(F)=A_{n-1}(F)N_{n-1,-}(F)$ as
$$
h=b_1 \left(\begin{array}{cc}a_{n-2} & \\
 &  1
\end{array}\right)\left(\begin{array}{cc}1_{n-2} & \\
 c_{n-2}&  1
\end{array}\right),
$$
where $b_1\in F^\times,a_{n-2}\in B_{n-2,-}(F),c_{n-2}\in M_{1,n-2}(F).$ The measure can be chosen as
$$
|a_{n-2}|^{-1}|b_1|^{-1}\,db_1\,dc_{n-2} \,da_{n-2},
$$
where $da_{n-2}$ is the right invariant measure on $B_{n-2,-}(F)$.
For the integration over $u\in N_{n-1}(E)$, we write 
$$
u=\left(\begin{array}{cc}1_{n-2} &u'_{n-2} \\
&  1
\end{array}\right)\left(\begin{array}{cc}u_{n-2} & \\
&  1
\end{array}\right)\in N_{n-1}(E).
$$

Then the product
$g^{-1}uh$ is equal to
\begin{align}\label{eqn prod guh}
b_1y_1^{-1}
\left(\begin{array}{cc}x^{-1}_{n-2} & \\
 &  1
\end{array}\right)\left(\begin{array}{cc}1_{n-2} &u'_{n-2} \\
&  1
\end{array}\right)\left(\begin{array}{cc}u_{n-2} & \\
&  1
\end{array}\right)\left(\begin{array}{cc}a_{n-2} & \\
 &  1
\end{array}\right)\left(\begin{array}{cc}1_{n-2} & \\
 c_{n-2}&  1
\end{array}\right).
\end{align}
Then the last row of the product (\ref{eqn prod guh}) is equal to $y_1^{-1}b_1(c_{n-2},1)\in M_{1,n-1}(E)$. By the condition on the support of $\phi_{n-1}'$  (cf. Property (i) of Prop. \ref{prop pro adm}), we can assume that
$$c_{n-2}\in \varpi^mM_{1,n-2}(\CO_E),
$$
so that $\varphi_{n-1}$ is invariant under right translation by such $\left(\begin{array}{cc}1_{n-2} & \\
 c_{n-2}&  1
\end{array}\right)$  (cf. Property (ii) of Prop. \ref{prop pro adm}).
 The product
of the first four matrices in (\ref{eqn prod guh}) is then equal to
 $$
y_1^{-1}b_1 \left(\begin{array}{cc}x_{n-2}^{-1}u_{n-2}a_{n-2} & x^{-1}_{n-2}u'_{n-2}\\
 &  1
\end{array}\right).
$$
The integrations on $c_{n-2}$ and $u'_{n-2}$ yield respectively
\begin{align*}
 \int_{M_{n-2,1}(F)}\phi_{n-1}'(y_1^{-1}b_1(c_{n-2},1))\,dc_{n-2},
\end{align*}
\begin{align*}
&\int_{M_{n-2,1}(E)}\phi_{n-2}(b_1y_1^{-1}x^{-1}_{n-2}u'_{n-2})\ov{\psi}_\tau(u'_{n-2})\,du'_{n-2}\\
=&|b_1^{-1}y_1|_E^{n-2}|x_{n-2}|_{E}\wh{\phi}_{n-2}(-e_{n-2}\tau y_1 b_1^{-1} x_{n-2}).
\end{align*}
(Here we note that the Fourier transform of $\phi_{n-2}$ is defined by the character $\psi_E$.)
Therefore $w(g)$ is equal to the integration of the function of $b_1\in F^\times$ given by the product of the above two terms and 
$$\int_{B_{n-2,-}(F)} \int_{N_{n-2}(E)}\varphi_{n-2}(y_1^{-1}b_1 x_{n-2}^{-1}u_{n-2}a_{n-2})\ov{\psi}_\tau(u_{n-2})|a_{n-2}|^{-1}\eta_n(h)\,du_{n-2}\,da_{n-2}
$$
with respect to the measure $|b_1|^{n-2}db_1$.
We may repeat the process and hence may assume that the function on $H_{n-2}(E)$ defined by
\begin{align*}
 g_{n-2}\mapsto  & \wh{\phi}_{n-2}(-e_{n-2}g_{n-2}) \int_{B_{n-2,-}(F)} \int_{N_{n-2}(E)}\varphi_{n-2}(g_{n-2}^{-1}u_{n-2}a_{n-2})\\ & \times\ov{\psi}_\tau(u_{n-2})
 |a_{n-2}|^{-1}\eta_n(a_{n-2})\,du_{n-2}\,da_{n-2}
\end{align*}
is zero unless $g_2\in H_{n-2}'(E)$ and its support is compact modulo $N_{n-2}(E)$. By the support condition of $\phi_{n-1}'$ (cf. Property (i) of Prop. \ref{prop pro adm}), we know that $y_1^{-1}b_1\in 1+\varpi^m\CO_E$. Since $y_1$ is in a compact region of $E^\times$, the integration of $b_1$ must also be in a compact region. This implies that $w(g)\neq 0$ only when $x_{n-2}\in H_{n-2}'(E)$ and in a region compact modulo $N_{n-2}(E)$. By the boundedness of $v_{n-2}$ as shown in the equation (\ref{eqn W=w}), we complete the proof of the part $(1)$.

To show part $(2)$, we need to keep track of the computation above. Since we are now assuming that $g\in H_{n-1}(F)$, we have $y_1\in F^\times$ and hence we may substitute $b_1\mapsto b_1 y_1$. Then for $\phi_{n-1}'(b_1(c_{n-2},1))$ to be nonzero, we must have $b_1\in 1+\varpi^m\CO_F$  (cf. Property (i) of Prop. \ref{prop pro adm}). Note that $\wh{\phi_{n-2}}$ and $\varphi_{n-2}$ are invariant under multiplication by scalars in $1+\varpi^m\CO_F$. We see that $w(g)$ is given by the product of
$$
\eta_n(y_1^{n-1}) \int_{M_{n-1,1}(F)}\phi_{n-1}'(b_1(c_{n-2},1))dc_{n-2}\eta(b_1^{n-1})|b_1|^{-1} \,d b_1,
 $$
$$
|x_{n-2}|_{E}\wh{\phi}_{n-2}(-e_{n-2} \tau x_{n-2}),
$$
and
$$\int_{B_{n-2,-}(F)} \int_{N_{n-2}(E)}\varphi_{n-2}( x_{n-2}^{-1}u_{n-2}a_{n-2})\ov{\psi}_\tau(u_{n-2})|a_{n-2}|^{-1}\eta_n(a_{n-2})\,du_{n-2}\,da_{n-2}.
$$
When $y=y_1\cdot diag(x_{n-2},1), x_{n-2}\in B_{n-2,-}(F)$,
we have 
\begin{align}\label{eqn delta rel}
\quad \delta_{n-1}(y)=\delta_{n-2}(x_{n-2})\det(x_{n-2}).
\end{align}
Note that
$$
|x_{n-2}|_{E}=|x_{n-2}|_{F}^2.
$$
Now we may repeat this process to complete the proof.
\end{proof}

For admissible $\varphi_{n-1}\otimes\phi_{n-1}\in  \sC_c^\infty(H_{n-1}(E)\times M_{n,1}(E))$, the funciton  $\wt{W}_{\varphi_{n-1},\phi_{n-1}}$ lies in $\sC_c^\infty(N_{n-1}(E)\bs H_{n-1}(E),\psi_E)$ and therefore determines a unique element in $\CW$, denoted by $W_{\varphi_{n-1},\phi_{n-1}}$, characterized by
\begin{align}
\label{eqn def W_n}
W_{\varphi_{n-1},\phi_{n-1}}\left(\begin{array}{cc}g& \\
 &  1
\end{array}\right)=\wt{W}_{\varphi_{n-1},\phi_{n-1}}(g),\quad g\in H_{n-1}(E).
\end{align}

\subsection{Smoothing local Flicker--Rallis period $\beta_n$.}

Let $\Pi_n$ be an irreducible unitary generic representation of $H_{n}(E)$. We now consider the local period Flicker--Rallis $\beta_n$ (cf. \S3, (\ref{eqn defn FR}), (\ref{eqn defn FR even})). We let $\CW=\CW(\Pi_n,\ov{\psi}_E)$ be the Whittaker model of $\Pi_n$ with respect to the complex conjugate of $\psi_E$ for later convenience. As earlier we have endowed $\CW$ with a non-degenerate positive definite invariant Hermitian structure (cf. (\ref{eqn defn vartheta})):
$$
\pair{W,W'}=\vartheta(W,W')=\int_{N_{n-1}(E)\bs H_{n-1}(E)}W\left(\begin{array}{cc}g & \\
 &  1
\end{array}\right)\ov{W'\left(\begin{array}{cc}g & \\
 &  1
\end{array}\right)}\,dg.
$$
We also consider its Kirillov model denoted by $\CK=\CK(\Pi_n,\ov{\psi}_E)$, which is a certain subspace of smooth functions $\sC^\infty(N_{n-1}(E)\bs H_{n-1}(E),\ov{\psi}_E)$. Moreover, it is well-known that the Kirillov model always contains the subspace $\sC_c^\infty(N_{n-1}(E)\bs H_{n-1}(E),\ov{\psi}_E)$ of smooth compactly-supported functions.

Let $\CW^\ast$  be the (conjugate) algebraic dual space of $\CW$ and $\CH$ be the Hilbert space underlying the unitary representation $\Pi_n$. Then $\CW$ is the space of smooth vectors in $\CH$ and we have inclusions:
$$
\CW\subset \CH\subset \CW^\ast.
$$
The Hermitian pairing on $\CW\times \CW$ extends to $\CH\times \CH$ and $\CW\times \CW^\ast$. A similar discussion also appears in \cite[\S2.1]{J}. We still denote by $\Pi_n$ the representation of $H_{n}(E)$ on $\CW^\ast$ so for any $W\in\CW,W'\in\CW^\ast$:
$$
\pair{\Pi_n(g)W,W'}=\pair{W,\Pi_n(g^{-1})W'}.
$$
Then the local Flicker--Rallis period $\beta_n$ is an element in $\CW^\ast$ defined by (\ref{eqn defn FR}).
To ease notation, we write this as
\begin{align}\label{eqn def beta1}
\beta_n(W)=\int_{N_{n-1}(F)\bs H_{n-1}(F)}W\left(\epsilon_{n-1}h\right)\eta_{n}(h)\,dh,
\end{align}
where $\eta_n$ is as in (\ref{eqn def eta}). We also write this as
$$
\beta_n(W)=\pair{W,\beta_n}.$$
It is $(H_n(F),\eta_{n})$-invariant (\cite{GJaR}):
$$
\beta_n\in \Hom_{H_n(F)}(\CW,\BC_{\eta_{n}}).
$$

We would like to smoothen the local period $\beta_n$ by applying some sort of ``mollifier".  
\begin{prop}\label{prop sm FR}
Let $\Pi_n$ be an irreducible unitary generic representation of $H_{n}(E)$. 
Assume that the pair $\varphi_{n-1}\in  \sC_c^\infty(H_{n-1}(E)), \phi_{n-1}\in \sC_c^\infty(M_{n,1}(E))$ is $m$-admissible for $m>0$. Let $W_{\varphi_{n-1},\phi_{n-1}}\in\CW$ be the element determined by (\ref{eqn def W_n}). Then for every $W\in W(\Pi_n,\ov{\psi}_E)$, we have
$$
\pair{\Pi_n(\varphi_{n-1}^\ast)\Pi_n(\phi^\ast_{n-1})W,\beta_n}=\pair{W,W_{\varphi_{n-1},\phi_{n-1}}}.
$$
In other words, the linear functional $\Pi_n(\phi_{n-1})\Pi_n(\varphi_{n-1})\beta_n$, a priori only in $\CW^\ast$, is indeed a smooth vector and is represented by $W_{\varphi_{n-1},\phi_{n-1}}\in \CW(\Pi_n,\ov{\psi}_E)$. 
\end{prop}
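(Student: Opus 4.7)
The plan is to establish the identity by unfolding both sides as iterated integrals and matching them after a sequence of elementary manipulations. Write $\underline{g} = \mathrm{diag}(g,1)$ for $g\in H_{n-1}(E)$ and $m(v) = \bigl(\begin{smallmatrix}1_{n-1}&v\\&1\end{smallmatrix}\bigr)$ for $v\in M_{n-1,1}(E)$. The left-hand side equals $\beta_n\bigl(\Pi_n(\varphi_{n-1}^\ast)\Pi_n(\phi_{n-1}^\ast)W\bigr)$, which unfolds as a triple integral over $h_0\in N_{n-1}(F)\backslash H_{n-1}(F)$, $h'\in H_{n-1}(E)$, and $v\in M_{n-1,1}(E)$, with integrand
$$\varphi_{n-1}^\ast(h')\,\phi_{n-1}^\ast(v)\,W\bigl(\underline{\epsilon_{n-1}h_0h'}\,m(v)\bigr)\,\eta_n(h_0).$$

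First I would use the commutation $\underline{h''}m(v)=m(h''v)\underline{h''}$ together with the Whittaker property $W(m(w)g)=\bar\psi_E(e_{n-1}w)W(g)$ and the identity $e_{n-1}\epsilon_{n-1}=\tau e_{n-1}$ to integrate out $v$. The resulting factor is $\overline{\hat\phi_{n-1}(-\tau e_{n-1}h_0h')}$, after the substitution $v\mapsto -v$ and identification with the Fourier transform defined in \S2. Next I would perform the successive substitutions $h'\mapsto h_0^{-1}h'$ (erasing $h_0$ from the argument of $W$) and $h'\mapsto \epsilon_{n-1}^{-1}h'$ (erasing $\epsilon_{n-1}$), so that the argument of $W$ reduces to $\underline{h'}$ while the Fourier-transform factor becomes $\overline{\hat\phi_{n-1}(-e_{n-1}h')}$, matching the prefactor in formula (\ref{eqn W_n}).

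Then I would decompose $h'=ug$ with $u\in N_{n-1}(E)$ and $g\in N_{n-1}(E)\backslash H_{n-1}(E)$. Since $u$ is upper-triangular unipotent, $e_{n-1}u=e_{n-1}$, so the Fourier factor depends only on $g$. The Whittaker property gives $W(\underline{h'})=\bar\psi_E(u)W(\underline{g})$, and after the substitution $u\mapsto u^{-1}$ (Haar invariant on $N_{n-1}(E)$) the character becomes $\psi_E(u)$ while the $\varphi_{n-1}$-factor becomes $\overline{\varphi_{n-1}(g^{-1}u\epsilon_{n-1}h_0)}$. Collecting yields
$$\int_g W(\underline{g})\,\overline{\hat\phi_{n-1}(-e_{n-1}g)}\int_{u}\int_{h_0}\overline{\varphi_{n-1}(g^{-1}u\epsilon_{n-1}h_0)}\,\psi_E(u)\,\eta_n(h_0)\,dh_0\,du\,dg,$$
which is exactly $\vartheta(W,W_{\varphi_{n-1},\phi_{n-1}})$ after complex conjugating the defining formula (\ref{eqn W_n}).

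The one real difficulty will be justifying Fubini throughout these manipulations. Compact support of $\varphi_{n-1}$ and $\phi_{n-1}$ alone does not obviously suffice, because the outer integrals involve $W$, which is only bounded on compacta modulo $N$. The key input here is Lemma \ref{lem W_n fml}(1): the $m$-admissibility of $(\varphi_{n-1},\phi_{n-1})$ guarantees that the inner double integral defining $W_{\varphi_{n-1},\phi_{n-1}}$ is compactly supported modulo $N_{n-1}(E)$. Since $W$ is bounded on any compact set modulo $N_{n-1}(E)$ and $\hat\phi_{n-1}$ is Schwartz, the iterated integrals are absolutely convergent and all the exchanges of order are legitimate.
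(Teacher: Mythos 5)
Your proposal is correct and follows essentially the same unfolding argument as the paper's own proof. The only difference is cosmetic order of operations: you integrate out the $\phi_{n-1}$-variable first via the commutation $\underline{h''}m(v)=m(h''v)\underline{h''}$ and the Whittaker property (producing the factor $\overline{\hat\phi_{n-1}(-\tau e_{n-1}h_0 h')}$), while the paper defers this step, first doing the $h'\mapsto h_0^{-1}\epsilon_{n-1}^{-1}h'$ substitution and the $N_{n-1}(E)$-unfolding of the $\varphi_{n-1}$-integral, then observing at the very end that $\Pi_n(\phi_{n-1}^\ast)W(g)=W(g)\,\overline{\hat\phi_{n-1}(-e_{n-1}g)}$. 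Both routes arrive at the same integral, which is then recognized as $\vartheta(W,W_{\varphi_{n-1},\phi_{n-1}})$ by conjugating the defining formula for $\wt W_{\varphi_{n-1},\phi_{n-1}}$, exactly as you say.
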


\begin{proof}
The left hand side is given by
$$
\int_{N_{n-1}(F)\bs H_{n-1}(F)}\int_{H_{n-1}(E)}\Pi_n(\phi^\ast_{n-1}) W(\epsilon_{n-1} hg_{n-1})\ov{\varphi}_{n-1}(g^{-1}_{n-1})dg_{n-1}\eta_n(h)\,dh.
$$
Substitute $g_{n-1}\mapsto h^{-1}\epsilon_{n-1}^{-1}g_{n-1}$,
$$
\int_{N_{n-1}(F)\bs H_{n-1}(F)}\int_{H_{n-1}(E)}\Pi_n(\phi^\ast_{n-1}) W(g_{n-1})\ov{\varphi}_{n-1}(g_{n-1}^{-1}\epsilon_{n-1} h)dg_{n-1}\eta_n(h)\,dh.
$$
Since  $W(ug_{n-1})=\ov{\psi}_E(u)W(g_{n-1})$ for $u\in N_{n-1}(E)$, we may rewrite the integral as$$
\int \Pi_n(\phi^\ast_{n-1}) W( g_{n-1})\left(\int_{N_{n-1}(E)}\ov{\varphi}_{n-1}(g_{n-1}^{-1}u^{-1}\epsilon_{n-1} h)\ov{\psi}_E(u)du\right)\eta_n(h)\,dh\,dg_{n-1},
$$
where the outer integral is over
$$
h\in N_{n-1}(F)\bs H_{n-1}(F),\quad g_{n-1}\in N_{n-1}(E)\bs H_{n-1}(E).
$$
Now we also note that 
\begin{align*}
 \Pi_n(\phi_{n-1}^\ast) W( g_{n-1})&=\int_{M_{n-1,1}(E)}W\left[g_{n-1}\left(\begin{array}{cc}1_{n-1}&u \\
 &  1
\end{array}\right)\right]\phi_{n-1}^\ast(u)\,du
\\&=W\left(g_{n-1}\right)\int_{M_{n-1,1}(E)}\ov{\phi}_{n-1}(-u)\ov{\psi}_E(e_{n-1}^\ast g_{n-1}u)\,du
\\&=W\left(g_{n-1}\right)\ov{ \wh{\phi}}_{n-1}(-e_{n-1}^\ast g_{n-1}).
\end{align*}
This completes the proof.
\end{proof}

\subsection{Smoothing local Rankin-Selberg period $\lambda$.} 
Now let $\Pi=\Pi_n\otimes \Pi_{n+1}$ be an irreducible unitary generic representation of $G'(F)=H_{n}(E)\times H_{n+1}(E)$.
We now need another compactness lemma. For an integer $m'>0$, we consider $\phi_n\in \sC_c^\infty(M_{n,1}(E))_{m'}^\dagger$ (cf. Definition \ref{def dag n1}). Note that  by definition, the Fourier transform $\wh{\phi}_n$ is supported in the domain
\begin{align}\label{eqn supp cond phi n}
\{(x_1,...,x_n)\in M_{1,n}(E)| |x_i/x_n|\leq|\varpi|^{m'}, i=1,2,...,n-1. \}
\end{align}
\begin{lem}\label{lem W_n+1}
Let $\phi_{n}\in \sC_c^\infty(M_{n,1}(E))_{m'}^\dagger$. Let $W\in\CW(\Pi_n,\ov{\psi}_E)$ be such that its restriction to $ H_{n-1}(E)$ has support compact modulo $N_{n-1}(E)$. 
If $m'$ is sufficiently large (depending on the vector $W$), 
the map 
$$
H_n(E)\ni g\mapsto \wh{\phi}_n(e_n  g)W(g)
$$ defines an element in
$\sC_c^\infty(N_n(E)\bs H_{n}(E),\ov{\psi}_E)$.  \end{lem}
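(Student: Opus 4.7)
The left $(N_n(E), \ov\psi_E)$-equivariance and the smoothness of $g \mapsto \wh\phi_n(e_n g) W(g)$ are immediate---$e_n u = e_n$ for $u \in N_n(E)$ gives the equivariance, and local constancy of $\wh\phi_n$ on $E^n \setminus \{0\}$ together with smoothness of $W$ gives smoothness of the product---so I will focus on compactness of the support modulo $N_n(E)$, which I plan to establish in three steps.

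First, I observe that the dagger condition on $\phi_n$ (Definitions~\ref{def dag 11} and \ref{def dag n1}) forces $\wh\phi_n$ to be supported in a compact set $C \subset E^n \setminus \{0\}$ consisting of rows $v = (v', v_n)$ with $|v_n|$ in a fixed annulus of valuation $-2m'$ and $|v_i/v_n| = O(|\varpi|^{m'})$ for $i < n$. Using the identification $P_n \bs H_n \simeq E^n \setminus \{0\}$ via $g \mapsto e_n g$, I fix the continuous section
\[ s(v) := \mathrm{diag}(1, \ldots, 1, v_n) \cdot \begin{pmatrix} I_{n-1} & 0 \\ v'/v_n & 1 \end{pmatrix}, \]
so every $g$ with $\wh\phi_n(e_n g) \neq 0$ factors uniquely as $g = p \cdot s(v)$ with $p \in P_n(E)$ and $v = e_n g \in C$, and the set $\{s(v) : v \in C\}$ is compact in $H_n(E)$.

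Second, I fix an open compact subgroup $K_W \subset H_n(E)$ that fixes $W$ under right translation, and take $m'$ large enough that the lower-unipotent factor $\begin{pmatrix} I & 0 \\ v'/v_n & 1 \end{pmatrix}$ of $s(v)$ lies in $K_W$ uniformly in $v \in C$. Then $W(p s(v)) = W(p \cdot \mathrm{diag}(1, \ldots, 1, v_n))$. Writing $p = \mathrm{diag}(h, 1) v_b$ with $h \in H_{n-1}(E)$ and $v_b = \begin{pmatrix} I & b \\ 0 & 1 \end{pmatrix} \in V \subset N_n(E)$, a direct matrix manipulation---commuting $v_b$ past $\mathrm{diag}(1, \ldots, 1, v_n)$, extracting the central scalar $v_n I$ to produce a central-character factor, and conjugating the resulting $V$-factor across $\mathrm{diag}(v_n^{-1} h, 1)$ to the left to pick up a $\ov\psi_E$-phase---will yield
\[ W(p \cdot s(v)) = \omega_{\Pi_n}(v_n) \cdot \ov\psi_E((hb)_{n-1}) \cdot W\bigl(\mathrm{diag}(v_n^{-1} h, 1)\bigr). \]

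Third, nonvanishing of this expression forces $v_n^{-1} h$ to lie in the support of $W|_{H_{n-1}(E)}$ modulo $N_{n-1}(E)$, which is compact by hypothesis; since $v_n$ varies in a compact subset of $E^\times$, this pins $h$ to a compact set modulo $N_{n-1}(E)$. Because $P_n = H_{n-1} \ltimes V$ with $V \subset N_n$, the quotient $N_n \bs P_n$ identifies canonically with $N_{n-1} \bs H_{n-1}$, so compactness of $h$ modulo $N_{n-1}$ is exactly compactness of $p$ modulo $N_n$; combined with compactness of $\{s(v) : v \in C\}$, this gives the desired compact support. The one place where the hypothesis "$m'$ sufficiently large depending on $W$" enters is matching the size of the lower-unipotent factor in $s(v)$ against $K_W$, and I do not anticipate any genuine obstacle beyond this bookkeeping; the rest of the argument is purely algebraic.
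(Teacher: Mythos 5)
Your proof is correct and follows essentially the same route as the paper: you parametrize $g$ by its orbit $e_n g$ via a section over $P_n(E)\backslash H_n(E)$, use the dagger support condition on $\phi_n$ to pin the last coordinate of $e_n g$ to an annulus and bound the remaining coordinates by $|\varpi|^{m'}$ relative to it, absorb the resulting small lower-unipotent factor into an open compact $K_W$ fixing $W$ once $m'$ is large, and reduce compactness to the hypothesis on $W|_{H_{n-1}(E)}$. The only cosmetic difference is that the paper packages the decomposition as $g = x\,u\,\mathrm{diag}(h,1)\,k'$ with $x\in E^\times$ central and $u \in N_n(E)$, whereas you split the mirabolic $P_n = H_{n-1}\ltimes V$ and commute factors by hand; the intermediate matrix identities you sketch check out and yield the same phase/central-character factorization.
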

\begin{proof}Clearly $\wh{\phi}_n(e_n  g)$ is $N_{n}(E)$-invariant and smooth.  Hence the product defines an element in $\sC^\infty(N_n(E)\bs H_{n}(E),\ov{\psi}_E)$. It remains to show that the product has support compact modulo $N_{n}(E)$. By the support condition of $ \wh{\phi}_n(e_n g)$ (cf. (\ref{eqn supp cond phi n})), we may assume that the lower right entry of $g$ is non-zero. Therefore we may write
$g=xu\left(\begin{array}{cc}h & \\
 &  1
\end{array}\right)\left(\begin{array}{cc}1_{n-1} & \\
v &  1
\end{array}\right)$ where $h\in H_{n-1}(E),u\in N_n(E),v\in M_{1,n-1}(E)$ and $x\in E^\times$. Again by the assumption on $\wh{\phi}_n$, we may assume that $||v||<|\varpi|^{m'}$ (otherwise $ \wh{\phi}_n(e_n g)$ vanishes). We may choose $m'$ sufficiently large so that $W$ is invariant under right multiplication by $1+\varpi^{m'}M_{n}(\CO_E)$ (such $m'$ exists since $W\in \CW$ is smooth).  We now have
\begin{align*}
\wh{\phi}_n(e_n  g)W(g)&=\psi_E(u)\wh{\phi}_n(x(v,1))W\left[x\left(\begin{array}{cc}h & \\
 &  1
\end{array}\right)\right]
\\&=\psi_E(u)\omega_{\Pi_n}(x)\wh{\phi}_n(x(v,1))W\left[\left(\begin{array}{cc}h & \\
 &  1
\end{array}\right)\right],
\end{align*}
where $\omega_{\Pi_n}$ is the central character of $\Pi_n$. Since the last factor has support compact modulo $N_{n-1}(E)$, and $x$ lies in a compact region, the desired compactness follows.
\end{proof}

Since the Kirillow model $\CK(\Pi_{n+1},\ov{\psi}_E)$ contains $\sC_c^\infty(N_n(E)\bs H_{n}(E),\ov{\psi}_E)$ as a subspace, we may view the product $\wh{\phi}_n(e_n  g)W(g)$ as an element in $\CK(\Pi_{n+1},\ov{\psi}_E)$. This determines uniquely an element in the Whittaker model $\CW(\Pi_{n+1},\ov{\psi}_E)$, denoted by $W_{\phi_n}$. In other words, to each $W\in\CW(\Pi_n,\ov{\psi}_E)$ whose restriction to $H_{n-1}(E)$ lies in $\sC_c^\infty(N_{n-1}(E)\bs H_{n-1}(E),\ov{\psi}_E)$ and a $\phi_n\in \sC_c^\infty(M_{n,1}(E))^\dagger_{m'}$ for $m'$ sufficiently large, we associate an element $W_{\phi_{n}}\in \CW(\Pi_{n+1},\ov{\psi}_E)$ characterized by
\begin{align}
\label{eqn W_n+1}
W_{\phi_n}\left(\begin{array}{cc}g & \\
 &  1
\end{array}\right)=\wh{\phi}_n(e_n  g)W(g),\quad g\in H_{n}(E).
\end{align}
Its complex conjugate $\ov{W}_{\phi_n}$ defines an element in $\CW(\Pi_{n+1},\psi_E)$.

Now we recall that the local Rankin-Selberg period is defined by (cf. (\ref{def local RS}):
$$
\lambda(s,W\otimes W')=\int_{N_{n}(E)\bs H_{n}(E)}W(g)W'\left(\begin{array}{cc}g& \\
 &  1
\end{array}\right)|g|^s\,dg,
$$
where $W\in \CW(\Pi_n,\ov{\psi}_E),W'\in  \CW(\Pi_{n+1},\psi_E)$. It has a meromorphic continuation to $s\in \BC$ (with possible poles at those poles of the local Rankin-Selberg L-factor $L(s+1/2,\Pi_v)$). For $\phi_{n}\in\sC_c^\infty(M_{n,1}(E))$, we have an action on $\CW(\Pi_{n+1},\psi_E)$ by:
$$
\Pi_{n+1}(\phi_n)W'(g)=\int_{M_{n-1,1}(E)}W'\left[g\left(\begin{array}{cc}1_{n-1} & u \\
 &  1
\end{array}\right)\right]\phi_n(u)\,du,\quad g\in H_{n+1}(E).
$$

\begin{prop}\label{prop sm RS}Let $\Pi=\Pi_n\otimes\Pi_{n+1}$ be an irreducible unitary generic representation of $H_n(E)\times H_{n+1}(E)$. 
Assume that the restriction of $W\in \CW(\Pi_n,\ov{\psi}_E)$ to $H_{n-1}(E)$ lies in $\sC_c^\infty(N_{n-1}(E)\bs H_{n-1}(E),\ov{\psi}_E)$ and that $\wh{\phi}_n\in \sC_c^\infty(M_{n,1}(E))^\dagger_{m'}$ for $m'$ sufficiently large (depending on $W$). Then for every $W'\in \CW(\Pi_{n+1},\psi_E)$,  $\lambda(s,W\otimes \Pi_{n+1}(\phi_n)W')$ is an entire function in $s\in \BC$ and we have
$$
\lambda(0,W\otimes \Pi_{n+1}(\phi_n)W')=\pair{W',\ov{W}_{\phi_n}}.
$$
\end{prop}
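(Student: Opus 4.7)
The plan is to unfold the action of $\Pi_{n+1}(\phi_n)$ on $W'$ along the last-column unipotent of $P_{n+1}$, so that the local Rankin--Selberg integral against $W$ is rewritten as the $\vartheta$-pairing of $W'$ with the Whittaker vector $W_{\phi_n}$ constructed in Lemma~\ref{lem W_n+1}. Entirety in $s$ and the value at $s=0$ then follow formally from the compact support guaranteed by that lemma.

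First I compute $\Pi_{n+1}(\phi_n)W'$ restricted to block matrices $\left(\begin{array}{cc}g & \\ & 1\end{array}\right)$ with $g\in H_n(E)$. Using the identity
$$
\left(\begin{array}{cc}g & \\ & 1\end{array}\right)\left(\begin{array}{cc}1_n & u \\ & 1\end{array}\right)=\left(\begin{array}{cc}1_n & gu \\ & 1\end{array}\right)\left(\begin{array}{cc}g & \\ & 1\end{array}\right),
$$
and the fact that the $\psi_E$-character on $N_{n+1}(E)$ applied to the block unipotent above depends only on the last entry $e_n(gu)=(e_ng)u$ of the column $gu$, the defining integral of $\Pi_{n+1}(\phi_n)W'$ turns into the Fourier transform of $\phi_n$:
$$
\Pi_{n+1}(\phi_n)W'\left(\begin{array}{cc}g & \\ & 1\end{array}\right)=\wh{\phi}_n(e_ng)\,W'\left(\begin{array}{cc}g & \\ & 1\end{array}\right).
$$

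Substituting this back, the local Rankin--Selberg integral becomes
$$
\lambda(s,W\otimes\Pi_{n+1}(\phi_n)W')=\int_{N_n(E)\backslash H_n(E)}W(g)\,\wh{\phi}_n(e_ng)\,W'\left(\begin{array}{cc}g & \\ & 1\end{array}\right)|g|^s\,dg.
$$
By the hypothesis that $W|_{H_{n-1}(E)}\in\sC_c^\infty(N_{n-1}(E)\backslash H_{n-1}(E),\ov\psi_E)$ together with Lemma~\ref{lem W_n+1}, for $m'$ sufficiently large the map $g\mapsto W(g)\wh{\phi}_n(e_ng)$ lies in $\sC_c^\infty(N_n(E)\backslash H_n(E),\ov\psi_E)$ and coincides with $W_{\phi_n}\!\left(\begin{array}{cc}g & \\ & 1\end{array}\right)$ by the characterization~(\ref{eqn W_n+1}). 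Since $|\det g|$ is bounded above and below on the compact support of this function in $N_n(E)\backslash H_n(E)$, the integral converges absolutely for every $s\in\BC$, so $\lambda(s,W\otimes\Pi_{n+1}(\phi_n)W')$ is entire in $s$.

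Finally, setting $s=0$ the integral becomes
$$
\int_{N_n(E)\backslash H_n(E)}W_{\phi_n}\!\left(\begin{array}{cc}g & \\ & 1\end{array}\right)W'\!\left(\begin{array}{cc}g & \\ & 1\end{array}\right)dg=\vartheta(W',\ov W_{\phi_n})=\pair{W',\ov W_{\phi_n}},
$$
using $\overline{\ov W_{\phi_n}}=W_{\phi_n}$. The only substantive ingredient is Lemma~\ref{lem W_n+1}, which both produces the smooth compactly supported Whittaker vector $W_{\phi_n}$ and ensures that the Rankin--Selberg zeta integral converges for all complex $s$; everything else is a direct unfolding of the convolution action of $\phi_n$.
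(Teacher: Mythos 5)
Your proof is correct and coincides with the paper's own argument: push $\Pi_{n+1}(\phi_n)$ through the block-diagonal element via the commutation relation, invoke the Whittaker transformation property to produce $\wh{\phi}_n(e_n g)$, and then apply Lemma~\ref{lem W_n+1} to obtain compact support (hence entirety in $s$) and identify the value at $s=0$ with the $\vartheta$-pairing.
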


\begin{proof}For $g\in H_{n}(E)$, we have
\begin{align*}
\Pi_{n+1}(\phi_n)W'\left(\begin{array}{cc}g & \\
 &  1
\end{array}\right)=&\int_{M_{n-1,1}(E)}W'\left[\left(\begin{array}{cc}g & \\
 &  1
\end{array}\right)\left(\begin{array}{cc}1_{n-1} & u \\
 &  1
\end{array}\right) \right]\phi_n(u)\,du
\\=&
\int_{M_{n-1,1}(E)}W'\left[\left(\begin{array}{cc}g & \\
 &  1
\end{array}\right)\right]\psi_E(e_n gu)\phi(u)\,du
\\=&\wh{\phi}_n(e_n g)W'\left[\left(\begin{array}{cc}g & \\
 &  1
\end{array}\right)\right].
\end{align*}
Return to the local Rankin-Selberg period:
\begin{align}\label{eqn RSs}
\lambda_{}(s,W\otimes \Pi_{n+1}(\phi_n)W')=
\int_{N_{n}(E)\bs H_{n}(E)}W(g)\wh{\phi}_n(e_n g) W'\left(\begin{array}{cc}g& \\
 &  1
\end{array}\right)|g|^s\,dg.
\end{align}
By Lemma \ref{lem W_n+1}, $W(g)\wh{\phi}_n(e_n g)$ has compact support modulo $N_{n}(E)$. It follows that the last integral converges absolutely for all $s\in \BC$ and hence defines an entire function in $s\in \BC$. Moreover the value at $s=0$ is given by (cf. \ref{eqn W_n+1})
\begin{align*}
\lambda_{}(0,W\otimes \Pi_{n+1}(\phi_n)W')=\pair{W',\ov{W}_{\phi_n}}.
\end{align*}
This completes the proof.
\end{proof}

\section{Local character expansion in the general linear group case}

In this section we prove a ``limit" formula for the (local) spherical character in the general linear group case. We will choose a subspace of test functions supported in a small neighborhood of the origin of the symmetric space $S_{n+1}$.  Then we may treat them as  functions on the ``Lie algebra" $\mathfrak{s}_{n+1}$ of $S_{n+1}$, the tangent space at the origin. The key property for  these functions is that their Fourier transforms have vanishing unipotent orbital integrals except for the regular unipotent element. The intermediate steps are messy and somehow ugly; but the final outcome seems to be miraculously neat.

\subsection{A ``limit" formula for the spherical character $I_\Pi(f)$.} Now let $\Pi=\Pi_n\otimes \Pi_{n+1}$  be an irreducible unitary generic representation of $H_n(E)\times H_{n+1}(E)$. Assume that the central characters of both $\Pi_n$ and $\Pi_{n+1}$ are trivial on $F^\times$. Let  $I_{\Pi,s}(f)$ be the (unnormalized) local spherical character defined by (\ref{def unnorm char G'}). We now combine Prop. \ref{prop sm FR} and \ref{prop sm RS} to obtain a formula of $I_{\Pi,s}(f)$ for a special class of test functions $f$.

Consider $f_n\in \sC_c^\infty(H_n(E))$ and $f_{n+1}\in \sC_c^\infty(H_{n+1}(E))$.
Let $\phi_n\in \sC_c^\infty(M_{n,1}(E))$. We consider a {\em perturbation} of $f_{n+1}$ by $\phi_n$
$$
f_{n+1}^{\phi_n}(g)=\int_{M_{n,1}(E)}f_{n+1}\left[\left(\begin{array}{cc}1_n & u\\
 &  1
\end{array}\right)g \right]\phi_n(u)\,du,\quad g\in H_{n+1}(E).
$$
This is the same as $\phi_n\ast f_{n+1}$
---the convolution introduced in \S\ref{para conv}, (\ref{eqn def ast})---where we view $M_{n,1}(E)$ as a subgroup of $H_{n+1}(E)$.
Similarly, for $\varphi_{n-1}\in  \sC_c^\infty(H_{n-1}(E)),\phi_{n-1}\in \sC^\infty_c (M_{n-1,1}(E))$, we define  a {\em perturbation} of $f_{n}$: for $g\in H_n(E)$,
$$
f_{n}^{\varphi_{n-1},\phi_{n-1}}(g):=\int_{M_{n-1,1}(E)}\int_{H_{n-1}(E)}f_n\left[g\left(\begin{array}{cc}x^{-1} & \\
 &  1
\end{array}\right) \left(\begin{array}{cc}1 & -u\\
 &  1
\end{array}\right) \right] \varphi_{n-1}(x)\phi_{n-1}(u)\,dx\,du.
$$
Equivalently $$f_{n}^{\varphi_{n-1},\phi_{n-1}}=f_{n}\ast \phi_{n-1}\ast \varphi_{n-1}\in \sC^\infty_c(H_n(E)).$$
We will consider functions of the following form:
$$
f=f_{n}^{\varphi_{n-1},\phi_{n-1}}\otimes f^{\phi_{n}}_{n+1}\in \sC_c^\infty(H_{n}(E)\times H_{n+1}(E)).
$$

\begin{defn}\label{def adm}
Fix $\Pi$. Let $(m,m',r)$ be positive integers with $r>m'>m>0$. We say that $f=f_{n}^{\varphi_{n-1},\phi_{n-1}}\otimes f^{\phi_{n}}_{n+1}$ is {\em $(m,m',r)$-admissible} or {\em admissible} for $\Pi$ if it satisfies the following:
\begin{itemize}
\item The function $\varphi_{n-1}\otimes \phi_{n-1}$ is $m$-admissible. Hence it determines an element $\wt{W}_{\varphi_{n-1},\ov{\phi}_{n-1}}\in \sC_c^\infty(N_{n-1}\bs H_{n-1}(E),\psi_E)$ (cf. Lemma \ref{lem W_n fml}) and $W_{\varphi_{n-1},\phi_{n-1}}\in\CW(\Pi_n,\psi_E)$. 
\item The function $\phi_n\in \sC_c^\infty(M_{n,1}(E))^\dagger_{m'}$ for sufficiently large $m'$ (depending on $\Pi_n$, $\varphi_{n-1}\otimes \phi_{n-1}$ and hence on the integer $m$). More precisely, $m'$ is large enough such that Lemma \ref{lem W_n+1} holds for $W=W_{\varphi_{n-1},\phi_{n-1}}$. Let $W_{\varphi_{n-1},\phi_{n-1},\phi_n}\in \CW(\Pi_{n+1},\ov{\psi}_E)$ be the function characterized by the equation (\ref{eqn W_n+1}) for the choice $W=W_{\varphi_{n-1},\phi_{n-1}}$.
\item The function 
 $f_n$  ($f_{n+1}$, resp.) is a multiple of the characteristic function of $1+\varpi^rM_{n}(\CO_E)$ (of $1+\varpi^{r}M_{n+1}(\CO_E)$ , resp.). We normalize $f_n,f_{n+1}$ by
\begin{align}\label{eqn norm f}
 \int_{H_n(E)}f_n(g)\,dg  = \int_{H_{n+1}(E)}f_{n+1}(g)\,dg =1.
\end{align}
We require that $r$ is sufficiently large (depending on $\Pi$, $\varphi_{n-1},\phi_{n-1},\phi_n$ and hence on $m,m'$) so that 
$$
\Pi_n(f_n)W_{\varphi_{n-1},\phi_{n-1}}=W_{\varphi_{n-1},\phi_{n-1}}
$$
and
$$
\Pi_{n+1}(f_{n+1})W_{\varphi_{n-1},\phi_{n-1},\phi_n}=W_{\varphi_{n-1},\phi_{n-1},\phi_n}.
$$\end{itemize}
\end{defn}

\begin{prop}Fix $\Pi$ and assume that $f=f_{n}^{\varphi_{n-1},\phi_{n-1}}\otimes f^{\phi_{n}}_{n+1}$ is $(m,m',r)$-admissible for $\Pi$. Then we have
\begin{align*}
I_{\Pi,s}(f)=\beta_{n+1}(W_{\varphi_{n-1},\phi_{n-1},\phi_n}).
\end{align*}
In particular it is independent of $s\in \BC$.
\end{prop}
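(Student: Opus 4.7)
My plan is a two‐stage Parseval–type reduction, using Proposition 7.3 (to absorb $\varphi_{n-1},\phi_{n-1}$ into the non-smooth functional $\beta_n$) and Proposition 7.5 (to absorb $\phi_n$ into the non-smooth functional $\lambda$), each followed by a resolution of identity in an orthogonal basis.

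First I would exploit the admissibility to reduce to a finite sum. Writing $W=W_n\otimes W_{n+1}$ over orthogonal bases of $\CW(\Pi_n,\ov{\psi}_E)$ and $\CW(\Pi_{n+1},\psi_E)$, the choice of $r$ in the admissibility hypothesis guarantees that $\Pi_n(f_n)$ and $\Pi_{n+1}(f_{n+1})$ act as the identity on the (finite-dimensional) $K_n$- and $K_{n+1}$-isotypic components of the vectors that actually contribute, so the sum collapses to
\[
I_{\Pi,s}(f)=\sum_{W_n,W_{n+1}}\frac{\lambda_s\bigl(\Pi_n(\phi_{n-1})\Pi_n(\varphi_{n-1})W_n\otimes \Pi_{n+1}(\phi_n)W_{n+1}\bigr)\,\ov{\beta_n(W_n)\beta_{n+1}(W_{n+1})}}{\vartheta_n(W_n,W_n)\,\vartheta_{n+1}(W_{n+1},W_{n+1})}.
\]

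Second, I would perform the inner sum over $W_n$. For fixed $W_{n+1}$, set $Y=\Pi_{n+1}(\phi_n)W_{n+1}$: on the finite-dimensional $K_n$-fixed subspace the linear functional $W\mapsto \lambda_s(W\otimes Y)$ is represented by some vector $v_{\Lambda}$ via $\lambda_s(W\otimes Y)=\vartheta_n(W,v_{\Lambda})$. Using the adjointness $\Pi_n(\psi)^{*}_{\vartheta}=\Pi_n(\psi^{*})$ and the Parseval identity $\sum_{W_n}\vartheta_n(W_n,u)\ov{\beta_n(W_n)}/\vartheta_n(W_n,W_n)=\ov{\beta_n(u)}$, combined with Proposition 7.3 applied with $W=v_{\Lambda}$, the $W_n$-sum collapses to
\[
\lambda_s\bigl(W_{\varphi_{n-1},\phi_{n-1}}\otimes \Pi_{n+1}(\phi_n)W_{n+1}\bigr).
\]

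Third, the outer sum over $W_{n+1}$. The hypotheses of Proposition 7.5 hold: by Lemma 7.2(1), $W_{\varphi_{n-1},\phi_{n-1}}$ restricts to an element of $\sC_c^\infty(N_{n-1}(E)\backslash H_{n-1}(E),\ov{\psi}_E)$, and $\phi_n$ is dagger-admissible with $m'$ large by admissibility of $f$. Hence $\lambda_s\bigl(W_{\varphi_{n-1},\phi_{n-1}}\otimes \Pi_{n+1}(\phi_n)W_{n+1}\bigr)=\vartheta_{n+1}(W_{n+1},\ov{W}_{\varphi_{n-1},\phi_{n-1},\phi_n})$; a second Parseval summation against $\ov{\beta_{n+1}(W_{n+1})}$ then yields $\beta_{n+1}(W_{\varphi_{n-1},\phi_{n-1},\phi_n})$. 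The asserted $s$-independence follows because Proposition 7.5 presents the Rankin--Selberg integral as an absolutely convergent integral over the compact (mod $N_n(E)$) support cut out by the dagger shell conditions on $\phi_n$ together with Lemma 7.2, on which $|{\det g}|$ is pinned so that $|g|^s$ contributes no effective $s$-dependence.

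The principal technical delicacy will be the bookkeeping of conjugations and of Whittaker-model conventions ($\psi_E$ versus $\ov{\psi}_E$), and in particular making sense of the Parseval manipulations when one of the functionals involved ($\beta_n$ or $\lambda_s$) is not smooth: the point is that the admissibility hypothesis is precisely what is required to guarantee that after one layer of mollification each manipulation takes place on the finite-dimensional $K$-isotypic component of a smooth vector, where Riesz representation and the resolution of identity are automatic.
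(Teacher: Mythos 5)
Your overall strategy coincides with the paper's: two layers of mollification, first absorbing $(\varphi_{n-1},\phi_{n-1})$ via Proposition~\ref{prop sm FR} into the Flicker--Rallis functional and then absorbing $\phi_n$ via Proposition~\ref{prop sm RS} into the Rankin--Selberg functional, each followed by a resolution of identity. But there is a genuine flaw in your first reduction. You assert that the $r$-part of the admissibility hypothesis lets you discard $\Pi_n(f_n)$ and $\Pi_{n+1}(f_{n+1})$ at the outset, reducing to a sum in which only $\Pi_n(\phi_{n-1})\Pi_n(\varphi_{n-1})$ and $\Pi_{n+1}(\phi_n)$ appear. This is fine on the $\Pi_{n+1}$ side, where $\Pi_{n+1}(f_{n+1})$ sits at the end of the composite and acts directly on the basis vectors. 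On the $\Pi_n$ side, however, the composite is $\Pi_n(f_n)\Pi_n(\phi_{n-1})\Pi_n(\varphi_{n-1})W_n$, and even for a fixed vector $W_n$ the image $\Pi_n(\phi_{n-1})\Pi_n(\varphi_{n-1})W_n$ need not lie in the $K_n^{(r)}$-fixed subspace (the mollifiers are supported at level $\varpi^m$ with $m<r$), so the projector $\Pi_n(f_n)$ is not the identity there. The admissibility condition only stipulates $\Pi_n(f_n)W_{\varphi_{n-1},\phi_{n-1}}=W_{\varphi_{n-1},\phi_{n-1}}$ for the one vector $W_{\varphi_{n-1},\phi_{n-1}}$, which only materialises after the fold. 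Dropping $\Pi_n(f_n)$ also undercuts your Riesz step: the functional $W\mapsto\lambda_s(W\otimes Y)$ is only represented by a vector $v_\Lambda$ once it has been cut down to a finite-dimensional $K_n^{(r)}$-fixed subspace, and without $\Pi_n(f_n)$ in play there is no such cut.

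The clean sequencing is the paper's: move $\Pi_n((f_n^{\varphi_{n-1},\phi_{n-1}})^\ast)$ onto the $\beta_n$ side by adjointness, apply Proposition~\ref{prop sm FR} to get $\beta_n(\Pi_n((f_n^{\varphi_{n-1},\phi_{n-1}})^\ast)W)=\pair{W,\Pi_n(f_n)W_{\varphi_{n-1},\phi_{n-1}}}$, fold the (now finite) sum over $W$ to isolate the single smooth vector $\Pi_n(f_n)W_{\varphi_{n-1},\phi_{n-1}}$, and only then invoke $\Pi_n(f_n)W_{\varphi_{n-1},\phi_{n-1}}=W_{\varphi_{n-1},\phi_{n-1}}$ from admissibility. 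After this repair, your second stage (Proposition~\ref{prop sm RS} plus a second fold) and your observation about the localised support pinning $|\det g|$ and hence killing the $s$-dependence run exactly as in the paper.
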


\begin{proof}
First we have
\begin{align*}
I_{\Pi,s}(f_n\otimes f_{n+1})=&\sum_{W,W'}\lambda(s,\Pi_n(f_n)W\otimes \Pi_{n+1}(f_{n+1})W')\ov{\beta_n(W)} \cdot \ov{\beta_{n+1}(W')}
\\=&\sum_{W,W'}\lambda(s,W\otimes \Pi_{n+1}(f_{n+1})W')\ov{\beta_n(\Pi_n(f_n^\ast)W)}\cdot \ov{\beta_{n+1}(W')},
\end{align*}
where the sum of $W$ ($W'$, resp.) runs over an orthonormal basis of $\Pi_n$ ($\Pi_{n+1}$, resp.).

For simplicity, we write $\phi$ for $\phi_{n-1}$ and $\varphi$ for $\varphi_{n-1}$.
Now we replace $f_n$ by $f^{\varphi_{},\phi_{}}_n=f\ast \phi\ast \varphi$. Note that
$$
\Pi_n((f^{\varphi,\phi}_n)^\ast)=\Pi_n(\varphi^\ast)\Pi_n(\phi^\ast)\Pi_n(f^\ast)
$$
By Prop. \ref{prop sm FR}, we have for all $W\in \CW(\Pi_n,\ov{\psi}_E)$
$$
\beta_n(\Pi_n((f^{\varphi,\phi}_n)^\ast)W)=\pair{\Pi_n(f_n^\ast)W,W_{\varphi,\phi}}=\pair{W,\Pi_n(f_n)W_{\varphi,\phi}}.
$$
For any $W_0\in \CW(\Pi_n,\ov{\psi}_E)$, we have an orthonormal expansion
$$
W_0=\sum_W \pair{W_0,W}W,
$$
where the sum of $W$ runs over an orthonormal basis of $\Pi_n$.
Hence we may fold the sum over $W$ to obtain
\begin{align*}
I_{\Pi,s}(f^{\varphi,\phi}_n\otimes f_{n+1})=&\sum_{W'}\lambda(s,\Pi_n(f_n)W_{\varphi,\phi} \otimes \Pi_{n+1}(f_{n+1})W') \ov{\beta_{n+1}(W')}.
\end{align*}
Now we further replace $ f_{n+1}$ by $ f_{n+1}^{\phi_n}=\phi_n\ast f_{n+1}$ and assume that $f=f_{n}^{\varphi,\phi_{}}\otimes f^{\phi_{n}}_{n+1}$ is admissible. By the admissibility, $f_n$ and $f_{n+1}$ have small support so that we have$$
\Pi_n(f_n)W_{\varphi,\phi}=W_{\varphi,\phi}.
$$and
$$
\Pi_{n+1}(f^\ast_{n+1})\ov{W}_{\varphi,\phi,\phi_n}=\ov{W}_{\varphi,\phi,\phi_n}.
$$  Now note that the function $W_{\varphi,\phi}(g)\wh{\phi}_n(e_n g)$ is supported in $N_n(E)H_n(\CO_E)$. Hence in (\ref{eqn RSs}), we have $|g|^s=1$ independent of $s\in \BC$.  Now by Prop. \ref{prop sm RS} and $\int_{H_n(E)}f_n(g)dg=\int_{H_{n+1}(E)}f_{n+1}(g)dg=1$, we have for all $W'\in \CW(\Pi_{n+1},\psi_E)$:
\begin{align*}
\lambda(s,\Pi_n(f_n)W_{\varphi,\phi} \otimes \Pi_{n+1}(f_{n+1}^{\phi_n})W')
=&\pair{\Pi_{n+1}(f_{n+1})W',\ov{W}_{\varphi,\phi,\phi_n} }
\\=&\pair{W',\Pi_{n+1}(f^\ast_{n+1})\ov{W}_{\varphi,\phi,\phi_n} }
\\=&\pair{W',\ov{W}_{\varphi,\phi,\phi_n} }.
\end{align*}
Then we may fold the sum over $W'$ to obtain
\begin{align*}
I_{\Pi,s}(f^{\varphi,\phi}_n\otimes f^{\phi_n}_{n+1})
=\ov{\beta_{n+1}(\ov{W}_{\varphi,\phi,\phi_n})}=\beta_{n+1}(W_{\varphi,\phi,\phi_n}).
\end{align*}
This completes the proof.
\end{proof}

We need to simplify the formula. Define for $a\in A_n(F)$:
\begin{align}
\label{def del bar}
\ov{\delta}_n(a):=\det(Ad(a):\fkn_{n}))/\det(Ad(a):\fkn_{n-1}).
\end{align}
{\em To simplify the exposition from now on we redefine $f_{n+1}^{\phi_n}$ as $\phi_{n}^\vee\ast f_{n+1}$.}
\begin{prop}\label{prop fml I Pi}
Fix $\Pi$ and assume that $f=f_{n}^{\varphi_{n-1},\phi_{n-1}}\otimes f^{\phi_{n}}_{n+1}$ is $(m,m',r)$-admissible for $\Pi$. Use notations as in Lemma \ref{lem W_n fml}. Then we have
\begin{align*}
I_{\Pi,s}(f)=\omega_{\Pi_n}(\tau)|\tau|_E^{d_n}\left(\int_{B_{n-1,-}(F)}\phi'( b)\,db \right)\int \prod_{i=0}^{n-1}\wh{\phi_{n-i}}(- y_i(v_{n-i-1},1)\tau)|\ov{\delta}_{n}(y)|^{-1}\eta(y)\,d^\ast y\,dv,
\end{align*}
where the integral of $d^\ast ydv$ is over $y\in A_n(F),v\in N_{n,-}(F)$:
\begin{align}\label{eqn for y00}
y=\left(\begin{array}{cccc}y_0y_1y_2...y_{n-1} & &&\\&\ddots&&\\
& & y_0 y_1&\\& & & y_0
\end{array}\right)\in A_{n}(F),
\end{align}
and
\begin{align}\label{eqn for v00}
v=\prod_{i=1}^{n-1}\left(\begin{array}{cc}1_{i} & \\
v_i &  1
\end{array}\right)\in N_{n,-}(F),\quad v_i\in M_{1,i}(F).
\end{align}
\end{prop}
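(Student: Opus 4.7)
The starting point is the identity $I_{\Pi,s}(f) = \beta_{n+1}(W_{\varphi_{n-1},\phi_{n-1},\phi_n})$ furnished by the previous proposition. The whole task is to unfold the right-hand side. Expanding $\beta_{n+1}$ via (\ref{eqn def beta1}) and substituting the Kirillov-type characterization (\ref{eqn W_n+1}) of $W_{\varphi_{n-1},\phi_{n-1},\phi_n}$, incorporating the $\phi_n\mapsto\phi_n^\vee$ sign twist from the redefinition of $f_{n+1}^{\phi_n}$, one arrives at
\[
I_{\Pi,s}(f) = \int_{N_n(F)\backslash H_n(F)} \wh{\phi}_n(-e_n\epsilon_n h)\,W_{\varphi_{n-1},\phi_{n-1}}(\epsilon_n h)\,\eta^n(h)\,dh,
\]
an integrand that is genuinely $N_n(F)$-invariant because $\tr_{E/F}(\tau x)=0$ for $x\in F$.

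Next I would parametrize with the Bruhat open cell $N_nA_nN_{n,-}$, writing $h = yv$ with $y, v$ as in (\ref{eqn for y00})--(\ref{eqn for v00}); the right-$H_n(F)$-invariant measure on $N_n(F)\backslash H_n(F)$ is $\delta_n(y)^{-1}\,d^\ast y\,dv$ in the $NAN_-$ decomposition (easy check on $\GL_2$). Using $\epsilon_n = \tau\cdot\mathrm{diag}(\epsilon_{n-1},1)$ and the block factorization $h = y_0\cdot\mathrm{diag}(y'v',1)\cdot u$ with $u = \bigl(\begin{smallmatrix}1_{n-1}&\\v_{n-1}&1\end{smallmatrix}\bigr)$, one computes
\[
e_n\epsilon_n h = \tau y_0(v_{n-1},1),\qquad \epsilon_n h = \tau y_0\cdot\mathrm{diag}(\epsilon_{n-1}y'v',1)\cdot u.
\]
The first identity provides the missing $i=0$ term $\wh{\phi}_n(-\tau y_0(v_{n-1},1))$ of the product in the target. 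For the second, the hypothesis $\omega_{\Pi_n}|_{F^\times}=1$ lets me extract $\omega_{\Pi_n}(\tau y_0) = \omega_{\Pi_n}(\tau)$; the Fourier-support condition in Definition \ref{def dag n1} forces $v_{n-1}\in\varpi^{m'}\CO_F^{n-1}$, so by admissibility ($m'$ chosen large enough for the smooth vector $W_{\varphi_{n-1},\phi_{n-1}}$) the Whittaker is right-invariant under $u$; then (\ref{eqn def W_n}) reduces the Whittaker value to $\omega_{\Pi_n}(\tau)\,\wt W_{\varphi_{n-1},\phi_{n-1}}(\epsilon_{n-1}y'v')$.

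Applying Lemma \ref{lem W_n fml}(2) to $\wt W_{\varphi_{n-1},\phi_{n-1}}(\epsilon_{n-1}y'v')$ produces the prefactor $|\tau|_E^{d_n}\int_{B_{n-1,-}(F)}\phi'(b)\,db$ together with $\eta_n(y')\,|\delta_{n-1}(y')|_F\,\prod_{i=1}^{n-1}\wh{\phi}_{n-i}(-y_i(v_{n-i-1},1)\tau)$, which supplies the remaining factors. The rest is algebraic bookkeeping: since $\delta_{n-1}$ is trivial on the center of $H_{n-1}$ one has $\delta_{n-1}(y')=\delta_{n-1}(y)$, so the modular piece $\delta_n(y)^{-1}|\delta_{n-1}(y')|_F$ telescopes to $|\ov{\delta}_n(y)|^{-1}$; using $\eta^2 = 1$ and $\det y = y_0^n\det y'$, the character $\eta^n(y)\eta_n(y') = \eta(\det y)^n\,\eta(\det y')^{n-1}$ collapses to $\eta(\det y)=\eta(y)$. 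Merging the $i=0$ Fourier factor with the $i = 1,\ldots,n-1$ terms yields the full product $\prod_{i=0}^{n-1}\wh{\phi}_{n-i}$, and the asserted formula follows. The principal technical obstacle — the only step not pure calculation — is ensuring that the admissibility parameters $(m,m',r)$ in Definition \ref{def adm} can be chosen so that the right-invariance of $W_{\varphi_{n-1},\phi_{n-1}}$ under $u$ holds uniformly for every $v_{n-1}$ in the effective Fourier support of $\wh{\phi}_n$.
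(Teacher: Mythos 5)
Your proof is correct and follows essentially the same route as the paper: unfold $\beta_{n+1}$ via the Kirillov characterization of $W_{\varphi_{n-1},\phi_{n-1},\phi_n}$, pass to the $N_nA_nN_{n,-}$ coordinates with the stated measure, split off the $y_0,v_{n-1}$ factors using the central-character triviality on $F^\times$ and the admissibility-forced smallness of $v_{n-1}$, then invoke Lemma \ref{lem W_n fml}(2) for $\wt W_{\varphi_{n-1},\phi_{n-1}}(\epsilon_{n-1}y'v')$ and collect the modular and $\eta$-factors. The bookkeeping checks out: $\delta_n(y)^{-1}\delta_{n-1}(y')=\ov{\delta}_n(y)^{-1}$ and $\eta_{n+1}\cdot\eta_n=\eta$ give exactly the stated formula, and the condition you flag at the end (right-invariance of $W_{\varphi_{n-1},\phi_{n-1}}$ under $v_{n-1}$ in the effective support of $\wh\phi_n$) is precisely what the choice of $m'$ in Definition \ref{def adm} guarantees, so there is no residual gap.
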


\begin{proof}
By (\ref{eqn W_n+1}), we have (cf. (\ref{eqn def beta1}), and note to replace $\phi_n$ by $\phi_n^\vee$)
\begin{align*}
\beta_{n+1}(W_{\varphi,\phi,\phi_n})
=&\int_{N_{n}(F)\bs H_n(F)}W_{\varphi,\phi}(\epsilon_n h)\wh{\phi}_n(-e_n  \epsilon_n h)\eta_{n+1}(h)\,dh.
\end{align*}
Let $h=yv$ where
$$
y=y_0\left(\begin{array}{cc}y' & \\
&  1
\end{array}\right) \in A_{n}(F),\quad y'\in A_{n-1}(F),
$$ and
$$v=\left(\begin{array}{cc}v' & \\
 &  1
\end{array}\right)\left(\begin{array}{cc}1_{n-1}& \\
v_{n-1} &  1
\end{array}\right),\quad v'\in N_{n-1,-}(F).
$$
Then we may replace the integral on the quotient $N_{n}(F)\bs H_n(F)$ by the integral over $y,v$ as in (\ref{eqn for y00}) and (\ref{eqn for v00}) for the measure:
$$
|\delta_{n}(y)|^{-1}\prod_{i=0}^{n-1}\,d^\ast y_i \prod_{j=1}^{n-1}\,dv_j.
$$
Since we have $\delta_n(y)=\delta_{n-1}(y')\det(y')=\delta_{n-1}(y')\ov{\delta}_n(y')$ (cf. (\ref{eqn delta rel}), (\ref{def del bar})),
we may also write this as a product 
$$
\left(|\delta_{n-1}(y')|^{-1}|y'|^{-1} \,d^\ast y'\,dv'\right) \,d^\ast y_0 \,dv_{n-1},
$$
where $d^\ast y'=\prod_{i=0}^{n-2}\,d^\ast y_i$ and $dv'= \prod_{j=1}^{n-2}\,dv_j$.
Since the central character of $\Pi_n$ is trivial on $F^\times$, we have
$$
W_{\varphi,\phi}(\epsilon_n y_0h)=W_{\varphi,\phi}(\epsilon_n h).
$$
By the admissibility, the value $\wh{\phi}_n(-e_n \epsilon_n h)=\wh{\phi}_n(- y_0(v_{n-1},1)\tau)$ is non-zero only if $\|v_{n-1}\|\leq |\varpi^{m'}|$ is very small so that $\left(\begin{array}{cc}1_{n-1}& \\
v_{n-1} &  1
\end{array}\right)$ acts trivially on $W_{\varphi,\phi}$.
This allows us to write the integral as the product of
$$
\int_{}\wh{\phi}_n(- y_0(v_{n-1},1)\tau)\eta_{n+1}(y_0^{n})\,d^\ast y_0\,dv_{n-1}
$$ 
and
$$
\omega_{\Pi_n}(\tau)\int_{A_{n-1}(F)}\int_{N_{n-1,-}(F)}W_{\varphi,\phi}\left(\begin{array}{cc}\epsilon_{n-1} y'v' & \\
 &  1
\end{array}\right) |\delta_{n-1}(y')|^{-1}|y'|^{-1} \eta_{n+1}(y')\,d^\ast y'\,dv',
$$
where we have used the equality
$$
\epsilon_{n}=\tau\left(\begin{array}{cc} \epsilon_{n-1} & \\
 &  1
\end{array}\right).
$$
By definition $W_{\varphi,\phi}\left(\begin{array}{cc}\epsilon_{n-1} y'v' & \\
 &  1
\end{array}\right) =\wt{W}_{\varphi,\phi}\left(\epsilon_{n-1} y'v' \right) $ (cf. (\ref{eqn def W_n})),
we may apply the formula of the latter by Lemma \ref{lem W_n fml}:
\begin{align*}W_{\varphi,\phi}\left(\begin{array}{cc}\epsilon_{n-1} y'v' & \\
 &  1
\end{array}\right)=
|\tau|_E^{d_n}\left(\int_{B_{n-1,-}(F)}\phi'( b)\,db \right)\eta_n(y')|\delta_{n-1}(y')|_F \prod_{i=1}^{n-1}\wh{\phi_{n-i}}( y_i(v_{n-i-1},1)\tau).
\end{align*}
Finally we note $\eta_n\eta_{n+1}=\eta$ and $\eta_{n+1}(y_0^n)=\eta(y_0^{n^2})=\eta(y_0^{n})$ (cf. (\ref{eqn def eta})).
This completes the proof.

\end{proof}

\subsection{Truncated local expansion of the spherical character $I_\Pi$.} We are now ready to deduce a truncated local expansion of $I_\Pi$ around the origin. As we have alluded to in the Introduction, this expansion is the relative version of a theorem of Harish-Chandra.  His result is a local expansion of the character of an admissible representations of a $p$-adic reductive group in terms of Fourier transform of nilpotent orbital integrals. Here we obtain a truncated expansion that only involves the regular unipotent element. 

First we need to associate to $f\in \sC_c^\infty(H_{n}(E)\times H_{n+1}(E))$ with small support around $1$ a function on the Lie algebra $\fks$ with small support around $0$. To $f=f_n\otimes f_{n+1}\in \sC_c^\infty(H_{n}(E)\times H_{n+1}(E))$ we have associated a function $\wt{f}\in  \sC_c^\infty(H_{n+1}(E))$ by (\ref{eqn def wtf}) and $\wt{\wt f}\in \sC_c^\infty(S_{n+1}(F))$ by (\ref{eqn def nuf even}), (\ref{eqn def nuf odd}). It is easy to see that $\wt{f}=f^\vee_n\ast f_{n+1}$ (cf. (\ref{eqn def inv vee})).
The Cayley map (cf. (\ref{def cay ugp})) defines a local homeomorphism near a neighborhood of $0\in \fks$
\begin{align*}
\fkc=\fkc_{n+1}:\fks&\to S_{n+1}\\
         X& \mapsto (1+X)(1-X)^{-1},
\end{align*} 
and its inverse is given by
$$
\fkc^{-1}(x)=-(1-x)(1+x)^{-1}.
$$
In particular, for a function $\Phi \in \sC_c^\infty(S_n )$ ($\phi \in \sC_c^\infty(\fks )$, resp.) with support in a small neighborhood of $1\in S_{n+1}$ ($0\in \fks$, resp.), we may consider it as a function on $\fks$ ($S_{n+1}$, resp.) denoted by $\fkc^{-1}(\Phi)$ ($\fkc(\phi))$, resp.). 
We also have a morphism:
\begin{align*}
\iota=\iota_{n+1}: \fks &\to H_{n+1}(E)\\
X&\mapsto 1+X,
\end{align*}
such that, wherever they are all defined, we have $\nu\circ \iota=\fkc:$
$$
\xymatrix{   & H_{n+1}(E) \ar[d]^{\nu}\\
\fks_{}\ar[ur]^{\iota}\ar[r]^{\fkc} & S_{n+1}
 }
$$
where $\nu$ is as in (\ref{eqn def nu}).
\begin{defn}\label{def f_nat}
We associate to $f\in  \sC_c^\infty(H_{n}(E)\times H_{n+1}(E))$ a function on $\fks$ denoted by $f_\nat$:
\begin{align}\label{def f_nat}
f_\nat(X):=\int_{H_n(F)}\wt{f}((1+X)h)\,dh,\quad X\in \fks,
\end{align}
if $n$ is even, and 
\begin{align}\label{def f_nat odd}
f_\nat(X):=\int_{H_n(F)}\wt{f}((1+X)h)\eta'((1+X)h)\,dh,\quad X\in \fks,
\end{align}
if $n$ is odd,
where $\wt{f}$ is defined by (\ref{eqn def wtf}).
\end{defn}
Then we have when $\det(1-X)\neq 0$
$$
\fkc^{-1}(\wt{\wt{f}})(X)=f_\nat(X),\quad X\in \fks.
$$
{\em From now on, all the test functions at hand will be supported in suitable neighbourhoods of the obvious distinguished points of $H_{n}(E)\times H_{n+1}(E),S_{n+1}(F)$ and $\fks(F)$ so that $\fkc$ is well-defined}. In particular, we have $f_\nat\in \sC^\infty_c(\fks)$.

We then consider $\fks(F)$ as a subspace of $M_{n+1} (E)$. On $M_{n+1} (E)$ we have a bilinear pairing $\pair{X,Y}:=\tr(XY)$, under which the decomposition $M_{n+1} (E)=M_{n+1} (F)\oplus \fks(F)$ is orthogonal. We then define the Fourier transform on $\fks(F)$ with respect to the restriction of the above pairing:
$$
\wh{f_\nat}(X):=\int_{\fks}f_\nat(Y)\psi(\tr(XY))\,dY.
$$
Here the measure is normalized so that $\wh{\wh{f_\nat}}(X)=f_\nat(-X)$ (cf. \S2 (\ref{eqn measure})). We will consider the orbital integral (Definition \ref{def orb uni})  of the regular unipotent element
\begin{align}\label{eqn xi- s_n}
\xi_-=\xi_{n+1,-}=\tau\left(
\begin{array}{cccc}0  &0 &... & 0\\
 1&  0&0&...\\ ...&1&0 &0
\\
0&...&1&0\end{array}\right)\in \fks(F).
\end{align}

\begin{thm}\label{thm germ gl}
Let $\Pi$ be an irreducible unitary generic representation. Then for any small neighborhood $\Omega$ of $1\in G'(F)$, there exists admissible $f\in \sC_c^{\infty}(\Omega)$ such that for all $s\in \BC$:
$$
I_{\Pi,s}(f)=|\tau|_E^{(d_n+d_{n+1})/2}\omega_{\Pi_n}(\tau)\cdot \mu_{\xi_-} (\wh{f_\nat}),
$$where
$\omega_{\Pi_n}$ is the central character of $\Pi_n$, the constant $d_n={n \choose 3}$ is as in (\ref{eqn def d_n}).
\end{thm}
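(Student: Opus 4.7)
The plan is to combine the explicit formula for $I_{\Pi,s}(f)$ established in Proposition \ref{prop fml I Pi} with a separate computation of the regular unipotent orbital integral $\mu_{\xi_-}(\wh{f_\nat})$ for the same admissible $f$. Proposition \ref{prop fml I Pi} already reduces $I_{\Pi,s}(f)$ to the product of $\omega_{\Pi_n}(\tau)|\tau|_E^{d_n}$, the scalar $\int_{B_{n-1,-}(F)}\phi'(b)\,db$, and an iterated integral $J$ over $(y,v) \in A_n(F) \times N_{n,-}(F)$ involving the Fourier transforms $\wh{\phi_{n-i}}$. Hence the theorem is equivalent to the identity
$$\mu_{\xi_-}(\wh{f_\nat}) \;=\; |\tau|_E^{(d_{n+1}-d_n)/2}\Big(\int_{B_{n-1,-}(F)}\phi'(b)\,db\Big)\cdot J,$$
which I expect to isolate as a companion lemma to Lemma \ref{lem W_n fml} (the ``orbital-integral side'' analogue of the ``Whittaker side'' computation already carried out).

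To prove this identity I would first unpack the definition of $f_\nat$ in (\ref{def f_nat})--(\ref{def f_nat odd}) via the Cayley map (\ref{def cay ugp}): since the components $f_n, f_{n+1}$ of an admissible $f$ are essentially normalized characteristic functions of small congruence neighbourhoods of the identity, the function $f_\nat\in\sC_c^\infty(\fks)$ inherits, after the convolutions by $\varphi_{n-1},\phi_{n-1},\phi_n$, a clean factorized structure on the matrix entries of $\fks\subset M_{n+1}(E)$. Taking Fourier transform with respect to $\pair{X,Y}=\tr(XY)$ on $\fks(F)$, the ``real'' coordinates of $\wh{f_\nat}$ become characteristic functions of large lattices while the ``imaginary'' coordinates become concentrated near $0$; this is precisely the asymmetry engineered into the dagger spaces of Definitions \ref{def dag 11}--\ref{def dag nn}, and it is what forces the contributions of all relative nilpotent orbits other than $\xi_-$ to vanish when paired with $\wh{f_\nat}$.

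Next I would write $\mu_{\xi_-}(\wh{f_\nat})=\int_{H_n(F)}\wh{f_\nat}(h^{-1}\xi_- h)\eta(h)\,dh$ and evaluate using the geometric tools of Section 6. By Iwasawa decomposition $h=tu$ with $t\in A_n(F)$ and $u\in N_n(F)$, the conjugates $u^{-1}\xi_- u$ sweep out a slice transverse to the relevant group action; the transpose version of Lemma \ref{lem N+} (equivalently, applying the regular section $\varrho$ of (\ref{eqn varrho}) and Lemma \ref{lem a sect}) shows this parameterization is triangular with Jacobian $\pm 1$. Rescaling by $t$ produces the $y$-variable of $J$ and the factor $|\ov\delta_n(y)|^{-1}$ from the modular character on $N_n$, while the surviving $v$-integration in $J$ corresponds to the slice coordinate through $\xi_-$. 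Substituting the factorized form of $\wh{f_\nat}$ and applying Fubini then identifies the result, coordinate-by-coordinate, with $J$ up to an overall $|\tau|_E$ power.

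The hardest part will be the meticulous bookkeeping of powers of $|\tau|_E$ and of Haar/self-dual measures: tracking how the identification $\fks\simeq M_{n+1}(F)$ via $X\mapsto \tau^{-1}X$ rescales the self-dual measure, how the Cayley map contributes Jacobian factors linking $f_\nat$ to $f_{n+1}$, and how the appearance of $\delta_n$ (once through $\ov\delta_n$ in $J$ and once through Iwasawa measure in the orbital integral) must be reconciled with the conjugating elements $\epsilon_n$ and $\epsilon_{n+1}$ whose determinants differ by $\tau^{d_{n+1}-d_n}=\tau^{\binom{n}{2}}$. The net discrepancy should come out to $|\tau|_E^{(d_{n+1}-d_n)/2}$, matching the $|\tau|_E^{(d_n+d_{n+1})/2}$ asserted in the theorem. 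Finally, admissibility with support in an arbitrary $\Omega$ is achieved by taking the parameters $(m,m',r)$ sufficiently large, and non-vanishing of the right-hand side follows by choosing the oscillating ``imaginary'' factors $\phi^-$ appropriately so that none of the Tate-type integrals over $y$ in $J$ vanish.
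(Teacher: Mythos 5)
Your overall plan coincides with the paper's: Theorem \ref{thm germ gl} is indeed proved by comparing the formula for $I_{\Pi,s}(f)$ in Proposition \ref{prop fml I Pi} with a separate, explicit evaluation of $\mu_{\xi_-}(\wh{f_\nat})$ for the same admissible test function; the latter is isolated in the paper as Lemma \ref{lem fml mu f} and is proved by precisely the geometric tools you invoke from Section~6 (Proposition \ref{prop sec}, Corollary \ref{cor reg sec}, Lemma \ref{lem a sect}). Two corrections. First, your displayed auxiliary identity has the sign of the $\tau$-exponent wrong. The identity that yields the theorem is
$$
\mu_{\xi_-}(\wh{f_\nat})=|\tau|_E^{-(d_{n+1}-d_n)/2}\left(\int_{B_{n-1,-}(F)}\phi'(b)\,db\right)\cdot J,
$$
so that dividing the Proposition \ref{prop fml I Pi} output $\omega_{\Pi_n}(\tau)|\tau|_E^{d_n}\big(\int\phi'\big)J$ by $\mu_{\xi_-}(\wh{f_\nat})$ gives $\omega_{\Pi_n}(\tau)|\tau|_E^{d_n}\cdot|\tau|_E^{(d_{n+1}-d_n)/2}=\omega_{\Pi_n}(\tau)|\tau|_E^{(d_n+d_{n+1})/2}$. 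The factor $|\tau|_E^{-(d_{n+1}-d_n)/2}=|\overline{\delta}_{n,E}(\epsilon_n)|^{-1/2}$ is exactly the measure-normalization constant appearing in Lemma \ref{lem fml mu f}; your final prose is consistent with this, so the error is confined to the display.

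Second, the parametrization of $H_n(F)$ used in $\mu_{\xi_-}(\wh{f_\nat})=\int_{H_n(F)}\wh{f_\nat}(h^{-1}\xi_- h)\eta(h)\,dh$ is not the Iwasawa-type decomposition $h=tu$ you write; the paper works on the open Bruhat cell $h=uav$ with $u\in N_n(F)$, $a\in A_n(F)$, $v\in N_{n,-}(F)$, and the $N_{n,-}$-factor cannot be dropped. The dagger spaces of Definitions \ref{def dag 11}--\ref{def dag nn} are engineered precisely so that the support conditions on $\wh{f^\Psi_\nat}$ force each row $v_i$ of $v$ into a small $\CO_F$-lattice (Steps~1--2 of the proof of Lemma \ref{lem fml mu f}); this surviving $v$-integral in the orbital integral is what matches the $v$-integral in Proposition \ref{prop fml I Pi}. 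With both adjustments your argument lines up with the paper's.
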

\begin{remark}
Note that $\mu_{\xi_-} (\wh{f_\nat})$ depends on the choice of $\tau$, but $\eta'(\Delta_-(\xi_-))\mu_{\xi_-} (\wh{f_\nat})$ does not .
\end{remark}
The proof will occupy the rest of this section by several steps.

\subsection{Determine $f_\nat$.}
We consider admissible functions of the form $f_{n}^{\varphi_{n-1},\phi_{n-1}}\otimes f^{\phi_{n}^\vee}_{n+1}$---{\em note the change to $\phi_n^\vee$ to simplify our later exposition (cf. (\ref{eqn def inv vee}))}. Let $P$---not to be confused with the mirabolic $P_n$---be the subgroup $P$ of $H_{n+1}(E)$ consisting of elements 
$$
\left(\begin{array}{cccc}* & *&*&*\\ *&...&*&*\\
0&0 & 1&*\\0&0 &0 & 1
\end{array}\right)\in H_{n+1}(E)\subset M_{n+1}(E).
$$
The triple $(\varphi_{n-1},\phi_{n-1},\phi_{n})$ then defines a function $\Psi$ on $P$ by
\begin{align}
\Psi\left[\left(\begin{array}{cc} \begin{array}{cc} x& u\\ &1
\end{array}& u'\\ &1
\end{array}\right)
\right]:=\varphi_{n-1}(x)\phi_{n-1}(u)\phi_n(u'),
\end{align}
where $x\in H_{n-1}(E), u\in M_{n-1,1}(E),u'\in M_{n,1}(E)$.
For simplicity, we write 
\begin{align}\label{eqn def f^psi}
f^\Psi=f_{n}^{\varphi_{n-1},\phi_{n-1}}\otimes f^{\phi_{n}^\vee}_{n+1}.
\end{align}
We also consider the Lie algebra $\fkp$ of $P$, consisting of:
$$
\left(\begin{array}{cccc}* & *&*&*\\ *&...&*&*\\
0&0 & 0&*\\0&0 &0 & 0
\end{array}\right)\in M_{n+1}(E).
$$Both $\fkp$ and $P$ are considered as subsets of $M_{n+1}(E)$.
The map $p\mapsto 1+p$ from $\fkp$ to $P$ defines a local homeomorphism from $\varpi \fkp(\CO_E)$ to its image. The function $\Psi$ is supported in the subgroup of $P(E)$:
$$
P(\varpi\CO)\oplus \fkp(\varpi\CO_E^-)=1+\fkp(\varpi\CO_E).
$$
Note that $1+\fkp(\varpi\CO_E)$ is a compact subgroup of $M_{n+1}(E)$ and hence unimodular. So both the left and the right invariant measure on $P(E)$ restrict to a Haar measure on it. 
 Let $(\varphi_{n-1},\phi_{n-1},\phi_{n})$ be $(m,m')$-admissible. We may write 
$\phi_i=\phi^+\otimes \phi^-_i$ ($i=n-1,n$) according to the decomposition $M_{i,1}(E)=M_{i,1}(F)\oplus M_{i,1}(E^-) $
and similarly for $\varphi_{n-1}$ viewed as a function on $M_{n-1}(E)$. Write $\fkp=\fkp^+\oplus \fkp^-$ for $\fkp^\pm=\fkp\cap M_{n+1}(E^\pm)$. Then we define $\Psi^+$ and $\Psi^-$ as functions on $1+\fkp^+(\CO_F)$ and $\fkp^-$, respectively:
$$
\Psi^?=\varphi_{n-1}^?\otimes \phi_{n-1}^?\otimes \phi_n^{?},\quad ?=\pm,$$
as follows:
\begin{align}
\Psi^+\left[\left(\begin{array}{cc} \begin{array}{cc} x& u\\ &1
\end{array}& u'\\ &1
\end{array}\right)
\right]:=\varphi_{n-1}^+(x)\phi_{n-1}^+(u)\phi_n^+(u'),
\end{align}
where $x\in H_{n-1}(F), u\in M_{n-1,1}(F),u'\in M_{n,1}(F)$, and
\begin{align}
\Psi^-\left[\left(\begin{array}{cc} \begin{array}{cc} x& u\\ &0
\end{array}& u'\\ &0
\end{array}\right)
\right]:=\varphi_{n-1}^-(x)\phi_{n-1}^-(u)\phi_n^-(u'),
\end{align}
where $x\in H_{n-1}(E^-), u\in M_{n-1,1}(E^-),u'\in M_{n,1}(E^-)$. 	We have
\begin{align}\label{eqn Psi =+-}
\Psi(p_++p_-)=\Psi^+(p_+)\Psi^-(p_-), \quad p_+\in 1+\fkp^+,p_-\in \fkp_-.
\end{align}

\begin{lem}Assume that
$f^\Psi=f_{n}^{\varphi_{n-1},\phi_{n-1}}\otimes f^{\phi_{n}^\vee}_{n+1}$ is admissible. Then the function $\Psi^-$ has the following invariance property
\begin{align}\label{eqn inv Psi}
\Psi^-(p_+p_-)=\Psi^-(p_-)
\end{align}
 whenever $p_+\in supp(\Psi^+)$. 
 \end{lem}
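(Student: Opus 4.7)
The plan is a direct block-matrix computation. Writing $p_+ = 1 + q_+$ with $q_+ \in \fkp^+(\varpi^m\CO_F)$ and decomposing
\[
q_+ = \begin{pmatrix} A_+ & u_+ & v_+ \\ 0 & 0 & w_+ \\ 0 & 0 & 0 \end{pmatrix},
\qquad
p_- = \begin{pmatrix} A_- & u_- & v_- \\ 0 & 0 & w_- \\ 0 & 0 & 0 \end{pmatrix}
\]
according to the block structure of $\fkp$, I would first check that $\fkp$ is closed under matrix multiplication (a one-line computation with these block matrices). Since $q_+\in M_{n+1}(F)$ while $p_-\in M_{n+1}(E^-)$, this gives $p_+p_-=p_-+q_+p_-\in \fkp^-$, and a direct computation yields the block components
\[
\tilde A_-=(1+A_+)A_-,\quad \tilde u_-=(1+A_+)u_-,\quad \tilde v_-=(1+A_+)v_-+w_-u_+,\quad \tilde w_-=w_-.
\]

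Then the claim $\Psi^-(p_+p_-)=\Psi^-(p_-)$ splits into three separate invariances for $\varphi_{n-1}^-$, $\phi_{n-1}^-$, and $\phi_n^-$. The first, $\varphi_{n-1}^-((1+A_+)A_-)=\varphi_{n-1}^-(A_-)$, I would extract from Proposition~\ref{prop pro adm}\,(iii)-(iv): two-sided invariance of $\varphi_{n-1}$ under $1+\varpi^m M_{n-1}(\CO_F)$, combined with the fact that $\varphi_{n-1}^+$ is a scalar multiple of the characteristic function of $1+\varpi^m M_{n-1}(\CO_F)$, forces the full invariance onto the $-$-part (the $+$-part invariance being automatic).

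The remaining two invariances, for $\phi_{n-1}^-$ and $\phi_n^-$, I would verify directly from the dagger-space definitions (\ref{def dag 11})-(\ref{def dag nn}). For $\phi_{n-1}^-$: on its support $\varpi^m M_{n-1,1}(\Lambda^-)$ the shift $A_+u_-$ lies in $\varpi^{2m} M_{n-1,1}(\Lambda^-)$, and both the characteristic-function factors (invariant under $\varpi^m\Lambda^-$-shifts) and the dagger factor (invariant under $\varpi^{2m}\Lambda$-shifts) absorb it. The map $1+A_+$ also bijects this support onto itself, which handles the vanishing case $u_-\notin\text{supp}(\phi_{n-1}^-)$. For $\phi_n^-$ the argument is identical with $m'$ in place of $m$; the cross-term $w_-u_+$ is in $\varpi^{m+m'} M_{n-1,1}(\Lambda^-)$, absorbed by the same invariance since $m+m'\geq m'$.

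The only obstacle is careful book-keeping of the various lattices---$\varpi^m\CO_F$, $\varpi^{m'}\Lambda^-$, $\varpi^{2m}\Lambda$, $\varpi^{m+m'}\Lambda^-$---and tracking that every cross-term arising from matrix multiplication lands in a sufficiently small lattice. The hierarchy $m<m'$ from Definition \ref{def adm} and the dagger-invariance of the relevant factors are exactly what is needed, so no new idea beyond these admissibility conditions is required.
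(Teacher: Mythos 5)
Your proposal is correct and follows essentially the same line as the paper's proof. The paper works directly with $p_+ = \left(\begin{smallmatrix} x & u & u' \\ & 1 & \\ & & 1\end{smallmatrix}\right)$ rather than writing $p_+ = 1 + q_+$, but the computed blocks agree ($\tilde A_- = xx_-$, $\tilde u_- = xu_-$, $\tilde v_- = xv_- + u\,w_-$, $\tilde w_- = w_-$), and the paper likewise reduces the invariance to Definition~\ref{def dag n1} and Proposition~\ref{prop pro adm}~(iii)--(iv); you have merely spelled out the lattice bookkeeping (two-sided $1+\varpi^m M_{n-1}(\CO_F)$-invariance of $\varphi_{n-1}^-$, absorption of the $\varpi^{2m}$- and $\varpi^{m+m'}$-level shifts by the dagger factors, and the bijectivity of $1+A_+$ on the supporting lattice) that the paper leaves implicit.
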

 \begin{proof}
 By the admissibility (\ref{def adm}), we may and will assume that $\Psi^+$ is a multiple of the characteristic function of the subset of $1+\fkp^+(\CO_F)$ consisting of the matrices $(x_{ij})$ where $x_{ij}\equiv \delta_{ij} ({\rm mod}\,\varpi^m)$ when $j\leq n$ and  $x_{i,n+1}\equiv \delta_{i,n+1} ({\rm mod}\,\varpi^{m'})$. 
Any $p^+\in supp(\Psi^+)$ is of the form
$$ \left(\begin{array}{cc} \begin{array}{cc} x& u\\ &1
\end{array}& u'\\ &1
\end{array}\right),\quad x\in 1+\varpi^mM_{n-1}(\CO_F), u\in \varpi^mM_{n-1,1}(\CO_F). $$
To show (\ref{eqn inv Psi}), we may assume that $p_-\in supp(\Psi^-)$. 
Now we note that
 $$
p_+p_-= \left(\begin{array}{cc} \begin{array}{cc} x& u\\ &1
\end{array}& u'\\ &1
\end{array}\right) \left(\begin{array}{ccc}  x_-& u_-& v_-\\
&0& v'_-\\ &&0
\end{array}\right)= \left(\begin{array}{ccc}  xx_-& x u_-& xv_-+uv'_-\\
&0& v'_-\\ &&0
\end{array}\right).
 $$
 Now the invariance follows from Definition \ref{def dag n1} (for $\phi_n$), and Property (iii), (iv) of $\varphi_{n-1}\otimes \phi_{n-1}$ in Prop. \ref{prop pro adm}.
  \end{proof}

\begin{lem}\label{lem f_nat}
Fix $\Pi$ and assume that
$f^\Psi=f_{n}^{\varphi_{n-1},\phi_{n-1}}\otimes f^{\phi_{n}^\vee}_{n+1}$ is admissible for $\Pi$. Then we have
$$
f^\Psi_\nat(X)=c(\Psi^+)\int_{\fkp^-}f_\nat (X+p)\Psi^-(p)\,dp,
$$
where $f_\nat$ is the function on $\fks$ associated to $f_n\otimes f_{n+1}$ and 
\begin{align}\label{eqn con c}
c(\Psi^+)=\int_{1+\fkp^+(\varpi\CO)}\Psi^+(p)\,dp
\end{align}
is a constant.\end{lem}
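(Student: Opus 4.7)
The plan is to recognize $f^\Psi$ as a convolution of $f = f_n \otimes f_{n+1}$ by $\Psi$ on a subgroup of $H_{n+1}(E)$ conjugate to $P$, then to pass to the Lie algebra $\fks$ via the Cayley map, and finally to use the structural identities (\ref{eqn Psi =+-})--(\ref{eqn inv Psi}) to separate the $\fkp^+$-factor (which, by orthogonality to $\fks$, contributes only the scalar $c(\Psi^+)$) from the $\fkp^-$-factor (which yields the claimed additive convolution on $\fks$).

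First I would unwind the convolutions. Using
\[ f_n^{\varphi_{n-1},\phi_{n-1}} = f_n \ast \phi_{n-1} \ast \varphi_{n-1}, \qquad f_{n+1}^{\phi_n^\vee} = \phi_n \ast f_{n+1}, \]
together with $\wt f = f_n^\vee \ast f_{n+1}$ and the identity $(A\ast B)^\vee = B^\vee \ast A^\vee$, one gets
\[ \wt{f^\Psi} = \varphi_{n-1}^\vee \ast \phi_{n-1}^\vee \ast f_n^\vee \ast \phi_n \ast f_{n+1}. \]
Bundling the three factors $\varphi_{n-1}^\vee, \phi_{n-1}^\vee, \phi_n$ into a single integral over $P$ via the defining relation for $\Psi$ and performing a change of variables, this rewrites as
\[ \wt{f^\Psi}(g) = \int_{1+\fkp(\varpi\CO_E)} \wt f(g p^{-1})\,\Psi(p)\, dp, \]
up to Jacobian factors of size $1 + O(\varpi^m)$ that are absorbed into the bi-invariance of $\wt f$ under $1 + \varpi^r M_{n+1}(\CO_E)$, guaranteed for $r \gg m, m'$ by the normalization of $f_n, f_{n+1}$ in Definition \ref{def adm}.

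Next I would pass to the Lie algebra. Setting $g = 1+X$, $p = 1 + Y$, and integrating over $H_n(F)$ per Definition \ref{def f_nat}, the group-theoretic element $(1+X) h (1+Y)^{-1}$ differs from its additive linearization $(1 + X - Y) h$ by a factor in $1 + \varpi^{2m} M_{n+1}(\CO_E) \subset 1 + \varpi^r M_{n+1}(\CO_E)$, and is therefore swallowed by the right-invariance of $\wt f$. Hence
\[ f^\Psi_\nat(X) = \int_{\fkp(\varpi\CO_E)} f_\nat(X - Y)\,\Psi(1 + Y)\,dY, \]
with an $\eta'$-twist for $n$ odd implicit in the bookkeeping. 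Splitting $Y = Y_+ + Y_-$ with $Y_\pm \in \fkp^\pm$ and factoring $1 + Y = (1 + Y_+)(1 + b)$ multiplicatively with $b := (1+Y_+)^{-1} Y_- \in \fkp^-$, the factorization (\ref{eqn Psi =+-}) combined with the invariance (\ref{eqn inv Psi}) gives $\Psi(1+Y) = \Psi^+(1+Y_+)\,\Psi^-(b)$, while the $Y_- \leftrightarrow b$ Jacobian is again $1 + O(\varpi^m)$.

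Finally, since $\fkp^+ \subset M_{n+1}(F)$ is orthogonal to $\fks \subset M_{n+1}(E^-)$ under the $F$-bilinear trace pairing, the shift by $Y_+$ does not appear in the $\fks$-argument of $f_\nat$; any residual $\fkp^+$-translation is of size $\varpi^m$ and absorbed by the level-$r$ invariance of $f_n, f_{n+1}$. The $Y_+$-integral then detaches as $\int_{1+\fkp^+(\varpi\CO_F)} \Psi^+(p_+)\,dp_+ = c(\Psi^+)$, and the remaining $b$-integral delivers, after the substitution $b \mapsto -b$, the stated $\int_{\fkp^-} f_\nat(X+p)\,\Psi^-(p)\,dp$. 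The main obstacle is step two: rigorously verifying that every higher-order correction generated by passing from group multiplication to Lie-algebra addition---whether in $(1+Y)^{-1}$, in the product $(1+X)(1+Y)^{-1}$, in the $Y_- \leftrightarrow b$ Jacobian, or in the $\fkp^+$-translation---lies in $1 + \varpi^r M_{n+1}(\CO_E)$. This bookkeeping is precisely what dictates the admissibility ordering $r > m' > m$.
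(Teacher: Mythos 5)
Your outline gets the global shape right (unwind the convolutions; recognize $\wt{f^\Psi}$ as a $\Psi$-smearing of $\wt f$; split $\Psi$ into $\Psi^+\otimes\Psi^-$ using (\ref{eqn Psi =+-}) and (\ref{eqn inv Psi}); read off $c(\Psi^+)$ and the $\fkp^-$-integral), but the crucial step where you pass from the group to the Lie algebra is wrong, and the error is a consequence of placing the $\Psi$-smearing on the wrong side of the fiber integral.

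You write $\wt{f^\Psi}(g)=\int \wt f(gp^{-1})\Psi(p)\,dp$ and then land on the inner argument $(1+X)h(1+Y)^{-1}$, which you claim differs from $(1+X-Y)h$ by a factor in $1+\varpi^{2m}M_{n+1}(\CO_E)\subset 1+\varpi^r M_{n+1}(\CO_E)$. Both halves of this claim fail. First, to compare $(1+X)h(1+Y)^{-1}$ with $(1+X-Y)h$ you must pass $h$ through $(1+Y)^{-1}$, which produces $(1+X)(1+hYh^{-1})^{-1}h$; the discrepancy factor is, to leading order, $1+(Y-hYh^{-1})$, and the commutator $Y-hYh^{-1}$ is of size $\|Y\|=O(\varpi^m)$ (not $O(\varpi^{2m})$) for generic $h$ in the range of integration — this is a genuine first-order obstruction, not a higher-order correction. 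Second, the inclusion is backwards: for $r>2m$ one has $1+\varpi^{r}M_{n+1}(\CO_E)\subset 1+\varpi^{2m}M_{n+1}(\CO_E)$, not the reverse, so invariance of $\wt f$ under the \emph{smaller} level-$r$ group cannot absorb a perturbation living in the \emph{larger} level-$2m$ (let alone level-$m$) group. The same inverted logic reappears at the end where you claim a $\varpi^m$-sized $\fkp^+$-translation is "absorbed by the level-$r$ invariance."

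The paper avoids the commutator entirely by keeping the $\Psi$-convolution on the \emph{left}: it writes $\wt{f^\Psi}(g)=\int_P\Psi(p)f_{n+1}(pg)\,dp$, so that after fibering over $H_{n+1}(F)$ on the right the integrand is $f_{n+1}(p(1+X)h)$. With $p=p_+(1+p_-)$, the $P(F)$-factor $p_+$ and the intermediate factor $1+p_-X\in H_{n+1}(F)$ (from the exact identity $(1+p_-)(1+X)=(1+p_-X)\bigl(1+(1+p_-X)^{-1}(p_-+X)\bigr)$) are pushed to the far right using the conjugation-invariance (\ref{eqn inv fn1}) of $f_{n+1}$ under $1+\varpi M_{n+1}(\CO_E)$ and absorbed into the $H_{n+1}(F)$-integral by translation of $h$. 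No linearization and no commutator error appear; the only residual step is $f_\nat((1+p_-X)^{-1}(p_-+X))=f_\nat(p_-+X)$, which holds exactly because $f_\nat$ is (a multiple of) the characteristic function of the $\GL_{n+1}(\CO_F)$-stable lattice $\varpi^r\fks(\CO_F)$ and $1+p_-X\in \GL_{n+1}(\CO_F)$. To salvage your version you would either have to carry along the conjugate $hYh^{-1}$ all the way and show its effect cancels against the constraints imposed by $\operatorname{supp}(\wt f)$ (which is not apparent), or reorganize the convolution to the left as the paper does.
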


\begin{proof}We consider the case when $n$ is odd; the case $n$ even is similar and only require us to change notations in several places.
By definition we have 
$$\wt{f^\Psi}=(f_n\ast \phi_{n-1}\ast \varphi_{n-1})^\vee \ast f_{n+1}^{\phi^\vee_n}= \varphi_{n-1}^\vee\ast \phi_{n-1}^\vee\ast f_n^\vee \ast f_{n+1}^{\phi^\vee_n}.$$
Since $f_i$ ($i=n,n+1$) is a multiple of $1_{1+\varpi^r_EM_{i} (\CO_E)}$ for some and $\int f(g)dg=1$, we may assume that $f_{n}^\vee\ast f^{\phi_{n}}_{n+1}$ is the same as $f^{\phi_{n}}_{n+1}$:
$$\wt{f^\Psi}=\varphi_{n-1}^\vee\ast \phi_{n-1}^\vee\ast f_{n+1}^{\phi^\vee_n}=\varphi_{n-1}^\vee\ast \phi_{n-1}^\vee\ast \phi_n^\vee\ast f_{n+1}.$$
Explicitly, this reads
\begin{align*}
\wt{f^\Psi}(g)=&\int \varphi_{n-1}(x)\phi_{n-1}(u)f^{\phi_{n}^\vee}_{n+1}(uxg)\,dx\,du
=\int_{P}\Psi(p)f_{n+1}(pg)\,dp.\end{align*}
Let's denote the right hand side by $f^\Psi_{n+1}(g)$. By our choice of $f_{n+1}$ also implies that $f_{n+1}$ is conjugate invariant under $1+\varpi M_{n+1}(\CO_E)$:
\begin{align}\label{eqn inv fn1}
f_{n+1}(hgh^{-1})=f_{n+1}(g),\quad h\in 1+\varpi M_{n+1}(\CO_E).
\end{align}
By the support condition,  $f^\Psi_\nat(X)$ vanishes unless $X\in \fks(\CO_F)$. We thus assume that $X\in \fks(\CO_F)$. Then by definition and the support condition of $\Psi$, we have:
\begin{align*}
f^\Psi_\nat(X)=&\int_{H_{n+1}(F)}\int_{P(E)}\Psi(p)f_{n+1}(p(1+X)h)\,dp\,dh
\\=&\int_{H_{n+1}(F)}\int_{\fkp^-}\int_{P(F)}\Psi(p_+(1+p_-))f_{n+1}(p_+(1+p_-)(1+X)h)\,dp_+\,dp_-\,dh.
\end{align*}
Note that $\Psi(p_+(1+p_-))=\Psi^+(p_+)\Psi^-(p_+p_-)$ (cf. (\ref{eqn Psi =+-})) and $\Psi^-(p_+p_-)=\Psi^-(p_-)$ (cf. (\ref{eqn inv Psi})). Together with the conjugate invariance (\ref{eqn inv fn1}), we have
\begin{align*}
f^\Psi_\nat(X)=&\int_{H_{n+1}(F)}\int_{\fkp^-}\int_{P(F)}\Psi^+(p_+)\Psi^-(p_-)f_{n+1}((1+p_-)(1+X)hp_+)\,dp_+\,dp_-\,dh
\\=&\int_{H_{n+1}(F)}\int_{\fkp^-}\int_{P(F)}\Psi^+(p_+)\Psi^-(p_-)f_{n+1}((1+p_-)(1+X)h)\,dp_+\,dp_-\,dh
\\=&c(\Psi^+)\int_{H_{n+1}(F)}\int_{\fkp^-}\Psi^-(p_-)f_{n+1}((1+p_-)(1+X)h)\,dp_-\,dh.
\end{align*}
Note that
$$
(1+p_-)(1+X)=(1+p_-X)(1+(1+p_-X)^{-1}(p_-+X))
$$
and $(1+p_-X)\in 1+\varpi M_{n+1} (\CO_F)\subset H_{n+1}(F)$. We have
\begin{align*}
&\int_{H_{n+1}(F)}\int_{\fkp^-}\Psi^-(p_-)f_{n+1}((1+p_-)(1+X)h)\,dp_-\,dh
\\=&\int_{H_{n+1}(F)}\int_{\fkp^-}\Psi^-(p_-)f_{n+1}((1+(1+p_-X)^{-1}(p_-+X))h)\,dp_-\,dh.
\end{align*}
Compared to the definition of $f_\nat$ for $f=f_n\otimes f_{n+1}$, we obtain\begin{align*}
f^\Psi_\nat(X)=&c(\Psi^+)\int_{\fkp^-}\Psi^-(p_-)f_{\nat}((1+p_-X)^{-1}(p_-+X))\,dp_-.
\end{align*}
Finally note that $f_{\nat}(X)$ is a multiple of $1_{\varpi^r\fks(\CO_F)}$ for some  $r>1$. It follows that $f_{\nat}(X)=f_{\nat}(hX)$  for any $h\in 1+\varpi M_n(\CO_F)$. Since $p_-\in supp(\Psi^-)\subset \fkp^-(\varpi\CO)$ and $X\in \fks(\CO)$, we therefore obtain
\begin{align*}
f^\Psi_\nat(X)=c(\Psi^+)\int_{\fkp^-}\Psi^-(p_-)f_{\nat}(p_-+X)\,dp_-.
\end{align*}
This completes the proof.
\end{proof}

\subsection{Local constancy of the orbital integral and a formula for the regular nilpotent orbital integral.}
To compare with the unitary group case  in the next section, we need to understand the orbital integral of $\wh{f^\Psi_\nat}$ in Lemma \ref{lem f_nat}, at least around zero. We show that the orbital integral is locally constant around zero. This constant is essentially given by the regular unipotent orbital integral (for $\xi_-$ defined by (\ref{eqn xi- s_n})) of Fourier transform $\wh{f^\Psi_\nat}$ of $f^\Psi_\nat$ on $\fks$. 

\begin{lem}\label{lem fml mu f}

Fix any $m$-admissible function $(\varphi_{n-1},\phi_{n-1})$. 
\begin{itemize}
\item[(1)]
For an arbitrarily large compact neighborhood $\sZ$ of $0\in \fks$, there exists large enough $(m,m',r)$ and an $(m,m',r)$-admissible function $f^\Psi=f_{n}^{\varphi_{n-1}, \phi_{n-1}}\otimes f^{\phi_{n}^\vee}_{n+1}$, such that the orbital integral $\eta'(\Delta_-(X))O(X,\wh{f^\Psi_\nat})$ is a (nonzero) constant for regular semisimple $X\in \sZ$ and and this constant is equal to $\eta'(\Delta_-(\xi_-))\mu_{\xi_{-}}(\wh{f^\Psi_\nat})$. 

\item[(2)]Let $f^\Psi$ be as in (1).
Then the regular unipotent orbital integral $\mu_{\xi_{-}}(\wh{f^\Psi_\nat})$ is equal to
\begin{align*}
&c(\Psi^+)|\ov{\delta}_{n,E}(\epsilon_n)|^{-1/2} \prod_{i=1}^{n-1}\phi^{'-}_{i}(0)
\\\times& \int_{A_n(F)N_{n,-}(F)} \prod_{i=1}^{n} \wh{\phi^-_{i}}(-a_i(v_{i-1},1)\tau)|\ov{\delta}_n(a)|^{-1} \eta(a)\,d^\ast a\,dv,
\end{align*}
where $v_{i}\in M_{i,1}(F)$, and $\ov{\delta}_n(a)$ is defined by (\ref{def del bar}).
\end{itemize}

\end{lem}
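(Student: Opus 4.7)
The overall strategy is to combine Lemma \ref{lem f_nat} with the explicit parametrization of regular semisimple orbits from Section 6 in order to reduce the Fourier-transformed orbital integral to an iterated one-dimensional integral that can be compared directly with the formula in Proposition \ref{prop fml I Pi}. The first step is to factorize $\wh{f^\Psi_\nat}$. Applying Lemma \ref{lem f_nat} and substituting $X \mapsto X - p$ in the Fourier integral gives
\begin{equation*}
\wh{f^\Psi_\nat}(Y) \;=\; c(\Psi^+)\, \wh{f_\nat}(Y)\cdot \wh{\Psi^-}(-Y),
\end{equation*}
where $\wh{\Psi^-}$ is the Fourier transform of $\Psi^-$ on $\fkp^-$, viewed as a smooth function on $\fks$ by pullback along the pairing $\langle\cdot,\cdot\rangle|_{\fkp^-\times\fks}$. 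The factor $\wh{f_\nat}$ plays the role of a large-scale mollifier: because $f_n,f_{n+1}$ are characteristic functions of $1+\varpi^r M_i(\CO_E)$ with $r$ large, the support of $f_\nat$ is a small neighborhood of $0\in\fks$ and hence $\wh{f_\nat}$ is essentially constant on any fixed compact set of $\fks$ once $r$ is large enough. The factor $\wh{\Psi^-}(-Y)$, on the other hand, encodes the oscillatory information coming from the admissibility condition on $(\varphi_{n-1},\phi_{n-1},\phi_n)$.

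\emph{Orbital integral and parametrization.} For part (1), fix a regular semisimple $X \in \sZ$ with $\Delta_-(X)\neq 0$. Using Proposition \ref{prop sec} applied to the transposed section $\varrho$ from \eqref{eqn varrho}, we identify the $H_n(F)$-orbit of $X$ with $H_n(F)/\text{triv}$, and change variables so that the orbital integral becomes an integral over $H_n(F)$ of the product $\wh{f_\nat}(h^{-1}Xh)\cdot\wh{\Psi^-}(-h^{-1}Xh)\cdot\eta(h)$. Using the Iwasawa-type decomposition $H_n(F)=K\,A_n(F)\,N_{n,-}(F)$ and the triangular morphisms of Lemma \ref{lem a sect} and Corollary \ref{cor reg sec}, the $K$-integration averages $\wh{f_\nat}$ to a genuine invariant function, while the $A_n(F)N_{n,-}(F)$-integration produces the analogue of the formula in Proposition \ref{prop fml I Pi}. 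Because $\Psi^-$ lies in a tensor product of dagger-spaces (Definition \ref{def m-adm}), $\wh{\Psi^-}$ is supported (in each coordinate direction) on a \emph{single} coset of the form $-\varpi^{-2m'}\Lambda^{-*}+\varpi^{-2m'+1}\Lambda^{-*}$, and pairing this with the non-invariance of $\wh{\Psi^-}$ under conjugation produces exactly the $\eta'(\Delta_-(X))$-character: for $r\gg m'\gg m\gg 0$ relative to $\sZ$, the integral reduces to the same expression evaluated at $\xi_-$, times $\eta'(\Delta_-(X)/\Delta_-(\xi_-))$. This establishes the local constancy of $\eta'(\Delta_-(X))O(X,\wh{f^\Psi_\nat})$ on $\sZ$, with common value $\eta'(\Delta_-(\xi_-))\,\mu_{\xi_-}(\wh{f^\Psi_\nat})$ (the latter interpreted via the regularization of Definition \ref{def orb uni}).

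\emph{Explicit evaluation.} For part (2), specialize $X=\xi_-$ (cf. \eqref{eqn xi- s_n}) and trace the computation through. The conjugation $h^{-1}\xi_- h$ for $h=av$ with $a\in A_n(F)$, $v\in N_{n,-}(F)$ puts the matrix in a form whose entries pair with $\Psi^-$ only along the sub-diagonal entries $(a_i(v_{i-1},1)\tau)_{1\le i\le n}$; this produces the product $\prod_{i=1}^n\wh{\phi^-_i}(-a_i(v_{i-1},1)\tau)$. The remaining off-diagonal entries of $h^{-1}\xi_- h$ evaluate $\wh{\Psi^-}$ through $\phi^{'-}_i$, which by admissibility concentrate at the origin, yielding the factor $\prod_{i=1}^{n-1}\phi^{'-}_i(0)$. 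The Jacobian factor $|\ov{\delta}_{n,E}(\epsilon_n)|^{-1/2}$ arises from passing between the measure on $\fks$ and the product measure on $A_n(F)N_{n,-}(F)$ (taking into account the self-duality normalization of Fourier transforms on $\fks$ versus the counting measures from Iwasawa decomposition). Combining these with the prefactor $c(\Psi^+)$ from the factorization of $\wh{f^\Psi_\nat}$ gives the stated formula. The main obstacle is the bookkeeping in the local constancy step: the interplay between the frequency scale $\varpi^{-m'}$ of $\wh{\Psi^-}$, the support scale $\varpi^{-r}$ of $\wh{f_\nat}$, and the compact set $\sZ$ must be arranged so that the transformation law $\wh{\Psi^-}(-h^{-1}Xh)=\eta'(\Delta_-(X)/\Delta_-(\xi_-))\wh{\Psi^-}(-h^{-1}\xi_-h)$ holds identically on the support of the integrand, rather than just on some measure-theoretic average.
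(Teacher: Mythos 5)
Your factorization $\wh{f^\Psi_\nat}=c(\Psi^+)\,\wh{f_\nat}\cdot\wh{\Psi^-}$ is correct and is precisely how the paper begins (it writes the equivalent statement as a tensor-product decomposition of $f^\Psi_\nat$ across $\fkp^-\oplus M_{1,n}(E^-)\oplus M_{1,n+1}(E^-)$ once $r$ is large). But the heart of part (1) — why the orbital integral $O(\varrho(x,y),\wh{f^\Psi_\nat})$ actually stabilizes as $(x,y)$ ranges over a fixed compact $\sZ$ once $m'$ and $r$ are increased appropriately — is missing, and the mechanism you propose for where the $\eta'(\Delta_-(X))$ factor comes from is not correct.

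There is no transformation law of the form $\wh{\Psi^-}(-h^{-1}Xh)=\eta'(\Delta_-(X)/\Delta_-(\xi_-))\,\wh{\Psi^-}(-h^{-1}\xi_-h)$: $\wh{\Psi^-}$ is a scalar-valued Schwartz function and does not transform under conjugation by a character. The $\eta'(\Delta_-(X))$ factor in the statement is purely a reflection of how the orbital integral covariates under conjugation: $O(h^{-1}Xh,f)=\eta(h)^{-1}O(X,f)$ together with $\Delta_-(h^{-1}Xh)=\Delta_-(X)\det(h)$ shows that $\eta'(\Delta_-(X))O(X,f)$ is constant on orbits. The representatives $\varrho(x,y)$ of (\ref{eqn varrho}) satisfy $\Delta_-(\varrho(x,y))=\tau^{n(n+1)/2}$ independent of $(x,y)$, so part (1) is exactly equivalent to $O(\varrho(x,y),\wh{f^\Psi_\nat})$ being constant. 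To prove \emph{that}, the paper carries out a three-step argument: (a) the dagger structure on $\phi_n$ at scale $m'$ forces $\|v_{n-1}\|\le|\varpi|^{m'}$, and, in a key claim, the entries of the last column of $h^{-1}\varrho(x)h$ that are seen by $\wh{\varphi_n^-}$ are bounded by a polynomial in the $x_i$ and $|\varpi|^{-m}$, independent of $r$; this is established by conjugating through $N_{n-1,-}(\varpi^m\CO_F)$, invoking Corollary \ref{cor bdd}, and using the $(1+\varpi^m M_{n-1}(\CO_F))$-invariance of $\wh{\varphi_n^-}$; (b) iterating shows $\|v_i\|\le|\varpi|^m$ for $i<n-1$ and writes the orbital integral in Bruhat-cell coordinates $h=uav$ on $N_nA_nN_{n,-}$ (not an Iwasawa $K$-average, which would not match the structure of the integrand); (c) the triangular change of variables of Lemma \ref{lem a sect} applied to the $N_n$-integral shows the dependence on $(x,y)\in\sZ$ only enters through the support of $\wh{\phi_n^{'-}}$ and $\wh{\phi_{n+1}^{'-}}$, so once $r$ exceeds a threshold depending on $\sZ$, $m$, $m'$, the constraint drops out and the integral is constant. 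None of these estimates appear in your argument, and without the key claim you cannot separate the roles of $m'$ and $r$ or conclude stability.

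Your evaluation sketch for part (2) (specialization at $\xi_-$, identification of the sub-diagonal entries that pair with the dagger factors, and the Jacobian $|\ov{\delta}_{n,E}(\epsilon_n)|^{-1/2}$ from the measure change $E^-$ versus $F\tau$) matches what the paper does after the substitution $u'\mapsto Ad(a)u'$; the gap is entirely in part (1).
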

\begin{proof}Without loss of generality 
we may assume $c(\Psi^+)=1$. Assume that $f^\Psi$ is $(m,m',r)$-admissible.  By Lemma \ref{lem f_nat}, when $r$ is large enough, the function $f^\Psi_\nat$ on $\fks_{n+1}$ is of the form $\Psi^-\otimes \phi^{'-}_n\otimes \phi^{-}_{n+1}$ corresponding to the decomposition
$$
\fks_{n+1}=\fkp^-\oplus M_{1,n}(E^-) \oplus M_{1,n+1}(E^-).
$$
More precisely we may write the function on $\fks$ defined by
$$X\mapsto f^\Psi_\nat(-X)
$$
in the following form:
\begin{center}
\[\begin{tabular}{| l  ccc|r| }
  \hline   
   \multicolumn{1}{|r}{$\phi^{'-}_1$}  &     \multicolumn{1}{|r|}{$\phi^-_1$} & \multicolumn{1}{r|}{$\phi_2^-$} & $\cdots$&  \multicolumn{1}{r|}{$\phi_n^{-}$} \\ \cline{1-2}
   \multicolumn{1}{|r}{$\phi_2^{'-}$} &   \multicolumn{1}{r|}{} & \multicolumn{1}{r|}{} & &  \\ \cline{1-3}
  $\vdots$&& \multicolumn{1}{r|}{}& &  \\  \cline{1-4}
 $\vdots$  &&$\ddots$&&\\  \hline
    \multicolumn{1}{|r}{ $\phi_{n+1}^{'-}$}&\multicolumn{1}{r}{} &\multicolumn{1}{r}{}&\multicolumn{1}{r}{}& \multicolumn{1}{r|}{} \\ \hline
\end{tabular}\]
\end{center}
We also write $f^\Psi_\nat$ as the tensor product 
$$\varphi^-_n\otimes \phi^-_n\otimes \phi_{n+1}^{'-},
$$
where $\varphi^-_n\in \sC_c^\infty(\fks_n)$. We recall their key properties for our computation below:
\begin{itemize}
\item[(i)] For $i=1,...,n-1$, $\phi_{i}^{'-}$ is a nonzero multiple of the characteristic function of $\varpi^m\CO_{E^-}$; each of $\phi_{n}^{'-}$ and $\phi_{n+1}^{'-}$ is a  nonzero multiple of the characteristic function of $\varpi^r\CO_{E^-}$, normalized so that $\wh{\phi^{'-}_{n}}(0)=\wh{\phi^{'-}_{n+1}}(0)=1$.

\item[(ii)] 
For $i=1,...,n-1$, $\phi_{i}$ is in $\sC^\infty_c(M_{i,1}(E))^\dagger_{m}$;  $\phi_{n}$ is in $\sC^\infty_c(M_{i,1}(E))^\dagger_{m'}$.

\item[(iii)] the function $\wh{\varphi^-_n}$ on $\fks_n$ is invariant under left and right multiplication by $1+\varpi^m M_{n-1}(\CO_F)$ (as a subgroup of $H_{n-1}(F)$, and hence of $H_n(F)$).
\end{itemize}

Back to the orbital integral, we may fix an arbitrarily large compact neighborhood $\sZ$ of $0$ in the quotient of $\fks_{n+1}$ by $H_n$. We use the following regular elements (cf. \S6, (\ref{eqn varrho})):
\begin{align}
\varrho(x,y)=\tau\left(
\begin{array}{ccccc}0 &...&0 &x_{n}  &y_n \\
1 &   0&...&x_{n-1}& y_{n-1}\\ 0&1&0&... &y_{n-2}
\\ ...&0&1&x_1&...\\
0&...&0&1&y_0\end{array}\right)\in \fks_{n+1},
\end{align}
where $(x,y)\in F^{2n+1}$ and we may assume that $\sZ$ is a compact neighborhood of $0$ in $F^{2n+1}$.
We also denote
\begin{align}
\varrho(x)=\tau\left(
\begin{array}{ccccc}0 &\cdots &0 &x_{n}   \\
1 &   0&\vdots &x_{n-1}\\ 0&1&0&\vdots 
\\ \vdots &0&1&x_1\end{array}\right)\in \fks_{n}.
\end{align}
It suffices to show that, when we increase $m,m',r$ suitably, the orbital integral of $\varrho(x,y)$ is a constant when $(x,y)$ lies in the fixed compact set $\sZ$.
We proceed in three steps. 

{\em Step 1.}
To ease notation, we denote $$f'=\wh{f^\Psi_\nat}.
$$ By definition we have
\begin{align}\label{eqn orb varrho 1}
O(\varrho(x,y),\wh{f^\Psi_\nat})=\int_{H_n(F)} \wh{f^\Psi_\nat}(h^{-1}\varrho(x,y) h)\eta(h)\,dh=\int_{H_n(F)} f'(h^{-1}\varrho(x,y) h)\eta(h)\,dh.
\end{align}
We may write $h\in H_n'(F)$ as
$$
h= a_n k v, \quad k\in  N_{n}H_{n-1},v=\left(\begin{array}{cc}1_{n-1} &\\ v_{n-1}& 1
\end{array}\right),a_n\in F^{\times}.
$$
Then the last row of $h^{-1}\xi_- h$ is of the form
$$
( a_n(v_{n-1},1),y_{0})\tau.
$$
For the integrand $ f'(h^{-1}\varrho(x,y) h)$ to be nonzero,  $a_n(v_{n-1},1)$ must be  in the support of $\wh{\phi^-_n}$. Since $\phi_n\in \sC^\infty_c(M_{n,1}(E))^\dagger_{m'}$), we may assume that (cf. (\ref{eqn supp cond phi n}))
$$\|v_{n-1}\|\leq |\varpi^{m'}|.
$$

Now we {\em claim that if $h^{-1}\varrho(x)h$ lies in the support of $\wh{\varphi_{n}^-}$, then the last column of $h^{-1}\varrho(x) h\in \fks_n$ is bounded by a polynomial of the norms of $x_i$ ($1\leq i\leq n$) and $|\varpi|^{-m}$, independent of $r$.} To show this, suppose that $g=h^{-1}\varrho(x)h$ lies in the support of $\wh{\varphi_{n}^-}$. The same argument as in the previous paragraph shows that we may find $v=\left(\begin{array}{cc}1_{n-2} &\\ v_{n-2}& 1
\end{array}\right)\in N_{n-1,-}\in N_{n,-}$ with $\|v_{n-2}\|\leq |\varpi|^m$, such that 
$v^{-1}gv$ is of the form
$$
\left(
\begin{array}{ccccc}\ast & \ast &   \ast & \ast&\ast\\ \ast&\ast& \ast&\ast&\ast\\ \ast&\ast&\ast& \ast&\ast\\
\ast& \ast&\ast&\ast& \ast\\
0 &0 &0&\alpha_{n-1}& \ast
\end{array}\right).
$$Since $\wh{\varphi_{n}^-}$ is invariant under both left and right multiplication by $1+\varpi^mM_{n-1}(\CO_F)$, the above  $v^{-1}gv$ again lies in the support of $\wh{\varphi_{n}^-}$.
Continuing this process, we may find an element $v$ of $N_{n-1,-}(F)\subset N_n(F)$ whose off-diagonal entries all lie in $\varpi^{m}\CO_F$, such that $v^{-1}gv$ is of the form
$$
\left(
\begin{array}{ccccc}\ast & \ast &   \ast & \ast&\ast\\ \alpha_1&\ast& \ast&\ast&\ast\\ 0&\alpha_2&\ast& \ast&\ast\\
0 &0 &\ddots&\ast& \ast\\
0 &0 &0&\alpha_{n-1}& \ast
\end{array}\right)
$$
and remains in the support of $\wh{\varphi_{n}^-}$.
Since the functions $\phi_{i}$, $1\leq i\leq n-1$, lie in $\sC^{\infty}_c(M_{i,1}(E))_m^\dagger$, all $|\alpha_i|$ ($1\leq i\leq n-1$) are equal to some constant multiple of $|\varpi|^{-2m}$. Note that the $x_i$'s are the coefficients of the characteristic polynomial of $g$ and hence of $v^{-1}gv$. 
Now by Corollary \ref{cor bdd} (taking transpose), it is easy to see that each entry of the last column of $v^{-1}gv$ is a polynomial of the first $n-1$ columns of $v^{-1}gv$, $\alpha_i,\alpha^{-1}_i$'s and the $x_i$'s. This shows that the claim holds for the last column of $v^{-1}gv$ and hence also for $g$ itself since the off-diagonal entries of $v$ is bounded by $|\varpi|^{m}$.  This proves the claim.

Now by the claim, there exists large enough $m'_0$ and $r_0$ such that once $m'\geq m'_0$ and $r\geq r_0$, we have the following invariance property when $(x,y)\in \sZ$:
\begin{align}
f'(h^{-1}\varrho(x,y)h)=\wh{\varphi^-_n}(-k^{-1}\varrho(x)k)\wh{\phi_n^-}(-a_n(v_{n-1},1)\tau)\wh{\phi^{'-}_{n+1}}(h^{-1}y\tau)
\end{align}
if the left hand side is not zero (i.e., at least $h^{-1}\varrho(x,y)h$ lies in the support of $f'$).

{\em Step 2.}
We now may repeat the process and utilize the property (iii) (i.e., the invariance of $\wh{\varphi^-_n}$ under $1+\varpi^m M_{n-1}(\CO_F)$). We write
$$
h= uav,  a= \left(\begin{array}{ccc}a_1a_2\cdots a_n& &\\&\ddots&\\
& &  a_n
\end{array}\right), u\in N_{n}(F),
$$
and write $v\in N_{n-}(F)$ as the product of $\left(\begin{array}{cc}1_{n-i} &\\ v_{n-i}& 1
\end{array}\right)\in N_{n-i+1,-}(F)\subset N_{n-}(F)$ for $1\leq i\leq n-1$. In Step 1 we have seen that $\|v_{n-1}\|\leq |\varpi|^m{'}.$
Since the functions $\phi_{i}$, $1\leq i\leq n-1$, lie in $\sC^{\infty}_c(M_{i,1}(E))_m^\dagger$, and by the property (iii), we may inductively show that:
$$
\|v_{i}\|\leq |\varpi|^m,\quad 1\leq i\leq n-2.
$$
We now view $\bigotimes_{i=1}^{n+1} \wh{\phi^{'-}_{i}}$ as a function on the set of upper triangular elements and then consider it as a function on $\fks_{n+1}$ via the natural projection from $\fks_{n+1}$ to the upper triangular elements. Then, when $(x,y)\in \sZ$, the orbital integral $O(\varrho(x,y),\wh{f^\Psi_\nat})$ is equal to (cf. (\ref{eqn orb varrho 1})):
\begin{align}\label{eqn orb xy}
\int_{}\left(\bigotimes_{i=1}^{n+1} \wh{\phi^{'-}_{i}} \right)(-(ua)^{-1}\varrho(x,y) ua)\,du \prod_{i=1}^{n} \wh{\phi^-_{i}}(-a_i(v_{i-1},1)\tau)|\delta_n(a)|^{-1}\eta(a)\,d^\ast a\,dv,
\end{align}
where $u\in N_n(F),v\in N_{n,-}(F)$.

{\em Step 3.}
 Note that $u^{-1}\varrho(x,y)u$ is of the form
$$
u^{-1}\varrho(x,y)u=\tau\left(
\begin{array}{ccccc}\ast & \ast &   \ast & \ast&\ast\\ 1&\ast& \ast&\ast&\ast\\ 0&1&\ast& \ast&\ast\\
0 &0 &1&\ast& \ast\\
0 &0 &0&1& \ast
\end{array}\right).
$$
By Lemma \ref{lem a sect}, we may make a substitution  
to replace the integral over $u$ in (\ref{eqn orb xy}) by an integral over $u'$ of elements of the form
\begin{align}
u'=\left(
\begin{array}{cccc}\ast & \ast &   \ast &0\\ 1&\ast&\ast&0\\ 0&1&\ast&0\\
0 &0 &1&0
\end{array}\right)\in H_n(F),
\end{align}
and the measure $du'$ is induced by $du$:
\begin{align*}&
\int_{}\left(\bigotimes_{i=1}^{n+1} \wh{\phi^{'-}_{i}} \right)((ua)^{-1}\varrho(x,y)ua)\,du
\\
=&\int_{}\left(\bigotimes_{i=1}^{n+1} \wh{\phi^{'-}_{i}} \right)\left(a^{-1}\left(
\begin{array}{cccc} u' &   \ast &\ast\\ 1&\ast&\ast\\ 0&1&\ast\end{array}\right)
a\tau\right)\,du',
\end{align*}
where the last two columns are polynomials of entries of $u'$, $x_i,y_j$ and $n$, by Lemma \ref{lem a sect} (taking transpose). 
Now we may increase $r$ suitably (i.e., increase the support of $\wh{\phi^{'-}_{n}}$ and $\wh{\phi^{'-}_{n+1}}$) so that the constraints of the last columns on $u'$ is superfluous. In particular,  there exists $r_1>r_0$ large enough (depending on $m,m'$ and $\sZ$) such that when $r>r_1$ we have
\begin{align*}&
\int_{}\left(\bigotimes_{i=1}^{n+1} \wh{\phi^{'-}_{i}} \right)((ua)^{-1}\varrho(x,y)ua)\,du
\\
=&\wh{\phi^{'-}_{n+1}}(0)\wh{\phi^{'-}_{n}}(0)\int_{}\left(\bigotimes_{i=1}^{n-1} \wh{\phi^{'-}_{i}} \right)\left(a^{-1}u' a\tau\right)\,du'.
\end{align*}
This is independent of $(x,y)\in \sZ$. By (\ref{eqn orb xy}), we conclude that  the orbital integral $O(\varrho(x,y),\wh{f^\Psi_\nat})$ is a constant and the constant is equal to the regular unipotent orbital integral $O(\varrho(0,0),\wh{f^\Psi_\nat})=\mu_{\xi_-}(\wh{f^\Psi_\nat})$. This finishes the proof the first part of the lemma.

To prove the second part of the lemma, it remains to evaluate the  regular unipotent orbital integral $\mu_{\xi_-}(\wh{f^\Psi_\nat})$. First we note that $\wh{\phi^{'-}_{n}}(0)=\wh{\phi^{'-}_{n+1}}(0)=1$ by our normalization. We make the substitution $u'\mapsto au'a^{-1}=Ad(a)u'$ (equivalently, $u'_{ij}\mapsto u'_{ij}\prod_{i<\ell\leq j}a_{\ell}$ for $1\leq i\leq n-1$).
This yields
\begin{align*}
&\int_{}\left(\bigotimes_{i=1}^{n-1} \wh{\phi^{'-}_{i}} \right) (a^{-1}u'a\tau )\,du'\\
=&\det(Ad(a):\fkn_{n-1})\int_{}\left(\bigotimes_{i=1}^{n-1} \wh{\phi^{'-}_{i}} \right) ( u' \tau)\,du'\\
=&\det(Ad(a):\fkn_{n-1})|\tau|_E^{-(1+2+...+(n-2))/2} \prod_{i=1}^{n-1}\phi^{'-}_{i}(0).
\end{align*}
(Or more explicitly $\det(Ad(a):\fkn_{n-1})=\prod_{i=1}^{n-1}|a_i|^{(i-1)(n-i)}$.) Here the factor
$$
|\tau|_E^{(1+2+...+(n-2))/2}=|\ov{\delta}_{n,E}(\epsilon_n)|^{1/2}
$$
is caused by the difference between the measures on $F\tau$ and $E^-$. We thus proved that  $\mu_{\CO_0}(\wh{f_\nat})$ is equal to
$$|\ov{\delta}_{n,E}(\epsilon_n)|^{1/2} \prod_{i=1}^{n-1}\phi^{'-}_{i}(0)
\int_{A_n(F)N_{n,-}(F)} \prod_{i=1}^{n} \wh{\phi^-_{i}}(-a_i(v_{i-1},1)\tau) |\ov{\delta}_n(a)|^{-1}\eta(a)\,d^\ast a\,dv.
$$
(Or more explicitly, $\ov{\delta}_n(a)=\det(Ad(a):\fkn_{n}))/\det(Ad(a):\fkn_{n-1})=\prod_{i=1}^{n-1}|a_i|^{n-i}$.)

\end{proof}

\subsection{Proof of Theorem \ref{thm germ gl}.}
We choose an admissible function $f^\Psi=f_{n}^{\varphi_{n-1}, \phi_{n-1}}\otimes f^{\phi_{n}^\vee}_{n+1}$ for $\Pi$ so that it also verifies the conditions Lemma \ref{lem fml mu f}. We decompose $\varphi_{n-1},\phi_{n-1}$ as in Lemma \ref{lem W_n fml} and similarly $\phi_n$. Note that \begin{align*}
&\int_{} \prod_{i=0}^{n-1}\wh{\phi_{n-i}}(- y_i(v_{n-i-1},1)\tau)|\ov{\delta}_{n}(y)|^{-1}\eta(y)\,d^\ast y\,dv,
\\=&\left(\prod_{i=0}^{n-1}\wh{\phi_{n-i}^+}(0)\right)\int \prod_{i=0}^{n-1}\wh{\phi^-_{n-i}}(- y_i(v_{n-i-1},1)\tau)|\ov{\delta}_{n}(y)|^{-1}\eta(y)\,d^\ast y\,dv,
\\=&\left(\prod_{i=1}^{n}\int_{M_{i,1}(F)} \phi_{i}^{+}(x_i)\,dx_i \right)\int \prod_{i=0}^{n-1}\wh{\phi^-_{n-i}}(- y_i(v_{n-i-1},1)\tau)|\ov{\delta}_{n}(y)|^{-1}\eta(y)\,d^\ast y\,dv,
\end{align*}
where $y\in A_n(F), v \in N_{n,-}(F)$ are as in (\ref{eqn for y00}) and (\ref{eqn for v00}).
Moreover,
$$
\int_{B_{n-1,-}(F)}\phi'( b)\,db=\prod_{i=1}^{n-1}\phi_{i}^{'-}(0)\prod_{i=1}^{n-1}\int_{M_{1,i}(F)}\phi_{i}^{'+}(b_i)\,db_i.
$$
It is clear that the constant in (\ref{eqn con c}) is given by
$$
c(\Psi^+)=\prod_{i=1}^{n}\int _{M_{i,1}(F)}\phi_{i}^{+}(x_i)\,dx_i\prod_{i=1}^{n-1}\int_{M_{1,i}(F)}\phi_{i}^{'+}(b_i)\,db_i.
$$ 
Note that $$
|\tau|_E^{(d_n+d_{n+1})/2}=|\delta_{n-1,E}(\epsilon_{n-1})|^{1/2}|\delta_{n,E}(\epsilon_{n})|^{1/2}=|\delta_{n-1,E}(\epsilon_{n-1})||\ov{\delta}_{n,E}(\epsilon_n)|^{1/2}.
$$
Then the identity in Theorem \ref{thm germ gl} follows by comparing Prop.
\ref{prop fml I Pi} with Lemma \ref{lem fml mu f}. Moreover, we may choose such admissible function with arbitrarily small support by increasing $(m,m',r)$ and such that $\mu_{\xi_-}(f^\Psi_\nat)\neq 0$.

\section{Local character expansion in the unitary group case }
\subsection{Three ingredients from \cite{Zh2}.}
Let $F$ be a non-archimedean local field of characteristic zero. We need to recall the main local results of \cite{Zh2}. There are two isomorphism classes of Hermitian spaces $W_1,W_2$ of dimension $n$. Denote by $H_{W_i}=U(W_i)$ the unitary group. We let $V_i=W_i\oplus Ee$ be the orthogonal sum of $W_i$ and a one-dimensional space $Ee$ with norm $\pair{e,e}=1$. Denote by $\fku_i$ the Lie algebra of $U(V_i)$. We have a bijection of regular semisimple orbits (cf. (\ref{eqn orbit match}) and \cite[\S3.1]{Zh2})
\begin{align*}
H_n(F)\bs \fks(F)_{rs}\simeq H_{W_1}(F)\bs \fku_{1}(F)_{rs}\coprod H_{W_2}(F)\bs \fku_{2}(F)_{rs}.
\end{align*}
A regular semisimple $X\in \fks$ matches some $Y\in \fku_i$ if and only if 
\begin{align}\label{eqn match con}
\eta(\Delta(X/ \tau))=\eta(\disc(W_i)),
\end{align} where $disc(W_i)\in F^\times/\RN E^\times$ is the discriminant of $W_i$. For an $f'\in  \sC^\infty_c(\fks)$ and a pair $(f_1,f_2)$, $f_i\in \sC^\infty_c(\fku_i)$, we say that $f'$ matches $(f_1,f_2)$, if for all matching regular semisimple $X\in \fks, Y\in\fku_i$, we have (cf. (\ref{def O(X,f,s)}))
\begin{align}\label{def sm mat lie}
 \eta'(\Delta_+(X)) O(X,f')=O(Y,f_i),
\end{align} where $\eta'$ is a fixed choice of character $E^\times\to\BC^\times$ with restriction $\eta'|_{F^\times}=\eta$.

Let $W\in\{W_1,W_2\}$. Analogous to the general linear group case (cf. Definition \ref{def f_nat}), to a function in a small neighborhood of $1\in G=U(W)\times U(V)$, we associate a function on the Lie algebra $\fku$ of $U(V)$. For $f=f_n\otimes f_{n+1}\in \sC^\infty_c(G)$, we let $\wt{f}$ be the function on $U(V)$ defined by
$$
\wt{f}(g):=\int_{U(W)}f_n(h)f_{n+1}(hg)\,dh,\quad g\in U(V).
$$
If $f$ is supported in a small neighborhood $\Omega$ of $1$ in $G$, then $\wt{f}$ is supported in a small neighborhood $\wt{\Omega}$ of $1$ in $U(V)$. Since the Cayley map $\fkc:\fku\to U(V)$ is a local homeomorphism around $0\in \fku$, we may denote $\omega =\fkc^{-1}(\wt{\Omega})\simeq \wt{\Omega}$ and to $\wt{f}$ we associate a function denoted by $f_\nat=\fkc^{-1}(\wt{f})$ on $\fku$ supported in $\omega$.
To connect the smooth transfer on the groups to the one on Lie algebras, we need:
\begin{lem}
Let $f'\in  \sC^\infty_c(G'(F))$ and  $f_i\in \sC^\infty_c(U(W_i)\times U(V_i))$ ($i=1,2$) be matching functions (in the sense of \S4, (\ref{eqn mat grp})) with support in a neighborhood of the identity where the Cayley map is well-defined. Then the functions
$f'_\nat \in \sC^\infty_c(\fks)$ and $f_{i,\nat}\in \sC^\infty_c(\fku_i)$ ($i=1,2$) match.
\end{lem}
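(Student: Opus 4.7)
The plan is to deduce the Lie algebra matching of $f'_\nat$ and $f_{i,\nat}$ from the assumed group-level matching of $f'$ and $f_i$, using the Cayley map as a bridge. The key observation is that the Cayley maps $\fkc:\fks\to S_{n+1}$ and $\fkc:\fku_i\to U(V_i)$ are $H_n$- (respectively $H_{W_i}$-) equivariant local homeomorphisms: $\fkc(h^{-1}Yh)=h^{-1}\fkc(Y)h$ for $h$ in the appropriate group. Since $f'_\nat(X)=\wt{\wt{f'}}(\fkc(X))$ and $f_{i,\nat}(Y)=\wt{f_i}(\fkc(Y))$ by the very definition, a direct change of variables yields, for regular semisimple $X\in\fks$ and $Y\in\fku_i$ in the small neighborhoods where $\fkc$ is defined,
\begin{align*}
O(X,f'_\nat) &= \int_{H_n(F)}\wt{\wt{f'}}(h^{-1}\fkc(X)h)\,\eta(h)\,dh = O(\fkc(X),\wt{\wt{f'}}), \\
O(Y,f_{i,\nat}) &= \int_{H_{W_i}(F)}\wt{f_i}(h^{-1}\fkc(Y)h)\,dh = O(\fkc(Y),\wt{f_i}),
\end{align*}
so the Lie algebra orbital integrals of $f'_\nat, f_{i,\nat}$ are transported exactly into group-level orbital integrals of $\wt{\wt{f'}}, \wt{f_i}$.

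Next I would use the reductions recalled in \S4: the group matching of $f'$ and $f_i$, after integrating out the $H'_1$- and $H_{W_i}$-factor, descends to a matching between $\wt{\wt{f'}}\in\sC^\infty_c(S_{n+1}(F))$ and $\wt{f_i}\in\sC^\infty_c(U(V_i)(F))$; concretely, for regular semisimple $s\in S_{n+1}(F)$ matching $\epsilon\in U(V_i)(F)$ under the analogous group-level orbit bijection, one has $\Omega_v(s)\,O(s,\wt{\wt{f'}})=O(\epsilon,\wt{f_i})$. Moreover, the $H_n$-equivariance of $\fkc$ implies that the two orbit bijections (Lie-algebra-level and group-level) are compatible: if $X\in\fks$ matches $Y\in\fku_i$, then $\fkc(X)\in S_{n+1}$ matches $\fkc(Y)\in U(V_i)$, because $\eta(\Delta(X/\tau))=\eta(\disc(W_i))$ is equivalent, for $X$ near zero, to the corresponding discriminant condition on $\fkc(X)$. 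Combining these, for matching $X$ and $Y$ we obtain
$$\Omega_v(\fkc(X))\,O(X,f'_\nat) \;=\; O(Y,f_{i,\nat}).$$

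The final and most delicate step, which I expect to be the main obstacle, is to verify the compatibility of the two transfer factors under the Cayley map, namely
$$\Omega_v(\fkc(X)) \;=\; \eta'(\Delta_+(X))$$
modulo factors in $(E^\times)^{\RN}$ that are killed by $\eta'$. Unwinding the definitions, with $s=(1+X)(1-X)^{-1}$, one expands $es^k=e(1+X)^k(1-X)^{-k}$ and relates the $(n+1)\times(n+1)$ matrix with rows $e,es,\ldots,es^n$ to the matrix $\delta_+(X)=(A^{n-1}u,\ldots,Au,u)$ (for $X=\left(\begin{smallmatrix}A & u\\ v & w\end{smallmatrix}\right)$) via row operations; the Jacobian of the change of basis turns out to be a power of $\det(1-X)$, whose image under $\eta'$ is $1$ since $\det(1-X)\in \RN_{E/F}(E^\times)$ (note $X\in\fks$ forces $\overline{\det(1-X)}=\det(1+X)$, hence $\det(1-X)\det(1+X)\in F^\times$ and $\det(1-X)\in F^\times\cdot \RN E^\times$). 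Similarly the factor $\det(s)^{-[(n+1)/2]}$ contributes only norms. Alternatively, both transfer factors are continuous, locally constant on the regular semisimple locus, with the same $H_n$-equivariance with respect to $\eta$, so they differ by a constant near $0$ that one can pin down by evaluation on one convenient regular orbit (e.g.\ one with $s=\fkc(X)$ diagonal). With this identity in hand, the preceding displayed equality becomes precisely the Lie algebra matching condition (\ref{def sm mat lie}), completing the proof.
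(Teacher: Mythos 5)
Your overall plan is the right one and is in fact exactly what the paper does: transport the orbital integrals across the Cayley map and reduce to a compatibility of the two transfer factors. The paper itself disposes of the whole thing in one line by citing the proof of \cite[Lemma 3.5]{Zh2} for exactly that compatibility. The trouble is that your sketch of the key compatibility
$\Omega_v(\fkc(X))=\eta'(\Delta_+(X))$ contains two genuine errors and neither of the two fallbacks you offer actually closes the gap.

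First, the algebraic claim that $\eta'(\det(1-X))=1$ because ``$\det(1-X)\in \RN_{E/F}(E^\times)$'' is false. For $X\in\fks$ one has $\overline{\det(1-X)}=\det(1+X)$, which is \emph{not} $\det(1-X)$ in general, so $\det(1-X)$ does not lie in $F^\times$, and a fortiori not in $\RN_{E/F}E^\times$. Your parenthetical ``hence $\det(1-X)\in F^\times\cdot\RN E^\times$'' is a non sequitur (the identity $\det(1-X)\det(1+X)=\RN(\det(1-X))\in F^\times$ holds for every $z\in E^\times$), and it would in any case not help, since $\RN E^\times\subset F^\times$ gives $F^\times\cdot\RN E^\times=F^\times$ and $\eta'|_{F^\times}=\eta$ is nontrivial. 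What one actually has is $\eta'\bigl(\det(1-X)\det(1+X)\bigr)=1$, which lets one trade powers of $\det(1\pm X)$ against each other and against the factor $\det(s)^{-[(n+1)/2]}$; the residual power of $\det(1-X)$ that survives this cancellation depends on the parity of $n$, and in the odd case it has to be absorbed into the extra $\eta'$-twists that appear in the very definitions of $\wt{\wt{f'}}$ and $f'_\nat$ (\ref{eqn def nuf odd}), (\ref{def f_nat odd}) and in $\Omega_v(\gamma)$. Your sketch never distinguishes the parity of $n$, even though all three of those objects do.

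Second, the row reduction you describe does not produce $\delta_+(X)$. Since $e=(0,\ldots,0,1)$ is a \emph{row} vector and $eX^k$ involves the bottom row $v$ of $X$, the row operations carry $(e,es,\ldots,es^n)$ to a nonzero multiple of $\det(1-X)^{-n}\,\Delta_-(X)$, with $\Delta_-(X)=\det(v;vA;\ldots;vA^{n-1})$, and \emph{not} to $\Delta_+(X)=\det(A^{n-1}u,\ldots,u)$. These are not interchangeable: $\eta'\bigl(\Delta_+(X)/\Delta_-(X)\bigr)=\eta\bigl(\Delta_+(X/\tau)/\Delta_-(X/\tau)\bigr)$ is in general nontrivial, and its value relative to $\eta(\Delta(X/\tau))$ is precisely what encodes the discriminant class of $W_i$ in (\ref{eqn match con}). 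Passing from $\Delta_-$ to $\Delta_+$ is the nontrivial bookkeeping carried out in the cited [Zh2, Lemma 3.5], and your proposal skips it.

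Finally, the fallback by local constancy does not work as stated. The ratio $\Omega_v(\fkc(X))/\eta'(\Delta_+(X))$ is $H_n(F)$-invariant and locally constant on the regular semisimple locus, but that locus, even restricted to a punctured neighborhood of $0$, is disconnected: it is cut by the divisor $\Delta=0$ into pieces corresponding to the different values of the categorical-quotient invariants (and hence to the different $W_i$). Agreement on one convenient orbit therefore does not propagate, and the argument would be circular anyway, since equality of the two transfer factors across components is exactly what needs proof.
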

\begin{proof}The support condition ensures that the associated functions  $f'_\nat, f_{i,\nat}$ are well-defined. Then it remains to show the transfer factor are compatible:
$$
\eta'(\Delta_+(X)) O(X,f'_{\nat})=\Omega(g) O(g,f'),
$$
where $\nu(g)=\fkc(X)\in S_{n+1}(F)$.
This follows from the proof of \cite[Lemma 3.5]{Zh2}.
\end{proof}

Now we only consider one Hermitian space $W\in\{W_1,W_2\}$, with the corresponding groups $U(W),U(V)$, and the Lie algebra $\fku$. We say that $f\in \sC^\infty_c(\fku)$ and $f'\in  \sC^\infty_c(\fks)$ match if the equality (\ref{def sm mat lie}) holds for all regular semisimple $X$ matching $Y\in \fku$.

In \cite[Theorem 2.6]{Zh2} the following result is proved:
\begin{thm}
For any $f\in \sC^\infty_c(\fku)$ there exists a matching $f'\in  \sC^\infty_c(\fks)$ and conversely. 
\end{thm}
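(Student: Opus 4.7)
The approach I would take combines Harish--Chandra's semisimple descent with the Fourier transform compatibility of transfer on the Lie algebras, culminating in a local relative trace formula argument in the spirit of Waldspurger's proof of the orthogonal fundamental lemma, adapted to the Jacquet--Rallis setting. First I would reformulate the claim as a surjectivity statement: given $f\in\sC^\infty_c(\fku)$, the regular semisimple orbital integrals $Y\mapsto O(Y,f)$ define a smooth function on the set of regular semisimple $H_n$-orbits of $\fku$, and via the orbit bijection together with the transfer factor $\eta'(\Delta_+)$ they prescribe the desired values $O(X,f')=\eta'(\Delta_+(X))^{-1}O(Y,f)$ on regular semisimple orbits of $\fks$ matching those of $\fku$, with $O(X,f')=0$ required on orbits matching the other Hermitian space. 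The problem is to realize this prescription as the orbital integral function of an actual $f'\in\sC^\infty_c(\fks)$, and symmetrically for the converse direction.

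Away from the nilpotent cone I would proceed inductively by Harish--Chandra semisimple descent. Near a non-central semisimple $X_0\in\fks$, the orbital integrals of a test function are controlled by orbital integrals for the centralizer Lie algebra of $X_0$, which under the orbit correspondence decomposes compatibly as a smaller product of $\fks$- and $\fku$-type factors on the two sides of the match. An induction on $n$ then yields matching on any $H_n$-invariant open set whose closure avoids the nilpotent cone, reducing the problem to an analysis in an arbitrarily small $H_n$-invariant neighbourhood of $0$.

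For this remaining neighbourhood I would invoke the Fourier transform: if $f$ and $f'$ match then $\hat f$ and $\hat{f'}$ also match, up to an explicit constant---this is one of the three ingredients emphasized earlier in the paper. Combined with a Shalika-type germ expansion, matching in a neighbourhood of $0$ becomes equivalent to matching of nilpotent orbital integrals $\mu_{\CO}(\hat f)$ with the corresponding $\mu_{\CO'}(\hat{f'})$ under the orbit bijection of nilpotent cones. The final and most delicate step is to prove this nilpotent matching via a local (relative) trace formula on $\fks$ and $\fku$: one forms a bilinear pairing of test functions, expands it geometrically as a sum of orbital integrals and spectrally as a sum of local spherical characters, and transports matching between the two sides using Fourier compatibility together with the descent already established. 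The main obstacle is closing this local trace formula in the relative setting, where $H_n$-invariance is the only symmetry available and the transfer factor $\eta'(\Delta_+)$ must be tracked carefully through the Fourier analysis; this is where the essential analytic content of the theorem resides.
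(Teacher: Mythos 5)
The theorem you are trying to prove is not actually proved in the present paper: the text states explicitly ``In \cite[Theorem 2.6]{Zh2} the following result is proved'' and then simply records the statement as an ingredient imported from the companion paper. What the present paper does tell you is the \emph{order} in which \cite{Zh2} establishes the three ingredients you invoked --- local relative trace formula on the Lie algebra, then Fourier transform compatibility of transfer, and only then existence of smooth transfer --- so, at the level of which tools enter, your plan is aligned with the intended architecture.

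However, there is a concrete gap in the way you propose to close the argument in a neighbourhood of $0$. You pass from ``matching near $0$'' to ``matching of nilpotent orbital integrals $\mu_\CO(\wh f)$ and $\mu_{\CO'}(\wh{f'})$'' via a Shalika-type germ expansion. The relative setting here does not have such an expansion available: as the paper itself warns just before introducing admissible functions, for $n\ge 2$ there are \emph{infinitely many} $\GL_n(F)$-nilpotent orbits in $\fks_{n+1}$, and the corresponding nilpotent orbital integrals are not even defined without regularization (indeed, in Definition \ref{def orb uni} only the two regular nilpotent integrals are defined, and even those require analytic continuation in a parameter $s$). A Shalika germ expansion is predicated on a finite set of nilpotent orbits with absolutely convergent orbital integrals and well-behaved germs; none of that is available here. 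This is precisely why the present paper's local expansion (Theorem \ref{thm germ gl} and Theorem \ref{thm germ u}) is only a \emph{truncated} expansion valid for carefully engineered ``admissible'' test functions, and why the existence of smooth transfer in \cite{Zh2} is established by a different mechanism (a bootstrap using the local trace formula and Fourier transform compatibility that avoids germ expansions) rather than by the classical Waldspurger-style germ argument you describe. Your semisimple-descent step and the role of Fourier compatibility are fine; the germ-expansion bridge is the step that would fail as stated.
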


Moreover we have \cite[Theorem 4.17]{Zh2}:
\begin{thm}\label{thm comp F}
If the functions $f$ and $f'$ match, then so do $\epsilon(1/2,\eta,\psi)^{n(1+n)/2}\wh{ f}$ and $\wh{f'}$.
\end{thm}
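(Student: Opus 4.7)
The plan is to deduce Fourier transform compatibility from an identity of orbital integrals, combining an explicit computation on a distinguished family of test functions with an inductive argument via parabolic (Harish-Chandra) descent and a local relative trace formula. By the definition of smooth transfer on the Lie algebras (cf. (\ref{def sm mat lie})), the assertion amounts to showing that for every regular semisimple $X \in \fks$ matching $Y \in \fku$,
\[
\epsilon(1/2,\eta,\psi)^{n(n+1)/2}\,\eta'(\Delta_+(X))\,O(X,\wh{f'}) \;=\; O(Y,\wh{f}).
\]
I would first exploit the Kostant-type sections $\sigma,\varrho,\iota$ of \S6 (and their evident analogues for $\fku$) to identify the regular semisimple locus as a product $H_n \times \sX$, so that an orbital integral on the regular semisimple part becomes a concrete integral transform on the base $\sX$ of invariants. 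On this open dense locus the desired identity is a clean statement about compactly supported functions on $\sX$, which allows me to localize the problem.

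Next, I would verify the identity directly on a generating family of test functions where Fourier transforms are explicit—characteristic functions of $\mathcal O_F$-lattices $\Lambda \subset \fks$ and their matching counterparts in $\fku_1 \sqcup \fku_2$ selected via (\ref{eqn match con}). Here the Fourier transforms are characteristic functions of dual lattices, and both orbital integrals are sums over lattice points. The constant $\epsilon(1/2,\eta,\psi)^{n(n+1)/2}$ should emerge as the ratio of Weil indices for the trace forms $\tr(X^2)$ on $\fks$ versus $\fku$: each coordinate of $\fks$ lying in $E^-$ (of which there are $n(n+1)/2$ off-diagonal coordinates in a natural basis, accounting for the exponent) contributes one factor of $\epsilon(1/2,\eta,\psi)$ when one transports the self-dual Haar measure across the isomorphism $\fks \simeq \fku$ of $F$-vector spaces.

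To propagate the identity from this generating family to all matching $(f,f')$, I would set up a local relative trace formula for the $H_n$-action on $\fks$ and the $H_W$-action on $\fku$, and run an induction on $n$. Around each non-regular semisimple orbit, parabolic descent expresses the local behaviour of the orbital integral in terms of transfer on a proper Levi subgroup—a product of smaller unitary groups with a general linear factor—where the induction hypothesis applies; the compatibility of Fourier transform with descent is standard once the Weil indices on the unipotent radicals are accounted for. The induction base $n=0,1$ is a direct verification. The main obstacle is the interaction between regular semisimple and nilpotent contributions under Fourier transform: the transfer factor $\eta'(\Delta_+)$ does not extend continuously to the singular set, so the identity for $\wh{f},\wh{f'}$ must be extracted from a germ expansion (in the spirit of (\ref{eqn HC char})) rather than a pointwise comparison. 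Controlling this germ expansion—and showing that all germ coefficients other than the regular unipotent one transfer with the same constant—is what forces the use of the local trace formula and constitutes the heart of the argument.
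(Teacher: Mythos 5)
The theorem you are proving is not proved in this paper at all; it is cited verbatim from the companion paper \cite[Theorem 4.17]{Zh2}, and the only indication given here of the proof method is the remark that the local relative trace formula on the Lie algebra (Theorem \ref{thm local TF}, cited as \cite[Theorem 4.6]{Zh2}) is ``an important ingredient,'' together with the note that in \cite{Zh2} this result is established \emph{before} (and independently of) the existence of smooth transfer. Your invocation of the local trace formula is thus consistent with the one hint the paper provides, and your Weil-index heuristic for the constant is morally right: the exponent $n(n+1)/2=\binom{n+1}{2}$ counts the off-diagonal pairs $\{x_{ij},x_{ji}\}$, and on each pair the restriction of $\langle X,Y\rangle=\tr(XY)$ is a split binary form for $\fks$ but (a twist of) the norm form of $E/F$ for $\fku$, so the Weil constants differ by $\epsilon(1/2,\eta,\psi)$ per pair. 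This is a sound way to \emph{guess} the scalar, but it is not by itself an argument that transfer commutes with Fourier transform, since $\fks$ and $\fku$ are not $H$-equivariantly isomorphic; the scalar has to be derived alongside, not prior to, the commutativity.

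The concrete gap that would sink the argument is your ``generating family'' step. Equation (\ref{eqn match con}) is the criterion for a regular semisimple \emph{element} $X\in\fks$ to match $Y\in\fku_i$; it is not a recipe that makes two \emph{test functions} match. Matching of test functions is the orbital-integral identity (\ref{def sm mat lie}), and there is no family of lattice characteristic functions in $\fks$ and $\fku_1\sqcup\fku_2$ for which this identity is manifest--producing even the single matching pair coming from unit Hecke elements is exactly the content of the Jacquet--Rallis fundamental lemma (Theorem \ref{thm FL}), a deep theorem that currently requires $p\ge c(n)$. So the base case of your strategy has no foundation, and since lattice characteristic functions are supported through $0$ and the whole nilpotent cone, their orbital integrals are precisely in the regime where germ coefficients, not explicit computation, control the answer. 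Beyond that, the heavy part of your outline--parabolic descent, induction on $n$, and a Shalika-type germ expansion for which all germ coefficients except the regular unipotent one would have to transfer with the same constant--is flagged as ``the heart of the argument'' but is not carried out, and it is not the architecture the present paper points you toward: the role of Theorem \ref{thm local TF} in \cite{Zh2} is to compare the two sides directly on the Lie algebra (via the symmetry $\int f_1\, O(\cdot,\wh{f_2})=\int O(\cdot,\wh{f_1})\, f_2$ and a density/uniqueness-of-transfer argument), not to bootstrap through Levi subgroups.
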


An important ingredient of the proof of both theorems above is a local relative trace formula on Lie algebra (\cite[Theorem 4.6]{Zh2}). Now we only need the one in the unitary group case .

\begin{thm}\label{thm local TF}
For $f_1,f_2\in \sC_c^\infty(\fku)$, we have
$$
\int_{\fku}f_1(X)O(X,\wh{f_2})\,dX=\int_{\fku}O(X,\wh{f_1})f_2(X)\,dX,
$$
where the integrals are absolutely convergent.
\end{thm}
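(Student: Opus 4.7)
The plan is to reduce the identity to an application of Parseval's formula and the $\Ad(H)$-invariance of the trace pairing $\langle X, Y \rangle = \tr(XY)$ that defines the Fourier transform on $\fku$. Expanding the left-hand side using the definition of the orbital integral gives
$$\int_{\fku} f_1(X)\, O(X, \wh{f_2})\, dX = \int_H \int_{\fku} f_1(X)\, \wh{f_2}(\Ad(h^{-1}) X)\, dX\, dh.$$
After the substitution $X \mapsto \Ad(h) Y$ in the inner integral (the Jacobian is $1$ since $\Ad(h)$ preserves the self-dual measure), the inner integral becomes $\int_{\fku} f_1(\Ad(h) Y) \wh{f_2}(Y)\, dY$, and Parseval's identity rewrites it as $\int_{\fku} \wh{f_1 \circ \Ad(h)}(Y)\, f_2(Y)\, dY$.

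The crucial algebraic observation is then $\wh{f_1 \circ \Ad(h)} = \wh{f_1} \circ \Ad(h)$, which holds because the trace pairing satisfies $\langle \Ad(h^{-1}) Y, Z \rangle = \langle Y, \Ad(h) Z \rangle$. Putting this together and swapping the order of integration,
$$\text{LHS} = \int_{\fku} f_2(Y) \left( \int_H \wh{f_1}(\Ad(h) Y)\, dh \right) dY,$$
and the bracketed integral is $O(Y, \wh{f_1})$ by unimodularity of $H$, yielding the right-hand side. Equivalently, the argument establishes the compatibility $\wh{O(\cdot, f)} = O(\cdot, \wh f)$ of the Fourier transform with orbital integration, which is the conceptual heart of the identity.

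The main obstacle is justifying the two applications of Fubini. Since $f_1$ is compactly supported, convergence for the left-hand side reduces to the local integrability of the function $X \mapsto \int_H |\wh{f_2}(\Ad(h^{-1}) X)|\, dh$ on the support of $f_1$. For $X$ regular semisimple, the $H$-orbit is closed and the orbital integral of the Schwartz function $\wh{f_2}$ converges with estimates controlled by the decay of $\wh{f_2}$ and the structure of orbit representatives (analogous to Harish-Chandra's classical estimates for adjoint orbital integrals, adapted to the relative $U(W)$-action on $\fku_V$ as carried out in \cite{Zh2}). The complement of the regular semisimple locus has measure zero, so these bounds yield local integrability. A standard truncation/density argument---approximating the outer integrals by integrals over compact subsets of $H$ and of the regular semisimple locus, where everything is finite---then legitimizes Fubini and completes the proof.
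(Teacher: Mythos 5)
The paper does not prove this theorem at all: it is recalled verbatim from \cite[Theorem 4.6]{Zh2} as one of the ``three ingredients from \cite{Zh2}'' in \S9.1, so there is no in-paper argument to compare against. That said, your formal chain --- unfold $O(\cdot,\wh{f_2})$, swap by Fubini, substitute $X\mapsto\Ad(h)Y$ using $\Ad$-invariance of the self-dual measure, apply Parseval to the inner integral, and use $\wh{f_1\circ\Ad(h)}=\wh{f_1}\circ\Ad(h)$ (a consequence of $\tr(\Ad(h)X\cdot\Ad(h)Y)=\tr(XY)$) --- is the correct conceptual skeleton and does track how the identity is structured in \cite{Zh2}.

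The gap is in how you dispatch the convergence. You describe the absolute convergence as reducible to ``local integrability of $X\mapsto O(X,|\wh{f_2}|)$'' and then wave at a ``standard truncation/density argument,'' but that local integrability near the nilpotent cone is the entire analytic content of the theorem --- it is not downstream of Fubini, it is the prerequisite for Fubini, and it is what \cite[\S4]{Zh2} actually has to prove (a relative analogue of Harish-Chandra's $|D|^{-1/2}$ bound for the $U(W)$-action on $\fku$). Citing ``\cite{Zh2}'' for this estimate while purporting to give an independent proof of a theorem that \emph{is} \cite[Theorem 4.6]{Zh2} is circular. Two smaller imprecisions: in the nonarchimedean setting $\wh{f_2}$ is already compactly supported, so invoking ``decay of the Schwartz function'' to control the orbital integral on closed orbits is a misstatement --- convergence there is by properness of the orbit map against a compact support, and the only issue is uniformity as $X$ degenerates; and the truncation ``over compact subsets of $H$'' you mention does not by itself give a locally integrable majorant, so it cannot legitimize Fubini without the quantitative orbital integral bound already in hand.
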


\subsection{A hypothesis.} We now return to the local spherical character in the unitary group case . Let $\pi$ be an irreducible admissible representation of $G=U(V)\times U(W)$. We use the measure on $U(V)$ determined by the self-dual measure on $\fku$ via the Cayley map. We call a subset $\Omega\subset G$ a {\em $U(W)\times U(W)$-domain} (associated to $\omega$) if  there is an open and closed subset $\omega$ in the $F$-points $(H\bs \fku)(F)$ of categorical quotient $H\bs \fku$\footnote{In our case, the categorical quotient  $H\bs \fku:=\Spec\, \CO_{\fku}^H$ is an affine space and the natural morphism $\fku\to H\bs \fku$ induces a continuous map on the $F$-points: $\fku(F)\to  (H\bs \fku)(F)$.} such that 
\begin{itemize}
\item the Cayley map is defined on  the preimage of $\omega$  in $\fku$ and takes    the preimage of $\omega$  to $\Omega'\subset U(V)$.
\item $\Omega$ is the preimage of $\Omega'$ under the contraction map $U(W)\times U(V)\to U(V)$ (given by $(g_n,g_{n+1})\mapsto g_ng_{n+1}$).
\end{itemize}
In particular, $\Omega$ is $U(W)\times U(W)$-invariant, open and closed.

We consider the following:

{\em Hypothesis $(\star)$ for $\pi$}: there exist a neighborhood  $\Omega\subset G$ of $1\in G$ that is a $U(W)\times U(W)$-domain, and a function $\Phi\in \sC_c^\infty(\fku)$, such that 
\begin{align}
\Phi(0)=1,
\end{align} and for all $f\in \sC_c^\infty(\Omega)\subset \sC_c^\infty(G)$,
$$
J_\pi(f)=\int_{\fku}f_\nat(X)O(X,\Phi)\,dX.
$$
 \begin{thm}
\label{thm char u}
Assume that $\pi$ is tempered and $\Hom_{H}(\pi,\BC)\neq 0$.  
Let $\phi$ be a matrix coefficient of $\pi$ such that $$\int_{H}\phi(h)\,dh=1.$$
Then the distribution $J_\pi$ is represented by the orbital integral of $\phi$, as  a function on $G$:
$$
G \ni g\mapsto O(g,\phi).
$$Moreover, the orbital integral $g\mapsto O(g,\phi)$ is a bi-$H$-invariant function that is locally $L^1$ on $G$.
\end{thm}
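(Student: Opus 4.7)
The plan is to unfold the orbital integral of $\phi$ against a test function $f\in \sC^\infty_c(G)$ and identify the result with $J_\pi(f)$, exploiting the uniqueness of the $H$-invariant functional on $\pi$.

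First, by the assumption $\Hom_H(\pi,\BC) \neq 0$ together with the multiplicity one theorem of \cite{AGRS,SZ} recalled in \S1.4, this Hom space is one-dimensional; fix a generator $\ell$. Since $\pi$ is tempered and unitary, the positive-definite $H\times H$-invariant Hermitian form $\alpha$ of (\ref{eqn def alpha}) must be a positive multiple of $(u,v)\mapsto \ell(u)\overline{\ell(v)}$; rescaling $\ell$, we may assume equality. Writing the matrix coefficient as $\phi(g)=\pair{\pi(g)u,v}$, the normalization $\int_H \phi(h)\,dh=1$ becomes $\alpha(u,v)=\ell(u)\overline{\ell(v)}=1$.

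Next, for $f\in \sC^\infty_c(G)$, I would compute
\begin{align*}
\int_G f(g)\, O(g,\phi)\,dg
&= \int_{H\times H}\!\!\int_G f(g)\,\pair{\pi(x^{-1}gy)u,\,v}\,dg\,dx\,dy\\
&= \int_{H\times H}\pair{\pi(f)\pi(y)u,\,\pi(x)v}\,dx\,dy,
\end{align*}
by Fubini and the unitarity of $\pi$. Expanding via an orthonormal basis $\{\phi_w\}$ of $\pi$ and integrating term by term gives
\[
\sum_w \alpha(u,\pi(f^\ast)\phi_w)\,\overline{\alpha(v,\phi_w)}
=\sum_w \overline{\ell(\pi(f^\ast)\phi_w)}\,\ell(\phi_w),
\]
using $\alpha(a,b)=\ell(a)\overline{\ell(b)}$ and $\ell(u)\overline{\ell(v)}=1$. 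Taking complex conjugates, this equals $\overline{J_\pi(f^\ast)}$. Finally, cyclicity of the trace (applied to the finite-rank operator $\pi(f)$) yields $\overline{J_\pi(f^\ast)}=J_\pi(f)$, completing the identity at the formal level.

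Bi-$H$-invariance of $g\mapsto O(g,\phi)$ is immediate from a change of variables in the defining double integral. Absolute convergence and local $L^1$-integrability are the remaining analytic issues. The key input is the temperedness of $\pi$: by property~(1) of $\alpha$ recalled in the introduction, $\int_H|\pair{\pi(h)a,b}|\,dh<\infty$ for all smooth $a,b$. Combined with Harish-Chandra's majorization $|\phi(g)|\leq C\,\Xi(g)$ and a Cauchy--Schwarz argument analogous to the one in the proof of Proposition~\ref{prop a=a'}, one obtains convergence of the iterated $H\times H$-integral over the support of $f$, which justifies Fubini and the interchange of sum and integral. Applying the identity to non-negative $f$ dominating $1_K$ for an arbitrary compact $K\subset G$ then bounds $\int_K|O(g,\phi)|\,dg$, yielding local integrability. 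The main obstacle is to control the iterated $H\times H$-integral of shifts of tempered matrix coefficients: this requires the full strength of the temperedness hypothesis (not merely the existence of $\ell$), through integrability estimates for the Harish-Chandra $\Xi$-function along $H\times H$-double orbits---precisely the sort of estimates developed in \cite{CHH} and used in the proof of Proposition~\ref{prop a=a'}.
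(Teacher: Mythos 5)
Your formal computation is correct and is essentially the unfolding one would expect: after normalizing so that $\alpha(a,b)=\ell(a)\overline{\ell(b)}$, unfolding the $H\times H$-orbital integral against $f$ gives $\sum_w\alpha(u,\pi(f^\ast)\phi_w)\,\overline{\alpha(v,\phi_w)}$, and the trace-reshuffle $\overline{J_\pi(f^\ast)}=J_\pi(f)$ does go through because $\pi(f)$ is finite rank (for a bi-$K$-invariant $f$) and one may take an orthonormal basis adapted to $\pi^K$, making every occurring sum finite. However, the paper's proof is a one-line reduction to Property~(iii) of $\alpha$ followed by a citation to the Ichino--Zhang appendix~\cite{IZ}; the analytic substance of the theorem is entirely in that reference, and your proposal neither reproduces it nor supplies a substitute.

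The genuine gap is exactly the ``remaining analytic issues'' paragraph, and it is not filled. Two specific problems. First, the Fubini interchange and the pointwise definition of $O(g,\phi)$ are not automatic: the integral $\int_{H\times H}\Xi(x^{-1}gy)\,dx\,dy$ \emph{diverges} at $g=1$ (it reduces to $\vol(H)\int_H\Xi$), so $O(g,\phi)$ does not converge for all $g$ when $H$ is noncompact, and one must show convergence on a measure-one subset with integrable singularity near the degenerate locus. The convergence of $\alpha$ (which is what Property~(1) and \cite{CHH} give you, namely $\int_H\Xi<\infty$) is the wrong estimate here; you need a quantitative bound on $\int_{H\times H}\Xi(x^{-1}gy)\,dx\,dy$ as a function of $g$ and its local integrability, which is not ``the same sort of estimate.'' Second, the line ``Applying the identity to non-negative $f$ dominating $1_K$ then bounds $\int_K|O(g,\phi)|\,dg$'' does not work: $O(g,\phi)$ is complex-valued with no sign control, so $\int fO=J_\pi(f)$ with $f\geq0$ controls at best $|\int_K O|$, not $\int_K|O|$; and more fundamentally, invoking this identity presupposes the very convergence and interchangeability you are trying to establish, so the reasoning is circular. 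These estimates are the actual content of \cite{IZ} and cannot be waved in.
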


\begin{proof}
When $\pi$ is tempered and $\Hom_{H}(\pi,\BC)\neq 0$, we have $\alpha\neq 0$, (cf. Property (iii) after (\ref{eqn def alpha})). Hence there exists $\phi$ such that $\int_{H}\phi(h)dh\neq 0$. Up to a scalar multiplication, we may assume that $\int_{H}\phi(h)dh=1.$ Then the theorem is proved  in \cite{IZ}.
\end{proof}

\begin{prop}\label{prop hypo}
Assume that $\Hom_{H}(\pi,\BC)\neq 0$. If the group $H=U(W)$ is compact or $\pi$ is supercuspidal, then
 $\pi$ verifies  Hypothesis $(\star)$.
\end{prop}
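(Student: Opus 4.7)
My plan is to start from the representation of $J_\pi$ given by Theorem \ref{thm char u}, rewrite it as an integral against a bi-$H$-invariant function on $U(V)$, and then transfer this to $\fku$ via the Cayley map, where the resulting integrand will turn out to be an orbital integral of a compactly supported function.

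Applying Theorem \ref{thm char u} yields $J_\pi(f) = \int_G f(g)\,O(g,\phi)\,dg$ for a matrix coefficient $\phi$ normalized by $\int_H \phi(h)\,dh = 1$. Expanding the double orbital integral $O((g_n,g_{n+1}),\phi) = \int_{H\times H}\phi(h_1^{-1}g_nh_2,h_1^{-1}g_{n+1}h_2)\,dh_1\,dh_2$ via the substitution $h_2 = h_1k$ and exploiting the diagonal embedding of $H$ in $G$, one finds that $O(g,\phi)$ depends only on $g_n^{-1}g_{n+1}\in U(V)$ up to $H$-conjugation. Substituting $g_{n+1} = g_n g$ in the integral against $f = f_n\otimes f_{n+1}$ leads to
\[
J_\pi(f) = \int_{U(V)}\wt{f}(g)\,\Psi(g)\,dg,\qquad \Psi(g) := \int_H F(h^{-1}gh)\,dh,
\]
where $F(g) := \int_H \phi(h,gh)\,dh$ is the $H$-averaged matrix coefficient, now a function on $U(V)$, and $\wt{f}$ is as in the definition preceding Hypothesis $(\star)$.

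Next I would choose a $U(W)\times U(W)$-domain $\Omega$ around $1\in G$ corresponding to an $H$-invariant neighborhood $\omega$ of $0\in\fku$ on which the Cayley map $\fkc$ is defined. For $f\in\sC_c^\infty(\Omega)$, $\wt{f}$ is supported on $\fkc(\omega)\subset U(V)$, and the equivariance $\fkc(h^{-1}Xh)=h^{-1}\fkc(X)h$ yields
\[
J_\pi(f) = \int_\fku f_\nat(X)\,\Psi_\nat(X)\,dX,\qquad \Psi_\nat(X) = \int_H F(\fkc(h^{-1}Xh))\,dh.
\]
The natural candidate is $\Phi(X) := F(\fkc(X))$: it automatically satisfies $\Phi(0) = F(1) = \int_H \phi(h,h)\,dh = \int_H\phi(h)\,dh = 1$ and, tautologically, $O(X,\Phi) = \Psi_\nat(X)$ for every $X$ in the Cayley domain. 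It remains to verify that $\Phi$ extends to a compactly supported smooth function on $\fku$.

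This last step splits into the two cases of the proposition. In case (i), where $H=U(W)$ is compact, $F$ may fail to be compactly supported on $U(V)$, but compactness of $H$ guarantees that the orbit $\{h^{-1}\fkc(X)h : h\in H\}$ stays uniformly close to $1\in U(V)$ as $X\to 0$; multiplying $F$ by a bump $\chi$ supported near $1$ within the Cayley image with $\chi(1)=1$ produces $\Phi := (\chi F)\circ\fkc \in \sC_c^\infty(\fku)$ with $\Phi(0)=1$ and $O(X,\Phi) = \Psi_\nat(X)$ on a suitable shrinking of $\omega$. In case (ii), where $\pi$ is supercuspidal, the matrix coefficient $\phi$ is compactly supported on $G$ (since the center is compact at a non-split place), so $F\in\sC_c^\infty(U(V))$. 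The key observation here is that for any $X$ in the Cayley domain, $\fkc(X)+1 = 2(1-X)^{-1}$ is invertible, so every $H$-conjugate $h^{-1}\fkc(X)h$ automatically lies in the Cayley image. Consequently, the values of $F$ outside the Cayley image are never seen by $\Psi_\nat$, and one can smoothly truncate $F$ to a compact subset of the Cayley image (containing $\mathrm{supp}(F)\cap\mathrm{image}(\fkc)$) without altering $\Psi_\nat$ on $\omega$, producing the desired $\Phi \in \sC_c^\infty(\fku)$ with $\Phi(0)=1$.

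The main delicate point will be the truncation in case (ii): one must choose the smooth cutoff so that it equals $1$ throughout the intersection $\mathrm{supp}(F)\cap\mathrm{image}(\fkc)$ while remaining supported in the open Cayley image, which is possible because this intersection is a compact subset of the open set $\mathrm{image}(\fkc)$. The normalization $\Phi(0)=1$ is not an added hypothesis but falls out cleanly from the condition $\int_H\phi(h)\,dh = 1$ of Theorem \ref{thm char u}, which is the source of the correct constant in Hypothesis $(\star)$.
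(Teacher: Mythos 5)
Your overall route is the same as the paper's: write $J_\pi(f)$ as an integral of $\wt{f}$ against the $H$-conjugation average of a compactly supported matrix coefficient on $U(V)$, transport everything to $\fku$ by the Cayley map, and truncate to land in $\sC_c^\infty(\fku)$. The compact-$H$ case is fine as you wrote it (and the paper even avoids Theorem \ref{thm char u} there, using the $H$-fixed vector directly, which dispenses with any temperedness assumption). The problem is in the truncation step of the supercuspidal case.

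You assert that $\mathrm{supp}(F)\cap\mathrm{image}(\fkc)$ is a compact subset of the open set $\mathrm{image}(\fkc)$ and build your cutoff $\chi$ from that. This claim is false in general. The set $\mathrm{supp}(F)$ is compact and $\mathrm{image}(\fkc)=\{g\in U(V):\det(1+g)\neq 0\}$ is open; their intersection is bounded but need not be closed in $U(V)$ --- as soon as $\mathrm{supp}(F)$ touches the divisor $\det(1+g)=0$, points of $\mathrm{supp}(F)\cap\mathrm{image}(\fkc)$ accumulate on that divisor, and then the intersection is not contained in any compact subset of $\mathrm{image}(\fkc)$. There is no way to guarantee that the matrix coefficient of a supercuspidal $\pi$ has support avoiding that divisor, so the cutoff you describe cannot in general be constructed.

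The repair is exactly what the paper's notion of a $U(W)\times U(W)$-domain is designed to supply, and it requires an actual argument rather than a generic bump. Take $\omega_0$ a compact open neighborhood of $0$ in the \emph{categorical quotient} $(H\backslash\fku)(F)$, set $\omega'$ equal to its preimage in $\fku$ (an $H$-conjugation-invariant open and closed set), and observe that $\det(1-X)$ is an $H$-invariant polynomial, hence a function of the invariants, hence bounded in absolute value for $X\in\omega'$. Since
\[
1+\fkc(X)=2(1-X)^{-1},\qquad\text{so}\qquad \det\bigl(1-X\bigr)\,\det\bigl(1+\fkc(X)\bigr)=2^{\,n+1},
\]
it follows that $|\det(1+g)|$ is bounded \emph{away} from zero on $\Omega':=\fkc(\omega')$. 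Consequently $\overline{\Omega'}$ stays inside the Cayley image, and since $\Omega'$ is closed in $\mathrm{image}(\fkc)$ (homeomorphic image of a closed set) it is actually closed in $U(V)$. Then $1_{\Omega'}$ is locally constant, $F\cdot 1_{\Omega'}\in\sC_c^\infty(U(V))$ (compact support because $\mathrm{supp}(F)$ is compact and $\Omega'$ is closed), and pulling back by $\fkc$ gives the desired $\Phi\in\sC_c^\infty(\fku)$ with $\Phi(0)=F(1)=1$ and, because $1_{\Omega'}$ is $H$-conjugation invariant, $O(X,\Phi)=\Psi_\nat(X)$ on $\omega'$. Without this invariant-theoretic bound on $\det(1+g)$ your truncation argument does not go through.
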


\begin{proof}

Assume first that $\pi$ is supercuspidal.  We choose an open and closed neighborhood $\omega$ of $0$ in the categorical quotient of $\fku$. Clearly we may choose such $\omega$ so that the Cayley map is defined on the preimage of $\omega$ in $\fku$. Then the associated $U(W)\times U(W)$-domain $\Omega$ is open and closed neighborhood of $1\in G$.  Let $\phi$ be a matrix coefficient as in Theorem \ref{thm char u}.  We consider $\phi_\Omega=\phi\cdot 1_{\Omega}$ where $1_{\Omega}$ is the characteristic function of $\Omega$. Since $\phi\in \sC^\infty_c(G)$ and $\Omega$ is open and closed, the function $\phi_\Omega$ also lies in  $\sC^\infty_c(G)$.  We now consider the function $\wt{\phi_\Omega}$ which lies $ \sC^\infty_c(U(V))$ and let $\Phi=\phi_{\Omega,\nat}\in  \sC^\infty_c(\fku)$ be the corresponding function on $\fku$ via the Cayley map. It is important to note that we still have
\begin{align}
\Phi(0)=1.
\end{align}
Moreover the measure on $\fku$ is transferred to the measure on $U(V)$. Then $\Phi\in \sC_c^\infty(\fku)$ and for all function $f\in  \sC_c^\infty(G)$ with small support around $1$, 
$$
J_\pi(f)=\int_{\fkg} f_\nat(X)O(X,\Phi)\,dX.
$$

If $H$ is compact (so that $dim\, W\leq 2$ or $E/F=\BC/\BR$),  then there is a non-zero vector $\phi_0\in \pi$ fixed by $H$. Then for all $f$, we have
$$
J_\pi(f)=\vol(H)\int_{G}f(g)\pair{\pi(g) \phi_0, \phi_0}\,dg$$
for a norm one $\phi_0\in \pi^H$. Set $\Phi(g)=\vol(H)^{-1}\pair{\pi(g)\phi_0,\phi_0}$. Then the same truncation as above completes the proof.

\end{proof}

We say that $f$ is admissible if there is an admissible $f'$ matching $f$.
\begin{thm}
\label{thm germ u}Assume that $\pi$ verifies  Hypothesis $(\star)$.
Then there exist an admissible functions $f\in \sC^\infty_c(G(F))$ and a matching function $f'\in \sC^\infty_c(G'(F))$ such that
$$
J_\pi(f)=(\eta'(\tau)/\epsilon(1/2,\eta,\psi))^{n(n+1)/2} \eta(\disc(W))\wh{\mu_{\xi_-}}(f'_\nat)\neq 0.
$$
\end{thm}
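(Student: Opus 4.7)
The plan is to reduce the computation to the ``germ expansion'' on the general linear side given by Theorem \ref{thm germ gl} and Lemma \ref{lem fml mu f}, via the following chain: (i) Hypothesis $(\star)$ re-expresses $J_\pi(f)$ as an orbital integral pairing on $\fku$; (ii) the local trace formula on Lie algebras (Theorem \ref{thm local TF}) converts this into a pairing of $\wh{f_\nat}$ against a compactly supported function; (iii) the Fourier compatibility of smooth transfer (Theorem \ref{thm comp F}) transports orbital integrals of $\wh{f_\nat}$ on $\fku$ into those of $\wh{f'_\nat}$ on $\fks$; and (iv) Lemma \ref{lem fml mu f}(1) shows that these are locally constant around $0$ and are captured by $\mu_{\xi_-}(\wh{f'_\nat})$.

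More concretely, I would first choose an admissible $f'\in \sC^\infty_c(G'(F))$ of the type constructed in Theorem \ref{thm germ gl}, with $f'_\nat$ supported in a sufficiently small neighborhood of $0\in\fks$, so that by Lemma \ref{lem fml mu f}(1) the orbital integral $\eta'(\Delta_-(X))\,O(X,\wh{f'_\nat})$ equals the nonzero constant $\eta'(\Delta_-(\xi_-))\,\mu_{\xi_-}(\wh{f'_\nat})$ on an arbitrarily large compact neighborhood $\sZ$ of $0\in\fks$. Using the existence of smooth transfer on Lie algebras (\cite[Theorem 2.6]{Zh2}), take $f_\nat\in\sC^\infty_c(\fku)$ matching $f'_\nat$, and let $f\in\sC^\infty_c(G(F))$ be its group-level counterpart built via the Cayley map $\fkc$: namely $f=f_n\otimes f_{n+1}$ with $f_n$ sharply concentrated at $1\in U(W)$ and $f_{n+1}$ the pushforward of $f_\nat$ to a small neighborhood of $1\in U(V)$. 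Shrinking supports if necessary, $f$ lies in the $U(W)\times U(W)$-domain $\Omega$ on which Hypothesis $(\star)$ applies, yielding
$$J_\pi(f)=\int_{\fku}f_\nat(Y)\,O(Y,\Phi)\,dY.$$
Since $F$ is non-archimedean, the function $\Psi(Y):=\wh{\Phi}(-Y)$ lies in $\sC^\infty_c(\fku)$ and satisfies $\wh\Psi=\Phi$, so Theorem \ref{thm local TF} applied to $f_\nat$ and $\Psi$ gives
$$J_\pi(f)=\int_{\fku}O(Y,\wh{f_\nat})\,\Psi(Y)\,dY.$$

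By Theorem \ref{thm comp F}, the functions $\epsilon(1/2,\eta,\psi)^{n(n+1)/2}\wh{f_\nat}$ and $\wh{f'_\nat}$ match, so for $Y\in\fku$ regular semisimple matching $X\in\sZ\subset\fks$ one obtains
$$O(Y,\wh{f_\nat})=\epsilon(1/2,\eta,\psi)^{-n(n+1)/2}\,\eta'(\Delta_+(X))\,\eta'(\Delta_-(X))^{-1}\,\eta'(\Delta_-(\xi_-))\,\mu_{\xi_-}(\wh{f'_\nat}).$$
Since $\Delta_\pm(X)\in\tau^{n(n+1)/2}F^\times$ and $\eta'|_{F^\times}=\eta$ is of order $2$, the combinations $\eta'(\Delta_+(X))\eta'(\Delta_-(X))^{-1}=\eta(\Delta(X_0))=\eta(\disc W)$ (using the matching condition (\ref{eqn match con})) and $\eta'(\Delta_-(\xi_-))=\eta'(\tau)^{n(n+1)/2}$ (from $\Delta_-(\xi_{n+1,-})=\pm1$ by anti-diagonal determinant) together show that $O(Y,\wh{f_\nat})$ takes the constant value $c_0\,\mu_{\xi_-}(\wh{f'_\nat})$ with $c_0=(\eta'(\tau)/\epsilon(1/2,\eta,\psi))^{n(n+1)/2}\eta(\disc W)$. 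Since $\Psi$ has compact support while $\sZ$ can be enlarged at will, this constancy holds throughout the support of $\Psi$, so
$$J_\pi(f)=c_0\,\mu_{\xi_-}(\wh{f'_\nat})\int_\fku\Psi(Y)\,dY=c_0\,\mu_{\xi_-}(\wh{f'_\nat})\,\Phi(0)=c_0\,\mu_{\xi_-}(\wh{f'_\nat}),$$
which is nonzero by the choice of $f'$ via Theorem \ref{thm germ gl}.

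The main difficulty is the simultaneous management of supports: $f'_\nat$ must be chosen small enough that the germ region $\sZ$ covers the image of the (fixed) support of $\Psi$ under the orbit matching, yet it must still produce an $f$ whose group-level support lies in the Hypothesis-$(\star)$ domain $\Omega$. Both neighborhoods can be independently shrunk or enlarged, but verifying compatibility requires a careful iteration between Lemma \ref{lem fml mu f}(1) and the smooth transfer construction. A secondary point of care is the precise bookkeeping of the characters $\eta'$ and powers of $\tau$ needed to identify the constant as $(\eta'(\tau)/\epsilon(1/2,\eta,\psi))^{n(n+1)/2}\eta(\disc W)$.
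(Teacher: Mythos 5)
Your proposal follows the paper's argument step for step: Hypothesis $(\star)$, the local trace formula (Theorem \ref{thm local TF}), Fourier compatibility of smooth transfer (Theorem \ref{thm comp F}), the germ constancy from Lemma \ref{lem fml mu f}(1), and the same character bookkeeping giving $\eta'(\Delta_+(X)/\Delta_-(X))=\eta(\disc W)$ and $\eta'(\Delta_-(\xi_-))=\eta'(\tau)^{n(n+1)/2}$, then evaluate $\int\wc{\Phi}=\Phi(0)=1$. The one place where you deviate is the construction of $f$: you propose to produce $f$ by first transferring $f'_\nat$ on Lie algebras and then lifting via the Cayley map with a sharply concentrated $f_n$, whereas the paper simply takes any group-level smooth transfer $f_0$ of $f'$ and truncates, $f=f_0\cdot 1_\Omega$. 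The paper's route makes the group-level matching $f\leftrightarrow f'$ (which the theorem statement asserts, and which is needed when one later divides against $I_\Pi(f')$ from Theorem \ref{thm germ gl}) immediate, and the Lie-algebra matching $f_\nat\leftrightarrow f'_\nat$ then follows from the first Lemma of \S9; your lift-from-the-Lie-algebra construction would require an extra argument that the resulting $f$ indeed matches $f'$ as functions on the groups, not just after passage to $\fku$ and $\fks$. This is a small, patchable gap rather than a wrong turn, and the simplest fix is precisely the paper's truncation device; otherwise your proof is correct and essentially identical in approach.
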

\begin{proof}
Suppose that in Hypothesis $(\ast)$ we have a $U(W)\times U(W)$-domain $\Omega$ associated to $\omega$ in the categorical quotient of $\fku$. Since the categorical quotient  of $\fks$ and that of $\fku$ are isomorphic we also view $\omega$ as an open and closed set in the quotient of $\fks$. 
Let $f'\in\sC^\infty_c(G')$ be an $(m,m',r)$-admissible function and $f\in \sC^\infty_c(G)$ be a matching function. We claim that we may choose an $f$ supported in $\Omega$. Indeed we may choose $(m,m',r)$ very large so that the support of $f'$ is very small, say, so that the image of the support of $f'_\nat$ in the categorical quotient of $\fks$ is contained in $\omega$. Now we choose any $f_0$ that matches $f'$. Then we set $f=f_0\cdot 1_{\Omega}$. Clearly the function $f$ has the same orbital integral as $f_0$ and is supported in $\Omega$. This proves the claim.

Now we apply the local trace formula (Theorem \ref{thm local TF})
\begin{align*}
\int_{\fku} f_\nat(Y)O(Y,\Phi)\,dY=
\int_{\fku}O(Y, \wh{f_\nat})\wc{\Phi}(Y)\,dY,
\end{align*}
where $\wc{\Phi}$ is the inverse of the Fourier transform. 
By the compatibility between Fourier transform and smooth transfer (Theorem \ref{thm comp F}) and (\ref{def sm mat lie}), we have
\begin{align}
\epsilon(1/2,\eta,\psi)^{n(n+1)/2}O(Y, \wh{f_\nat})=\eta'(\Delta_+(X))O(X, \wh{f'_\nat})
\end{align}
for matching regular semisimple $X$ and $Y$.
Since $\wc{\Phi}$ has compact support, we may choose a compact neighborhood $\sZ$ of $0\in \fks$ so that the image of $\sZ$ in the quotient $H_n\bs \fks(F)\simeq H\bs \fku(F)$ contains the image of $supp(\wc{\Phi})$. By Lemma \ref{lem fml mu f}, we may choose an admissible functions $f'$ such that $\eta'(\Delta_-(X))O(X, \wh{f'_\nat})$ is equal to a non-zero constant $\eta'(\Delta_-(\xi_-))O(\xi_-, \wh{f'_\nat})$ when $X\in \sZ$.  Thus for regular semisimple $Y\in supp(\wc{\Phi})$ we have:
$$O(Y, \wh{f_\nat})=\epsilon(1/2,\eta,\psi)^{-n(n+1)/2} \eta'(\Delta_+(X)/\Delta_-(X)) \eta'(\Delta_-(\xi_-)) \mu_{\xi_-}(\wh{f'_\nat})\neq 0.
$$
By (cf. (\ref{eqn match con})), we know that
$$\eta(\Delta_+(X)/\Delta_-(X))=\eta(\Delta_+(X/\tau)/\Delta_-(X/\tau))=\eta(\Delta(X/\tau))=\eta(\disc(W)).$$
We note that $\eta'(\Delta_-(\xi_-)) =\eta'(\tau)^{n(n+1)/2}.$
Therefore for all regular semisimple $Y\in supp(\wc{\Phi})$, we have
$$O(Y, \wh{f_\nat})=(\eta'(\tau)/\epsilon(1/2,\eta,\psi))^{n(n+1)/2} \eta(\disc(W))\mu_{\xi_-}(\wh{f'_\nat})\neq 0.
$$
We obtain
\begin{align*}
J_\pi(f)=&\int_{\fku}O(X, \wh{f_\nat})\wc{\Phi}(X)\,dX\\
=&(\eta'(\tau)/\epsilon(1/2,\eta,\psi))^{n(n+1)/2} \eta(\disc(W)) \mu_{\xi_-}(\wh{f'_\nat})\cdot \int_{\fku}\wc{\Phi}(X)\,dX\\
=& (\eta'(\tau)/\epsilon(1/2,\eta,\psi))^{n(n+1)/2} \eta(\disc(W))\mu_{\xi_-}(\wh{f'_\nat}) \cdot \Phi(0).
\end{align*}
By Hypothesis $(\star)$ we have
$$
\Phi(0)=1.
$$
The theorem now follows.
\end{proof}

\subsection{Completion of the proof of Theorem \ref{thm local}: Case (2)-(ii) and (2)-(iii).} It remains to prove case (2)-(ii) and (2)-(iii), i.e.,  when $v$ is non-split.
If we choose a suitable admissible function $f'$ and a smooth transfer $f$, then the equality holds for $f,f'$ by Theorem \ref{thm germ gl}, Prop. \ref{prop hypo} and Theorem \ref{thm germ u}.

\subsection{Concluding remarks}

Note that we only deal with $\pi_v$ which appears as a local component of  a global $\pi$. But we expect Conjecture \ref{conj local} to hold in general (as long as $\Pi_v$ is generic in order to define $I_{\Pi_v}$).

We conclude with
\begin{conj}
The spherical character $ I_\Pi$ and $J_\pi$ are  representable by a locally $L^1$ function which is smooth (locally constant in the non-archimedean case) on an open subset.\end{conj}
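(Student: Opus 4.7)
The plan is to produce explicit locally integrable representing kernels on both sides, smooth (resp.\ locally constant) on the regular semisimple loci via orbital-integral considerations, and then to bound them across the singular stratum by means of a relative local character expansion.

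For $J_\pi$, Theorem \ref{thm char u} already gives this when $\pi$ is tempered with $\Hom_H(\pi,\BC)\neq 0$, via the orbital integral $g\mapsto O(g,\phi)$ of a suitably normalized matrix coefficient. To drop the assumption $\Hom_H(\pi,\BC)\neq 0$ I would simply observe that $J_\pi\equiv 0$ in that case. To drop temperedness I would realize $\pi$ as the Langlands quotient of a standard module induced from a tempered representation of a Levi subgroup and transfer the matrix-coefficient construction through parabolic induction, using analytic continuation in the inducing parameter in the style of Sakellaridis--Venkatesh; the orbital integral of the resulting (no longer pointwise-defined) matrix coefficient is then interpreted distributionally but remains locally $L^1$ on the regular semisimple locus.

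For $I_\Pi$, the natural candidate kernel is
\begin{align*}
\kappa_\Pi(g) := \sum_{W} \lambda\bigl(\Pi(g)W\bigr)\,\overline{\beta(W)},
\end{align*}
summed over an orthonormal basis of $\CW(\Pi,\psi_E)$ with respect to $\vartheta$ (and with the Rankin--Selberg integral interpreted by meromorphic continuation when $\Pi$ is not tempered). First I would show convergence of $\kappa_\Pi$ to a smooth function on the regular semisimple locus of $G'(F)$: the uniqueness of $H_1'\times H_2'$-invariant functionals forces the bilinear form $(W,W')\mapsto \lambda(\Pi(g)W)\overline{\beta(W')}$ to factor through a one-dimensional space at each regular semisimple $g$, reducing convergence of the sum to continuity of $\lambda$ and $\beta$ on the smooth dual. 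Then I would verify the distributional identity $I_\Pi(f')=\int f'(g)\kappa_\Pi(g)\,dg$ for $f'$ supported in the regular locus by direct manipulation of the defining spectral sum, and extend by continuity.

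The main obstacle will be the local $L^1$ control of $\kappa_\Pi$, and of the matrix-coefficient orbital integral on the unitary side, across the non-regular-semisimple stratum. Via the Weyl integration formula adapted to $H\bs G/H$ this reduces to growth estimates near singular orbits. For tempered $\pi$ the Harish-Chandra $\Xi$-function bounds exploited in the proof of Proposition \ref{prop a=a'} suffice; in general one needs a full relative analog of the Howe--Harish-Chandra local character expansion, of which Theorems \ref{thm germ gl} and \ref{thm germ u} are only truncated versions isolating the single orbit $\xi_-$. The chief technical difficulty is therefore twofold: extending the germ expansion on $\fks_{n+1}$ and $\fku$ to every $H_n(F)$-nilpotent orbit---and regularizing the resulting divergent nilpotent orbital integrals---and, on the general linear side, controlling the interaction of these singularities with the non-compact Flicker--Rallis integration. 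A complete argument would likely proceed by pairing such a full relative germ expansion with a Rader--Rallis-type theorem on $L^1$-integrability of relative characters.
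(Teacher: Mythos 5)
The statement you are addressing is not a theorem in the paper with a proof: it is the final, explicitly labelled \emph{conjecture} in the concluding remarks, immediately followed by the author's own observation that ``there should be a more complete analogue of the Harish--Chandra local character expansion in our relative setting.'' The paper therefore provides no argument for this assertion, and there is nothing to compare your write-up against.

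Your submission is also not a proof but a research outline, and you acknowledge this candidly when you write that ``a complete argument would likely proceed by pairing such a full relative germ expansion with a Rader--Rallis-type theorem on $L^1$-integrability of relative characters.'' The components you do have in hand are correctly identified: Theorem~\ref{thm char u} gives representability of $J_\pi$ when $\pi$ is tempered and $H$-distinguished; the kernel $\kappa_\Pi$ is the natural candidate on the $G'$ side; and the single-orbit truncations proved in Theorems~\ref{thm germ gl} and~\ref{thm germ u} would need to be promoted to a full germ expansion over all $H_n(F)$-nilpotent orbits in $\fks_{n+1}$ (resp.\ $\fku$) before any $L^1$ estimate near the singular set could follow. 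But each of the steps you regard as feasible contains unresolved genuine difficulties: (1) you dismiss $\Hom_H(\pi,\BC)=0\Rightarrow J_\pi\equiv 0$ and temperedness as removable conditions, yet the analytic-continuation-through-Langlands-quotients step has no precise content here and can fail to preserve local integrability; (2) the reduction of convergence of $\sum_W \lambda(\Pi(g)W)\overline{\beta(W)}$ to multiplicity one of $H_1'\times H_2'$-invariant functionals is a statement about a bilinear form on the \emph{algebraic} dual, and multiplicity one gives you a one-dimensional target only for continuous functionals on each factor, not automatically absolute convergence of the spectral sum; (3) as the paper notes, for $n\geq 2$ there are infinitely many $H_n(F)$-nilpotent orbits in $\fks_{n+1}$, the corresponding orbital integrals generally require regularization, and no such regularization scheme is established in the paper or in your proposal. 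So while your sketch is a reasonable description of what a proof \emph{would look like}, it does not close any of the gaps it identifies, and its status is essentially the same as the author's own framing of the statement as an open conjecture.
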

There should be a more complete analogue of the Harish--Chandra local character expansion in our relative setting.
Moreover, it seems that the spherical character (if non-zero) should contain as much information as the usual character of the representation.

\section*{Acknowledgement}  The influence of Herv\' e Jacquet on this paper should be obvious to the reader. The author is grateful to S. Zhang and B. Gross for their constant support, to Z. Mao for a question that leads to an improvement of the proof, to M. Harris, A. Ichino, D. Jiang, E. Lapid, C-P. Mok, Y. Tian, L. Xiao, X. Yuan for helpful comments, to the anonymous referees for their careful reading of this paper and making useful suggestions.
 The author thanks  the Morningside Center of Mathematics at the Chinese Academy of Sciences for their hospitality.

\end{document}